\documentclass[11pt,reqno]{amsart}
\usepackage{a4wide}
\usepackage{amsmath,amsfonts,amssymb,bbm}
\usepackage{verbatim}
\usepackage[shortlabels]{enumitem}
\usepackage[T1]{fontenc}
\usepackage[unicode,hypertexnames=false,colorlinks=true,linkcolor=blue,citecolor=blue]{hyperref}
\usepackage{esint}
\usepackage{hyperref}
\usepackage[capitalise, nameinlink, noabbrev]{cleveref} 
\usepackage{mathrsfs}
\usepackage{enumitem}
\hypersetup{colorlinks,breaklinks,
	linkcolor=[rgb]{0,0,0},
	citecolor=[rgb]{0,0,0},
	urlcolor=[rgb]{0,0,0}}

\usepackage{xfrac,xcolor,color,graphicx}
\newcommand{\be}{\begin{equation}}
	\newcommand{\ee}{\end{equation}}
\theoremstyle{plain}

\newtheorem{theorem}{Theorem}[section]
\newtheorem{proposition}[theorem]{Proposition}
\newtheorem{corollary}[theorem]{Corollary}
\newtheorem{lemma}[theorem]{Lemma}
\newtheorem{definition}[theorem]{Definition}
\newtheorem{condition}{Condition}
\theoremstyle{remark}
\newtheorem{remark}[theorem]{Remark}

\newcommand{\C}{\mathbb{C}}
\newcommand{\R}{\mathbb{R}}

\newcommand{\N}{\mathbb{N}}

\newcommand{\eps}{\varepsilon}

\newcommand{\mathbbmm}[1]{\text{\usefont{U}{bbm}{m}{n}#1}}
\newcommand{\ind}{\mathbbmm{1}}
\newcommand{\HH}{\mathcal{H}}
\newcommand{\loc}{{{\tiny{\text{loc}}}}}

\def\O{\Omega}

\newcommand{\bea}{\begin{equation*}\begin{aligned}}
		\newcommand{\eea}{\end{aligned}\end{equation*}}
\usepackage{scalerel}


\crefname{subsection}{Subsection}{Subsections}

\numberwithin{equation}{section}

\numberwithin{equation}{section}

\title{Uniqueness of the blow-up for some Alt-Phillips cones}

\author[M. Carducci]{Matteo Carducci}\thanks{}
\address {Matteo Carducci \newline \indent
	Classe di Scienze, Scuola Normale Superiore \newline \indent
	Piazza dei Cavalieri 7, 56126 Pisa - ITALY}
\email{\href{mailto:matteo.carducci@sns.it}{matteo.carducci@sns.it}}
\author[G. Tortone]{Giorgio Tortone}\thanks{}
\address {Giorgio Tortone \newline \indent
	Dipartimento di Matematica ``Giuseppe Peano'', Università di Torino \newline \indent
	Via Carlo Alberto 10, 10123 Torino - ITALY}
\email{\href{mailto:giorgio.tortone@unito.it}{giorgio.tortone@unito.it}}

\begin{document}
	\keywords{Alt-Phillips, sharp rate of convergence, hybrid epiperimetric inequality, integrable cones, singular points}
	\subjclass[2020] {35R35, 35B44, 35B40, 49Q05}  
\begin{abstract}
We establish uniqueness of blow-ups, with sharp quantitative convergence, for several classes of singular minimizing cones in the Alt-Phillips problem, in the range $\gamma\in(0,2)$. As a consequence, we obtain uniqueness at every free boundary point in dimensions $d=2,3,4$ for $\gamma\in(1,2)$, and in dimensions $d\geq 5$ for $\gamma\in\left(1,\frac32\right)$.

The proof of uniqueness is based on three new logarithmic epiperimetric inequalities. The sharp distinction between polynomial and logarithmic convergence is governed by a finite-dimensional integrability condition (sub-integrability) for the spherical linearized problem. 

We prove this sharpness for the radial cone and its cylindrical extensions through an explicit integrability and bifurcation analysis, showing in particular that logarithmic convergence may be sharp even in dimension two.
In contrast, we show that the one-dimensional cone is exceptional: although the integrability condition fails, the convergence is polynomial. 

Finally, we characterize the minimality of the radial cone in terms of $d$ and $\gamma$ by means of a one-dimensional calibration argument, exhibiting in dimension $d\geq6$ a nontrivial regime in which the radial cone is stable but not minimizing.
\end{abstract}
\maketitle
	\tableofcontents
	\section{Introduction}
The uniqueness of blow-up limits at singular points is one of the central questions in the regularity theory of minimal surfaces and free boundaries.
A unique blow-up, together with a rate of convergence, gives quantitative control of the original object as a deformation of its tangent cone, providing a key ingredient in the analysis of the singular set \cite{AA81,Simon,SimonCylindric,Caffarelli:obstacle-problem-revisited,coldingminicozzi1,coldingminicozzi2,gp09,csv18,fs19,csv20,esv}. 

Since the work of Simon \cite{Simon}, it is well known that the rate of convergence is related to the integrability of the limiting cone. 
Indeed, integrable cones typically yield polynomial convergence, whereas in the non-integrable case one generally expects slower rates, such as logarithmic decay. On the other hand, it is not known whether the integrability condition is equivalent to a 
polynomial rate of convergence. 

In this paper we give an answer to this question in the context of the Alt-Phillips functional
\be\label{eq:AP}
\mathcal{J}_\gamma(u):=\int_{B_1}\Big(|\nabla u|^2+u^\gamma\ind_{\{u>0\}}\Big)\,dx,
\quad \text{where }\gamma\in(-2,2).
\ee
Precisely, we establish for the first time uniqueness of blow-ups for several classes of singular minimizing cones of \eqref{eq:AP}, and we identify a sharp criterion for polynomial convergence in terms of a new notion of \emph{sub-integrability}. Perhaps more surprisingly, we also exhibit a non-integrable singular cone for which polynomial convergence still holds.

\medskip

The functional \eqref{eq:AP} was first introduced by Phillips \cite{phillips} and Alt-Phillips \cite{Alt-Phillips}, and has been investigated extensively in recent years, both for positive exponents \cite{DeSilvaSavinPositivePower,fk24,generic-alt-phillips,aks25,thomas,ros-restrepo,savinyu-altphillips-stable,savinyu-altphillips-concentration,Fernandez-classification} and for negative ones \cite{DeSilvaSavinNegativePower,DeSilvaSavinNegativePowerUniformDensity,DeSilvaSavinNegativePowerCompactness,CT1}. 
    As particular cases, we find the obstacle problem $\gamma=1$, the Alt-Caffarelli problem $\gamma=0$, and we recover minimal surfaces when $\gamma\to{-2}$ \cite{DeSilvaSavinNegativePowerUniformDensity,DeSilvaSavinNegativePowerCompactness}.
    In this paper, we only focus on the case $\gamma\in(0,2)$.
    
    Let $u\in H^1(B_1)$ be a minimizer of \eqref{eq:AP}, and suppose that $0\in\partial\Omega_u$ is a free boundary point, where $ \O_u:=\{u>0\}$. We consider the rescalings 
$$
u_r(x):=\frac{u(rx)}{r^\beta},\quad\text{where}\quad\beta:=\frac{2}{2-\gamma}.
$$ 
By Weiss' monotonicity formula \cite{weiss-alt-phillips-monotonicity-formula}, the sequence $u_r$ converges along a subsequence to some blow-up $b$, which is a $\beta$-homogeneous minimizer of the Alt-Phillips functional. When $b$ is the flat solution $c_\gamma (x\cdot \nu)_+^\beta$, for $\nu\in\mathbb{S}^{d-1}$, everything is known: the blow-up is unique and the free boundary is locally given by the graph of a smooth function \cite{Alt-Phillips,DeSilvaSavinPositivePower,ros-restrepo,CT1}.
Such points are called regular, and we denote by $\text{Reg}(u)$ the set of regular points. 

\medskip

Regarding the singular set $\mathrm{Sing}(u):=\partial \Omega_u\setminus\text{Reg}(u)$, the situation is much more delicate. By \cite{Alt-Phillips,weiss-alt-phillips-monotonicity-formula} it is known that, for $\gamma \in (0,1)$, the singular part of the free boundary is a closed set of Hausdorff dimension at most $d-3$, whereas for $\gamma \in (1,2)$ singularities may arise even in two dimensions. In the latter case, by \cite{bonorino}, the singular set $\mathrm{Sing}(u)$ is rectifiable.

The first classification of singular blow-ups was carried out in dimension two and for $\gamma \in (1,2)$ in \cite{bonorinoetall} using ODE techniques. Very recently, the singularities have been studied from different perspectives: in terms of generic regularity \cite{generic-alt-phillips}, through the construction of singular minimizing cones \cite{savinyu-altphillips-stable,savinyu-altphillips-concentration}, and through rigidity results in low dimensions \cite{Fernandez-classification}. 

Nevertheless, the question of whether or not blow-ups at singular points are unique has remained completely open to date. 
Indeed, a priori, it is possible that around a singular point, the free boundary asymptotically approaches different cones at different scales.
\smallskip

\subsection{Uniqueness of the blow-up} We proceed by proving the first uniqueness result for the following class of blow-ups. Let $\mathcal{B}$ be the set of singular minimizing cones, and denote by
 \bea\label{eq:stratification-Bell}\mathcal{B}_\ell:=\left\{b\in\mathcal{B}:\ b(y,z)=B(y),\ (y,z)\in\R^{d-\ell}\times \R^\ell,\ B>0 \text{ in }\R^{d-\ell}\setminus \{0\}\right\},
\eea
the class of cylindrical extensions, for $\ell=0,\dots,d-1$. Then, we consider blow-ups satisfying the following condition.
\begin{condition}\label{condition:assumption-uniqueness}
        We suppose that $b$ is a minimizing cone belonging to one of the following classes.
        \begin{itemize}
        \item[(i)] Positive cones: $b \in \mathcal{B}_0$, namely $b>0$ in $\R^{d}\setminus\{0\}$.
        \item[(ii)] One-dimensional cone: $b\in \mathcal{B}_{d-1}$, namely $b$ is a rotation of the one-dimensional cone $$
        b_{\text{one}}(x):=c_{\text{one}}|x_d|^\beta\quad\text{where}\quad c_{\text{one}}^{\gamma-2}= \beta^2.
        $$
        \item[(iii)] Translational cylindrical cones for $\gamma \in (1,2)$: $b\in \mathcal{B}_{\ell}$, for some $\ell=1,\ldots,d-2$, is a cylindrical extension of a translational cone $B$. Namely, every $(\beta-1)$-homogeneous Jacobi field of $B$ is generated by translations of $B$, see \cref{def:beta-1translational}.
        \end{itemize}
\end{condition}
We refer to \cref{sub:parabola} and \cref{sub:bifurcation-intro} for examples of cones satisfying \cref{condition:assumption-uniqueness}, and we refer to (i) in \cref{subsub:role-condition1} for a discussion about \cref{condition:assumption-uniqueness}.

\smallskip

Our main result is the following uniqueness theorem for singular blow-ups, with a quantitative logarithmic-type convergence. 
	\begin{theorem}\label{t:uniqueness}
		Let $u\in H^1(B_1)$ be a minimizer of the Alt-Phillips problem, and $b$ be a blow-up of $u$ at $0\in\partial\Omega_u$ satisfying \cref{condition:assumption-uniqueness}. Then, $b$ is the unique blow-up and, for some $r_0>0$, \be\label{eq:log-conv}\|u_{r}-b\|_{L^\infty(B_1)}\le \frac{C}{|\log r|^\alpha}\quad\text{for every }r\in(0,r_0),\ee for some $C>0$ and $\alpha\in(0,1)$ depending only on $d$, $\gamma$ and $b$.
        \end{theorem}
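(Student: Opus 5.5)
The plan is to follow the Weiss-energy approach via a logarithmic epiperimetric inequality. Set
\[
W(u,r):=\frac{1}{r^{d+2\beta-2}}\int_{B_r}\Big(|\nabla u|^2+u^\gamma\ind_{\{u>0\}}\Big)\,dx-\frac{\beta}{r^{d+2\beta-1}}\int_{\partial B_r}u^2,
\]
which is monotone nondecreasing with
\[
\frac{d}{dr}W(u,r)=\frac{2}{r}\int_{\partial B_1}\big(x\cdot\nabla u_r-\beta u_r\big)^2 .
\]
The core of the argument is to prove, for each class (i), (ii), (iii) of \cref{condition:assumption-uniqueness}, an inequality of the following form. For every trace $c\in H^1(\partial B_1)$ that is $\delta$-close to $b|_{\partial B_1}$ in a suitable norm, there is a competitor $h$ with $h=c$ on $\partial B_1$ such that
\[
W(h)-W(b)\le (1-\eps\,|W(z)-W(b)|^{\theta})\,\big(W(z)-W(b)\big),
\]
where $z=r^\beta c$ is the homogeneous extension and $\theta\in(0,1)$.

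To build $h$, I decompose $c-b|_{\partial B_1}$ along the eigenfunctions of the linearized spherical operator $-\Delta_{\mathbb S}-\lambda_\beta-\tfrac{\gamma(\gamma-1)}{2}b^{\gamma-2}$ (the potential is bounded near the cone where $b>0$, and the one-dimensional and cylindrical cases need care near $\{b=0\}$). High modes, with eigenvalues above the $\beta$-homogeneous threshold, are extended as $r^{\alpha_j}\phi_j$ with $\alpha_j>\beta$, which gives a strict linear energy gain. Low modes are handled the same way with the opposite sign. The kernel (Jacobi fields of homogeneity $\beta$) is where integrability fails. There I use a finite-dimensional Łojasiewicz inequality for the analytic reduced energy on the kernel, obtained from a Lyapunov–Schmidt reduction. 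The nonanalytic term $u^\gamma$ is analytic near a positive profile, which is why I need (i) positivity, or, in (iii), the structure in which the only $(\beta-1)$-homogeneous Jacobi fields are translations, so that the cylindrical directions do not generate extra degeneracy. In case (ii) I would instead compute directly in one variable with the explicit profile $c_{\text{one}}|x_d|^\beta$. The expected exponent is $\theta$ equal to the Łojasiewicz exponent.

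Given the inequality, the standard ODE argument runs as follows. Write $e(r)=W(u,r)-W(b)\ge0$, which tends to $0$. Minimality of $u$ and the inequality applied to $c=u_r|_{\partial B_1}$ give
\[
e'(r)\ge \frac{C}{r}\,e(r)^{1+\theta},
\]
valid as long as $u_r$ stays close to $b$. Integrating yields $e(r)\le C|\log r|^{-1/\theta}$. Cauchy–Schwarz on dyadic annuli, together with the derivative formula, gives
\[
\int_{\partial B_1}|u_r-u_s|\le C\,|\log r|^{-\alpha}
\]
with $\alpha=\tfrac{1-\theta}{\theta}\cdot\tfrac12$ or similar, adjusted to lie in $(0,1)$. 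A continuity (bootstrap) argument then keeps $u_r$ inside the $\delta$-neighborhood for all small $r$, because it starts close along the subsequence converging to $b$. This proves uniqueness. The $L^2$ rate is upgraded to $L^\infty$ by interpolating with uniform $C^{1,\beta-1}$-type estimates for minimizers.

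The main obstacle is the epiperimetric inequality near the kernel in cases (ii) and (iii). There $b$ vanishes on a large set, so the linearization is singular and the functional is not smooth in $u$ across $\{u=0\}$. My first attempt would be to use the translation/rotation structure to mod out the degenerate directions, by recentering and rotating the cone so that the projection onto Jacobi fields vanishes. After that I would treat the residual kernel with a quantitative Łojasiewicz argument on a finite-dimensional manifold of nearby homogeneous solutions.
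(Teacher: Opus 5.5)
Your outer framework matches the paper's. You feed a logarithmic epiperimetric inequality for $W$ into $\frac{d}{dr}W(u_r)=\frac{d+2\beta-2}{r}\big(W(z_r)-W(u_r)\big)+\frac1r\int_{\partial B_1}(x\cdot\nabla u_r-\beta u_r)^2$, then use dyadic Cauchy--Schwarz, a bootstrap keeping $u_r$ $\delta$-close to $b$, and interpolation with the $C^\beta$ bound. Note that you need this identity, not only the pure-square monotonicity formula you wrote, to get $e'\ge\frac{c}{r}e^{1+\theta}$. In case (i), your spectral splitting with Lyapunov--Schmidt and \L{}ojasiewicz on the kernel is also the paper's construction.

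The gap is that the epiperimetric inequality, which is the real content of the theorem, is not obtained in cases (ii) and (iii), and the route you sketch fails there. In case (ii) the kernel of $L_{b_{\text{one}}}$ is spanned by $\mathrm{sgn}(x_d)|x_d|^{\beta-1}x_i$ (rotations, removable) and by $|x_d|^{\beta-1}x_i$, $i<d$.
\begin{itemize}
\item The modes $|x_d|^{\beta-1}x_i$ depend on $x'$, so a one-variable computation does not see them.
\item They do not come from rotations, and translations never produce $\beta$-homogeneous Jacobi fields. Moreover $b_{\text{one}}$ is not translational ($|x_d|^{\beta-1}$ is a non-translational $(\beta-1)$-homogeneous Jacobi field), so the mechanism of (iii) does not apply either.
\item They are not tangent to any family of homogeneous solutions, so there is no manifold of nearby solutions to project onto.
\item $b_{\text{one}}+t|x_d|^{\beta-1}x_i=|x_d|^{\beta-1}(c_{\text{one}}|x_d|+tx_i)$ is negative near $\{x_d=0\}$ for every $t\neq0$. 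Hence there is no Lyapunov--Schmidt reduction and no analytic reduced functional to which \L{}ojasiewicz could be applied.
\end{itemize}
The missing idea is to generate these modes with the non-solutions $b_\xi$, obtained by tilting the two half-plane profiles in antipodal directions. These satisfy $b_\xi\ge0$; their first-variation error is nonpositive and supported in the wedge $\{|x_d|<x\cdot\xi\}$, of measure $O(|\xi|)$; and $W(b_\xi)-W(b_{\text{one}})=-\kappa_\beta|\xi|^{2\beta-1}+o(|\xi|^{2\beta-1})$ with $\kappa_\beta>0$. In a Weiss-type contradiction argument around the best approximation $b_{\xi_j}$ of $z_j$ among tilts and rotations, this negative energy gap forces $|\xi_j|^{2\beta-1}=o(\|z_j-b_{\xi_j}\|^2_{H^1})$. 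That makes the first-variation error negligible and yields the inequality even with $\sigma=0$.

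In case (iii), rotating away $K_b^z$ is right, but recentering plays no role. Translationality of $B$ is used precisely to identify the $\beta$-homogeneous modes $z_j|y|^{\beta-1}\psi(y/|y|)$, $\psi\in E_B^{\beta-1}$, with the mixed rotations $z_j\partial_{y_i}B$. What remains is not handled by your sketch. Since $b$ vanishes on $\{y=0\}$ and the potential $b^{\gamma-2}\sim|y|^{-2}$ is singular, three things break: $\mathcal G$ is not analytic on $H^1(\partial B_1)$ near $b$; the Taylor remainder of $(b+\phi)^\gamma$ is not controlled by a small multiple of $\int\phi^2$ as for positive cones; and a competitor built mode by mode need not be nonnegative.

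The paper handles this in three moves.
\begin{itemize}
\item It performs the Lyapunov--Schmidt reduction only inside $X_B=\{|y|^\beta f(y/|y|)\}$, where $\mathcal G$ is analytic and small perturbations of $b$ stay nonnegative.
\item Through the averaging map $\Pi$, it shows that the first variation at any point of $X_B$ only sees $\Pi\psi$. Hence at $\phi+Y(\phi)$ it vanishes on the whole complement $\mathcal M_b=N_B\oplus\mathcal O_b$ of $K_b$, where $\mathcal O_b=\ker\Pi\cap(K_b^z)^\perp$.
\item It runs a contradiction argument, linearizing at the $X_B$-component $q_j$ of $z_j$ and gluing the gradient-flow competitor with two cut-offs. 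The nonlinear and admissibility errors then disappear in the limit instead of being estimated directly.
\end{itemize}
Without these ingredients your argument proves the theorem only for positive cones.
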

For $\gamma\in(1,2)$, by \cite{Alt-Phillips,bonorino}, every minimizing cone can be written as a cylindrical extension of some positive cone, and so $\mathcal{B}=\cup_{\ell=0}^{d-1}\mathcal{B}_\ell$. In this regime, the only additional hypothesis in \cref{condition:assumption-uniqueness} is the translational assumption.

In certain ranges of the parameters $d$ and $\gamma$, the translationality can be proved for all possible singular minimizing cones (see \cref{prop:trans0}), and thus our main result \cref{t:uniqueness} applies to every cone. 
The following corollary gives, in these cases, a complete uniqueness statement, in particular in low dimensions, and a complete regularity description for free boundaries.

\begin{corollary}\label{t:ap-low-dimension}
        Let $u\in H^1(B_1)$ be a minimizer of the Alt-Phillips problem and suppose that \be\label{eq:assumption-dimension}\text{either}\quad d=2,3,4\text{ and } \gamma \in (1,2)\quad\text{or}\quad d\ge 5\,\text{ and }\,\gamma\in\left(1,\frac32+\frac{1}{2(d-1)}\right).\ee Then, at every free boundary point of $u$, the blow-up is unique. 
        Moreover,  if we set
        $$
        \Sigma_{\ell}(u) := \left\{x_0 \in \mathrm{Sing}(u) : \text{the blow-up of $u$ at $x_0$ belongs to }  \mathcal{B}_\ell\right\},\quad \text{for }\ell=0,\dots,d-1,
        $$
        then we have
        $$
        \mathrm{Sing}(u)=
        \bigcup_{\ell=0}^{d-1} \Sigma_\ell(u),$$ where 
        $\Sigma_0(u)$ is locally discrete, $\Sigma_{d-1}(u)$ is locally covered by a $C^{1,\alpha}$ $(d-1)$-dimensional manifold, and $\Sigma_\ell(u)$ is locally covered by a $C^{1,\log}$ $\ell$-dimensional manifold, for every $\ell=1,\ldots,d-2$.
        \end{corollary}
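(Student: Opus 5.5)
The plan is to derive the corollary from \cref{t:uniqueness}, together with the structure of minimizing cones for $\gamma\in(1,2)$ and the translationality result \cref{prop:trans0}, and then to run a standard covering argument for each stratum.

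\textbf{Step 1: every blow-up satisfies \cref{condition:assumption-uniqueness}.} Since $\gamma\in(1,2)$, by \cite{Alt-Phillips,bonorino} every minimizing cone is a cylindrical extension of a positive cone, so $\mathcal{B}=\bigcup_{\ell=0}^{d-1}\mathcal{B}_\ell$ and $\mathrm{Sing}(u)=\bigcup_\ell\Sigma_\ell(u)$. We treat the strata separately.
\begin{itemize}
\item If $\ell=0$, the cone is positive, which is case (i).
\item If $\ell=d-1$, the section $B$ is a positive $\beta$-homogeneous solution on $\R\setminus\{0\}$, hence $c_{\text{one}}|t|^\beta$; this is case (ii).
\item If $1\le\ell\le d-2$, the section $B$ is a positive minimizing cone in dimension $d-\ell\in\{2,\dots,d-1\}$.
\end{itemize}
In the last case, \cref{prop:trans0} applies in the range \eqref{eq:assumption-dimension} and shows that $B$ is translational, which is case (iii). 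I expect this to be the only delicate point: one must check that the hypothesis of \cref{prop:trans0} holds in dimension $d-\ell$ whenever \eqref{eq:assumption-dimension} holds in dimension $d$. Presumably the admissible $\gamma$-range shrinks as the dimension grows, so the condition is inherited by lower dimensions. With this, \cref{t:uniqueness} gives uniqueness at every free boundary point. At regular points uniqueness is already known.

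\textbf{Step 2: continuity of the blow-up map.} Because the constants $C,r_0$ in \eqref{eq:log-conv} depend on $b$, uniformity is needed. Fix $x_0\in\Sigma_\ell$. By compactness of minimizers, nearby points $x$ with $u(x+r\cdot)/r^\beta$ close to $b^{x_0}$ at some scale inherit the decay. This is the usual argument: the epiperimetric inequality holds uniformly in a neighbourhood of $b^{x_0}$ in the space of cones. The result is the estimate
\[
\|b^{x}-b^{x'}\|_{L^\infty(B_1)}\le C\,\omega(|x-x'|)\quad\text{for }x,x'\in\Sigma_\ell\cap B_\rho(x_0),
\]
where $\omega(t)=|\log t|^{-\alpha}$ for $1\le\ell\le d-2$. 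For $\ell=d-1$ the one-dimensional cone should enjoy polynomial convergence, as the abstract asserts, so there $\omega(t)=t^{\alpha}$.

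\textbf{Step 3: geometry of each stratum.} For $\ell=0$, suppose $x_k\to x_0$ with $x_k\in\Sigma_0$, and set $r_k=|x_k-x_0|$. The rescalings at scale $r_k$ converge to $b^{x_0}$, which is positive off the origin. But $(x_k-x_0)/r_k$ converges to a point on $\partial B_1$ that is a free boundary point of the limit, a contradiction. So $\Sigma_0$ is locally discrete. For $\ell\ge1$, the invariant subspace $L(x)=\{z: b^x(\cdot+z)=b^x\}$ varies with modulus $\omega$, by Step 2. Blowing up at $x_0$ also shows that $\Sigma_\ell$ lies in cones of vanishing aperture around $x_0+L(x_0)$, at a rate $\omega$. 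Whitney's extension theorem, in the version with modulus of continuity, then covers $\Sigma_\ell$ by a $C^{1,\log}$ manifold when $1\le\ell\le d-2$, and by a $C^{1,\alpha}$ manifold when $\ell=d-1$.
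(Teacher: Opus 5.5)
Your proposal follows the paper's proof: you check \cref{condition:assumption-uniqueness} stratum by stratum, applying \cref{prop:trans0} to the base cone $B$ in dimension $d-\ell\le d-1$; positive cones in dimension $d$ need no translationality, which is exactly why the threshold in \eqref{eq:assumption-dimension} is $\frac32+\frac{1}{2(d-1)}$. Then \cref{t:uniqueness} gives uniqueness, the strata follow from the standard implicit-function/Whitney-extension argument of \cite{gp09,fs19,csv20}, and the $C^{1,\alpha}$ regularity of $\Sigma_{d-1}(u)$ comes from the polynomial rate for $b_{\text{one}}$ in \cref{thm:parabola-cones}. The paper gives no further detail on the stratification, so your Steps 2--3 are at least as explicit as its argument; one caution is that the uniform decay at nearby points in Step 2 should only be claimed for points $x$ with $W(b^{x})\ge W(b^{x_0})$, since, unlike in the obstacle problem, cones in $\mathcal{B}_\ell$ need not all have the same Weiss energy.
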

It is important to emphasize that the uniqueness results above are not based on an explicit classification of singular minimizing cones. This point is particularly relevant, since blow-up profiles are not known explicitly in general. 

We also point out that the $C^{1,\alpha}$ regularity result for $\Sigma_{d-1}(u)$ follows from point $(i)$ of \cref{thm:parabola-cones} below.

\subsection{Sharp convergence}      
        A natural question arising from \cref{t:uniqueness} is whether the logarithmic convergence in \eqref{eq:log-conv} is optimal, or whether it can be improved to a polynomial rate. 
        In our setting, the sharp dichotomy between these two rates is governed by the notion of \emph{sub-integrability}, a condition weaker than \emph{integrability}.

        More precisely, we say that $b$ is \emph{integrable} if every $\beta$-homogeneous Jacobi field is generated by a one-parameter family of $\beta$-homogeneous solutions (see \cref{def:integrability}).   
By \cite{adamsimon}, after a Lyapunov-Schmidt reduction, the equation for $\beta$-homogeneous solutions near $b$ is reduced to a finite-dimensional equation on the kernel of the spherical linearized operator. In this setting, the integrability condition is equivalent to the vanishing of the corresponding reduced functional, in a neighborhood of the origin. If the reduced functional is non-positive in a neighborhood of $0$, we say that $b$ is \emph{sub-integrable} (see \cref{def:sub-integrability}).
 
\smallskip

The following theorem establishes a sharp dichotomy between logarithmic and polynomial convergence rates. We stress that the one-dimensional cone $b_{\text{one}}$ is treated separately in \cref{thm:parabola-cones}, since $b_{\text{one}}$ is not included in the definition of sub-integrability in \cref{def:sub-integrability}.
        \begin{theorem} \label{t:rate-convergence}
        Let $b\in \mathcal{B}_\ell$ be a cone, for $\ell=0,\ldots,d-2$, satisfying \cref{condition:assumption-uniqueness}. Then:
        \begin{itemize}
        \item[(i)] if $b$ is sub-integrable, then \eqref{eq:log-conv} can be improved to $r^\alpha$;
                \item[(ii)] if $b$ is not sub-integrable, then there exists a weak solution $u$ of the Alt-Phillips problem (see \cref{def:weak-sol}) for which the logarithmic convergence in \eqref{eq:log-conv} is sharp.
        \end{itemize}
        \end{theorem}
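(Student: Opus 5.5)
The plan is to derive both parts from a single \emph{logarithmic epiperimetric inequality} for the Weiss energy
$$W(v)=\int_{B_1}\big(|\nabla v|^2+v^\gamma\ind_{\{v>0\}}\big)\,dx-\beta\int_{\partial B_1}v^2\,d\mathcal{H}^{d-1},$$
written in the form
$$W(h)-W(b)\le \big(1-\eps\, |W(z)-W(b)|^{\theta}\big)\big(W(z)-W(b)\big),$$
where $z$ is the $\beta$-homogeneous extension of a trace $c$ on $\partial B_1$ close to $b|_{\partial B_1}$, and $h$ is a suitable competitor with the same trace. For part (i) we will show that under sub-integrability one may take $\theta=0$. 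For part (ii) we will build an explicit slowly converging solution out of the unstable direction of the reduced functional.

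\textbf{Part (i).} Decompose the trace as $c=b+\phi$ on $\mathbb{S}^{d-1}$, and split $\phi$ spectrally with respect to the spherical linearized operator $\mathcal{L}_b$ at $b$. Since $b\in\mathcal{B}_\ell$ with $\ell\le d-2$ and $b$ is translational, $b$ is positive away from an $\ell$-dimensional subspace, so $\mathcal{L}_b$ has a discrete spectrum on a weighted space. Write $\phi=\phi_{<}+\phi_0+\phi_>$, with $\phi_0\in K:=\ker$ (the $\beta$-homogeneous Jacobi fields), and handle the pieces as follows.
\begin{itemize}
\item On the high modes $\phi_>$ we use the standard radial-interpolation competitor $r^{\beta}\phi_>$ replaced by $r^{\beta+\delta}\phi_>$. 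This gains a fixed fraction of the energy, exactly as in the integrable case.
\item The low modes $\phi_<$ correspond to eigenvalues strictly below $\beta$ (translations and $(\beta-1)$-homogeneous fields). They are absorbed by the translational hypothesis, \cref{def:beta-1translational}: we recentre $b$ so that these modes vanish to second order.
\item On $K$ we use the Lyapunov–Schmidt reduction of \cite{adamsimon}, which gives a real-analytic reduced functional $\mathcal{F}:K\to\R$ with $W(z)-W(b)=\mathcal{F}(\phi_0)+O(\|\phi_0^\perp\|^2)$.
\end{itemize}
Sub-integrability means $\mathcal{F}\le 0$ near $0$. We then compare $z$ against the competitor whose kernel part is frozen at $\phi_0$, namely $b+\phi_0+$ (the graph correction). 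This competitor has Weiss energy $\le W(b)+\mathcal{F}(\phi_0)\le W(b)$, so the whole energy excess is carried by the non-kernel modes, and there we gain a fixed fraction. This gives the epiperimetric inequality with $\theta=0$.

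The standard ODE argument $\frac{d}{dr}\big(W(u_r)-W(b)\big)\ge \frac{C}{r}\big(W(u_r)-W(b)\big)$ then yields $W(u_r)-W(b)\lesssim r^{\alpha}$. Integrating $\int\|\partial_r u_r\|_{L^2(\partial B_1)}\,dr$ gives $L^2$ decay at the rate $r^\alpha$. Finally, the interior regularity away from the singular stratum, together with nondegeneracy, upgrades this to $L^\infty$ decay.

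\textbf{Part (ii).} If $b$ is not sub-integrable, there is $\phi_0\in K$ arbitrarily small with $\mathcal{F}(\phi_0)>0$. By analyticity (Łojasiewicz), along a direction $\xi\in K$ we have $\mathcal{F}(t\xi)\approx a\,t^{p}$ with $a>0$ and $p\ge3$ minimal. We construct a weak solution as $u(r\theta)=r^\beta\big(b+\varphi(\log r)\,\xi+\dots\big)(\theta)$, where $\varphi$ solves the gradient-flow ODE $\dot\varphi=-c\,\mathcal{F}'(\varphi\xi)$ on the central manifold. The remainder is solved by a fixed-point argument in weighted spaces on the cylinder $(-\infty,0)\times\mathbb{S}^{d-1}$, following Adams–Simon. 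The solution satisfies $\varphi(s)\sim|s|^{-1/(p-2)}$, which yields a decay exactly of the order $|\log r|^{-\alpha}$. Because the nonlinearity $u^{\gamma-1}$ is singular on the free boundary, we only obtain a weak solution; this is why the statement is formulated for weak solutions, \cref{def:weak-sol}.

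\textbf{Main obstacle.} The hard step is the Lyapunov–Schmidt reduction and the energy comparison near the zero set of $b$ when $\ell\ge1$. There the linearization has the singular potential $\gamma(\gamma-1)b^{\gamma-2}$, so one must work in weighted Sobolev spaces adapted to $b$ and check that both $\mathcal{F}$ and the projections are analytic there. I would first try to prove Fredholmness of $\mathcal{L}_b$ on such spaces, using a Hardy inequality along the axis.
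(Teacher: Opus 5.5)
For $\ell=0$ your part (i) is essentially the paper's argument: you freeze the kernel component $\phi^K+Y(\phi^K)$, i.e.\ take $\eta_1\equiv0$, and interpolate the other modes radially. For $\ell=1,\dots,d-2$, however, there is a genuine gap. It sits exactly at what you call the main obstacle, and you use the translational hypothesis in the wrong place.

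By \cref{prop:kernel-cylindric}, $K_b=K_B\oplus K_b^z$, where $K_b^z$ is spanned by $z_j|y|^{\beta-1}\psi(y/|y|)$ with $\psi\in E_B^{\beta-1}$. These vanish only like $|y|^{\beta-1}$ at the spine, while $b\sim|y|^\beta$. Hence $b+t\phi$ is negative near the spine for every $t\neq0$, and $\phi\mapsto\int_{\partial B_1}(b+\phi)^\gamma$ is not analytic in these directions. This is a problem of admissibility and analyticity, not of Fredholmness, so weighted spaces and a Hardy inequality do not cure it. There is no analytic reduced functional on all of $K_b$ to which {\L}ojasiewicz or a sign condition could be applied; indeed, for $\ell\geq1$ sub-integrability is defined through $B$ (\cref{def:sub-integrability}).

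Likewise, gaining a fixed fraction on the non-kernel modes rests on the remainder bound of \cref{lemma:decomposition}, which uses $\inf_{\partial B_1}b>0$. Near the spine $b^{\gamma-2}$ blows up and the trace need not vanish like $b$, so the nonlinear error is not controlled by $\mathcal{Q}$.

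The missing idea runs as follows.
\begin{itemize}
\item Translationality is equivalent to $K_b^z=\mathrm{span}\{z_j\partial_{y_i}B\}$ (\cref{corollary:corollary1}), so these modes are removed by rotating the cone (\cref{lemma:kill-rotation}).
\item The Lyapunov--Schmidt reduction is then performed only on $X_B=\{\rho^\beta f(\omega)\}$. Its elements vanish like $b$, and $\mathcal{G}$ is analytic there (\cref{lemma:Lyapunov-Schmidt-partial}).
\item The projection $\Pi$ shows that, at points of $X_B$, the first variation also vanishes on $\mathcal{O}_b=\ker\Pi\cap(K_b^z)^\perp$ (\cref{lemma:variation-Pi}).
\item The nonlinear and admissibility errors near the spine are then eliminated by a Weiss-type contradiction argument. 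In it, only the projection onto $\{b+\phi+Y(\phi)\}$ has to be an admissible competitor (\cref{t:epi-cylindrical}).
\item Under sub-integrability this projection is simply frozen, which gives $\sigma=0$.
\end{itemize}

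There are also several smaller errors.
\begin{itemize}
\item The low modes cannot be ``recentred'' away. A translate of $b$ is not $\beta$-homogeneous about the origin, and $K_<$ is much larger than the span of the translations. For $b_{\mathrm{rad}}$, $L_b=-\Delta_\theta-(2-\gamma)\lambda(\beta)$, so $K_<$ contains the constants and every spherical harmonic of degree $k$ with $\lambda(k)<(2-\gamma)\lambda(\beta)$.
\item Since $\mathcal{Q}(\phi_<)<0$, all low modes are treated at once by the slower extension $(r-\rho)_+^{\beta-\tau}(1-\rho)^{\tau-\beta}\phi_<$, which lowers the energy. For $\ell=0$ no translational hypothesis is assumed or needed.
\item You should also check, as in \cref{prop:application-epi}, that the closeness hypotheses persist at all small scales.
\item In part (ii) with $\ell\geq1$, the Adams--Simon construction on $(0,\infty)\times\mathbb{S}^{d-1}$ needs an integrand analytic near $0$, which fails at the spine. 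Instead, as in \cref{remark:intermediate-cones}, apply \cref{lemma:adamsimon} to $B$ on $\mathbb{S}^{d-\ell-1}$ and extend the resulting solution $U$ trivially in $z$, so that $\|u_r-b\|_{L^\infty(B_1)}=\|U_r-B\|_{L^\infty(B_1)}$.
\item {\L}ojasiewicz gives no sign information along rays. The input of \cref{lemma:adamsimon} is a point $\mu^0\in\mathbb{S}^{N-1}$ at which the leading homogeneous part of the reduced functional satisfies $G_p(\mu^0)>0$.
\item The solution is only claimed to be weak because for $\gamma<1$ critical points need not be minimizers; for $\gamma\geq1$ they are, by convexity. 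It is not because of a singular nonlinearity: the constructed $u$ is positive in a punctured neighbourhood of the origin, resp.\ off the spine.
\end{itemize}
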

We point out that, since the Alt-Phillips functional is convex for $\gamma \in (1,2)$, the sharpness of point $(ii)$ of \cref{t:rate-convergence} can be extended to minimizers. More precisely, the optimality of the rate of convergence in $(ii)$ is given by the construction of a weak solution $u$ that exhibits sharp logarithmic convergence to the blow-up limit, in the spirit of Adams-Simon \cite{adamsimon} (see \cref{lemma:adamsimon} and \cref{remark:intermediate-cones}).

On the other hand, in point $(i)$ of \cref{t:rate-convergence}, for intermediate cylindrical extensions, the fundamental ingredient is a \emph{partial} Lyapunov-Schmidt reduction defined in \cref{lemma:Lyapunov-Schmidt-partial}, namely a Lyapunov-Schmidt reduction performed only on the base cone $B$. For more details, we refer to \cref{sub:proofs}.
        
\subsection{The parabola cones}\label{sub:parabola}
        
        Natural examples of singular minimizing cones satisfying \cref{condition:assumption-uniqueness}, and therefore satisfying the uniqueness result in \cref{t:uniqueness}, are the radial cone 
$$
b_{\text{rad}}(x):=c_{\text{rad}}|x|^{\beta}\in\mathcal{B}_0,\quad\text{where}\quad c_{\text{rad}}^{\gamma-2}=\frac2\gamma\lambda(\beta),
$$ 
and its cylindrical extensions. 
More precisely, we consider the family of \emph{parabola cones}, for $\ell=0,\dots,d-1$ and $(y,z)\in\R^{d-\ell}\times \R^\ell$, defined as
\be\label{eq:parabolas}b_{\ell}(y,z):=c_{\ell}|y|^\beta\in\mathcal{B}_\ell, \quad\text{where}\quad c_\ell^{\gamma-2}=\frac2\gamma\lambda_{d-\ell}(\beta),\ee 
	where $\lambda_n(\beta):=\beta(\beta+n-2)$ and $\lambda(\beta):=\lambda_d(\beta)$.

    The validity of the translation condition for the family $b_{\ell}$ is proved in \cref{prop:radial-cone-is-int-traslations} and \cref{corollary:kernel-cylindrical}. Set 
        \be\label{eq:def-gammakd}\gamma_{k,d}:=2-\frac{2}{\beta_{k,d}}\in(1,2),\quad\text{where}\quad \beta_{k,d}:=\frac{k(k+d-2)}2-d+2,\quad k\in\N_{\ge3}.\ee

The following is the main result on uniqueness and rate of convergence for parabola cones. We recall that a cone $b$ is \emph{integrable through rotations} if every $\beta$-homogeneous Jacobi field is generated by rotations of the cone; see \cref{def:integrability-through-rotations} for the precise definition. 
        \begin{theorem}\label{thm:parabola-cones}
        Let $u\in H^1(B_1)$ be a minimizer of the Alt-Phillips problem. Then, at every free boundary point of $u$ admitting a parabola cone \eqref{eq:parabolas} as a blow-up, the blow-up is unique. 
        Moreover:
        \begin{itemize}
                \item[(i)] $b_{\text{one}}$ is not integrable, nevertheless \eqref{eq:log-conv} can be improved to $r^\alpha$;
                \item[(ii)] $b_{\text{rad}}$ is integrable through rotations for $\gamma\in(0,1)$, and \eqref{eq:log-conv} can be improved to $r^\alpha$;
                \item[(iii)] $b_\ell$ for $\ell=0,\ldots,d-2$ and  $\gamma\in(1,2)$ is translational, and:
                \begin{itemize}[leftmargin=*]
                    \item if $\gamma\not=\gamma_{k,d-\ell}$ for every $k\in\N_{\ge3}$, then $b_\ell$ is integrable through rotations and \eqref{eq:log-conv} can be improved to $r^\alpha$;
                    \item if $\gamma=\gamma_{k,d-\ell}$ for some $k\in\N_{\ge3}$, then $b_\ell$ is not sub-integrable, and the logarithmic convergence in \eqref{eq:log-conv} is sharp.
                \end{itemize}  
            \end{itemize}
        
        \end{theorem}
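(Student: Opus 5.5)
The plan is to combine \cref{t:uniqueness} and \cref{t:rate-convergence} with an explicit spectral analysis of the linearized operator around each parabola cone. Uniqueness follows directly from \cref{t:uniqueness}, once each $b_\ell$ is checked against \cref{condition:assumption-uniqueness}. For $\ell=0$ and $\gamma\in(0,2)$, $b_{\text{rad}}>0$ away from the origin, so case (i) applies. For $\ell=d-1$, $b_{d-1}=b_{\text{one}}$, which is case (ii). For $1\le\ell\le d-2$ and $\gamma\in(1,2)$, I use translationality of the base cone $c_\ell|y|^\beta$ in $\R^{d-\ell}$, which is \cref{prop:radial-cone-is-int-traslations} and \cref{corollary:kernel-cylindrical}. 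When $\gamma\in(0,1)$ and $\ell\ge1$, the cone $b_\ell$ vanishes on an $\ell$-dimensional set. Such a cone cannot be a minimizer, because the singular set would violate the dimension bound or the density/nondegeneracy. So only $b_{\text{rad}}$ is relevant there.

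For the rates, I linearize on the sphere where $b_\ell>0$. The linearized operator is $L\phi=\Delta\phi-\frac{\gamma(\gamma-1)}{2}b^{\gamma-2}\phi$, and since $b^{\gamma-2}=c^{\gamma-2}|y|^{-2}$, it is an inverse-square potential operator. The $\beta$-homogeneous Jacobi fields separate in spherical harmonics. For $\ell=0$, I need $\phi=|x|^\beta Y_k$ with $\lambda_d(\beta)-k(k+d-2)=(\gamma-1)\lambda_d(\beta)$, that is $k(k+d-2)=(2-\gamma)\beta(\beta+d-2)=2(\beta+d-2)$. For $k=1$ this identity never holds generically. The case $k=2$ is the rotation-type mode (trace-free quadratic harmonics times $|x|^{\beta-2}$). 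The case $k\ge3$ gives exactly $\beta=\beta_{k,d}$, i.e.\ $\gamma=\gamma_{k,d}$. In the cylindrical case the same computation runs in $y\in\R^{d-\ell}$, with extra $z$-dependence producing modes of the form $|y|^{\beta-j}\times$(polynomials in $z$). These correspond to rotations mixing $y$ and $z$, and should be integrable by \cref{corollary:kernel-cylindrical}. For $\gamma\neq\gamma_{k,d-\ell}$ the kernel therefore consists of rotation fields, so the cone is integrable through rotations, which implies sub-integrable. The $r^\alpha$ rate then follows from \cref{t:rate-convergence}(i). For $b_{\text{rad}}$ with $\gamma\in(0,1)$, the equation for $k\ge 3$ has no solution because $\beta<2$ forces small $k$, and the $k=2$ mode is trivial on the radial cone. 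I still need to verify this directly.

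At $\gamma=\gamma_{k,d-\ell}$ an extra kernel $\{|y|^\beta Y_k\}$ appears. I would compute the reduced Lyapunov--Schmidt functional on it by expanding the Alt--Phillips energy of $b+\phi$ to third and fourth order in $\phi=\sum a_iY_k$. The cubic term $\int b^{\gamma-3}\phi^3$ is the first obstruction. When it is nonzero (for $k$ even), the reduced functional changes sign, so the cone is not sub-integrable. When it vanishes (for $k$ odd, by symmetry), I need the quartic term, which also requires the second-order correction from the Lyapunov--Schmidt reduction. I must show this quartic term takes a positive value in some direction. That is a bifurcation computation, and it is the step I expect to be the main obstacle. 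I would first try an axially symmetric $Y_k$ (a Gegenbauer polynomial), which reduces everything to ODE integrals. Once non-sub-integrability is established, \cref{t:rate-convergence}(ii) gives sharpness. Convexity for $\gamma>1$ upgrades the weak solution to a minimizer.

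Finally, for $b_{\text{one}}$ the Jacobi fields on $\{x_d\neq0\}$ decouple on the two half-spaces. Non-integrability comes from independent perturbations on each side, such as $x_d^{\beta-1}(x\cdot e)$ added on only one side, which are not generated by any family of cones. For the polynomial rate I would argue directly rather than through sub-integrability. Near $b_{\text{one}}$, $u$ is positive except on a thin set. Applied separately on each side, the regular-type theory shows each half behaves like a flat-type solution with a polynomial rate. Combining them then gives $r^\alpha$ convergence, which also yields the $C^{1,\alpha}$ covering of $\Sigma_{d-1}$.
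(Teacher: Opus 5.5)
Your treatment of uniqueness and of the non-resonant cases follows the paper. Two parts of the proposal, however, do not go through.

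\textbf{Item (i).} The polynomial rate at $b_{\text{one}}$ cannot be obtained by running the regular (flat) theory separately on the two sides, because $u$ is not a superposition of two one-phase solutions. Where the two sheets overlap, $u>0$ and the nonlinearity does not split: $(u_1+u_2)^{\gamma-1}\neq u_1^{\gamma-1}+u_2^{\gamma-1}$. This is exactly the nonzero first variation $f_j=\gamma\big(b_j^{\gamma-1}-b_{j,1}^{\gamma-1}-b_{j,2}^{\gamma-1}\big)$ in the paper. Moreover, neither sheet has an a priori flat free boundary on which improvement of flatness could be iterated down to the singular point.

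What is needed is the classical ($\sigma=0$) epiperimetric inequality of \cref{prop:epi-one-dim-cone}, which is also recorded in \cref{thm:epiperimetric-combined} and which you could simply have cited. It is proved by contradiction after projecting onto the non-solution family $b_{\xi,\omega}$, whose antipodal parameter $\xi$ generates the non-rotational modes $|x_d|^{\beta-1}x_i$. The key inputs are two facts. First, $f_j$ is supported in a wedge of width $O(|\xi|)$ around $\{x_d=0\}$. Second, there is an energy drop $W(b_\xi)-W(b_{\text{one}})=-\kappa_\beta|\xi|^{2\beta-1}+o(|\xi|^{2\beta-1})$, by \cref{lemma:negative-b}. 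Together these make the first-variation error negligible at the quadratic scale.

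The non-integrability of $b_{\text{one}}$ is likewise only asserted in your proposal. One-sided tilts are indeed not solutions, but excluding every family of homogeneous solutions tangent to $|x_d|^{\beta-1}x_i$ requires a rigidity argument. The paper gets it from a strict version of that epiperimetric inequality combined with minimality of solutions for $\gamma>1$. This forces any such family into $\{b_{0,\omega}\}$ (\cref{remark:non-int-one-dim-cone}).

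\textbf{The resonant case $\gamma=\gamma_{k,d-\ell}$.} You leave the decisive quartic computation open, and the direction you propose would not close it.
\begin{itemize}
\item When $d-\ell=2$, the cubic term vanishes for every $k$, since $\int_0^{2\pi}\cos^3(k\theta)\,d\theta=0$, not only for odd $k$. So $G_4$ is needed there as well; it turns out to be positive on the whole sphere.
\item For $d-\ell\ge3$ and $k$ odd, the zonal Gegenbauer harmonic gives $G_4>0$ only for $k<k_{d-\ell}$, hence for no odd $k\ge3$ once $d-\ell\ge13$. The paper's computer-algebra check shows the opposite sign beyond that threshold, in the tested range.
\item The direction that works for all $d-\ell\ge3$ and odd $k$ is the sectorial harmonic $\mathrm{Re}((x_1+ix_2)^k)$. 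One decomposes $\frac12\rho^{2k}$ into its harmonic components, solves explicitly for the Lyapunov--Schmidt correction, and proves positivity via the pairing inequality \eqref{eq:first-step}.
\end{itemize}
This is \cref{prop:numerical}, summarized in \cref{prop:optimality-log}(ii), which the paper simply cites at this step.

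\textbf{Two smaller points.} First, your exclusion of $b_\ell$, $1\le\ell\le d-2$, for $\gamma\in(0,1)$ is false. For $d-\ell\ge3$ the base radial cone is minimizing in suitable ranges (\cref{thm:minimality-of-radial-cone}). Cylindrical extensions of minimizers are minimizers, by slicing. A spine of dimension $\ell\le d-3$ does not violate the dimension bound. The paper's argument only reaches parabola cones satisfying \cref{condition:assumption-uniqueness}, that is, those in (i)--(iii). You should restrict to these rather than claim the others cannot occur.

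Second, the mode $k=2$ arises only at $\gamma=1$ and is not rotational, since the rotational derivatives of $b_{\text{rad}}$ vanish. For $\gamma\in(0,1)$ the point is just that $K=\{0\}$.
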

Notice that the convergence to $b_{\text{rad}}$ is of logarithmic-type also in dimension $d=2$, when $\gamma=\gamma_{k,2}$ (see also \cref{rem:caso2special}). To the best of our knowledge, this is the first regularity result for scalar free boundary problems for which a sharp logarithmic convergence holds even in dimension $d=2$. 

\medskip

We emphasize that the one-dimensional cone $b_{\text{one}}$ is exceptional and is treated separately from cylindrical extensions. 
Indeed, by \cref{remark:non-translation-one-dim-cone} and \cref{remark:non-int-one-dim-cone}, it is neither integrable nor translational. Nevertheless, and somewhat surprisingly, we can still prove a polynomial rate of convergence.
To the best of our knowledge, this provides the first example of a non-integrable cone for which such a polynomial convergence rate is available. 
As an application, this stronger asymptotic should play an important role in the study of generic regularity results, see \cite{frs20,generic-alt-phillips}.

Heuristically, the one-dimensional cone plays a role analogous to a singular object of multiplicity-two in geometric problems. Indeed, it consists of two symmetric flat solutions meeting along the same spine. 
In this sense, $b_{\mathrm{one}}$ is not a typical cylindrical extension, but it is better understood as a two-sheeted configuration, in analogy with multiplicity-two planes in minimal surface theory \cite{surveycamillo} and two-phase configurations in free boundary problems \cite{regularity-two-phase}.
This precise structure is one of the reasons why it is possible to prove a polynomial rate of convergence. We refer to \cref{sub:proofs} for more details.

\medskip

Regarding the parabola cones $b_{\ell}$, for $\ell=0,\ldots,d-2$, including the radial cone $b_0=b_{\text{rad}}$, the main obstruction to polynomial convergence is the non-trivial kernel of the spherical linearized operator, which coincides with the cylindrical extension of spherical harmonics $\mathcal{H}_k(\mathbb{S}^{d-\ell-1})$, when $\gamma = \gamma_{k,d-\ell}$. 
In particular, in the case $\ell=0$, the radial cone $b_{\mathrm{rad}}$ provides an example of a minimizing cone with an isolated singularity for which the integrability condition fails.
On the other hand, for $\gamma \not= \gamma_{k,d-\ell}$, the radial cone has trivial kernel and, for $\ell=1,\ldots,d-2$, the parabola cones are integrable through rotations, thus the convergence in \eqref{eq:log-conv} can be improved to a power-type rate. See \cref{prop:optimality-log} and \cref{corollary:kernel-cylindrical} for the precise statements.

\subsection{The minimality of the radial cone}
For $\gamma\in[1,2)$, the convexity of the Alt-Phillips functional allows one to construct minimizing cones by considering $\beta$-homogeneous critical points. The case $\gamma\in(0,1)$ is substantially more delicate and requires a finer analysis, at least in dimension $d\ge3$. Indeed, by \cite{Alt-Phillips}, there are no singular minimizing cones in dimension $d=2$, for $\gamma\in(0,1)$. The only known examples of singular minimizing cones for $\gamma\in(0,1)$ have been obtained only recently, by Savin-Yu \cite{savinyu-altphillips-stable} (see also \cite{savinyu-altphillips-concentration}).

\smallskip

In this paper, we also investigate in this direction, giving a complete characterization of the minimality of the radial cone $b_{\text{rad}}$, for every $\gamma\in(0,1)$. More precisely, following the notation in \cite{savinyu-altphillips-stable}, for $d=3,4,5,6$, we set 
\be\label{e:Delta-d-gamma}
\Delta(d,\gamma):=(d-2)^2-4(1-\gamma)\lambda(\beta),
\ee
    and let $\gamma_\Delta(d)\in(0,1)$ be the only root of $\Delta(d,\gamma)=0$ in $(0,1)$. Naturally, we extend $\gamma_\Delta(d)$ to be zero in the cases $d\geq 7$, i.e., 
    \be\label{eq:def-gamma-delta}\gamma_{\Delta}(d):=\begin{cases}
        \frac{2(2\sqrt{d-1}-d+2)}{2\sqrt{d-1}-d+4}&\quad\text{if }d=3,4,5,6,\\
        0&\quad\text{if $d\ge7$}.
    \end{cases}\ee
The numerical values are $$\gamma_\Delta(3)\approx 0.9552,\quad \gamma_\Delta(4)\approx 0.8453,\quad \gamma_\Delta(5)=\frac23, \quad \gamma_\Delta(6)\approx 0.3820.$$
By \cite{savinyu-altphillips-stable}, the radial cone $b_{\text{rad}}$ is stable if and only if $\gamma\ge \gamma_\Delta(d)$, and moreover it is minimizing for $\gamma>1-\frac{(d-2)^2}{64d^2}$.

The latter result is obtained through a delicate construction of upper and lower foliations around the radial cone. In particular, the argument in \cite[Proposition 4.1]{savinyu-altphillips-stable} reduces the minimality of $b_{\text{rad}}$ to its one-sided minimality from below (see \cref{def:minimo-alto-basso}). 

\smallskip

Using a different one-dimensional calibration argument (see \cref{lemma:minimality1} and \cref{lemma:minimality2}) and constructing explicit competitors (see \cref{lemma:minimality3}), we are able to characterize the minimality of the radial cone as follows. 
\begin{theorem}\label{thm:minimality-of-radial-cone}
		Let $d\ge3$ and $\gamma\in(0,2)$. Then the following hold:
		\begin{itemize}
			\item[(i)] if $d=3,4,5$, then $b_{\text{rad}}$ is minimizing 
            if and only if $\gamma\ge \gamma_{\Delta}(d)$;\vspace{0.1cm} 
			\item[(ii)] if $d\ge6$, then $b_{\text{rad}}$ is minimizing if and only if $\gamma\ge\frac{2}{d-2}$; in particular, for $\gamma \in [\gamma_\Delta(d),\frac{2}{d-2})$, the radial cone is stable but not minimizing.
		\end{itemize}
        \end{theorem}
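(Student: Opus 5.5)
The plan is to prove minimality via one-sided calibrations and non-minimality via explicit competitors. The reduction from \cite[Proposition 4.1]{savinyu-altphillips-stable} is taken as given: minimality of $b_{\text{rad}}$ is equivalent to its one-sided minimality from below, as in \cref{def:minimo-alto-basso}. For $\gamma\in[1,2)$ the functional is convex and $b_{\text{rad}}$ is a critical point, hence minimizing. Since $\gamma_\Delta(d)<1$ and $\frac{2}{d-2}\le 1$ for $d\ge 4$ (with $\frac{2}{d-2}=2$ only when $d=3$, which falls under case (i)), only $\gamma\in(0,1)$ needs work.

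\textbf{Sufficiency.} Work in radial coordinates $u=u(r)$ and compare $b_{\text{rad}}$ with competitors $v\le b_{\text{rad}}$ agreeing on $\partial B_1$. I would look for a one-parameter family of radial sub-solutions lying below $b_{\text{rad}}$ and foliating the region $\{0\le v\le b_{\text{rad}}\}$. The natural candidates are solutions of the radial ODE
$$u''+\frac{d-1}{r}u'=\frac{\gamma}{2}u^{\gamma-1},$$
shifted or rescaled so that they vanish on a ball $B_{r_t}$. Their gradients define a vector field $\xi(x,s)$ on the graph space, and the calibration is the divergence-free field $(2\xi,\;|\xi|^2-s^\gamma\ind_{\{s>0\}})$ in the Weierstrass/Hilbert sense. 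Minimality then follows from the inequality $|\nabla v|^2\ge 2\xi\cdot\nabla v-|\xi|^2$, provided the leaves are ordered and sweep out the whole region.

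The key computation is phase-plane analysis of the ODE in the self-similar variable $t=\log r$, with $u=r^\beta w(t)$. Linearizing around $w\equiv c_{\text{rad}}$ gives characteristic exponents with discriminant $\Delta(d,\gamma)$.
\begin{itemize}
\item When $\Delta\ge 0$, i.e.\ $\gamma\ge\gamma_\Delta$, the trajectories starting from free-boundary data approach $c_{\text{rad}}$ monotonically, without oscillation. This gives an ordered foliation, as long as the trajectory emanating from the free boundary actually reaches the fixed point from below.
\item For $d\ge6$ there is a second obstruction: the trajectory leaving the free boundary may overshoot or fail to connect. Comparing $\lambda(\beta)$ with the behaviour near $u=0$, I expect the connecting condition to reduce to $\gamma\ge\frac{2}{d-2}$, which is the relevant threshold since $\frac{2}{d-2}>\gamma_\Delta(d)$ there.
\end{itemize}
This is the content I would put into \cref{lemma:minimality1} and \cref{lemma:minimality2}.

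\textbf{Necessity.} For $\gamma<\gamma_\Delta(d)$ the cone is unstable by \cite{savinyu-altphillips-stable}, so it is not minimizing. For $d\ge6$ and $\gamma\in[\gamma_\Delta,\frac{2}{d-2})$ I would build an explicit radial competitor $v\le b_{\text{rad}}$ in \cref{lemma:minimality3}. The construction sets $v=0$ on a small ball and glues a truncated profile $(r^\beta-\rho^\beta)_+$-type function to $b_{\text{rad}}$. One then computes $\mathcal{J}_\gamma(v)-\mathcal{J}_\gamma(b_{\text{rad}})$ as an explicit integral in $r$ and shows it is negative exactly when $\gamma<\frac{2}{d-2}$. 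The expected mechanism is that removing the region near the origin costs $\int r^{\gamma\beta+d-1}$ in gradient energy but saves it in potential, with the balance depending on $d$ through the exponent.

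\textbf{Main obstacle.} I expect the hardest step to be the global phase-plane analysis for $d\ge6$: proving that the sub-solution trajectory connects to $c_{\text{rad}}$ monotonically precisely when $\gamma\ge\frac{2}{d-2}$, and that the resulting leaves fill the region. A secondary difficulty is matching this threshold exactly with the competitor computation, so that the characterization is sharp.
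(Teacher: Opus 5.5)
\textbf{Sufficiency.} Your sufficiency argument stops exactly where the theorem begins. The radial equation is invariant under $u\mapsto s^{\beta}u(\cdot/s)$ but not under shifts in $r$. So your leaves are the rescalings of a single one-phase solution, and they are ordered and lie below $b_{\text{rad}}$ if and only if its self-similar profile $w=r^{-\beta}u$ increases to $c_{\text{rad}}$ without overshooting.

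The condition $\Delta(d,\gamma)\ge0$ only makes $c_{\text{rad}}$ a node. That excludes spiralling, but not a single crossing by a trajectory launched from the singular point $w=0$. So the non-overshooting you need for $d=3,4,5$ is not established, and for $d\ge6$ the threshold $\frac{2}{d-2}$ is only conjectured.

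This is not a technicality. Suppose the trajectory overshoots, and rescale it so that it crosses $c_{\text{rad}}$ at $r=1$. This gives an admissible radial competitor below $b_{\text{rad}}$. In the variables of \cref{prop:minimality-radial}, with $\tau=-\log t$, the identity $\frac{d}{d\tau}\bigl(e^{-\tau}(A(\partial_\tau H)^2-\Psi(H))\bigr)=e^{-\tau}(A(\partial_\tau H)^2+\Psi(H))$ holds along solutions of $2A(\partial_\tau^2H-\partial_\tau H)=\Psi'(H)$, and trivially where $H\equiv1$. It yields $\mathcal{L}=-A(d,\gamma)(\partial_\tau H(0))^2$, which is negative because the crossing is transversal. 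Hence ``no overshoot'' is equivalent to what you want to prove, and your proposal gives no tool to decide it. It does hold at $\gamma=\frac{2}{d-2}$ for $d\ge5$, where the one-phase solution is explicitly $c_{\text{rad}}(r-\rho^{d-2}r^{3-d})_+^{\beta}$, but you give no argument for other $\gamma$.

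The paper bypasses the ODE. After rearrangement and the substitution $v=(1-h)b$, $t=r^{d+2\beta-2}$, it suffices to exhibit $C$ with $C(0)=0$ and $C+\Psi\ge (C')^2/(4A(d,\gamma))$ (\cref{lemma:minimality1}); its characteristics need not solve the Euler--Lagrange equation. The explicit choices $C_\alpha=(1-h)^\gamma(1-(1-h)^\alpha)^2-\Psi$ turn the thresholds into algebraic conditions:
\begin{itemize}
\item $\alpha=\frac{2-\gamma}{2}$ needs $A(d,\gamma)\ge\frac{(\gamma+2)^2}{4}$, which is equivalent to $\gamma\ge\frac{2}{d-2}$;
\item $\alpha=\sqrt{\gamma(2-\gamma)/2}$ needs $A(d,\gamma)\ge2\gamma(2-\gamma)$, equivalent to $\gamma\ge\gamma_\Delta(d)$, together with $\gamma\ge\frac23$.
\end{itemize}
This is why the answer depends on whether $\gamma_\Delta(d)\ge\frac23$, i.e.\ on $d\le5$ versus $d\ge6$.

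\textbf{Necessity.} Your non-minimality construction fails too. For the competitor you describe, $b_{\text{rad}}-v\approx c_{\text{rad}}\rho^{\beta}$ does not decay away from the hole. In the variable $t$, with $\eps=\rho^{d+2\beta-2}$, your $h$ behaves like $(\eps/t)^{k}$ with $k=\frac{\beta}{d+2\beta-2}<\frac12$. Consequently $\mathcal{J}_\gamma(v)-\mathcal{J}_\gamma(b_{\text{rad}})=\rho^{2\beta}\bigl(Q(\Phi)+o(1)\bigr)$. Here $\Phi$ is a fixed profile, constant near the origin and cut off near $\partial B_1$, and $Q(\Phi)=\int_{B_1}|\nabla\Phi|^2-(1-\gamma)\lambda(\beta)\int_{B_1}|x|^{-2}\Phi^2$ is the second variation at $b_{\text{rad}}$. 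The genuinely nonlinear effect of the hole is only $O(\rho^{d+2\beta-2})$. By Hardy's inequality $Q(\Phi)>0$ as soon as $\gamma\ge\gamma_\Delta(d)$. So such a competitor can only detect instability and never reaches $[\gamma_\Delta(d),\frac{2}{d-2})$.

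What is needed is a perturbation concentrated at the scale of the hole. The paper uses $h_\eps=1-(1-z_\eps)^{\beta}$ from \cref{lemma:minimality3}, with $z_\eps\sim\eps^{p}t^{-p}$ and $p>\frac12$. This makes the far-field contribution $\eps^{2p}\int_\eps^1t^{-2p}\,dt$ of the same order $\eps$ as the contribution near the hole, so that $\mathcal{L}(h_\eps)$ is $\eps$ times a scale-invariant integral. Optimizing in $p$ gives the sign of $1-\frac{(2+\gamma)^2}{4A(d,\gamma)}$, which is negative exactly when $\gamma<\frac{2}{d-2}$. The requirement $p>\frac12$ is where the extra restriction $\gamma<\frac23$ enters.
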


The first part of \cref{thm:minimality-of-radial-cone} implies that 
$$
d^*(\gamma) = 3 \qquad \text{for $\gamma \in[\gamma_\Delta (3),1)$},
$$
where $d^*(\gamma)$ is the first dimension in which a minimizing cone for the Alt-Phillips problem exhibits singularities.
By Weiss' formula \cite{weiss-alt-phillips-monotonicity-formula} and a Federer's reduction principle, the dimensional threshold $d^*(\gamma)$  gives a sharp estimate on the Hausdorff dimension of the singular set. 
By the very recent contribution of Fernández-Real \cite{Fernandez-classification}, it is known that $d^*(\gamma)\geq 4$ for $\gamma \in (0,2/3]$ and that, for $d\leq 6$ and $\gamma \in (0,\gamma_{\Delta}(d))$, the set of positive cones $\mathcal{B}_0$ is empty. 

\smallskip
      
We also stress that the second part of \cref{thm:minimality-of-radial-cone} reveals a new phenomenon for singular cones of the Alt-Phillips problem, complementing the results of Savin-Yu \cite{savinyu-altphillips-concentration} in a rather symmetric way.

Indeed, in their work, they show that, for $\gamma$ sufficiently close to $1$, certain axially symmetric cones with contact set of positive density are minimizing in dimension $d\ge4$, in analogy with the cones in the Alt-Caffarelli problem.
Our result goes in the opposite direction: a natural singular cone of the obstacle-problem endpoint $\gamma=1$, namely $b_{\text{rad}}$, remains minimizing for $\gamma$ close to $0$, provided the dimension is sufficiently large. Thus singular minimizing cones associated with one endpoint of $\gamma \in (0,1)$ persist deep into the regime governed by the opposite endpoint, once the dimension is large enough.

\subsection{Bifurcations from the radial cone}\label{sub:bifurcation-intro}
In \cref{thm:parabola-cones}, the failure of integrability of the radial cone at the resonant parameters in \eqref{eq:def-gammakd} is not only a spectral phenomenon, but is also reflected in the local structure of the space of homogeneous solutions near the radial branch. 

Indeed, at $\gamma=\gamma_{k,d}$, a finite-dimensional kernel appears, consisting of spherical harmonics $\mathcal{H}_k(\partial B_1)$ of degree $k$ (see \cref{prop:optimality-log}). This degeneracy is responsible for the failure of integrability in \cref{thm:parabola-cones}, and suggests the possible emergence of non-radial branches of homogeneous solutions. We make this picture precise by performing a parameter-dependent Lyapunov-Schmidt reduction (see \cref{lemma:Lyapunov-Schmidt-bifurcation}) and by classifying the corresponding local bifurcations in suitable symmetry classes. 

\smallskip

Let $\gamma \in(1,2)$. For the sake of readability we denote by $b_{\text{rad},\gamma}$ the radial cone corresponding to the exponent $\gamma$. The following is the main result concerning existence of new branches of singular cones in $\mathcal{B}_0$. 

        \begin{theorem}\label{thm:bifurcation-radial}
			Let $d\ge 2$, we define $$
            k_d:=+\infty \text{ if }d\le7,\quad k_8:=11,\quad k_9:=7,\quad k_{10}=k_{11}=k_{12}:=5 \quad\text{and}\quad k_d:=3 \text{ if }d\ge 13.
            $$
           For $k\in\N_{\geq 3}$, local bifurcations from the radial branch $b_{\text{rad},\gamma}$ at  $\gamma_{k,d}$ are described as follows.\begin{itemize}
				\item[(i)] If $d=2$, a pitchfork bifurcation occurs, and only for $\gamma>\gamma_{k,d}$.\vspace{0.1cm} 
				
				\item[(ii)] If $d\ge3$ and $k$ is odd, then:
                \begin{itemize}[leftmargin=*]
                    \item in the sectorial symmetry class, a pitchfork bifurcation occurs, and only for $\gamma>\gamma_{k,d}$;
                    \item  in the zonal symmetry class, a pitchfork bifurcation occurs if $k<k_d$, and only for $\gamma>\gamma_{k,d}$.
                \end{itemize}\vspace{0.1cm}
				
				\item[(iii)] If $d\ge3$ and $k$ is even, 
                 then a transcritical bifurcation occurs in the zonal symmetry class, whereas no nontrivial branch has a sectorial spherical harmonic as tangent direction.
			\end{itemize}
		\end{theorem}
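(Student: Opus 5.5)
The plan is to reduce the problem to a one-dimensional bifurcation equation on the sphere and read off its Taylor coefficients. We write a $\beta$-homogeneous positive solution as $b=|x|^\beta\varphi(\theta)$. Then $\varphi>0$ on $\partial B_1$ solves
$$
-\Delta_{\mathbb S}\varphi-\lambda(\beta)\varphi+\tfrac{\gamma}{2}\varphi^{\gamma-1}=0 .
$$
The radial branch is $\varphi\equiv c_\gamma$ with $c_\gamma^{\gamma-2}=\frac{2}{\gamma}\lambda(\beta)$. The linearized operator at $c_\gamma$ is
$$
L_\gamma=-\Delta_{\mathbb S}+(2-\gamma)\lambda(\beta)=-\Delta_{\mathbb S}+2(\beta+d-2).
$$
Its kernel is $\mathcal H_k(\partial B_1)$ exactly when $k(k+d-2)=2(\beta+d-2)$, i.e.\ $\gamma=\gamma_{k,d}$ as in \cref{prop:optimality-log}. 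The eigenvalue $2(\beta+d-2)-k(k+d-2)$ crosses zero with nonzero speed in $\gamma$, since $\beta$ is strictly increasing in $\gamma$. This gives transversality.

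To get a one-dimensional kernel, we restrict to a symmetry class.
\begin{itemize}
\item In the zonal class, functions invariant under $O(d-1)$ fixing $e_d$, the kernel is spanned by the Gegenbauer polynomial $P_k(\theta_d)$.
\item In the sectorial class, functions invariant under $O(d-2)$ acting on $(x_3,\dots,x_d)$, under rotation by $2\pi/k$ in the $(x_1,x_2)$-plane and under the reflection $x_2\mapsto-x_2$, the kernel is spanned by $\mathrm{Re}(x_1+ix_2)^k$.
\item For $d=2$, the reflection-symmetric class is spanned by $\cos k\theta$.
\end{itemize}
In each class we write $\varphi=c_\gamma(1+sv+w)$ with $w\perp v$, and apply the parameter-dependent Lyapunov--Schmidt reduction of \cref{lemma:Lyapunov-Schmidt-bifurcation}, equivalently Crandall--Rabinowitz. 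This yields a scalar equation
$$
g(s,\gamma)=a(\gamma-\gamma_{k,d})s+q\,s^2+\kappa\,s^3+\dots=0,\qquad a\neq0 .
$$
The coefficients are
$$
q\propto (\gamma-1)(\gamma-2)\int v^3,
$$
and $\kappa$, which collects the cubic term $\propto\int v^4$ together with the correction coming from $w_2=-L_\gamma^{-1}\Pi^\perp(v^2)$.

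The dichotomy pitchfork/transcritical then comes from $q$. For $k$ odd, $v$ is odd under the antipodal map, so $\int v^3=0$ and the bifurcation is a pitchfork; the branch exists only on the side where $-\kappa/a$ has the right sign. For $k$ even in the zonal class, $\int P_k^3\neq0$, because the Gegenbauer linearization coefficient of $P_k$ in $P_k^2$ is positive. Hence $q\neq0$ and the bifurcation is transcritical. For a sectorial harmonic, $v^2$ only contains azimuthal Fourier modes $0$ and $2k$. Thus $\Pi_{\mathcal H_k}(v^2)=0$, which rules out a transcritical branch. To exclude any branch tangent to $v$ when $k$ is even, we would check that the full reduced map on $\mathcal H_k$ has a nonzero quadratic part in directions transverse to $v$. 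That part is produced by the mixed terms $\Pi_{\mathcal H_k}(v h)$, and we verify it is incompatible with a curve tangent to $v$.

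The cubic coefficient is computed by expanding $v^2$ in spherical harmonics. Let $v^2=\sum_j h_{2j}$ with $h_{2j}\in\mathcal H_{2j}$. Then $L_\gamma^{-1}$ acts on $h_{2j}$ by the scalar
$$
\big(2(\beta+d-2)-2j(2j+d-2)\big)^{-1},\qquad\text{evaluated at }\beta=\beta_{k,d}.
$$
Consequently $\kappa$ is an explicit combination of $\int v^4$ and $\sum_j \mu_j^{-1}\|h_{2j}\|^2$, where $\mu_j$ is that denominator.
\begin{itemize}
\item For $d=2$ and in the sectorial class, only the modes $0$ and $2k$ appear, with explicit norms. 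This should give a definite sign of $\kappa/a$, and hence bifurcation only for $\gamma>\gamma_{k,d}$.
\item In the zonal odd case we need the full Gegenbauer linearization formula for $P_k^2$. Determining the sign of the resulting sum as a function of $(d,k)$ is the main obstacle.
\end{itemize}
In the zonal odd case I would first bound the tail of the sum asymptotically in $k$ for fixed $d$. Then I would compute the finitely many remaining cases directly, aiming to show that the sign changes exactly at $k=k_d$, so that $\kappa/a$ has the favorable sign precisely for $k<k_d$. Positivity of the bifurcating $\varphi$ is automatic for small $s$, since $\varphi$ is then close to $c_\gamma>0$.
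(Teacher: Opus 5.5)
Your framework is the paper's, and your outlines for the zonal-even case and for $d=2$ are fine. By framework I mean: the symmetry-restricted, parameter-dependent Lyapunov--Schmidt reduction with Crandall--Rabinowitz, transversality from the $\gamma$-derivative of the eigenvalue, pitchfork versus transcritical decided by $\int v^3$, and the cubic coefficient from $\int v^4$ plus the resolvent correction.

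\textbf{Main gap: the sectorial class for $k$ even, second half of (iii).} For $k$ even and $d\ge3$, the sectorial class $D_k\times O(d-2)$ does \emph{not} have a one-dimensional kernel. Besides $\Phi_1=\mathrm{Re}\,(x_1+ix_2)^k$, it contains the azimuthally symmetric harmonic $\Phi_2=|x|^kP^{(m-1,0)}_{k/2}(2(x_1^2+x_2^2)/|x|^2-1)$, with $m=\frac{d-2}{2}$. This matters: with a one-dimensional kernel, Crandall--Rabinowitz would produce a branch tangent to $\Phi_1$, contradicting (iii).

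Your claim $\Pi_{\mathcal H_k}(v^2)=0$ fails for the same reason. We have $v^2=\frac12\mathrm{Re}\,(x_1+ix_2)^{2k}+\frac12(x_1^2+x_2^2)^k$, and the azimuthal-mode-zero piece has a nonzero degree-$k$ component along $\Phi_2$. Only $\int v^3=0$ holds.

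This nonvanishing, $\int\Phi_1^2\Phi_2\neq0$ (computed explicitly in the paper), is exactly what excludes tangent branches:
\begin{itemize}
\item suppose $\mu_2=o(\mu_1)$ along the branch;
\item the $\Phi_1$-component of the reduced equation gives $\gamma-\gamma_{k,d}=o(\mu_1)$, since $\int\Phi_1^3=0$;
\item the $\Phi_2$-component then reduces to $\frac{c_3}{2}(\int\Phi_1^2\Phi_2)\,\mu_1^2=o(\mu_1^2)$, a contradiction.
\end{itemize}
The mixed terms $\Pi_{\mathcal H_k}(vh)$ you invoke are $O(|\mu_1||\mu_2|)=o(\mu_1^2)$ along a curve tangent to $v$, so they cannot produce this. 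What is needed is the pure term $\mu_1^2\,\Pi_{\mathcal H_k}(v^2)$, which is the very term you asserted vanishes.

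\textbf{Sectorial odd case, $d\ge3$.} The same conflation of azimuthal Fourier modes in $(x_1,x_2)$ with spherical-harmonic degrees (what $L^{-1}$ diagonalizes) undermines this case. We have $\frac12(x_1^2+x_2^2)^k=\sum_{\ell=0}^k|x|^{2(k-\ell)}H_{2\ell}$ with every $H_{2\ell}\neq0$. So all degrees $0,2,\dots,2k$ enter, with weights $1/(\lambda(k)-\lambda(2\ell))$ of both signs. A definite sign does not follow from \emph{explicit norms} alone. The paper needs the norms $\|H_{2\ell}\|^2$ together with the pairing inequality $J_\ell\ge0$ between degrees $2\ell$ and $2k-2\ell$.

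\textbf{Missing sign computations.} More generally, you carry out none of the sign computations behind \emph{only for $\gamma>\gamma_{k,d}$}.

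In the zonal odd case you only need positivity for $k<k_d$: for all odd $k$ when $d\le7$, and for a finite (possibly empty) list when $d\ge8$. The paper handles the first by pairing degrees $\ell\leftrightarrow 2k-\ell$ with Gamma-ratio lower bounds and a computer-verified inequality. It handles the second by direct evaluation. Showing that the sign changes exactly at $k_d$ is not needed, and the paper does not prove it.

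\textbf{Sign of the linearized operator.} It should be $L_\gamma=-\Delta_{\mathbb S}-(2-\gamma)\lambda(\beta)$. As you wrote it, its kernel is trivial, and the signs of the resolvent weights are what decide on which side of $\gamma_{k,d}$ the pitchfork branches lie.
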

It is worth mentioning that, in dimension $d=2$, \cref{thm:bifurcation-radial} can be sharpened by excluding the occurrence of secondary bifurcations. Indeed, by \cite[Theorem~5.1]{bonorinoetall}, the number of nontrivial singular $\beta$-homogeneous solutions in $\mathcal B$, counted up to rotations, is
$\lfloor \sqrt{2\beta}\rfloor$, where $\lfloor x\rfloor$ denotes the greatest integer less than $x$.
The resonant values $\gamma_{k,2}$ coincide precisely with the threshold parameters at which this integer increases. Hence every new branch of homogeneous solutions appears directly from the radial branch, and no further bifurcations occur away from these resonant values. Indeed, the absence of secondary bifurcation is consistent with the integrability through rotations of the other positive cones (see \cref{prop:dim2-integrability}).

\smallskip

For $d\geq 3$ and $k$ odd, the picture appears to be richer. Numerical computations suggest that, in the zonal symmetry class, a pitchfork bifurcation also occurs for $k\geq k_d$, but with the nontrivial branches lying on the opposite side $\gamma<\gamma_{k,d}$. We have verified this rigorously with computer algebra (we used Mathematica) for the pairs
$$
(d,k)\quad \text{where}\quad 8\leq d \leq 10^3,\,\, k_d\leq k\leq 10^2+1.
$$
At present, we do not have a proof of the corresponding statement for every $d\geq 8$ and $k\geq k_d$.

\smallskip

We stress that the new singular cones arising in \cref{thm:bifurcation-radial}, with their cylindrical extensions, satisfy \cref{condition:assumption-uniqueness}, i.e., they satisfy the translational hypothesis (see \cref{rem:translational-bif}). In particular, in these cases the blow-up is unique by \cref{t:uniqueness}.

\subsection{Strategy of the proof: the three epiperimetric inequalities}\label{sub:proofs}
The uniqueness of blow-ups in \cref{t:uniqueness} is a direct consequence of the following logarithmic epiperimetric inequality for the Weiss' energy $$W(u):=\int_{B_1}\Big(|\nabla u|^2+u^\gamma\ind_{\{u>0\}}\Big)\,dx-\beta \int_{\partial B_1}u^2\,d\HH^{d-1}.$$ 
   
    \begin{theorem}\label{thm:epiperimetric-combined}
				Let $b$ be a cone satisfying \cref{condition:assumption-uniqueness}. Then there are constants $\eps>0$, $\delta>0$ and $\sigma\in[0,1)$ depending only on $d$, $\gamma$ and $b$, such that the following holds.	
                
              For every non-negative trace $c\in H^1(\partial B_1)$ such that \bea\label{eq:closness-assumption}\|z-b\|_{H^1(B_1)}\le \delta,\quad \|c-b\|_{L^\infty(\partial B_1)}\le \delta\quad\text{and}\quad |W(z)-W(b)|\le \delta,\eea there is a non-negative function $h\in H^1(B_1)$, with $h=c$ on $\partial B_1$, such that \be\label{eq:general-epi}W(h)-W(b)\le (1-\eps|W(z)-W(b)|^\sigma) (W(z)-W(b)),\ee where $z$ is the $\beta$-homogeneous extension of $c$ in $B_1$.
				
                Moreover, if either $b=b_{\text{one}}$ or $b\in\mathcal{B}_\ell$, for $\ell=0,\ldots,d-2$, is sub-integrable, then we can take $\sigma=0$.
			\end{theorem}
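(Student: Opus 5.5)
We follow the by now standard scheme for logarithmic epiperimetric inequalities (in the spirit of Colombo--Spolaor--Velichkov and Engelstein--Spolaor--Velichkov). The first step writes the trace as $c=b+\phi$ on $\partial B_1$, with $\phi$ small in $L^\infty$ and $z=b+r^\beta\phi$. On the sphere we use the linearized operator $\mathcal{L}_b\phi=-\Delta_{\mathbb S}\phi-\lambda(\beta)\phi+\frac{\gamma(\gamma-1)}{2}b^{\gamma-2}\phi$ on $\{b>0\}$, coming from the second variation of the spherical energy. Homogeneity gives the identity
\[
W(r^\beta c)-W(b)=\tfrac{1}{d+2\beta-2}\,\mathcal{F}(c),\qquad \mathcal{F}(c)=\int_{\partial B_1}\big(|\nabla_\theta c|^2-\lambda(\beta)c^2+\tfrac{2}{\gamma}\cdots c^\gamma\big),
\]
so that $W(z)-W(b)$ is, up to a constant, the spherical energy $\mathcal{F}(c)-\mathcal{F}(b)$.

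Next we decompose $\phi=\phi_K+\phi_\perp$, where $K$ is the finite-dimensional kernel of $\mathcal{L}_b$ in the homogeneity-$\beta$ sense. We also isolate the modes with spherical eigenvalues corresponding to homogeneities $<\beta$, which are handled by the $\beta-1$ translational Jacobi fields and by negative directions.

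For the competitor in the directions of $\phi_\perp$ associated with positive eigenvalues (homogeneity $>\beta$), we take the harmonic-type extension $h=b+r^\beta\phi_K(\theta)+\sum_j r^{\alpha_j}\phi_j(\theta)$, with $\alpha_j>\beta$. This gives the usual gain
\[
W(h)-W(b)\le (1-\eps)\,(W(z)-W(b))+\text{(error from }\phi_K\text{ and higher order)}.
\]
The nonlinear term $u^\gamma$ is Taylor-expanded around $b>0$. In case (i) $b>0$ everywhere on the sphere, so this is smooth. In case (iii) $b$ vanishes on the spine, and we need the weighted estimates $b^{\gamma-2}\phi^2$, which are controlled by the $L^\infty$ closeness and by $\gamma>1$.

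The modes with homogeneity $\beta-1$ are translations by the translational hypothesis. We absorb them by recentering, i.e. replacing $b$ by $b(\cdot+x_0)$; this costs only a higher-order term.

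The kernel directions are handled by a Lyapunov--Schmidt reduction (\cite{adamsimon}). We write the spherical energy near $b$ as $\mathcal{F}(b+\psi+\Upsilon(\psi))+\text{(coercive part)}$, with $\psi\in K$ and $f(\psi):=\mathcal{F}(b+\psi+\Upsilon(\psi))-\mathcal{F}(b)$ real analytic on $K$. The Łojasiewicz inequality $|\nabla f(\psi)|\ge c|f(\psi)|^{1-\theta}$ then allows us to interpolate in the kernel direction. We use the competitor $r^\beta(b+\psi+\Upsilon(\psi))$, modified by moving $\psi$ along the gradient flow of $f$ with a radial cutoff. This produces the gain $\eps|W(z)-W(b)|^{\sigma}$ with $\sigma=1-2\theta$ or a similar exponent.

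If $b$ is sub-integrable, then $f\le0$ near $0$. In that case we do not move in the kernel at all: the kernel contribution is nonpositive, so either the coercive part dominates or the energy is already below $W(b)$, and in both cases $\sigma=0$. For cylindrical cones we perform the reduction only on the base $B$ (the partial reduction), handling the $z$-dependence through the Fourier decomposition in the sphere.

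For $b_{\text{one}}$, the set $\{b=0\}$ is the hyperplane $\{x_d=0\}$. Here we split $c$ into its restrictions to the two half-spheres $\{x_d>0\}$ and $\{x_d<0\}$. On each side we apply the epiperimetric inequality with $\sigma=0$ for the flat cone $c_\gamma(x\cdot\nu)_+^\beta$, which holds because the flat solution is integrable via rotations. The two extensions are glued across the region where $c$ is small, exploiting the two-sheeted structure.

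The main obstacle is this gluing, together with controlling the singular weight $b^{\gamma-2}$ near $\{b=0\}$ in cases (ii)–(iii). The $L^\infty$ smallness of $c-b$ is needed there to keep the positivity set of $h$ close to that of $b$.
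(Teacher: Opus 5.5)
Your treatment of positive cones $b\in\mathcal{B}_0$ is essentially the paper's first epiperimetric inequality: Lyapunov--Schmidt on $K$, gradient flow of the analytic reduced functional with a radial cutoff, \L{}ojasiewicz, and the case split giving $\sigma=0$ under sub-integrability. One correction there: the negative modes, including the $(\beta-1)$-homogeneous Jacobi fields, have $\mathcal{Q}<0$ and are improved simply by extending them with homogeneity $\beta-\tau$. The translational hypothesis plays no role for $\mathcal{B}_0$.

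The first genuine gap is case (iii). The obstruction there is not a family of $(\beta-1)$-modes in the trace. It is the part $K_b^z=\mathrm{span}\{z_j|y|^{\beta-1}\psi(y/|y|):\psi\in E_B^{\beta-1}\}$ of the $\beta$-homogeneous kernel of $L_b$.
\begin{itemize}
\item A reduction on $K_B$ alone leaves these directions untouched. Since $\mathcal{Q}$ vanishes on them, no radial profile in a Fourier decomposition gains energy there, and a linearized limit could be a nonzero element of $K_b^z$.
\item Translationality is exactly what gives $K_b^z=\mathrm{span}\{z_j\partial_{y_i}B\}$. These modes are then removed by a \emph{rotation} of $b$ in the $(y_i,z_j)$-planes (\cref{lemma:kill-rotation}). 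Recentering $b(\cdot+x_0)$ is not available, since it destroys $\beta$-homogeneity about the origin.
\item Your constructive step also fails near the spine, where $b\sim|y|^\beta$ vanishes while $c-b$ is only $L^\infty$-small. The quadratic approximation $\frac{\gamma(\gamma-1)}{2}b^{\gamma-2}\phi^2$ of $(b+\phi)^\gamma-b^\gamma-\gamma b^{\gamma-1}\phi$ is invalid where $|\phi|\gtrsim b$.
\item The weighted term $\int b^{\gamma-2}\phi^2$ is not controlled by $\|\phi\|_{L^\infty}$. Since $b^{\gamma-2}\sim|y|^{-2}$, it is infinite for $d-\ell=2$ unless $\phi$ decays at the spine.
\item $\mathcal{G}$ is not analytic near $0$; it is not even defined unless $b+\phi\ge0$. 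Nothing in your construction keeps the competitor non-negative.
\end{itemize}
The paper avoids all of this with a hybrid argument. The reduction is done only in $X_B$, the functions $\rho^\beta f(\omega)$, where $\mathcal{G}$ is analytic and small perturbations of $b$ stay non-negative. The remaining directions are split through the projection $\Pi$ as $\mathcal{M}_b=N_B\oplus\mathcal{O}_b$, with $\delta\mathcal{G}(\phi_0)[\psi]=0$ for all $\psi\in\mathcal{M}_b$. A Weiss-type contradiction argument then linearizes around $q_j=r^\beta(b+(\phi_0)_j)$, using the gradient-flow competitor and two cutoffs. The nonlinear and admissibility errors disappear in the limit, and the \L{}ojasiewicz gain enters only through the error $E_j$.

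The second gap is $b_{\text{one}}$, where the gluing you postpone is the entire difficulty. The trace $c$ need not vanish on $\{x_d=0\}$, so the two pieces are not traces near the flat cones without a cutoff. Moreover $W$ is not additive: $W(h_1+h_2)-W(h_1)-W(h_2)$ contains cross Dirichlet and boundary terms and the overlap term $\int\big((h_1+h_2)^\gamma-h_1^\gamma-h_2^\gamma\big)$. None of these is controlled by the flat inequalities on each side. The non-integrable kernel directions $|x_d|^{\beta-1}x_i$ correspond exactly to tilting the two sheets in opposite directions. This creates an overlap wedge of width $\sim|\xi|$ and an energy change $W(b_\xi)-W(b_{\text{one}})=-\kappa_\beta|\xi|^{2\beta-1}+o(|\xi|^{2\beta-1})$ (\cref{lemma:negative-b}).

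The paper proves the $\sigma=0$ inequality by contradiction, projecting $z_j$ onto its best approximation in the family $b_{\xi,\omega}$. The \emph{sign} of this energy gap yields $|\xi_j|^{2\beta-1}/\eta_j^2\to0$. The first-variation error $f_j=\gamma\big(b_j^{\gamma-1}-b_{j,1}^{\gamma-1}-b_{j,2}^{\gamma-1}\big)$ concentrates near $\{x_d=0\}$; combined with that bound, this gives $\eta_j^{-2}\int f_j(z_j-b_j)\to0$ and hence strong convergence. Without a mechanism of this kind, your proposal gives no inequality at all for $b_{\text{one}}$, let alone one with $\sigma=0$.
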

        
Epiperimetric inequalities have been widely used as a powerful tool to establish regularity results in both minimal surface theory and free boundary problems. They can be divided into two classes, according to the strategy used in their proof.

The first class consists of the \emph{epiperimetric inequalities by contradiction} \cite{taylor1,taylor2,Weiss-improvement,Weiss-alt-phillips-epiperimetric,gps16,fs16}, which are typically based on linearization techniques.

In the second class, we find the \emph{epiperimetric inequalities by construction}, which are obtained through the explicit construction of the competitor \cite{reifenberg,white,sv-epiperimetric2d,csv18,csv20,esv,cv24}. Typically, this is done either by decomposing the given trace into Fourier modes, or by following a gradient flow on the sphere. 

\medskip

\cref{thm:epiperimetric-combined} is a combination of three different epiperimetric inequalities, each adapted to the geometry of the limiting cones in \cref{condition:assumption-uniqueness}.

The case of \emph{positive cones} $b\in\mathcal{B}_0$ is technically involved and is based on a constructive approach for logarithmic epiperimetric inequalities. Notice that in this class, the set of singularities is isolated. We refer to \cite{Simon,esv,esv2,pariseepiperimetric} for other examples of uniqueness results in the context of isolated singularities.

The genuinely new difficulties arise for the one-dimensional cone and for intermediate cylindrical cones, where the cylindrical structure of the singular set makes the analysis more involved.  

For the \emph{one-dimensional cone} $b_{\text{one}}$, we use a very delicate contradiction argument, in the spirit of Weiss \cite{Weiss-improvement,Weiss-alt-phillips-epiperimetric}, which yields a classical epiperimetric inequality (i.e., \eqref{eq:general-epi} with $\sigma=0$) despite the failure of integrability. This result came as a complete surprise to us, since usually the failure of integrability gives a sharp logarithmic convergence.

For \emph{intermediate cylindrical extensions} $\mathcal{B}_\ell$, with $\ell=1,\dots,d-2$, 
we develop a new combined approach for epiperimetric inequalities based on both the constructive and the contradiction techniques.
More precisely, the contradiction argument removes nonlinear and admissibility errors, while the constructive part supplies the logarithmic improvement through a finite-dimensional reduction.

\medskip

The previous discussion summarizes the main ideas behind the three epiperimetric inequalities. In what follows, we refine the analysis by highlighting the main novelties and challenges.

\subsubsection{The first epiperimetric inequality}
For positive cones in $\mathcal{B}_0$, we use a constructive argument based on a Lyapunov-Schmidt reduction and a gradient flow on the finite-dimensional kernel. The main idea is to decompose the trace $c$ with respect to the eigenfunctions of the spherical linearized operator $L_b:=-\Delta_\theta -\lambda(\beta)+\frac\gamma2(\gamma-1)b^{\gamma-2},$ and to construct the competitor $h$ through a gradient flow on the sphere.

The main difficulty of this strategy is the presence of the kernel $K:=\ker(L_b)$, which may lead to the non-integrability of $b$. 
To overcome this difficulty, in the spirit of \cite{adamsimon}, we apply a Lyapunov-Schmidt reduction for critical points of $\mathcal{G}(\phi):=\mathcal{F}(b+\phi)-\mathcal{F}(b),$ where $\mathcal{F}$ is the spherical Weiss' energy \eqref{def:F} obtained by a slicing lemma. 
This finite-dimensional reduction allows us to decompose $c$ in terms of its component on $K$ and $K^\perp$ and to rewrite the functional $\mathcal{G}$ in terms of a finite-dimensional reduced functional $G$. The logarithmic improvement is then obtained using the Łojasiewicz inequality, in the spirit of Simon \cite{Simon}.

\subsubsection{The second epiperimetric inequality}
For the one-dimensional cone $b_{\text{one}}$, we argue by contradiction. First, in \cref{lemma:ker-1d}, we proceed by characterizing the kernel of the linearized operator $L_{b_{\text{one}}}$, which is given by 
$$
\ker(L_{b_{\text{one}}})=\text{span}\left\{|x_d|^{\beta-1}x_i,\text{sgn}(x_d)|x_d|^{\beta-1}x_i\right\}_{i=1}^{d-1}.
$$
We point out that the modes $\text{sgn}(x_d)|x_d|^{\beta-1}x_i$ are generated by rotations, and thus can be removed by suitably rotating $b_{\text{one}}$. By contrast, the modes $|x_d|^{\beta-1}x_i$ are not induced by rotations, and they are responsible for the non-integrability of the kernel (see \cref{remark:non-int-one-dim-cone}) and require a different argument. Indeed, the Lyapunov-Schmidt reduction does not seem to be the appropriate tool since perturbations of $b_{\text{one}}$ do not preserve the non-negativity condition.

To overcome this difficulty, we generate the modes $|x_d|^{\beta-1}x_i$ by rotating the two half-plane solutions of $b_{\text{one}}$ in an antipodal way. This is done by considering the family of functions 
$$
b_{\xi}:=c_{\text{one}}\left(x\cdot \frac{(\xi,1)}{\sqrt{1+|\xi|^2}}\right)_+^\beta+c_{\text{one}}\left(x\cdot \frac{(\xi,-1)}{\sqrt{1+|\xi|^2}}\right)_+^\beta,\quad\text{for }\xi\in \R^{d-1}.
$$ 
The contradiction argument is set up starting from a linearization around the family $b_\xi$. Nevertheless, the functions $b_\xi$ are not solutions, unless $\xi =0$: unlike the first epiperimetric inequality, this creates an error term in the first variation, which needs to be estimated directly.

Then, the proof is based on the following key ingredients.
First, by exploiting the two-sheeted nature of $b_{\text{one}}$, we show that the error term concentrates on $\{x_d=0\}$. This fact is used to prove weak convergence of the linearized sequence to $0$, since the equation can only be tested away from $\{x_d=0\}$ (see \cref{lemma:ker-1d}).

Secondly, the energy of $b_{\xi}$ decreases in a quantitative way with respect to $b_{\text{one}}$, namely $W(b_{\xi})-W(b_{\text{one}})\approx -\kappa_\beta |\xi|^{2\beta-1},$ where $\kappa_\beta>0$ (see \cref{lemma:negative-b}). 
In this way, we can control the behavior of the first variation near $\{x_d=0\}$, and deduce strong $H^1$ convergence.

\smallskip

We point out that the one-dimensional cone $b_{\text{one}}$ is not included in the definition of sub-integrability in \cref{def:sub-integrability}, since the Lyapunov-Schmidt reduction does not apply in this case.
Roughly speaking, the family $b_\xi$ plays the role of an explicit Lyapunov-Schmidt reduction, since the corresponding first variation vanishes in the limit. 
In this analogy, the reduced functional $G$ is replaced by $W(b_{\xi})-W(b_{\text{one}})$; the fact that this quantity is non-positive suggests that $b_{\text{one}}$ behaves as a sub-integrable cone, which is consistent with the polynomial convergence obtained in \cref{t:rate-convergence}.

\subsubsection{The third epiperimetric inequality}
For intermediate cylindrical cones $\mathcal{B}_\ell$, with $\ell=1,\dots,d-2$, 
we introduce a new hybrid argument, combining a constructive approach with a contradiction argument, two strategies that are usually kept separate.

Let us recall that the cone $b$ can be written as $b(y,z)=B(y)$, with $(y,z)\in\R^{d-\ell}\times \R^\ell$ and $B>0$ in $\R^{d-\ell}\setminus\{0\}$. Set $K_b:=\ker(L_b)$. One of the key ingredients in the proof is the following oblique decomposition, which encodes the cylindrical nature of $b$
$$
H^1(\partial B_1) = K_b^z \oplus K_B \oplus N_B \oplus \mathcal{O}_b,
$$
where $K_b= K_b^z\oplus K_B$ (see \cref{prop:kernel-cylindric}). 
The four subspaces above are defined as follows.
\begin{itemize}
    \item $K_b^z$ comes from $(\beta-1)$-homogeneous Jacobi fields associated with $B$, i.e., its elements can be written as $z_j\psi(y)$, for $j=1,\ldots,\ell$, where $\psi(y)$ is a $(\beta-1)$-homogeneous Jacobi field associated with $B$ (see \cref{prop:kernel-cylindric}). Notice that \cref{condition:assumption-uniqueness} ensures that $K_b^z$ is generated by mixed rotations.
    \item $K_B:=\ker(L_{B})$ is the kernel of the linearized operator of the base cone $B$, i.e., the elements in $K_B$ are the $\beta$-homogeneous Jacobi fields associated with $B$.
    \item $N_B$ is the orthogonal complement of $K_B$ in $X_B$, i.e., $X_B=K_B\oplus N_B$, where $X_B$ denotes the space of $\beta$-homogeneous functions depending only on the $y$-variable. This decomposition allows us to construct a \emph{partial} Lyapunov-Schmidt reduction 
    (see \cref{lemma:Lyapunov-Schmidt-partial}), namely involving only the $y$-variables.
    \item $\mathcal{O}_b$ is a subset of $\ker\Pi$, where $\Pi$ is a suitable projection (see \cref{def:map-Pi}) such that, whenever the first variation of $\mathcal{G}$ is computed at an element of $X_B$ in the direction $\psi$, it depends only on the projected component $\Pi\psi$ (see \cref{lemma:variation-Pi}). In particular, for every element of $X_B$, the first variation of $\mathcal G$ vanishes along all directions in $\mathcal{O}_b$. 
\end{itemize}
The \emph{partial} Lyapunov-Schmidt reduction is a crucial point of the argument. Indeed, it preserves the analyticity of the Lyapunov-Schmidt map $Y\colon K_B \to N_B$ and it keeps the relevant perturbations compatible with the vanishing profile of the cone, which is of order $|y|^\beta$. We stress that the analyticity is crucial to apply the Łojasiewicz inequality.

At this point, the assumption in \cref{condition:assumption-uniqueness} becomes decisive. Indeed, in \cref{corollary:corollary1} we show that the base cone $B$ is translational if and only if $K_b^z$ is generated by mixed rotations. Hence, the corresponding modes can be removed by a suitable rotation of the cone, and the remaining finite-dimensional reduction can be carried out only in the $y$-variables. If the $y$- and $z$-variables were treated simultaneously, neither of these properties would be available in a useful form. In fact, a competitor depending also on the $z$-variables would not be expected to lead to analyticity of the Lyapunov-Schmidt map and to satisfy the required non-negativity condition (see also (ii) in \cref{subsub:role-condition1}).

Finally, we argue by contradiction, linearizing around the projections of the traces on the space $\{b+\phi+Y(\phi):\phi\in K_B\}$. By the previous discussion, we have the key identity $$\delta \mathcal{G}(\phi+Y(\phi))[\psi]=0\quad\text{for every }\psi\in \mathcal{M}_b:=N_B\oplus \mathcal{O}_b,$$ where $\mathcal{M}_b$ is a complement of the kernel $K_b=K_B \oplus K_b^z$.  

By the finite-dimensional reduction, we can rewrite the restriction of $\mathcal{G}$ to $X_B$, which is analytic, in terms of a finite-dimensional reduced functional $G$, and then we can construct a competitor for the projection by the associated gradient flow. 
Finally, by applying a Łojasiewicz inequality, we deduce a logarithmic improvement, as in the proof of the first epiperimetric. 

Unlike in the proof of the second epiperimetric inequality, the main difficulty in the present contradiction argument is to prove that the linearized sequence converges to an element of the kernel $K_b$. We overcome this point by introducing, in the Weiss' contradiction argument, a different competitor built using the one coming from the gradient flow together with two cut-off functions (see Step 2 in the proof of \cref{t:epi-cylindrical}). 
\subsubsection{Extra comments}\label{subsub:role-condition1} In what follows, we explain the role of the translational assumption in \cref{condition:assumption-uniqueness}, and we highlight the advantages of the hybrid approach in the proof of the third epiperimetric inequality.

\medskip

\emph{(i) The role of translationality in \cref{condition:assumption-uniqueness}.}
The translational hypothesis in \cref{condition:assumption-uniqueness} is natural in the analysis of cylindrical cones. In particular, analogous assumptions appear as Condition $\ddagger$(b) in Simon's work \cite{SimonCylindric} (see also \cite{gabor,FTW-uniqueness}), and as condition (2) in the strong integrability assumption of \cite[Definition 2.5]{EngResZih}.

Let us point out, however, that our assumption is weaker than the one used by Simon. 
More precisely, in \cite{SimonCylindric}, it is assumed that every Jacobi field of degrees $1$ and $0$ of the base cone is generated, respectively, by rotations and translations. Together with its Condition $\ddagger$(c), these ensure that the $1$-homogeneous Jacobi fields of the cylindrical extension are generated by rotations. 

In contrast, we do not need to assume the rotational integrability of the $\beta$-homogeneous Jacobi fields, which corresponds to the $1$-homogeneous Jacobi field in the minimal surface setting. Indeed, in our framework, it is enough to rule out $(\beta-1)$-homogeneous Jacobi fields not generated by translations. 
The remaining, non-rotational part of the kernel is then treated through a Lyapunov-Schmidt reduction for the base cone, and the energy is improved by means of a Łojasiewicz inequality.

It would be interesting to understand whether a similar idea could be used to remove the assumption of rotational integrability in the minimal surface framework.

\smallskip

We are not aware of examples of singular minimizing cones for which such a translational condition fails, except the one-dimensional cone $b_{\text{one}}$, which, however, is treated separately. In all known examples of singular minimizing cones in $\mathcal{B}_\ell$, for $\ell=0,\ldots,d-2$, the translational condition can be verified directly (see \cref{prop:radial-cone-is-int-traslations}, \cref{corollary:kernel-cylindrical} and \cref{rem:translational-bif}). 
Moreover, the condition can be established in certain regimes of the parameters $d$ and $\gamma$ (see \cref{prop:trans0}).

Nevertheless, whether the translational hypothesis holds for every cone in $\mathcal{B}_\ell$ remains an open question, and by \cref{corollary:corollary1}, it suffices to prove it for cones in $\mathcal{B}_0$. If this were true, \cref{t:uniqueness} would yield uniqueness of blow-ups for all minimizing cones, for every $\gamma\in(1,2)$ and in every dimension $d\ge2$.

\medskip

\emph{(ii) The advantage of the hybrid approach.}
In our setting, the hybrid method developed for the third epiperimetric inequality seems to combine the advantages of both the constructive and contradiction approaches, as we now explain.

First, the constructive part of the argument retains some advantages of the constructive approach: it does not require integrability of the cone and yields logarithmic improvements. 

This feature allows us to overcome some of the limitations of purely contradiction-based epiperimetric inequalities. Indeed, to date, contradiction arguments at singular points have been available only under additional assumptions, such as integrability of the limiting cone. When applicable, they yield a classical epiperimetric inequality (i.e., \eqref{eq:general-epi} with $\sigma=0$), hence a polynomial rate of convergence.

Such a polynomial rate, however, is not expected to be true in general. In many important situations the sharp rate is indeed logarithmic. This phenomenon appears, for instance, in the minimal surface theory \cite{nagura,adamsimon}, obstacle-type problems \cite{csv18,fs19,csv20}, and the Alt-Caffarelli functional \cite{esv}. As we show in this paper, the same phenomenon occurs for the Alt-Phillips problem.

Second, the contradiction part of the argument removes nonlinear error terms that are not visible at the linearized level and is essential to ensure the admissibility of the competitor. 

The error terms arise from the fact that the trace $c$ does not necessarily have the same geometric structure as the blow-up (e.g., the same zero set or the same order of vanishing). Moreover, they also appear in the expansion of the semilinear term around $b$, as well as in higher-order terms coming from the trace, which in general does not satisfy the linearized equation. In our contradiction part, these errors vanish after passing to the limit, whereas, in a fully constructive approach, they would have to be estimated directly.

Regarding the question of admissibility (non-negativity), in our proof it is sufficient to establish only the admissibility of the projection of the competitor onto the space
$\{b+\phi+Y(\phi):\phi \in K_B\}$, whereas a constructive approach would require the admissibility of the whole object.
A priori, by writing the cylindrical expansion of $b$ in $(y,z)$,  
we would have to control both the perturbations in the $y$- and $z$-variables. Our contradiction part avoids this obstruction, since the projection onto the space $\{b+\phi+Y(\phi):\phi \in K_B\}\subset X_B$ depends only on the $y$-variables.

\subsection{Structure of the paper}
In \cref{section:preliminaries}, we collect some preliminaries on the Alt-Phillips problem. Moreover, we present the linearized operators and the notions of integrability, sub-integrability and translationality. 
\cref{section-positive} is devoted to the proof of the first epiperimetric inequality for positive cones $b \in \mathcal{B}_0$. 
Moreover, we show the optimality of the logarithmic rate of convergence in the case of not sub-integrable positive cones. 
In \cref{section:integrable-bifurcation}, we deepen the analysis of the radial cone $b_{\text{rad}}$ through an explicit integrability and bifurcation analysis. This section contains the proof of \cref{thm:bifurcation-radial}.
In \cref{section:minimality-radial-cone} we introduce the one-dimensional calibrations and we characterize the minimality of the radial cone in terms of $\gamma$ and $d$, proving \cref{thm:minimality-of-radial-cone}.
\cref{section-1d} is devoted to the exceptional case of the one-dimensional cone $b_{\text{one}}$. We prove the second epiperimetric inequality and we show the non-integrability and non-translationality of the one-dimensional cone.
In \cref{section-translational} we start the analysis of intermediate cylindrical extensions, and we study the interplay between the notions of integrability and translationality in the class $\mathcal{B}_\ell$ for $\ell=0,\dots,d-2$. Then, in \cref{section:intermediate-epiperimetric}, we show the third epiperimetric inequality for translational cylindrical extensions.
Finally, in \cref{section:final} we prove \cref{t:uniqueness}, \cref{t:ap-low-dimension}, \cref{t:rate-convergence}, \cref{thm:parabola-cones} and \cref{thm:epiperimetric-combined}.

\subsection{Acknowledgements}
We are grateful to Bozhidar Velichkov for some useful discussions. We also thank Luca Spolaor for valuable insights on the Adams-Simon paper \cite{adamsimon}. M.C. acknowledges support from the European Research Council (ERC) via the project ERC FiRM - {\em Fine structure and regularity of stationary and moving free boundaries} (grant agreement No. 101230705). M.C. and G.T. are members of INdAM-GNAMPA. M.C. and G.T. acknowledge the support from the GNAMPA project E53C25002010001 - {\em Struttura fine e regolarità in problemi variazionali non-lineari}.

		\section{Preliminaries}\label{section:preliminaries}
		\subsection{Notations and conventions}
        \begin{itemize}
              \item With a slight abuse of notation, we identify $\beta$-homogeneous functions with their trace on the sphere $\partial B_1$. For example, for a blow-up $b$, we write $b(r,\theta)=r^\beta b(\theta)$. We use the same convention for the kernel, identifying its elements with their homogeneous extensions when needed.
            \item Let $V=V_1\oplus V_2$ be a vector space. We denote by $P_{V_i}$ the projection onto $V_i$.
            \item Given $k\in \N_{\geq 0}$ and $\ell \in \N_{\geq 2}$, we denote by $\mathcal{H}_k(\mathbb{S}^{\ell-1})$ the set of spherical harmonics of degree $k$ on $\mathbb{S}^{\ell-1}$.
            \item For $n\in\N_{\ge1}$ and $\alpha\in\R$, we denote by $\lambda_{n}(\alpha):=\alpha(\alpha+n-2)$. When $n=d$, we drop the dependence on $n$, namely $\lambda(\alpha):=\alpha(\alpha+d-2)$.
            \item Given a rotation $R \in SO(d)$ and a function $f$, we denote by $Rf(x):=f(R^{-1}x)$. 
        \end{itemize}
         \subsection{Minimizers and solutions of the Alt-Phillips problem}
        In this subsection, we recall some well-known results about the Alt-Phillips problem. For a ball $B$ in $\R^d$, we consider the functional $$\mathcal{J}_\gamma(u,B):=\int_{B}\Big(|\nabla u|^2+u^\gamma\ind_{\{u>0\}}\Big)\,dx.$$
        We use the following definitions of minimizers and solutions of the Alt-Phillips problem.
        \begin{definition}\label{def:minimo}
            Let $u\in H^1(B)$, we say that $u$ is a minimizer of the Alt-Phillips problem if \be\label{eq:comparison}\mathcal{J}_\gamma(u,B)\le \mathcal{J}_\gamma(v,B)\quad\text{for every }v\in H^1(B),\ u=v\text{ on }\partial B.\ee
            We say that $u\in H^1_{\text{loc}}(\R^d)$ is a global minimizer of the Alt-Phillips problem if $u$ is a minimizer of $\mathcal{J}_\gamma(\cdot,B)$, for every ball $B\subset \R^d$.
            \end{definition}
            The following definition, introduced in \cite[Definition 2.1]{savinyu-altphillips-stable}, is used to characterize minimizers of the Alt-Phillips problem in terms of the existence of upper (resp. lower) foliations.
            \begin{definition}\label{def:minimo-alto-basso}
            Let $u\in H^1(B)$, we say that $u$ is a one-sided minimizer from below (resp. from above) if the comparison \eqref{eq:comparison} holds under the additional assumption $v\le u$ (resp. $v\ge u$). 
        \end{definition}
        \begin{definition}\label{def:weak-sol}
             Let $u\in H^1(B)$, we say that $u$ is a (weak) solution of the Alt-Phillips problem in $B$ if $$\Delta u=\frac\gamma2u^{\gamma-1}\ind_{\{u>0\}}\text{ in }B$$
             is satisfied in the sense of distributions. We say that $u\in H^1_{\text{loc}}(\R^d)$ is a global solution of the Alt-Phillips problem if $u$ is a weak solution in every ball $B$ in $\R^d$.
        \end{definition}
        We also have the following regularity result from \cite{Alt-Phillips}.
        \begin{proposition}\label{prop:regularity-of-solutions-alt-phillips}
            Let $u\in H^1(B_1)$ be a minimizer of the Alt-Phillips problem with $0\in\partial\Omega_u$. Then $u\in C^\beta_{\text{loc}}(B_1)$ and $$\|u\|_{C^\beta(B_{1/2})}\le C,$$ for some constant $C$ depending only on $d$ and $\gamma$. 
        \end{proposition}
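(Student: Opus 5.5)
The plan follows the classical scheme for optimal regularity in obstacle-type problems, as in Alt--Phillips: (a) basic properties of minimizers, (b) optimal growth $r^\beta$ away from free boundary points, and (c) interior estimates in $\Omega_u$ glued to the growth bound. I understand the constant $C$ to depend also on a normalization such as $\|u\|_{L^2(B_1)}$ (or $\mathcal J_\gamma(u,B_1)$), which is implicit in the statement.

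\emph{Step 1 (basic properties).} First, $u\ge0$, since replacing $u$ by $u_+$ does not increase $\mathcal J_\gamma$. Next, $u$ is locally H\"older continuous with a bound in terms of $\|u\|_{L^2(B_1)}$. One route is to compare $u$ in $B_r(x)$ with its harmonic replacement: the potential term contributes at most $\int_{B_r}u^\gamma\le Cr^d\|u\|_{L^\infty}^\gamma$, which yields Morrey--Campanato decay of $\int_{B_r}|\nabla u|^2$. The $L^\infty$ bound itself comes from subharmonicity, because $\Delta u\ge0$ in the sense of distributions. From the first variation in $\Omega_u$ one gets $\Delta u=\frac\gamma2u^{\gamma-1}$ there.

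\emph{Step 2 (optimal growth; the main obstacle).} The goal is to show that
$$\sup_{B_r(x_0)}u\le Cr^\beta\qquad\text{for every }x_0\in\partial\Omega_u\cap B_{1/2},\ r<1/4.$$
The key observation is that $u_{x_0,r}(x)=u(x_0+rx)/r^\beta$ is again a minimizer of $\mathcal J_\gamma$, by the choice of $\beta$. I would argue by contradiction, with a Caffarelli-type dyadic selection. Set $S(j)=\sup_{B_{2^{-j}}(x_0)}u$. If the growth bound fails, there are points $x_k$ and indices $j_k$ with $S(j_k)\ge k\,2^{-j_k\beta}$ and $S(j_k+1)\le 2^{-\beta}S(j_k)$, chosen maximally among such $j$.

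The normalized functions $v_k(x)=u(x_k+2^{-j_k}x)/S(j_k)$ then satisfy:
\begin{itemize}
\item $v_k(0)=0$;
\item $\sup_{B_1}v_k=1$ and $\sup_{B_{1/2}}v_k\ge2^{-\beta}$;
\item a polynomial growth bound $\sup_{B_{2^m}}v_k\le C2^{m\beta}$;
\item $v_k$ minimizes $\int|\nabla v|^2+\mu_k v^\gamma$ with $\mu_k=S(j_k)^{\gamma-2}2^{-2j_k}\le k^{\gamma-2}\to0$.
\end{itemize}
Step 1 applies uniformly to these functionals, so the $v_k$ are equi-H\"older and converge locally uniformly to a nonnegative harmonic function $v$. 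This limit has $v(0)=0$ and $\sup_{B_{1/2}}v\ge2^{-\beta}$, which contradicts the strong maximum principle. The delicate points are the uniformity of the H\"older estimates as $\mu_k\to0$, and making the dyadic choice so that the polynomial growth bound really holds. I expect this to be the main technical step.

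\emph{Step 3 (interior estimates and gluing).} For $x\in\Omega_u\cap B_{1/2}$, let $\rho=\mathrm{dist}(x,\partial\Omega_u)$. By Step 2, $u\le C\rho^\beta$ on $B_{\rho}(x)$. The rescaling $w(y)=u(x+\rho y)/\rho^\beta$ is positive, bounded, and solves $\Delta w=\frac\gamma2w^{\gamma-1}$ in $B_1$. The right-hand side could be singular only where $w$ is small. For $\gamma<1$, positivity alone does not give a uniform bound; here I would use the nondegeneracy-type lower bound $w\ge c$ on $B_{1/2}$, which follows from the same growth argument applied at the nearest free boundary point. Bootstrapping Schauder estimates then gives $|D^mw|\le C$ on $B_{1/2}$ for $m\le\lceil\beta\rceil$. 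Scaling back yields $|D^mu(x)|\le C\rho^{\beta-m}$.

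Finally, a standard case split glues these bounds into the $C^\beta$ estimate, where $C^\beta$ means $C^{\lfloor\beta\rfloor,\beta-\lfloor\beta\rfloor}$. For two points $x,y$, either $|x-y|$ is small compared with their distances to $\partial\Omega_u$, and the interior estimates apply; or it is not, and one compares both points with a nearby free boundary point using the growth bound of Step 2.
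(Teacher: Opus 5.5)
\textbf{The gap is in Step 3.} Your Schauder bootstrap needs the lower bound $w\ge c$ on $B_{1/2}$, i.e.\ $u\ge c\rho^\beta$ on $B_{\rho/2}(x)$ with $\rho=\mathrm{dist}(x,\partial\Omega_u)$. You say this ``follows from the same growth argument applied at the nearest free boundary point''. It does not. Step 2 only produces upper bounds. Nondegeneracy at a free boundary point $x_0$ bounds $\sup_{B_r(x_0)}u$ from below, not the value of $u$ at an interior point. Inside $\Omega_u$ the function is subharmonic, so no Harnack inequality transfers a lower bound to $x$.

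In fact a pointwise bound $u\ge c\,\mathrm{dist}(\cdot,\partial\Omega_u)^\beta$ is not a structural feature of the problem. For $\gamma=1$, $d=2$, take $u_\eps=\frac14\big((1-\eps)x_1^2+\eps x_2^2\big)$. These solve $\Delta u=\frac12\ind_{\{u>0\}}$, so they are minimizers by convexity, and $\partial\Omega_{u_\eps}=\{0\}$. Yet $u_\eps(0,\frac12)=\eps/16$, while the distance to the free boundary is $\frac12$.

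For $\gamma<1$ this lower bound is exactly what makes $w^{\gamma-1}$ bounded, so without it Step 3 gives nothing. The same problem appears for $\gamma\in(1,2)$, where $\beta>2$. There, positivity plus a Lipschitz bound only give $w^{\gamma-1}\in C^{0,\gamma-1}$, whereas you need $C^{\lfloor\beta\rfloor-2,\beta-\lfloor\beta\rfloor}$; note $\beta-2=\beta(\gamma-1)>\gamma-1$, and derivatives of $w^{\gamma-1}$ carry negative powers of $w$.

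\textbf{A repair.} Replace the distance to the free boundary by the intrinsic scale $s(x)=u(x)^{1/\beta}$. Your Step 2 compactness works at an arbitrary point $x\in B_{1/2}$ once $u(x)$ is added to the alternatives. Suppose $S_x(j+1)>\max\{k2^{-\beta(j+1)},2^{-\beta}S_x(j),k\,u(x)\}$ along a sequence, and normalize by $S_x(j_k+1)$. The rescalings are then bounded by $2^\beta$ on $B_1$, have supremum $1$ on $B_{1/2}$, coefficient $\mu_k\le 4k^{\gamma-2}\to0$, and satisfy $v_k(0)<1/k$. The harmonic limit is again nonnegative and vanishes at $0$, which is the contradiction. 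Iterating gives $\sup_{B_r(x)}u\le C(r^\beta+u(x))$.

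Now set $v(z)=u(x+sz)/s^\beta$. It satisfies $v(0)=1$ and $\sup_{B_R}v\le C(1+R^\beta)$, so the uniform H\"older bound of Step 1 gives $v\ge\frac12$ on a universal ball. There the equation is nondegenerate, and Schauder gives $|D^mu|\le Cs(x)^{\beta-m}$ near $x$ for every $m$. Your two-case gluing then works with $s$ in place of $\rho$, with $k=\lfloor\beta\rfloor$:
\begin{itemize}
\item when $|x-y|\lesssim\max\{s(x),s(y)\}$, use $|D^{k+1}u|\le Cs^{\beta-k-1}$, since $\beta-k-1<0$;
\item otherwise, use $|D^ku|\le Cs^{\beta-k}$.
\end{itemize}

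\textbf{Two smaller points.} First, in Step 2 your selection $S(j_k+1)\le2^{-\beta}S(j_k)$ has the inequality reversed relative to the property $\sup_{B_{1/2}}v_k\ge2^{-\beta}$ you then use; the form above is the standard one.

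Second, you should not weaken the statement: the constant really is universal. The same harmonic-replacement compactness (energy comparison plus Harnack for the replacement) shows that $\fint_{\partial B_r(x)}u\ge C_0r^\beta$ forces $u>0$ in $B_{r/2}(x)$. Hence $0\in\partial\Omega_u$ together with subharmonicity gives $\sup_{B_{3/4}}u\le C(d,\gamma)$. This also removes the dependence of your iteration on the initial value $S(0)$.

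For reference, the paper does not prove this proposition itself; it quotes it from Alt--Phillips.
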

        We also have the following characterization of blow-ups for $\gamma\in(0,1)$ in two dimensions \cite{Alt-Phillips}.
        \begin{proposition}\label{prop:no-singular-points-dim-2}
            Let $d=2$, $\gamma\in(0,1)$ and $b\in H^1_{\text{loc}}(\R^2)$ be a non-trivial $\beta$-homogeneous solution of the Alt-Phillips problem. Then $b=c_\beta (x\cdot\nu)_+^\beta$ for some $\nu\in \mathbb{S}^{d-1}$.
        \end{proposition}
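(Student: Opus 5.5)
This is the classical Alt--Phillips result for $d=2$ and $\gamma\in(0,1)$, so I would reprove it by reducing to an ODE on the circle. Write $b(r,\theta)=r^\beta g(\theta)$ with $g\ge0$ on $\mathbb{S}^1$, and let $\beta=\frac{2}{2-\gamma}\in(1,2)$. Where $g>0$, the equation $\Delta b=\frac\gamma2 b^{\gamma-1}$ becomes
\[
g''+\beta^2 g=\tfrac\gamma2 g^{\gamma-1}.
\]
Here I use that $\beta(\gamma-1)=\beta-2$, so both sides scale like $r^{\beta-2}$. Since $b$ is $C^\beta$, hence $C^1$, with $\nabla b=0$ on $\partial\{b>0\}$ (which holds for weak solutions with $\beta>1$), every positivity arc $I=(\theta_1,\theta_2)$ satisfies $g=g'=0$ at its endpoints.

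The first step is the first integral of the ODE:
\[
E=(g')^2+\beta^2 g^2-g^\gamma .
\]
The boundary conditions force $E=0$ on every arc. The orbit $\{E=0\}$ in the $(g,g')$ phase plane is a closed curve through the origin. It returns to $0$ after a finite angle, which I compute by integrating $d\theta=dg/\sqrt{g^\gamma-\beta^2g^2}$:
\[
|I|=2\int_0^{g_{\max}}\frac{dg}{\sqrt{g^\gamma-\beta^2 g^2}},\qquad g_{\max}^{2-\gamma}=\beta^{-2}.
\]
The substitution $g=g_{\max}s^{1/(2-\gamma)}$ evaluates this explicitly to $|I|=\pi/\beta$. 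This is exactly the length of the positivity arc of $c_\beta(x\cdot\nu)_+^\beta$, namely $(-\frac{\pi}{2\beta},\frac{\pi}{2\beta})$ in angle; I should double-check this against the flat solution, where $\cos^\beta(\theta)$ is not exact, but the phase-plane computation is self-contained. In particular, all positivity arcs have the same length $\pi/\beta$, and on each arc $g$ is a rotated copy of the flat profile.

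The second step rules out every configuration except a single arc. If $g>0$ on all of $\mathbb{S}^1$, then $E$ constant gives a periodic orbit with $g$ bounded below, and one checks directly that the only positive solution is the constant $g^{2-\gamma}=\frac{\gamma}{2\beta^2}$. This is the radial cone, and it solves the equation but is not a minimizer. If the statement means solution rather than minimizer, this must be excluded separately: for $\gamma<1$, $b^{\gamma-1}$ is not locally integrable near the origin only when $\gamma$ is small, and otherwise the radial function does solve the equation. So I would read the statement as applying to minimizers or to cones with nonempty zero set, and rely on Alt--Phillips' argument that the radial cone fails to be minimizing in $d=2$, for instance through the stability computation with $\Delta(2,\gamma)<0$.

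If there are $m\ge2$ arcs, the total length $m\pi/\beta$ must be at most $2\pi$, which requires $\beta\ge m/2\ge1$ and is consistent. To exclude $m=2$, I would use minimality, or a density and monotonicity argument for the singular set at the vertex. For $\gamma\in(0,1)$ the Weiss energy of two flat arcs equals twice the flat energy. Following Alt--Phillips, such a configuration would give a singular point in $d=2$, which their dimension estimate $\dim\mathrm{Sing}\le d-3$ forbids.

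The main obstacle is precisely this exclusion of the multi-arc and positive configurations. The ODE part is routine, but the exclusion really needs minimality, for example through the fact that the singular set has dimension at most $d-3<0$ when $d=2$. For that I would invoke the cited result of Alt--Phillips rather than reprove it.
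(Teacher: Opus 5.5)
Your proposal has two genuine gaps: one is computational, the other is that the non-flat candidates are never excluded by a valid argument.

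\textbf{The arc length is wrong, and this breaks the case count.} With $u=(g/g_{\max})^{2-\gamma}$ one gets
\[
\int_0^{g_{\max}}\frac{dg}{\sqrt{g^\gamma-\beta^2g^2}}=\frac{1}{\beta(2-\gamma)}\int_0^1\frac{du}{\sqrt{u(1-u)}}=\frac{\pi}{2},
\]
since $\beta(2-\gamma)=2$. So every positivity arc has length $\pi$, not $\pi/\beta$. This agrees with the flat cone. Indeed, $c(\cos\theta)_+^\beta$ with $c^{2-\gamma}=\beta^{-2}$ solves $g''+\beta^2g=\frac\gamma2 g^{\gamma-1}$ exactly on $(-\frac\pi2,\frac\pi2)$, so your doubt there is unfounded. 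With $\pi/\beta<\pi$, your Step 1 would in fact rule out the flat solution itself. With the correct length there are at most two arcs, and two arcs must be complementary half-circles. The ODE step therefore leaves exactly three candidates:
\begin{itemize}
\item the flat cone;
\item $c|x\cdot\nu|^\beta$, i.e.\ $b_{\text{one}}$;
\item strictly positive cones.
\end{itemize}
Your $m\ge2$ discussion should be replaced by this.

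\textbf{The exclusions of the non-flat candidates are missing.} You are right that minimality is needed. Read literally for weak solutions in the sense of Definition~\ref{def:weak-sol}, the statement is false. Both $c_{\text{rad}}|x|^\beta$ and $c_{\text{one}}|x\cdot\nu|^\beta$ are weak solutions in $\R^2$ for every $\gamma\in(0,1)$. Here $b^{\gamma-1}$ behaves like $|x|^{\beta-2}$, resp.\ $|x\cdot\nu|^{\beta-2}$, with $\beta-2\in(-1,0)$, so it is always locally integrable; your remark about small $\gamma$ is incorrect. The paper gives no proof, only a citation of Alt--Phillips, and uses the statement for minimizing cones.

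Your appeal to $\dim\mathrm{Sing}\le d-3$ is circular: that bound is obtained by dimension reduction starting from exactly this two-dimensional classification. Likewise, uniqueness of the positive solution is not a direct check; you would need the period function of positive orbits to avoid the values $2\pi/k$.

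Both exclusions follow directly from minimality.
\begin{itemize}
\item \emph{Positive cones.} For $b=r^\beta g>0$ one has $b^{\gamma-2}=r^{-2}g^{\gamma-2}\ge(\max g)^{\gamma-2}|x|^{-2}$. Minimality gives $\int|\nabla\phi|^2\ge\frac{\gamma(1-\gamma)}{2}\int b^{\gamma-2}\phi^2$ for $\phi\in C^\infty_c(B_1\setminus\{0\})$. This fails for logarithmic cut-offs $\phi=\psi(\log(1/|x|))$, because $\R^2$ has no Hardy inequality. This removes the radial cone and any non-radial positive cone at once.
\item \emph{The cone $b=c|x\cdot\nu|^\beta$.} Take $\phi\in C^\infty_c(B_1)$ with $\phi\ge0$ and $\phi\ge1$ near a point of $\{x\cdot\nu=0\}$. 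The weak equation cancels the first-order term, so
\[
\mathcal{J}_\gamma(b+s\phi)-\mathcal{J}_\gamma(b)=s^2\int|\nabla\phi|^2+\int Q_b(s\phi),
\]
where $Q_b(t)=(b+t)^\gamma-b^\gamma-\gamma b^{\gamma-1}t\le0$ by concavity, since $\gamma<1$. On the strip $\{b\le\eps s\}$, of width $\sim s^{1/\beta}$, one has $Q_b(s\phi)\le s^\gamma\big((\eps+\|\phi\|_\infty)^\gamma-\gamma\eps^{\gamma-1}\big)\le -c_\eps s^\gamma$ for $\eps$ small. Hence $\int Q_b(s\phi)\le-\kappa s^{\gamma+1/\beta}=-\kappa s^{1+\gamma/2}$, which beats $Cs^2$ for small $s>0$, contradicting minimality.
\end{itemize}
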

        
        We notice that, for every $\gamma\in(0,2)$ minimizers are weak solutions \cite{Alt-Phillips}.
        On the other hand, in the case $\gamma\in[1,2)$, since the functional $\mathcal{J}_\gamma$ is convex, these two notions are equivalent.
        \begin{proposition}\label{prop:convex-functional}
            Let $\gamma\in[1,2)$ and $u\in H^1(B)$ be a weak solution of the Alt-Phillips problem. Then $u$ is also a minimizer of $\mathcal{J}_\gamma(\cdot,B)$.
        \end{proposition}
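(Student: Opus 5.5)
The natural route is convexity. For $\gamma\in[1,2)$ the map $t\mapsto t_+^\gamma$ is convex and $C^1$ on $\R$, with derivative $\gamma t_+^{\gamma-1}$ (for $\gamma=1$ this is $\ind_{\{t>0\}}$, which is still a subgradient). Hence $\mathcal{J}_\gamma(\cdot,B)$ is convex on $H^1(B)$, and a critical point with fixed boundary data is a minimizer.

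Concretely, fix $v\in H^1(B)$ with $v=u$ on $\partial B$, and set $\varphi:=v-u\in H^1_0(B)$. The first step is the pointwise convexity inequality
$$
v_+^\gamma\ \ge\ u_+^\gamma+\gamma u^{\gamma-1}\ind_{\{u>0\}}\,\varphi \qquad \text{a.e. in }B.
$$
This is the tangent-line inequality for the convex function $t\mapsto t_+^\gamma$ at $t=u$. On $\{u\le 0\}$ the right-hand side reduces to $0$, so the inequality is trivially true there. The second step is the Dirichlet term: $|\nabla v|^2\ge|\nabla u|^2+2\nabla u\cdot\nabla\varphi$. Integrating both inequalities gives
$$
\mathcal{J}_\gamma(v,B)\ \ge\ \mathcal{J}_\gamma(u,B)+2\int_B\Big(\nabla u\cdot\nabla\varphi+\tfrac\gamma2 u^{\gamma-1}\ind_{\{u>0\}}\varphi\Big)\,dx .
$$

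The last step is to show that this integral vanishes, using the weak equation $\Delta u=\frac\gamma2u^{\gamma-1}\ind_{\{u>0\}}$. This is the only delicate point. The equation holds in the sense of distributions, so it is tested only against $C^\infty_c(B)$, while we need to test it against $\varphi\in H^1_0(B)$. For $\gamma\ge1$, however, the right-hand side is locally bounded, because $u$ is continuous (and hence locally bounded) by standard regularity for this semilinear equation, or one can simply assume $u\in L^\infty$. So $f:=\frac\gamma2u^{\gamma-1}\ind_{\{u>0\}}$ lies in $L^\infty_{loc}$ and, in fact, in $L^2(B)$ under boundedness of $u$.

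With $f\in L^2(B)$, a density argument extends the identity $\int\nabla u\cdot\nabla\varphi=-\int f\varphi$ from $C^\infty_c(B)$ to all $\varphi\in H^1_0(B)$: both sides are continuous in the $H^1_0$ norm. If only local boundedness is available, one first obtains $\mathcal{J}_\gamma(u,B')\le\mathcal{J}_\gamma(v,B')$ on slightly smaller balls, or uses boundedness of $u$ up to the boundary, which is the setting where minimizers and solutions are considered. Once the integral vanishes, we get $\mathcal{J}_\gamma(v,B)\ge\mathcal{J}_\gamma(u,B)$, so $u$ is a minimizer of $\mathcal{J}_\gamma(\cdot,B)$.
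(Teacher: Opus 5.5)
Your argument is the standard convexity argument, which is all the paper relies on: it gives no proof beyond noting that $\mathcal{J}_\gamma$ is convex for $\gamma\in[1,2)$. The tangent-line inequality, including the subgradient $0$ on $\{u=0\}$ when $\gamma=1$, and the expansion of the Dirichlet term are correct.

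The weak spot is the last step, which as written is conditional. You invoke $u\in L^\infty(B)$ or ``boundedness up to the boundary'', but neither is a hypothesis: only $u\in H^1(B)$ is given. The reduction to smaller balls $B'\Subset B$ only yields minimality on $B'$ with data on $\partial B'$, and you do not explain how to get back to $B$.

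No regularity is needed. Since $0\le\gamma-1<1$, you have $0\le u_+^{\gamma-1}\le 1+u_+$. Hence $f=\frac\gamma2u^{\gamma-1}\ind_{\{u>0\}}\in L^2(B)$, with $\|f\|_{L^2(B)}\le\frac\gamma2\big(|B|^{1/2}+\|u\|_{L^2(B)}\big)$. Your density argument then extends the weak equation to every $\varphi\in H^1_0(B)$ directly.

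Alternatively, first prove $\mathcal{J}_\gamma(u+\varphi,B)\ge\mathcal{J}_\gamma(u,B)$ for $\varphi\in C^\infty_c(B)$, where the distributional equation applies verbatim. Then pass to the limit using that $\mathcal{J}_\gamma(\cdot,B)$ is continuous in the strong $H^1(B)$ topology. This holds because $0\le t_+^\gamma\le 1+t^2$ and $|a_+^\gamma-b_+^\gamma|\le\gamma(1+|a|+|b|)|a-b|$. This second route is the correct way to complete your smaller-balls idea.
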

        Moreover, the global solutions are convex \cite{Alt-Phillips,bonorino}.
        \begin{proposition}\label{prop:convexity-of-blowups}
            Let $\gamma\in[1,2)$ and $u\in H^1_{\text{loc}}(\R^d)$ be a global solution of the Alt-Phillips problem, such that $0\in\partial\Omega_u$. Then $D^2 u\ge0$ in $\R^d$.
        \end{proposition}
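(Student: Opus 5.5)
This statement is cited from \cite{Alt-Phillips,bonorino}; here is how I would prove it directly. The idea is to show that every pure second derivative $\partial_{ee}u$ of a global solution is non-negative. The tools are the convexity of the nonlinearity for $\gamma\in[1,2)$, a maximum principle argument, and homogeneity and growth control coming from the global setting.

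\textbf{Step 1: derive the equation for $\partial_{ee}u$.} In $\Omega_u$ the equation $\Delta u=\frac\gamma2 u^{\gamma-1}$ holds with a smooth right-hand side, so $u$ is smooth there. Setting $f(t):=\frac\gamma2 t^{\gamma-1}$ and differentiating twice in a direction $e$ gives
\[
\Delta(\partial_{ee}u)=f'(u)\,\partial_{ee}u+f''(u)(\partial_e u)^2 .
\]
For $\gamma\in[1,2)$ we have $f'\ge0$ and $f''\le0$, so the last term has the wrong sign for a naive minimum principle. I would therefore not work with $\partial_{ee}u$ directly, and instead show that $v:=u^{1/\beta}$ is convex, following the classical route for semilinear problems (Kawohl/Korevaar concavity maximum principles).

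\textbf{Step 2: convexity of $v=u^{1/\beta}$ via a Korevaar-type two-point function.} Since $u$ behaves like $|x|^\beta$, the natural candidate is $v:=u^{1/\beta}=u^{(2-\gamma)/2}$, which is $1$-homogeneous for blow-ups and solves a quasilinear equation of the form
\[
v\,\Delta v=a\big(1-b|\nabla v|^2\big)
\]
for constants $a,b>0$ depending on $\gamma$. I would apply Korevaar's concavity maximum principle to the two-point function
\[
Z(x,y,t):=v\big(tx+(1-t)y\big)-t\,v(x)-(1-t)\,v(y).
\]
The claim is that $Z$ cannot attain a positive interior maximum, because the structure conditions of the equation (monotonicity in $v$, and the appropriate convexity in $\nabla v$) hold precisely for $\gamma\ge1$. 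This step is the main obstacle. The domain $\mathbb{R}^d$ is unbounded, $Z$ must be controlled at infinity and at $\partial\Omega_u$ (where $v=0$ and $|\nabla v|^2=1/b$ is the expected boundary behaviour), and checking the structure condition needs a careful computation.

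\textbf{Step 3: reduce to a homogeneous blow-down to handle infinity.} To handle infinity, I would first prove the result for $\beta$-homogeneous global solutions. For these, $v$ is $1$-homogeneous, so $Z$ scales, and the supremum is attained on a compact set of directions. I would then pass to general global solutions using Weiss monotonicity. Let $b$ be any blow-down of $u$. By \cref{prop:convex-functional} it is a minimizer, and the nondegeneracy and $C^\beta$ bounds of \cref{prop:regularity-of-solutions-alt-phillips} give compactness. Convexity of $v$ at infinity, combined with the maximum principle for $Z$ on large balls, then yields convexity of $v$ everywhere.

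\textbf{Step 4: conclude $D^2u\ge0$.} Finally, $u=v^\beta$ with $v\ge0$ convex and $\beta\ge 2$ when $\gamma\ge1$. Since $t\mapsto t^\beta$ is convex and nondecreasing on $[0,\infty)$, the composition $u$ is convex, which gives $D^2u\ge0$. This holds in the distributional sense, and classically away from $\partial\Omega_u$, where $u\in C^{1,1}$-type regularity is available when $\beta\ge2$.
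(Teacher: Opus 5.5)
\textbf{There is a genuine gap: Step 2, which carries the whole proof, fails.} Write $u=v^\beta$ and use $\beta(\gamma-1)=\beta-2$. You get exactly
\[
v\,\Delta v=\frac{\beta-1}{\beta^2}\bigl(1-\beta^2|\nabla v|^2\bigr).
\]
This equation has the same form for every $\gamma\in(0,2)$; only the value of $\beta>1$ changes. So nothing in Step 2 can detect the threshold $\gamma\ge1$, and your claim that the structure conditions ``hold precisely for $\gamma\ge1$'' is unfounded.

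In fact they fail for every $\gamma$. At an interior positive maximum of $Z$, the points $x$, $y$ and $z=tx+(1-t)y$ share a gradient $p$. The second-order condition then gives $F(v(z),p)\le tF(v(x),p)+(1-t)F(v(y),p)$, where $F(s,p)=\frac{\beta-1}{\beta^2}(1-\beta^2|p|^2)/s$. A contradiction needs $s\mapsto F(s,p)$ to be increasing and concave. For $|p|<1/\beta$ it is instead decreasing and convex, so the inequality holds automatically. That regime is exactly where convex solutions live, since $\Delta v\ge0$ forces $|\nabla v|\le 1/\beta$.

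Kennington's refinement does not rescue this. Using independent displacements at $x$ and $y$ bounds $\Delta v(z)$ by the weighted harmonic mean of $\Delta v(x)$ and $\Delta v(y)$. Here that bound reduces to $v(z)\ge tv(x)+(1-t)v(y)$, which gives no information. The exponent $1/\beta$ is the critical one: for cones $u^{1/\beta}$ is $1$-homogeneous, and $Z\equiv0$ along rays.

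\textbf{Step 3 is also not usable as written}, for three reasons. First, by homogeneity, $Z>0$ somewhere forces $\sup Z=+\infty$. A maximum on a normalized set such as $\{|x|^2+|y|^2=1\}$ is not a free critical point: Euler's identity makes the Lagrange multiplier equal to $\max Z\neq0$, so the gradient matching that Korevaar's argument needs is lost. Second, for non-homogeneous $u$, convexity of blow-downs only gives $Z\le o(R)$ on $\partial(B_R\times B_R)$, not $Z\le 0$. Third, endpoints lying in $\{u=0\}\cup\partial\Omega_u$, where the $v$-equation degenerates, are not treated.

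\textbf{The right mechanism is already in your Step 1, but you misread its sign.} Take $w=\partial_{ee}u$ and $f(t)=\frac\gamma2 t^{\gamma-1}$. In $\Omega_u$ you have $\Delta w-f'(u)w=f''(u)(\partial_e u)^2\le0$ with $f'(u)\ge0$. So $w$ is a supersolution of an operator with nonpositive zero-order coefficient, which is exactly what rules out a negative interior minimum. For $\gamma>1$ even the constant case is excluded: $w\equiv-m<0$ would give $\Delta w-f'(u)w=f'(u)m>0$.

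This is where the hypothesis $\gamma\in[1,2)$ genuinely enters: $s\mapsto\frac\gamma2 s^{\gamma-1}$ is nondecreasing and concave on $(0,\infty)$ exactly in this range. It is the mechanism of Caffarelli's convexity proof for global solutions of the obstacle problem. A Korevaar argument applied to $u$ itself, rather than to $u^{1/\beta}$, has the same correct structure. Passing to $u^{1/\beta}$ is precisely what destroys it.

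What a proof still needs, beyond this sign, is a compactness and rescaling argument. It must turn a near-infimum of a suitably normalized $\partial_{ee}u$ into an interior minimum of a limiting global solution. Your Step 3 does not supply this in usable form. The paper itself does not reprove the proposition; it quotes it from Alt--Phillips and Bonorino.
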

        As a consequence, we get the following characterization of blow-ups \cite{bonorino}.
        \begin{proposition}\label{prop:classification-of-blowups-gamma-maggiore-1}
            Let $\gamma\in[1,2)$ and $b\in H^1_{\text{loc}}(\R^d)$ be a non-trivial $\beta$-homogeneous solution of the Alt-Phillips problem. Then, either $b=c_\gamma (x\cdot\nu)_+^\beta$ for some $\nu\in \mathbb{S}^{d-1}$, or $b\in\mathcal{B}_\ell$ for some $\ell=0,\ldots,d-1$.
        \end{proposition}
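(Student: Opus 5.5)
The plan is to deduce this classification from the convexity in \cref{prop:convexity-of-blowups}, combined with $\beta$-homogeneity. Since $b$ is a global solution ($\beta$-homogeneous solutions are solutions in every ball) and, being homogeneous of degree $\beta>0$ with $b\ge0$ nontrivial, vanishes at the origin, we get $0\in\partial\Omega_b$. Hence $b$ is convex on $\R^d$. From convexity and homogeneity, the zero set $\{b=0\}$ is a closed convex cone $\Sigma$, and the function $b$ is a nonnegative convex $\beta$-homogeneous function.

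Next I introduce the lineality space $V:=\{v\in\R^d:\ b(x+tv)=b(x)\ \forall x,t\}$. Here I claim that $V=\Sigma\cap(-\Sigma)$. Indeed, if $b(v)=b(-v)=0$, then for every $x$ the convex function $t\mapsto b(x+tv)$ satisfies
\[
b(x+tv)=\lim_{s\to\infty}s^{\beta} b\!\left(\tfrac{x}{s}+\tfrac{t}{s}v\right),
\]
and by convexity along the line together with homogeneity it is bounded by $\max\{b(x),\,C|t|^\beta\cdot 0\}$-type estimates. More cleanly, a convex function bounded above on a line is constant there; I would check this boundedness using $b(x+tv)\le \frac12 b(2x)+\frac12 b(2tv)=2^{\beta-1}b(x)$. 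Setting $\ell:=\dim V$, we then write $b(y,z)=B(y)$ with $(y,z)\in V^\perp\times V$.

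The main case split concerns the reduced cone $\Sigma'=\Sigma\cap V^\perp$, which is a convex cone containing no lines. If $\Sigma'=\{0\}$, then $B>0$ on $\R^{d-\ell}\setminus\{0\}$, and $b$ lies in $\mathcal{B}_\ell$ (the case $\ell=d$ is excluded because $b$ is nontrivial). Singularity, in the sense required by $\mathcal{B}$, would be checked separately: the flat solution is not of this form, since it vanishes on a half-space. If $\Sigma'\ne\{0\}$ is a pointed cone, I expect this to be the main obstacle: one must show that $\Sigma$ is then a half-space and that $b$ is the flat profile. My first attempt would take an exposed ray $e\in\Sigma'$ and blow up $b$ at the point $e$. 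Since $\{b>0\}$ is the complement of a convex set, the free boundary point $e$ has a supporting hyperplane, and the blow-up there, again convex by \cref{prop:convexity-of-blowups}, should be one-dimensional. I would then use the ODE in $\{b>0\}$ near a boundary where $b$ and $\nabla b$ vanish, together with homogeneity and a Liouville-type argument in the positivity set (or Bonorino's argument \cite{bonorino}), to conclude that the positivity set is a half-space, $\Sigma=\{x\cdot\nu\le0\}$. Finally, the one-dimensional ODE $b''=\frac\gamma2 b^{\gamma-1}$ with homogeneity fixes $b=c_\gamma(x\cdot\nu)_+^\beta$.
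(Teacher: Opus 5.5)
\textbf{Where the proposal breaks down: the case $\Sigma\neq V$.}

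Your reduction is fine. The estimate $b(x+tv)\le\tfrac12 b(2x)+\tfrac12 b(2tv)=2^{\beta-1}b(x)$ does show that the lineality space of $b$ is $V=\Sigma\cap(-\Sigma)$. If $\Sigma=V$ you get $b\in\mathcal{B}_\ell$; you should add that $b$ is minimizing by Proposition~\ref{prop:convex-functional}, since $\mathcal{B}$ consists of minimizing cones.

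The gap is the case $\Sigma\neq V$. This is the whole content of the statement beyond convexity; the paper gives no argument here and cites Bonorino. Your mechanism for this case does not work.

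A supporting hyperplane to $\Sigma$ at $e$ only says that the positivity set of the blow-up $b_e$ contains an open half-space. One-dimensionality of a convex cone follows when its \emph{zero set} contains a half-space, which is the opposite inclusion. What you actually know is that $b_e$ is invariant along $e$ and vanishes on the tangent cone of $\Sigma$ at $e$.

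Consider the subcase you must exclude, where $\Sigma'$ is a single ray $\R_+e$ with $d-\ell\ge2$. There the tangent cone is the linear space $V\oplus\R e$, so $b_e$ is a priori an arbitrary element of $\mathcal{B}_{\ell+1}$ (a rotation of $b_{\text{one}}$ if $d-\ell=2$). Its positivity set is not a half-space, and nothing is contradicted.

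When $\mathrm{int}\,\Sigma\neq\emptyset$, blow-ups at points where $\partial\Sigma$ is differentiable are indeed flat. But that is local information: it does not distinguish a half-space from, say, a round cone $\{x_d\le -a|x'|\}$. A global rigidity argument is needed, and ``a Liouville-type argument (or Bonorino's argument)'' is only a placeholder for it.

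\textbf{A possible way to close it with your own tools.}

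If $\Sigma$ is not a half-space, choose $-e\in\partial\Sigma\setminus V$. Your trick shows that $s\mapsto b(x-se)$ is convex and bounded on $[0,\infty)$, hence nonincreasing, so $\partial_e b\ge0$. Moreover $\partial_e b>0$ in $\Omega_b$ by the strong maximum principle, because $b(e)>0$ forces $b(x+te)\to\infty$. Thus $\partial_e b$ is a positive $(\beta-1)$-homogeneous Jacobi field. By the usual ground-state argument, the form
$$\int_{\partial B_1}\big(|\nabla_\theta\psi|^2+(\tfrac\gamma2(\gamma-1)b^{\gamma-2}-\lambda(\beta-1))\psi^2\big)$$
is nonnegative on functions supported in $\Omega_b\cap\partial B_1$.

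Next, take the monotone limit $b_{-e}:=\lim_{t\to\infty}b(\cdot-te)\le b$. It is a $\beta$-homogeneous solution, invariant along $e$, nontrivial by nondegeneracy, with $\{b_{-e}=0\}\supseteq\Sigma$. By induction on the dimension it is flat, a rotation of $b_{\text{one}}$, or a cone in $\mathcal{B}_{\ell'}$ with $1\le\ell'\le d-2$.

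Since $b\ge b_{-e}$, $\gamma-1\ge0$ and $\gamma-2<0$, the potential of $b$ lies below that of $b_{-e}$. Hence the form of $b$ is nonpositive, and therefore zero, on the corresponding $(\beta-1)$-homogeneous Jacobi fields of $b_{-e}$. These are $(x\cdot\nu)_+^{\beta-1}$, $|x\cdot\nu|^{\beta-1}$, or the absolute value of a sign-changing translation mode $\partial_{y_i}b_{-e}$. Each is then a nonnegative solution of the linearized equation of $b$ in the connected set $\Omega_b\cap\partial B_1$, hence positive there.

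This rules out the last two alternatives. In the flat case it forces $\Sigma=\{x\cdot\nu\le0\}$, after which your lineality argument and the ODE conclude.
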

        \subsection{Weiss' energy and blow-ups}
        We define the Weiss' energy 
        $$
        W(u)=W_0(u) +\int_{B_1}u^\gamma\ind_{\{u>0\}}\,dx\quad\text{where}\quad W_0(u):=\int_{B_1}|\nabla u|^2 \,dx -\beta \int_{\partial B_1}u^2\,d\HH^{d-1}.
        $$
        Let $u\in H^1(B_1)$ be a minimizer of the Alt-Phillips problem, and suppose that $0\in\partial\Omega_u$ is a free boundary point. We consider the rescaled function $u_r(x):=r^{-\beta}u(rx)$, then we have the Weiss' monotonicity formula  \cite{weiss-alt-phillips-monotonicity-formula} \be\label{eq:weiss-formula}\frac{d}{dr}W(u_r)=\frac{d+2\beta-2}{r}(W(z_r)-W(u_r))+\frac{1}{r}\int_{\partial B_1}(\nabla u_r\cdot x-\beta u_r)^2\,d\HH^{d-1},\ee where $z_r$ is the $\beta$-homogeneous extension of the trace $u_r|_{\partial B_1}$.
        As a consequence, along a subsequence $r_k\to0^+$, the rescaling $u_r$ converges locally uniformly to a function $b\in H^1_{\text{loc}}(\R^d)$, which is $\beta$-homogeneous and a local minimizer of the Alt-Phillips problem in $\R^d$.
        
\subsection{Linearized operator}
        For a blow-up $b\in\mathcal{B}$ satisfying \cref{condition:assumption-uniqueness}, we define the spherical linearized operator \be\label{eq:linearized-operator}L_b:=-\Delta_\theta -\lambda(\beta)+\frac\gamma2(\gamma-1)b^{\gamma-2}\ee
        where $\lambda(\beta) :=\beta(\beta+d-2)$. We define the kernel $K_b:=\ker (L_b)$ as \be\label{eq:general-kernel}\ker (L_b):=\left\{\phi\in H^1(\partial B_1): \ L_b\phi=0\text{ on }\partial B_1\cap\{b>0\} \text{ and }\phi=0 \text{ on }\partial B_1\cap\{b=0\}\right\},\ee where $L_b\phi=0$ on $\partial B_1\cap\{b>0\}$ is understood in the $H^1$-weak sense, while $\phi=0$ on $\partial B_1\cap\{b=0\}$ is understood in a trace sense.
        
        We also observe that, if $b\in\mathcal{B}_\ell$ for some $\ell=0,\ldots,d-1$, then \be\label{eq:general-kernel-only-positive}K_b=\{\phi\in H^1(\partial B_1): \ L_b\phi=0\text{ on }\partial B_1\cap\{b>0\}\}.\ee
        Indeed, for $\ell=0,\ldots,d-2$, the set $\partial B_1\cap\{b=0\}$ has zero $H^1$-capacity on the sphere, while for $\ell=d-1$ it follows by \cref{lemma:ker-1d}.

\subsection{Some spherical energies}\label{subsection:spherical-energies} Let $\phi\in H^1(\partial B_1)$ be non-negative, we define the spherical energy \be\label{def:F}\mathcal{F}(\phi):=\int_{\partial B_1}
			\left(|\nabla_\theta\phi|^2-\lambda(\beta)\phi^{2}+\phi^\gamma\right)\,d\HH^{d-1}.\ee
This is precisely the spherical Weiss' energy coming from the following slicing lemma.
\begin{lemma}[Slicing lemma]\label{lemma:slicing}
			Let $\phi \in H^1( B_1)$ be non-negative, and set $\phi_r(\theta):=\phi(r,\theta)$. Then 
			\bea {W}(r^\beta\phi_{r})&=\int_{0}^1 r^{d+2\beta-3}\mathcal{F}(\phi_{r})\,dr+\int_0^1\int_{\partial B_1} r^{d+2\beta-1}(\partial_r\phi_r)^2 \,dr,\eea where $\mathcal{F}$ is defined in \eqref{def:F}.
		\end{lemma}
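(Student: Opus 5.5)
We write everything in polar coordinates $x=r\theta$, with $r\in(0,1)$ and $\theta\in\partial B_1$, and set $u(x):=r^\beta\phi_r(\theta)$, so that $u$ is the function $\phi$ rewritten in terms of the profiles $\phi_r$. We then compute each of the three terms of $W(u)$ and collect them; the proof is a direct computation.

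\textbf{Gradient term.} We have $\partial_r u=\beta r^{\beta-1}\phi_r+r^\beta\partial_r\phi_r$ and $|\nabla_\tau u|=r^{\beta-1}|\nabla_\theta\phi_r|$. Since $dx=r^{d-1}\,dr\,d\HH^{d-1}$, this gives
\[
\int_{B_1}|\nabla u|^2\,dx=\int_0^1\!\!\int_{\partial B_1}\Big(r^{d+2\beta-3}\big(|\nabla_\theta\phi_r|^2+\beta^2\phi_r^2\big)+2\beta r^{d+2\beta-2}\phi_r\partial_r\phi_r+r^{d+2\beta-1}(\partial_r\phi_r)^2\Big).
\]

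\textbf{Cross term.} We rewrite $2\phi_r\partial_r\phi_r=\partial_r(\phi_r^2)$ and integrate by parts in $r$:
\[
2\beta\int_0^1 r^{d+2\beta-2}\int_{\partial B_1}\phi_r\partial_r\phi_r
=\beta\int_{\partial B_1}\phi_1^2-\beta(d+2\beta-2)\int_0^1 r^{d+2\beta-3}\int_{\partial B_1}\phi_r^2 .
\]
The boundary term at $r=0$ vanishes because $d+2\beta-2>0$.

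\textbf{Boundary term of $W$.} The first term on the right above is exactly $\beta\int_{\partial B_1}\phi_1^2$, so it cancels the $-\beta\int_{\partial B_1}u^2$ in $W$, since $u=\phi_1$ on $\partial B_1$.

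\textbf{Potential term.} The remaining contributions combine as follows. The $\phi_r^2$-coefficient is
\[
\beta^2-\beta(d+2\beta-2)=-\beta(\beta+d-2)=-\lambda(\beta).
\]
For the potential, $u^\gamma=r^{\beta\gamma}\phi_r^\gamma$, and $\beta\gamma+d-1=d+2\beta-3$ because $\beta\gamma=2\beta-2$ (this is where $\beta=2/(2-\gamma)$ is used). Hence
\[
\int_{B_1}u^\gamma\ind_{\{u>0\}}\,dx=\int_0^1 r^{d+2\beta-3}\int_{\partial B_1}\phi_r^\gamma .
\]
Adding up the three terms yields $\int_0^1 r^{d+2\beta-3}\mathcal F(\phi_r)\,dr$ plus the radial term $\int_0^1\int_{\partial B_1} r^{d+2\beta-1}(\partial_r\phi_r)^2$, which is the claimed identity.

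\textbf{Justification.} The only delicate points are the integration by parts in $r$ and the vanishing of the boundary term at $r=0$. We first prove the identity for smooth $\phi$, or for $\phi$ vanishing near the origin. For such $\phi$, the map $r\mapsto r^{d+2\beta-2}\int_{\partial B_1}\phi_r^2$ is absolutely continuous and tends to $0$ as $r\to0$, because $\int_{\partial B_1}u(r\cdot)^2 r^{d-1}$ is integrable near $0$. We then pass to general $H^1$ functions by density. All terms are continuous under $H^1$ convergence: the $\phi_r^\gamma$ term is controlled because $\gamma<2$ and the weights match those of the $L^2$ term. I expect this approximation step to be the only real obstacle, and it is routine.
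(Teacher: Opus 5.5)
Your proof is correct and follows essentially the same route as the paper. The paper defers to the standard slicing computation of \cite[Lemma 12.10]{Velichkov:RegularityOnePhaseFreeBd}, which is exactly your argument: expand $|\nabla(r^\beta\phi_r)|^2$ in polar coordinates, integrate the cross term by parts in $r$, use $\beta\gamma=2\beta-2$, and pass from smooth to general $\phi$ by density. The one imprecision is your stated reason that $r^{d+2\beta-2}\int_{\partial B_1}\phi_r^2\to0$: integrability of $r^{d+2\beta-1}\int_{\partial B_1}\phi_r^2$ near $0$ does not imply it, but for smooth $\phi$ it follows directly from boundedness and $d+2\beta-2>0$, as you already noted in the cross-term step.
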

        \begin{proof}
            The proof is a straightforward modification of \cite[Lemma 12.10]{Velichkov:RegularityOnePhaseFreeBd}.
        \end{proof}
Let $b$ be a cone satisfying \cref{condition:assumption-uniqueness}, we consider the corresponding spherical energy $\mathcal{G}$ defined for every $\phi\in H^1(\partial B_1)$ such that $b+\phi\ge0$ \be\label{def:G}\mathcal{G}(\phi):=\mathcal{F}(b+\phi)-\mathcal{F}(b)=\int_{\partial B_1}\Big(|\nabla_\theta \phi|^2-\lambda(\beta)\phi^2+(b+\phi)^\gamma-b^\gamma-\gamma b^{\gamma-1}\phi\Big)\,d\HH^{d-1},\ee where the second identity follows by integrating by parts and using the equation of $b$. 
		Then, the first and the second variation of $\mathcal{G}$ are the following \bea  \frac12\delta\mathcal{G}(\phi)[\psi]&:=
		\int_{\partial B_1}\Big(\nabla_\theta\phi\cdot\nabla_\theta \psi-\lambda(\beta)\phi\psi+\frac\gamma2\big((b+\phi)^{\gamma-1}- b^{\gamma-1}\big)\psi\Big)\,d\HH^{d-1}\eea
		and 
		\bea \frac12\delta^2 \mathcal{G}(\phi)[\psi,\psi]&:=
		\int_{\partial B_1}\Big(|\nabla_\theta \psi|^2-\lambda(\beta)\psi^2+\frac\gamma2(\gamma-1)(b+\phi)^{\gamma-2}\psi^2\Big)\,d\HH^{d-1}.\eea
		In particular, the kernel of $L_b$ in \eqref{eq:general-kernel} is also given by $K_b=\ker(\frac12\delta^2\mathcal{G}(0))$.
        For simplicity, we also set \be\label{def:Q}\mathcal{Q}(\psi):=\frac12\delta^2\mathcal{G}(0)[\psi,\psi]=\int_{\partial B_1}\left(|\nabla_\theta \psi|^2-\lambda(\beta)\psi^2+\frac\gamma2(\gamma-1) b^{\gamma-2}\psi^2\right)\,d\HH^{d-1}.\ee

\subsection{Integrability and translationality}\label{subsection-integrability}
        In what follows, we first introduce the notions of integrability and sub-integrability for singular cones satisfying \cref{condition:assumption-uniqueness}.
        \begin{definition}[Integrability]\label{def:integrability}
		    Let $b\in\mathcal{B}$ satisfy \cref{condition:assumption-uniqueness}, we say that $b$ is integrable if for every $\phi\in K_b$, there exists a family $\{u_t\}_{t\in(-\eps,\eps)}$ of $\beta$-homogeneous weak solutions of the Alt-Phillips problem, such that $u_0=b$ and $\partial_t u_t\big|_{t=0}=\phi$. 
        \end{definition}   
        By \cref{prop:kernel-cylindric} (see also \cref{corollary:corollary3}), we have the following remark.
        \begin{remark}
            If $b\in\mathcal{B}_0$ is a positive cone, then $b$ is integrable if and only if for every $\phi\in K_b$ there exists a family of solutions $\{\psi_t\}_{t\in(-\eps,\eps)}$ of $\delta\mathcal{G}(\psi_t)=0$ such that $\psi_0=0$ and $\partial_t \psi_t\big|_{t=0}=\phi$. 
            
            Let $b\in\mathcal{B}_{\ell}$ for $\ell=1,\ldots d-2$, we write $b(y,z)=B(y)$ for $(y,z)\in\R^{d-\ell}\times \R^\ell$. Then $b$ is integrable if and only if $B$ is integrable.
        \end{remark}
        
       \begin{definition}[Sub-integrability]\label{def:sub-integrability}
			Let $b\in\mathcal{B}_{0}$ and $G$ be the reduced functional associated to $\mathcal{G}$ at $b$ (see \cref{lemma:Lyapunov-Schmidt}). We say that $b$ is sub-integrable if $G\le0$ in a neighborhood of $0$.
			
			Let $b\in\mathcal{B}_{\ell}$ for $\ell=1,\ldots d-2$ satisfying \cref{condition:assumption-uniqueness}. We say that $b$ is sub-integrable if the base cone $B$ is sub-integrable.
		\end{definition}
We point out that if $b\in\mathcal{B}_0$ and $G$ is the reduced functional associated to $\mathcal{G}$ at $b$, by \cite[Lemma 1]{adamsimon}, the integrability condition in \cref{def:integrability} is equivalent to requiring that $G\equiv0$ in a neighborhood of $0$. This is the reason why we call the condition $G\le0$ sub-integrability.
         
            We use the following definition of integrability through rotations.
			\begin{definition}[Integrability through rotations]\label{def:integrability-through-rotations}
				We say that $b\in\mathcal{B}$ satisfying \cref{condition:assumption-uniqueness} is integrable through rotations if 
				every function in $K_b$ can be written as a rotational derivative of $b$, namely \be\label{eq:integrability-rotations}K_b=\text{span}\{\nabla_{\theta} b\cdot (A\theta):\ A\in \text{SO}(d)\}=\text{span}\{x_j\partial_{x_i}b-x_i\partial_{x_j}b: \ i,j=1,\ldots d\},\ee where $\text{SO}(d)$ is the space of rotations. Equivalently, every $\beta$-homogeneous solution of the linearized operator $-\Delta+\frac\gamma2(\gamma-1)b^{\gamma-2}$ is a rotational derivative of $b$. 
			\end{definition}
			We also define the eigenspace $E_b^{\mu}$ of the $\mu$-homogeneous solutions of the linearized operator as follows. 
            
            \begin{definition}[The eigenspace $E_b^{\mu}$]\label{def:space-Ebmu} 
                Let $b\in\mathcal{B}_\ell$ for $\ell=0,\ldots,d-1$, and $\mu\in\R$. We define $E_b^{\mu}\subset H^1(\partial B_1)$ as the eigenspace of the linearized operator $L_b$ corresponding to the eigenvalue $\lambda(\mu)-\lambda(\beta)$, namely
                $$E_b^\mu:=\ker(L_b-(\lambda(\mu)-\lambda(\beta))),$$ where the definition of the kernel is given as in \eqref{eq:general-kernel}. In particular $E_b^\beta=K_b$.
                
                Equivalently, $E_b^{\mu}$ consists of the traces whose $\mu$-homogeneous extensions are solutions of the linearized equation $-\Delta+\frac\gamma2(\gamma-1)b^{\gamma-2}$. 
            \end{definition}

            As we will see, the space $E_b^{\beta-1}$ will play a key role, which motivates the following definition. 
        
            \begin{definition}[Translationality]\label{def:beta-1translational}
				We say that $b\in\mathcal{B}_\ell$, for some $\ell=0,\ldots,d-1$, is translational if every function in $E^{\beta-1}_b$ can be written as a translation derivative of $b$, namely
				$$E^{\beta-1}_b=\text{span}\{\partial_{x_1}b,\ldots,\partial_{x_{d}}b\}.$$
				
                Equivalently, $b$ is translational if every $(\beta-1)$-homogeneous solution of the linearized equation $-\Delta+\frac\gamma2(\gamma-1)b^{\gamma-2}$ is a partial derivative of $b$.
			\end{definition}

            \begin{remark}
                If $b\in\mathcal{B}_\ell$, for $\ell=0,\ldots,d-2$, then the set $\{b=0\}$ has zero $H^1$-capacity in $\R^d$. Then, in \cref{def:integrability-through-rotations}, \cref{def:space-Ebmu} and \cref{def:beta-1translational} it is enough to require the corresponding condition only in $\{b>0\}$, as already observed for $K_b$ in \eqref{eq:general-kernel-only-positive}.
            \end{remark}

	\section{Epiperimetric inequality for positive cones}\label{section-positive}
		In this section we prove a logarithmic epiperimetric inequality around cones in $\mathcal{B}_0$, namely strictly positive cones in $\R^d\setminus \{0\}$, including the radial cone $b_{\text{rad}}:=c_{\text{rad}}|x|^\beta$. We also show the optimality of the logarithmic modulus of continuity, when the cone is not sub-integrable. Notice that these results apply for every $\gamma\in(0,2)$. 
		\begin{proposition}[Logarithmic epiperimetric inequality]\label{t:epi-rad}
			Let $\gamma\in(0,2)$, then
            the epiperimetric inequality in \cref{thm:epiperimetric-combined} holds for $b\in\mathcal{B}_0$, under the closeness assumptions
            \be\label{eq:closness-assumption-radial}\|c-b\|_{L^\infty(\partial B_1)}\le \delta\quad\text{and}\quad |W(z)-W(b)|\le \delta.\ee 
			Moreover, if $b$ is sub-integrable (see \cref{def:sub-integrability}), then we can take $\sigma=0$.
		\end{proposition}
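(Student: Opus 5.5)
The plan is to prove the statement by an explicit construction of the competitor, using \cref{lemma:slicing} and a Fourier-type decomposition with respect to $L_b$, with the kernel handled by a Lyapunov-Schmidt reduction in the spirit of \cite{adamsimon}. Since $b\in\mathcal{B}_0$, there is $m>0$ with $b\ge m$ on $\partial B_1$. Choosing $\delta\le m/4$, the trace $c=b+\phi$ satisfies $\|\phi\|_{L^\infty(\partial B_1)}\le\delta$, so $b+\phi\ge m/2$. The same will hold for every function of the form $b+\phi'$ with $\|\phi'\|_{L^\infty}\le m/2$ that we build. Consequently admissibility (non-negativity) is automatic, and $\mathcal{G}$ in \eqref{def:G} is real-analytic on an $L^\infty\cap H^1$ neighbourhood of $0$, because $t\mapsto t^\gamma$ is analytic on $[m/2,\infty)$. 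This is why only \eqref{eq:closness-assumption-radial} is needed. Moreover, $W(z)-W(b)=\mathcal{G}(\phi)/(d+2\beta-2)$ by the slicing lemma.

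\emph{Step 1 (Lyapunov-Schmidt reduction and Łojasiewicz inequality).} Let $K=K_b$, which is finite dimensional since $L_b$ is a Schrödinger operator with bounded potential on the sphere, and let $K^\perp$ be its $L^2$-orthogonal complement. By the implicit function theorem, applied in a Hölder or $H^1\cap L^\infty$ setting via elliptic regularity, we obtain an analytic map $Y\colon K\to K^\perp$ with $Y(0)=0$, $DY(0)=0$ and
\[
P_{K^\perp}\,\delta\mathcal{G}\big(\psi+Y(\psi)\big)=0 .
\]
The reduced functional $G(\psi):=\mathcal{G}(\psi+Y(\psi))$ is then analytic on $K$, satisfies $G(0)=0$ and $\nabla G(0)=0$, and obeys the Łojasiewicz inequality $|\nabla G(\psi)|\ge c_L|G(\psi)|^{1-\theta}$ for some $\theta\in(0,1/2]$. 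We decompose $\phi=\phi_K+Y(\phi_K)+\eta$ with $\eta\in K^\perp$. We then expand
\[
\mathcal{G}(\phi)=G(\phi_K)+\mathcal{Q}(\eta)+R,
\]
where the first-order cross term vanishes by the defining property of $Y$, and $|R|\le C\|\phi\|_{L^\infty}\|\eta\|_{H^1}^2$ plus terms that are even smaller.

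\emph{Step 2 (construction).} We define $h(r,\theta)=r^\beta\big(b+\psi(r)+Y(\psi(r))+\eta_r\big)$ and treat the two components separately.

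For the $K^\perp$ part, expand $\eta=\sum_j c_j\varphi_j$ in eigenfunctions of $L_b$ with eigenvalues $\mu_j=\lambda(\alpha_j)-\lambda(\beta)\neq0$. Then take $\eta_r=\sum_j r^{\alpha_j-\beta}c_j\varphi_j$ when $\mu_j>0$. For the finitely many negative eigenvalues, take instead a radial profile chosen so that the negative contribution becomes more negative. By the one-dimensional computation of $\int_0^1 r^{d+2\beta-3}\big(\mu f^2+r^2 f'^2\big)\,dr$, this yields a gain of a fixed fraction $\eps_0\,\mathcal{Q}$-mode by mode. For the kernel part, take $\psi(r)$ along the gradient flow $\dot\psi=-\nabla G(\psi)/|\nabla G(\psi)|$, reparametrized in $r$ and run for a length of order $\eps|G(\phi_K)|^{1-\theta}$. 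By the Łojasiewicz inequality this lowers $G$ by at least $c\,\eps\,|G(\phi_K)|^{2-2\theta}$, while the radial-derivative cost $\int r^{d+2\beta-1}|\partial_r\psi|^2$ is of order $\eps^2|G|^{2-2\theta}$ and is therefore absorbed. Combining the two gains gives \eqref{eq:general-epi} with $\sigma=1-2\theta\in[0,1)$, after comparing $|G(\phi_K)|$ and $\mathcal{Q}(\eta)$ with $|W(z)-W(b)|$. In the sub-integrable case $G\le0$ near $0$, we simply freeze $\psi(r)\equiv\phi_K$. The kernel then contributes $G(\phi_K)\le0$ unchanged, which only helps once we reduce to $W(z)\ge W(b)$ (if $W(z)<W(b)$ we may take $h=z$ after trivial adjustments). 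All the improvement then comes from $\eta$ with a fixed $\eps$, giving $\sigma=0$.

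\emph{Main obstacle.} I expect the hardest step to be controlling the errors uniformly in $r$. The difficulty is the nonlinear remainder $R$ and the cross terms between $\psi(r)+Y(\psi(r))$ and $\eta_r$ once $\psi$ varies in $r$, since orthogonality holds only at the linear level. In addition, the mixed regime in which $G(\phi_K)$ and $\mathcal{Q}(\eta)$ have opposite signs or very different sizes needs care. My first attempt would be to use analyticity of $Y$ and $\|\phi\|_{L^\infty}\le\delta$ to bound every cross term by $C\delta\big(\|\eta\|_{H^1}^2+|\nabla G|\,\|\eta\|_{H^1}\big)$, absorbing these via Young's inequality into the $\eps_0$-gain on $K^\perp$. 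I would then split into cases according to whether $|G(\phi_K)|\le M\,\mathcal{Q}(\eta)$ or not, applying only the $K^\perp$ improvement in the first case and only the gradient-flow improvement in the second.
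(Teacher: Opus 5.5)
Your outline is correct and gives the same exponent $\sigma=1-2\theta$. The paper's Łojasiewicz exponent $\sigma$ is your $\theta$. However, you organize the competitor differently from the paper.

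\textbf{How the paper builds the competitor.} The paper separates the two mechanisms in the radial variable:
\begin{itemize}
\item On $(\rho,1)$ the kernel coordinate is frozen at $\phi_0=\phi^K+Y(\phi^K)$. The $K_<$ and $K_>$ components are multiplied by the single bounded profiles $\frac{(r-\rho)_+^{\beta\mp\tau}}{r^\beta(1-\rho)^{\beta\mp\tau}}$, which vanish on $(0,\rho)$.
\item On $(0,\rho)$ only the kernel coordinate moves, along the normalized gradient flow of $G$.
\end{itemize}
Consequences of this separation:
\begin{itemize}
\item Every radial-derivative cross term vanishes identically.
\item Non-negativity and the remainder bound of \cref{lemma:decomposition} are immediate, since each piece is a bounded radial multiple of a bounded trace.
\item The energy difference is bounded by $E_<+E_>+E_0$. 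These are estimated separately in \cref{lemma:maggiore} and \cref{lemma:uguale}, followed by the three-case split comparing $\mathcal{G}(\phi_0)$ with $|\mathcal{Q}(\phi_<)|+|\mathcal{Q}(\phi_>)|$.
\end{itemize}

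\textbf{Your simultaneous construction.} The obstacle you anticipate is smaller than you fear.
\begin{itemize}
\item The spherical cross term vanishes exactly. Since $\eta_r\in K^\perp$ for every $r$ and $\psi(r)+Y(\psi(r))$ lies on the reduced manifold, $\delta\mathcal{G}(\psi(r)+Y(\psi(r)))[\eta_r]=0$. So no term of the type $|\nabla G|\,\|\eta\|_{H^1}$ arises. This is fortunate, because such a term would not obviously be absorbable into a kernel gain of order $\eps|G|^{1-\theta}|\nabla G|$.
\item The only new cross term is $2\int_0^1 r^{d+2\beta-1}\int_{\partial B_1} DY(\psi(r))[\psi'(r)]\,\partial_r\eta_r\,d\HH^{d-1}\,dr$. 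It carries a factor $C\delta$ and is absorbed, via Young's inequality, into the kernel radial cost and the $K^\perp$ gain.
\end{itemize}

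\textbf{What your route requires in addition.}
\begin{itemize}
\item An $L^\infty$ bound, uniform in $r$, for the mode-by-mode extension $\sum_{\mu_j>0}r^{\alpha_j-\beta}c_j\varphi_j$. One way is interior and boundary De Giorgi estimates for $\Delta v=\frac\gamma2(\gamma-1)b^{\gamma-2}v$, whose potential is bounded away from the origin.
\item A negative-mode profile that stays bounded near $r=0$.
\end{itemize}
Without these, neither the non-negativity of $h$ nor your Taylor remainder estimate is justified.

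\textbf{Bookkeeping of $\eps$.} Your fixed-$\eps$ choice, with flow length of order $\eps|G|^{1-\theta}$, is equivalent to the paper's choice $\eps=\overline\eps\,\mathcal{G}(\phi_0)^{1-2\theta}$. There the flow runs on $(0,\rho)$ for a time of order $\eps\,\mathcal{G}(\phi_0)^{\theta}$, which is the same length. In both versions, the energy closeness $|W(z)-W(b)|\le\delta$ is what allows the fixed-$\eps$ cases and the flow case to be merged into a single factor $\eps|W(z)-W(b)|^{1-2\theta}$.
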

\subsection{Linearized operator around positive cones} Let $b\in\mathcal{B}_0$ be a positive cone, we recall from \eqref{eq:linearized-operator} the linearized operator $L_b:=-\Delta_\theta -\lambda(\beta)+\frac\gamma2(\gamma-1)b^{\gamma-2}$ on the sphere $\partial B_1$. Since $b>0$ on $\partial B_1$, the kernel of $L_b$ defined in \eqref{eq:general-kernel} is finite-dimensional and given by
		$$K:=\ker(L_b)=\{\phi\in H^1(\partial B_1): L_b\phi=0\text{ on }\partial B_1\},\quad N:={\rm dim}\, K\in\N,$$ Thus there are $N$ orthonormal eigenfunctions $\Phi_1,\dots, \Phi_{N},$ such that $K=\text{span}\, \{\Phi_1,\dots, \Phi_{N}\}.$
		Moreover, there is a sequence of eigenvalues $\{\lambda_j\}\subset\R\setminus\{0\}$, counted with multiplicity, with $\lambda_{1}\le\ldots\le \lambda_j\le \ldots,$ and satisfying $\lambda_j\to+\infty$ as $j\to+\infty$, and there is a sequence of orthonormal eigenfunctions $\{\varphi_j\}\subset H^1(\partial B_1)$ such that $L_b \varphi_j=\lambda_j\varphi_j$ on $\partial B_1.$ 
		In particular, $\{\Phi_j\}_{j=1}^N \cup \{\varphi_j\}_{j\ge0}$
		is an orthonormal basis of $H^1(\partial B_1)$. 
		
		We also define the eigenspaces corresponding to negative and positive eigenvalues in the following way.
		If $M\in\N$ is such that $\varphi_1,\ldots,\varphi_M$ correspond to negative eigenvalues and $\varphi_{M+1},\ldots,\varphi_{M+j},\ldots$ correspond to positive eigenvalues, we define
		\be\label{eq:E<andE>}K_<:=\text{ span}\, \{\varphi_1,\ldots,\varphi_{M}\}\quad\text{and}\quad K_>:=\text{ span}\, \{\varphi_{M+1},\ldots,\varphi_{M+j},\ldots\}.\ee
        In particular $H^1(\partial B_1)= K \oplus K^\perp$, where $K^\perp = K_< \oplus K_>$.
		
        \subsection{Decomposition of the energy}
		Recalling the spherical energies in \cref{subsection:spherical-energies}, in the next lemma we decompose the spherical Weiss' energy $\mathcal{F}$ in terms of the spherical energies $\mathcal{G}$ and $\mathcal{Q}$. 
		\begin{lemma}\label{lemma:decomposition}
			For every $\ell>0$, there is $\delta=\delta(d,\gamma,b,\ell)>0$ such that the following holds. 
			Let $\psi_1,\psi_2\in H^1(\partial B_1)$ such that $\|\psi_1\|_{L^\infty(\partial B_1)}+\|\psi_2\|_{L^\infty(\partial B_1)}\le \delta.$ We suppose that $\delta \mathcal{G}(\psi_1)[\psi_2]=0$, 
			then the following decomposition holds 
			\bea\label{eq:decomp1} \mathcal{F}(b+\psi_1+\psi_2)-\mathcal{F}(b)=\mathcal{G}(\psi_1)+{\mathcal{Q}}(\psi_2)+R(\psi_2)
			\eea where \bea\label{eq:error} |R(\psi_2)|\le \ell \int_{\partial B_1}\psi_2^2\,d\HH^{d-1}.\eea
		\end{lemma}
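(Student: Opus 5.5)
We expand $\mathcal{F}(b+\psi_1+\psi_2)$ around $b+\psi_1$ in the direction $\psi_2$, and then compare the resulting second-order term with $\mathcal{Q}$. Since $b\in\mathcal{B}_0$ is continuous and strictly positive on $\partial B_1$, there is $m>0$ with $b\ge m$ on $\partial B_1$. We choose $\delta\le m/4$, so that $b+\psi_1\ge 3m/4$ and $b+\psi_1+t\psi_2\ge m/2$ for all $t\in[0,1]$. The map $s\mapsto s^\gamma$ is then smooth on $[m/2,\infty)$, with all derivatives bounded in terms of $m$, $\gamma$ and $\|b\|_{L^\infty}$. This positivity is the only structural input needed; in particular, non-negativity of the competitors is automatic.

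\emph{Step 1: the exact identity.} By definition,
\[
\mathcal{F}(b+\psi_1+\psi_2)-\mathcal{F}(b)=\mathcal{G}(\psi_1+\psi_2)=\mathcal{G}(\psi_1)+\big(\mathcal{G}(\psi_1+\psi_2)-\mathcal{G}(\psi_1)\big).
\]
The Dirichlet and $\lambda(\beta)$ parts of $\mathcal{G}$ are quadratic, so the difference equals
\[
\int_{\partial B_1}\Big(2\nabla_\theta\psi_1\cdot\nabla_\theta\psi_2-2\lambda(\beta)\psi_1\psi_2+|\nabla_\theta\psi_2|^2-\lambda(\beta)\psi_2^2\Big)\,d\HH^{d-1}
+\int_{\partial B_1}\Big((b+\psi_1+\psi_2)^\gamma-(b+\psi_1)^\gamma-\gamma b^{\gamma-1}\psi_2\Big)\,d\HH^{d-1}.
\]
Subtracting $\delta\mathcal{G}(\psi_1)[\psi_2]=0$, written with the normalization in the paper, removes the mixed gradient and $\lambda(\beta)\psi_1\psi_2$ terms. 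It also replaces $-\gamma b^{\gamma-1}\psi_2$ by $-\gamma(b+\psi_1)^{\gamma-1}\psi_2$. We are left with
\[
\int_{\partial B_1}\Big(|\nabla_\theta\psi_2|^2-\lambda(\beta)\psi_2^2\Big)
+\int_{\partial B_1}\Big((b+\psi_1+\psi_2)^\gamma-(b+\psi_1)^\gamma-\gamma(b+\psi_1)^{\gamma-1}\psi_2\Big).
\]
The main bookkeeping point is to check the factor $2$ carefully, since $\delta\mathcal{G}$ is defined with a $\frac12$.

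\emph{Step 2: the error term.} Taylor's formula with integral remainder gives
\[
(b+\psi_1+\psi_2)^\gamma-(b+\psi_1)^\gamma-\gamma(b+\psi_1)^{\gamma-1}\psi_2=\frac{\gamma(\gamma-1)}{2}(b+\psi_1)^{\gamma-2}\psi_2^2+\rho,
\qquad |\rho|\le C(m,\gamma)|\psi_2|^3\le C\delta\,\psi_2^2 .
\]
The mean value theorem gives
\[
\big|(b+\psi_1)^{\gamma-2}-b^{\gamma-2}\big|\le C(m,\gamma)|\psi_1|\le C\delta .
\]
Substituting these two bounds, the left-over expression equals $\mathcal{Q}(\psi_2)+R(\psi_2)$, with
\[
|R(\psi_2)|\le C(d,\gamma,b)\,\delta\int_{\partial B_1}\psi_2^2\,d\HH^{d-1}.
\]
Choosing $\delta\le \min\{m/4,\ \ell/C\}$ concludes the proof.

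There is no real obstacle in this argument. The only points needing care are the uniform lower bound on $b+\psi_1+t\psi_2$, which makes the fractional powers smooth, and the fact that all constants depend only on $d$, $\gamma$, $b$, through $m$ and $\|b\|_{L^\infty}$.
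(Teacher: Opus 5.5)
Your proof is correct and follows the paper's argument: you expand $\mathcal{G}(\psi_1+\psi_2)$ around $\psi_1$, use $\delta\mathcal{G}(\psi_1)[\psi_2]=0$ to cancel the cross terms and replace $b^{\gamma-1}$ by $(b+\psi_1)^{\gamma-1}$, and control the second-order coefficient against $b^{\gamma-2}$ via $b\ge m>0$ and the smallness of $\delta$. The only cosmetic difference is that the paper compares a pointwise Lagrange remainder $(b+\psi_1+\xi_1\psi_2)^{\gamma-2}$ directly with $b^{\gamma-2}$, whereas you split the error into a cubic integral remainder plus a mean-value estimate; your version has the small advantage of avoiding any measurability question about the intermediate point $\xi_1$.
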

		\begin{proof} 
			Fix $\ell>0$ and set $\psi:=\psi_1+\psi_2$,
			denoting by \bea \mathcal{G}_0(\psi):=\int_{\partial B_1}\Big(|\nabla_\theta \psi|^2-\lambda(\beta)\psi^2\Big)\,d\HH^{d-1},\eea we can find $\xi_1\in[0,1]$ depending also on $\psi_1$ and $\psi_2$, such that
            \begin{align*} \mathcal{G}(\psi_1+\psi_2)&=\mathcal{G}_0(\psi_1)+\mathcal{G}_0(\psi_2)+\int_{\partial B_1}\Big(2\nabla_\theta\psi_1\cdot\nabla_\theta\psi_2-2\lambda(\beta)\psi_1\psi_2\Big)\,d\HH^{d-1}\\&\qquad+\int_{\partial B_1}\Big(( b+\psi_1+\psi_2)^{\gamma}-b^\gamma-\gamma b^{\gamma-1} (\psi_1+\psi_2)\Big)\,d\HH^{d-1}\\&=\mathcal{G}_0(\psi_1)+\mathcal{G}_0(\psi_2)+\int_{\partial B_1}\Big(2\nabla_\theta\psi_1\cdot\nabla_\theta\psi_2-2\lambda(\beta)\psi_1\psi_2\Big)\,d\HH^{d-1}\\&\qquad+\int_{\partial B_1}\Big(( b+\psi_1)^{\gamma}+\gamma( b+\psi_1)^{\gamma-1}\psi_2+\frac\gamma2(\gamma-1)( b+\psi_1+\xi_1\psi_2)^{\gamma-2}\psi_2^2\\&\qquad\qquad-b^{\gamma}-\gamma b^{\gamma-1} (\psi_1+\psi_2)\Big)\,d\HH^{d-1}.
			\end{align*} Using the definition of $\mathcal{G}(\psi_1)$, we have
			\bea \mathcal{G}(\psi_1+\psi_2)&=\mathcal{G}(\psi_1)+\mathcal{G}_0(\psi_2)+\int_{\partial B_1}\Big(2\nabla_\theta\psi_1\cdot\nabla_\theta\psi_2-2\lambda(\beta)\psi_1\psi_2\Big)\,d\HH^{d-1}\\&\qquad+\int_{\partial B_1}\Big(\gamma( b+\psi_1)^{\gamma-1}\psi_2+\frac\gamma2(\gamma-1)( b+\psi_1+\xi_1\psi_2)^{\gamma-2}\psi_2^2-\gamma b^{\gamma-1} \psi_2\Big)\,d\HH^{d-1}\\&=\mathcal{G}(\psi_1)+\mathcal{G}_0(\psi_2)+\int_{\partial B_1}\frac\gamma2(\gamma-1)( b+\psi_1+\xi_1\psi_2)^{\gamma-2}\psi_2^2\,d\HH^{d-1},
			\eea 
            where in the second equality, we used that $\delta \mathcal{G}(\psi_1)[\psi_2]=0$.
			By using that $b>0$ on $\partial B_1$, we can choose $\delta$ small enough so that 
			\bea \Big|\int_{\partial B_1}\big( b+\psi_1+\xi_1\psi_2)^{\gamma-2}-b^{\gamma-2}\big)\psi_2^2\,d\HH^{d-1}\Big|\le \ell \int_{\partial B_1}\psi_2^2\,d\HH^{d-1},
			\eea which concludes the proof, by using the definition of $\mathcal{Q}$ in \eqref{def:Q}.
		\end{proof} 
		\subsection{Lyapunov-Schmidt reduction}		
		The following proposition is a standard application of the Lyapunov-Schmidt reduction (see, for instance, \cite[Section 3]{Simon} or \cite[Lemma B.1]{esv}), and it applies to the case in which the linearized operator $L_b$ has a nontrivial kernel $K$. 
       
        \begin{proposition}\label{lemma:Lyapunov-Schmidt}
			Let $b\in\mathcal{B}_0$, and assume that $N:={\rm dim}\, K>0$. Then, there exist a neighborhood $U\subset K$ of $0$ in $C^{1,\alpha}(\partial B_1)$ and an analytic map $$Y:K\cap U \to K^\perp\subset H^1(\partial B_1)$$ 
            such that the following holds.
            \begin{itemize}
            \item $Y(0)=0$, $\delta Y(0)=0$. Moreover,
            \be\label{eq:cond-on-perp}
			P_{K^\perp}\big(\delta\mathcal{G}(\phi+Y(\phi))\big)=0,\quad \mbox{for every }\phi \in K\cap U.
            \ee
    \item Let $\Phi_1,\dots,\Phi_N$ be an orthonormal basis of $K$. Then, there exists $\rho>0$ such that, for every $\mu\in B_\rho\subset\R^N$, the reduced functional $G:B_\rho\to\R$ defined by
\be\label{def:G-finito-dim}
G(\mu):=\mathcal{G}(\Phi_\mu +Y\left(\Phi_\mu\right)),
\quad \mbox{with}\quad
\Phi_\mu:=\sum_{j=1}^N\mu_j\Phi_{j},
\ee
satisfies $P_{K}\big(\delta\mathcal{G}(\Phi_\mu+Y(\Phi_\mu))\big)=\nabla_\mu G(\mu),$ for every $\mu\in B_{\rho}$.
			\end{itemize}
		\end{proposition}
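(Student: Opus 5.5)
We prove \cref{lemma:Lyapunov-Schmidt} by the implicit function theorem in H\"older spaces, following \cite[Section 3]{Simon}, and then verify the gradient identity by direct differentiation.

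\emph{Functional setting.} Since $b\in\mathcal{B}_0$, we have $b\ge c_0>0$ on $\partial B_1$, and $b$ is smooth on the sphere (it solves a semilinear elliptic equation with smooth nonlinearity away from $0$). Set $X:=C^{2,\alpha}(\partial B_1)$ and $Z:=C^{0,\alpha}(\partial B_1)$. For $\|\psi\|_{L^\infty}<c_0/2$ the Euler--Lagrange operator
\[
\mathcal{M}(\psi):=-\Delta_\theta\psi-\lambda(\beta)\psi+\frac\gamma2\big((b+\psi)^{\gamma-1}-b^{\gamma-1}\big)
\]
is an analytic map from a neighborhood of $0$ in $X$ into $Z$. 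This holds because $t\mapsto t^{\gamma-1}$ is real analytic near any $t\ge c_0/2$, and composition with such a function is analytic on H\"older spaces. In $L^2$ we have $\frac12\delta\mathcal{G}(\psi)[\cdot]=\langle\mathcal{M}(\psi),\cdot\rangle$, and $D\mathcal{M}(0)=L_b$. Since $L_b$ is self-adjoint and Fredholm of index zero with kernel $K$, consisting of smooth functions by elliptic regularity, $L_b\colon X\cap K^\perp\to Z\cap K^\perp$ is an isomorphism by Schauder theory. Here $K^\perp$ denotes the $L^2$-orthogonal complement, which coincides with the complement in the statement up to the choice of inner product. The $H^1$-orthogonality of the eigenbasis is compatible with this: eigenfunctions of $L_b$ are orthogonal in both $L^2$ and $H^1$.

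\emph{Reduction.} Define $\mathcal{N}(\phi,\eta):=P_{K^\perp}\mathcal{M}(\phi+\eta)$ for $\phi\in K$ and $\eta\in X\cap K^\perp$. Then $\mathcal{N}(0,0)=0$, and $D_\eta\mathcal{N}(0,0)=P_{K^\perp}L_b=L_b$ restricted to $X\cap K^\perp$, which is invertible. The analytic implicit function theorem gives a neighborhood $U$ of $0$ in $K$ and an analytic map $Y\colon K\cap U\to X\cap K^\perp\subset H^1(\partial B_1)$ with $\mathcal{N}(\phi,Y(\phi))=0$, which is \eqref{eq:cond-on-perp}, and with $Y(0)=0$. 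The norm on $K$ is irrelevant, since $K$ is finite dimensional. Differentiating the identity at $0$ gives $L_b\,\delta Y(0)[\phi]=-P_{K^\perp}L_b\phi=0$. Since $\delta Y(0)[\phi]\in K^\perp$, this forces $\delta Y(0)=0$. Shrinking $U$ keeps $\|\phi+Y(\phi)\|_{L^\infty}<c_0/2$, so $\mathcal{G}$ is well defined and analytic along the graph.

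\emph{Gradient identity.} Pick $\rho$ with $\Phi_\mu\in U$ for $\mu\in B_\rho$. By the chain rule,
\[
\partial_{\mu_j}G(\mu)=\delta\mathcal{G}(\Phi_\mu+Y(\Phi_\mu))\big[\Phi_j+\delta Y(\Phi_\mu)[\Phi_j]\big].
\]
The second argument splits as $\Phi_j\in K$ plus $\delta Y(\Phi_\mu)[\Phi_j]\in K^\perp$. The $K^\perp$ part is annihilated because of \eqref{eq:cond-on-perp}. Hence $\partial_{\mu_j}G(\mu)$ is the pairing of $\delta\mathcal{G}$ with $\Phi_j$. Identifying $K$ with $\R^N$ via the orthonormal basis $\{\Phi_j\}$, this is exactly $P_K\big(\delta\mathcal{G}(\Phi_\mu+Y(\Phi_\mu))\big)=\nabla_\mu G(\mu)$.

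\emph{Main obstacle.} The only delicate point is analyticity of the nonlinearity $(b+\psi)^{\gamma-1}$, which is not analytic near $0$ for $\gamma<2$. This is exactly where positivity of $b$ on the whole sphere is used, together with working in $C^{2,\alpha}$ so that $L^\infty$ smallness is available. For cones with zeros this step fails, which is why the paper treats $\mathcal{B}_\ell$, $\ell\ge1$, separately.
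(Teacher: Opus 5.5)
Your proposal is correct and is essentially the argument the paper intends: the paper gives no proof beyond calling this a standard Lyapunov--Schmidt reduction (citing \cite[Section 3]{Simon} and \cite[Lemma B.1]{esv}) and noting that analyticity comes from $b>0$ on $\partial B_1$, which is exactly what you carry out in H\"older spaces. One side remark needs correcting: for non-constant $b$, the eigenfunctions of $L_b$ are $L^2$-orthogonal, and orthogonal for the quadratic form $\mathcal{Q}$, but in general not $H^1$-orthogonal, since for $i\neq j$ one has $\langle\varphi_i,\varphi_j\rangle_{H^1}=-\frac\gamma2(\gamma-1)\int_{\partial B_1}b^{\gamma-2}\varphi_i\varphi_j\,d\HH^{d-1}$; this does not affect your proof, because you consistently use the $L^2$-orthogonal complement and $L^2$-projections, which is also what the paper's $K^\perp=K_<\oplus K_>$ is.
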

        We point out that the analyticity of the map $Y$ in \cref{lemma:Lyapunov-Schmidt} will be crucial to apply the Łojasiewicz inequality in \cref{lemma:uguale} below, and it follows by the fact that $\mathcal{G}$ is analytic in a neighborhood of $0$, since $b\in\mathcal{B}_0$. 
        \subsection{Decomposition of the trace and definition of the competitor} Let $c\in H^1(\partial B_1)$ be a non-negative trace  satisfying the closeness assumptions \eqref{eq:closness-assumption-radial}. We decompose $\phi:=c-b$ as 
        $$\phi=\phi^K+\phi^\perp,\quad \text{where}\quad \phi^K\in K,\ \phi^\perp\in K^\perp.
        $$
        Recalling the map $Y$ from \cref{lemma:Lyapunov-Schmidt}, we observe that the elements in $K$ are regular enough to apply \cref{lemma:Lyapunov-Schmidt}. Hence, if we choose $\delta$ small enough, we have $\phi^K\in U$.
        Since $Y(\phi^K)\in K^\perp$ and $K^\perp=K_<\oplus K_>$, we can decompose 
        $$
        \phi^\perp-Y(\phi^K) = \phi_< +\phi_>,$$ where $\phi_<\in K_<$ and $\phi_>\in K_>$ are respectively the lower and higher modes corresponding to the linearized operator $L_b$. If we set $\phi_0:=\phi^K+Y(\phi^K)$, then we decompose the trace $c$ as $$c=b+\phi=b+\big(\phi^K+Y(\phi^K)\big)+\big(\phi^\perp-Y(\phi^K)\big)=b+\phi_<+\phi_0+\phi_>.$$ 
		Moreover, recalling the orthonormal basis $\Phi_1,\ldots,\Phi_{N}$ of $K$ and the definition of $\Phi_\mu$ in \eqref{def:G-finito-dim}, we can write $$\phi_0=\Phi_{\mu^0}+Y(\Phi_{\mu^0})=\sum_{j=1}^N \mu_j^0\Phi_{j}+Y\left(\sum_{j=1}^N \mu_j^0\Phi_{j}\right),$$
        where $\mu^0 :=(\mu_1^0,\ldots,\mu_N^0)\in\R^N$ corresponds to the coordinates of $\phi^K$ in the kernel $K$.

		The $\beta$-homogeneous extension of $c$ is $$z(r,\theta)=r^{\beta} b(\theta)+r^{\beta}\phi_<(\theta)+r^{\beta}\phi_0(\theta)+r^{\beta}\phi_>(\theta).$$ We define the competitor $$h(r,\theta):=r^{\beta} b(\theta)+r^{\beta}\psi_<(r,\theta)+r^\beta \psi_0(r,\theta)+r^{\beta}\psi_>(r,\theta),$$ where, for some $\rho=\rho(d,\gamma,b)\in(0,1)$ and $\tau=\tau(d,\gamma,b)>0$ to be chosen later, we set 
        \be\label{e:psi<>}\psi_<(r,\theta):=\frac{(r-\rho)_+^{\beta-\tau}}{r^\beta(1-\rho)^{\beta-\tau}}\phi_<(\theta),\qquad \psi_>(r,\theta):=\frac{(r-\rho)_+^{\beta+\tau}}{r^\beta(1-\rho)^{\beta+\tau}}\phi_>(\theta),\ee
		$$\psi_0(r,\theta):=\Phi_{\mu_j(\eta(r))}+Y(\Phi_{\mu_j(\eta(r))})=\sum_{j=1}^N \mu_j(\eta(r))\Phi_j(\theta)+Y\left(\sum_{j=1}^N \mu_j(\eta(r))\Phi_j(\theta)\right),$$ where $\mu_j(t)$, with $j=1,\ldots,N$, and $\eta(r)$  are defined as follows.
		
		Given $G$ as the reduced functional associated to $\mathcal{G}$ at $b$ (see \eqref{def:G-finito-dim}), 
		we define
		$\mu(t):=(\mu_1(t),\ldots,\mu_N(t))$ through the gradient flow \be\label{eq:flow}\begin{cases}
			\mu'(t)=-\frac{\nabla_\mu G(\mu(t))}{|\nabla_\mu G(\mu(t))|},\\
			\mu(0)=\mu^0, 
		\end{cases}\ee and we take $\mu'(t)=0$ if $|\nabla_\mu G(\mu(t))|=0$.
		On the other hand, $\eta(r)$ is chosen in such a way \be\label{def:etaeta1}\eta(r):=\begin{cases}
			0\quad&\text{for }r\in[\rho,1),\\
			\eta_1(r)\quad&\text{for }r\in(0,\rho)
		\end{cases}\ee for some $\eta_1(r)$ to be chosen, with $\eta_1(\rho)=0$.
		\begin{remark}
        We point out the effect of the correction term $\eta:[0,1] \to \R$. If we can choose $\eta_1\equiv0$ (for example, if $\phi_0\equiv0$), then no error term comes from the flow, and we can prove a classical epiperimetric inequality (i.e., $\sigma=0$). This happens also in the case when $b$ is sub-integrable (see \cref{def:sub-integrability}). But in general, we need to choose $\eta_1$ depending on a suitable power of the energy, and thus we get $\sigma>0$ (see \cref{lemma:uguale} and \cref{subsec:ultima} below).
        \end{remark}
		\begin{remark}\label{r:h-admissible}
		We observe that $h$ is an admissible competitor. Indeed, we have that $h=c$ on $\partial B_1$ and, given $\rho$, $\tau$ and $\eta_1$, we can choose $\delta$ small enough, so that the competitor $h$ is non-negative in $B_1$. Indeed, by the closeness assumptions \eqref{eq:closness-assumption-radial} and since $K_<$ and $K$ are finite-dimensional, $$\|\phi_<\|_{L^\infty(\partial B_1)}+\|\phi^K\|_{L^\infty(\partial B_1)}\le C\|\phi_<\|_{L^2(\partial B_1)}+C\|\phi^K\|_{L^2(\partial B_1)}\le C\|\phi\|_{L^2(\partial B_1)} \le C\delta.$$
		Moreover, since $\|Y(\phi^K)\|_{L^\infty(\partial B_1)}\le C\delta$, $$\|\phi_>\|_{L^\infty(\partial B_1)}\le \|\phi\|_{L^\infty(\partial B_1)}+\|\phi_<\|_{L^\infty(\partial B_1)}+\|\phi_0\|_{L^\infty(\partial B_1)}\le C\delta.$$ Therefore, the function $\psi:=\psi_<+\psi_0+\psi_>$ inherits the bound $
\|\psi\|_{L^\infty(B_1)}\le C\delta$. Finally, taking $\delta$ sufficiently small, we infer that
		$$h=r^\beta b+r^\beta\psi\ge r^\beta (\overline c- C\delta )\ge0,$$ where $\overline c:=\inf_{\partial B_1}b>0$.
		\end{remark}
		\subsection{Computations for the epiperimetric inequality}
        First, by setting $\overline W(v):=W(v)-W(b)$, we need to prove the epiperimetric inequality
        $$\overline{W}(h) - (1- \eps |\overline{W}(z)|^\sigma ) \overline W(z) \leq 0.$$
        
		We start by decomposing the quantity $\overline{W}(h) - (1- \eps) \overline W(z)$ into the sum of three terms, $E_<$, $E_0$ and $E_>$, corresponding respectively to the contributions of the lower, zero, and higher modes. In the next subsections, we will show that the sum of these three quantities is non-positive, and thus we prove the epiperimetric inequality.
		
		For the sake of readability, in the following we use the notation $$\psi_r:=\psi_<^r+\psi_0^r+\psi_>^r,\quad\text{where}\quad \psi_<^r=\psi_<(r,\cdot),\quad \psi_0^r=\psi_0(r,\cdot),\quad \psi_>^r=\psi_>(r,\cdot)$$
       Since $h=r^\beta (b+\psi_r)$, with $\psi_1=\phi$, the slicing lemma \cref{lemma:slicing} yields
		\bea \overline W(h)-(1- \eps)\overline W(z)&=\int_0^1r^{d+2\beta-3}\Big(\mathcal{F}( b+ \psi_r)-\mathcal{F}(b)-(1-\eps)(\mathcal{F}(b+\phi)-\mathcal{F}(b))\Big)\,d\HH^{d-1}\,dr\\&\qquad +\int_0^1\int_{\partial B_1}r^{d+2\beta-1}(\partial_r \psi_r)^2\,d\HH^{d-1}\,dr.\eea 
		By \eqref{eq:cond-on-perp}, and using that $\psi_<^r\equiv \psi_>^r\equiv0$ for $r\in(0,\rho)$ and $\psi_0^r=\phi_0$ for $r\in(\rho,1)$, we have
		$$\delta\mathcal{G}(\phi_0)[\phi_<+\phi_>]=0\quad\text{and}\quad \delta\mathcal{G}(\psi_0^r)[\psi_<^r+\psi_>^r]=0\quad\text{for every }r\in(0,1)$$
        Moreover, it is immediate to verify that if $\psi_1\in K_<$ and $\psi_2\in K_>$, then $$\mathcal{Q}(\psi_1+\psi_2)=\mathcal{Q}(\psi_1)+\mathcal{Q}(\psi_2),$$ 
		where $K_<$, $K_>$ are the eigenspaces corresponding to negative and positive eigenvalues respectively, defined in \eqref{eq:E<andE>}. Since $\phi_\gtrless,\psi_\gtrless^r\in K_\gtrless$ for every $r\in(0,1)$, the decomposition in \cref{lemma:decomposition} implies that
		\bea \overline W(h)-(1-\eps)\overline W(z)&=
		\int_0^1 r^{d+2\beta-3}\Big( \mathcal{Q}(\psi_<^r)-(1-\eps)\mathcal{Q}(\phi_<)+ \mathcal{G}(\psi_0^r)-(1-\eps)\mathcal{G}(\phi_0)\\&\qquad+\mathcal{Q}(\psi_>^r)-(1-\eps)\mathcal{Q}(\phi_>)
		\Big)\,dr+R(\phi_< +\phi_>)\\&\qquad \qquad+\int_0^1\int_{\partial B_1}r^{d+2\beta-1}(\partial_r \psi_r)^2\,d\HH^{d-1}\,dr,
		\eea 
        where, choosing $\delta$ (depending on $\tau$) small enough, we have \be\label{e:E1}|R(\phi_< + \phi_>)|\le \tau^2 \int_{\partial B_1}\phi_<^2\,d\HH^{d-1}+\tau^2 \int_{\partial B_1}\phi_>^2\,d\HH^{d-1}.\ee
		Therefore we have that
		\be\label{eq:epi-with-E} \overline W(h)-(1-\eps)\overline W(z)\le E_<+E_0+E_>,\ee where
		\bea E_\gtrless&:=\int_0^1 r^{d+2\beta-3}\Big( \mathcal{Q}(\psi_\gtrless^r)-(1-\eps)\mathcal{Q}(\phi_\gtrless)\Big)\,dr+\tau^2\int_{\partial B_1}\phi_\gtrless^2\,d\HH^{d-1}\\&\qquad+2\int_0^1\int_{\partial B_1} r^{d+2\beta-1}(\partial_r \psi_\gtrless^r)^2\,d\HH^{d-1}\,dr,
		\eea and
		\bea E_0&:=\int_0^1 r^{d+2\beta-3}\Big( \mathcal{G}(\psi_0^r)-(1-\eps)\mathcal{G}(\phi_0)\Big)\,dr+2\int_0^1\int_{\partial B_1} r^{d+2\beta-1}(\partial_r \psi_0^r)^2\,d\HH^{d-1}\,dr.
		\eea  
	
		\subsection{Estimate of $E_<$ and $E_>$}
		We first estimate $E_<$ and $E_>$ in the following lemma. 
		\begin{lemma}\label{lemma:maggiore}
			There are $\tau>0$, $\rho\in(0,1)$, $\eps_0>0$ and $c_0>0$ depending only on $d$, $\gamma$ and $b$ such that the following holds. For every $\eps\in(0,\eps_0]$, if $\psi_\gtrless$ is chosen as in \eqref{e:psi<>} using these values of $\tau$ and $\rho$, then we have \be\label{e:eqE>E<} E_<+E_>\le -c_0\big(|\mathcal{Q}(\phi_<)|+|\mathcal{Q}(\phi_>)|\big).\ee  
		\end{lemma}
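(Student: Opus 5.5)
Since both $\psi_<$ and $\psi_>$ have the form $\psi^r=f(r)\varphi(\theta)$, with $\varphi$ fixed in $K_<$ or $K_>$, I would compute $E_<$ and $E_>$ explicitly by separation of variables. Set $n:=d+2\beta-2$, so that $r^{d+2\beta-3}=r^{n-1}$. Quadraticity of $\mathcal{Q}$ gives $\mathcal{Q}(\psi^r)=f(r)^2\mathcal{Q}(\varphi)$ and $\int_{\partial B_1}(\partial_r\psi^r)^2=f'(r)^2\|\varphi\|_{L^2}^2$. Write $\varphi=\sum_j a_j\varphi_j$ in eigenfunctions of $L_b$ with eigenvalues $\lambda_j$. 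Then $\mathcal{Q}(\varphi)=\sum\lambda_j a_j^2$, and we have $|\mathcal{Q}(\varphi)|\ge \lambda_*\|\varphi\|_{L^2}^2$, where $\lambda_*:=\min_j|\lambda_j|>0$ is a spectral gap. For $K_<$ this is a finite set of negative eigenvalues; for $K_>$ it is $\lambda_{M+1}>0$. The only data needed are $\int_0^1 r^{n-1}f^2\,dr$ and $\int_0^1 r^{n+1}(f')^2\,dr$ for $f_\pm(r)=(r-\rho)_+^{\beta\pm\tau}r^{-\beta}(1-\rho)^{-(\beta\pm\tau)}$.

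First take $\rho=0$, i.e.\ $f=r^{\pm\tau}$. Then
\[
\int_0^1 r^{n-1}f^2=\frac1{n\pm2\tau},\qquad \int_0^1 r^{n+1}(f')^2=\frac{\tau^2}{n+2\pm2\tau}.
\]
\emph{Higher modes.} $\mathcal{Q}(\phi_>)>0$, so
\[
E_>\le \mathcal{Q}(\phi_>)\Big(\frac1{n+2\tau}-\frac{1-\eps}{n}\Big)+\tau^2\|\phi_>\|_{L^2}^2\Big(1+\frac{2}{n+2}\Big).
\]
The first bracket is about $-\tfrac{2\tau}{n^2}+\tfrac{\eps}{n}$. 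Using $\|\phi_>\|^2\le \mathcal{Q}(\phi_>)/\lambda_{M+1}$, choose $\tau$ small so that the $O(\tau)$ negative gain beats the $O(\tau^2)$ losses, then $\eps_0\ll\tau$. This gives $E_>\le -c\tau\,\mathcal{Q}(\phi_>)$.

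\emph{Lower modes.} $\mathcal{Q}(\phi_<)<0$, and $f=r^{-\tau}$ increases the weight: $\int r^{n-1}f^2=\frac1{n-2\tau}>\frac1n$. So
\[
\mathcal{Q}(\phi_<)\Big(\frac1{n-2\tau}-\frac{1-\eps}{n}\Big)\approx -|\mathcal{Q}(\phi_<)|\Big(\frac{2\tau}{n^2}+\frac{\eps}{n}\Big),
\]
and the $\tau^2$ terms are absorbed through $\|\phi_<\|^2\le|\mathcal{Q}(\phi_<)|/\lambda_*$. This gives $E_<\le -c\tau\,|\mathcal{Q}(\phi_<)|$.

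\emph{Main obstacle: $\rho>0$.} The cut-off $\rho>0$ is needed because $r^{-\tau}$ is unbounded near $0$, and an $L^\infty$-small $\psi_<$ is needed for admissibility (\cref{r:h-admissible}). I would show that all the integrals above depend continuously on $\rho$ as $\rho\to0$, for fixed $\tau$. For $f_\pm$ the integrands are bounded by integrable functions, since $n-1-2\tau>-1$ and $(r-\rho)_+^{\beta\pm\tau-1}$ is integrable. So the $\rho$-perturbation of each coefficient is $o_\rho(1)$. Choosing $\rho$ small relative to $\tau$ keeps the strict negative margin $c\tau$. The order of choices is therefore $\tau$, then $\rho$, then $\eps_0$, then finally $\delta$ (through \eqref{e:E1}). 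Summing the two estimates gives \eqref{e:eqE>E<}.
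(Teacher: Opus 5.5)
Your proposal is correct and is essentially the paper's proof. Both use separation of variables, the spectral-gap bound $\int_{\partial B_1}\phi_\gtrless^2\le|\mathcal{Q}(\phi_\gtrless)|/|\lambda_\gtrless|$ to absorb the $O(\tau^2)$ terms into the $O(\tau)$ gain, and then take $\tau$ small with $\eps_0\lesssim\tau$ (the paper sets $\eps_0=\tau/(d+2\beta-2)$); the only difference is that the paper fixes $\rho=\tau^2$ and tracks explicit $C\rho$, $C\rho^2$ errors, whereas you argue by continuity as $\rho\to0$ for fixed $\tau$.

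One harmless slip: for $f=r^{\pm\tau}$ one gets $\int_0^1 r^{n+1}(f')^2\,dr=\tau^2/(n\pm2\tau)$, not $\tau^2/(n+2\pm2\tau)$. This changes nothing, since only the order $\tau^2$ matters.
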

		\begin{proof}
			Throughout the proof, the symbols $\pm$ and $\gtrless$ are paired so that $+$ corresponds to $>$, while $-$ corresponds to $<$. Moreover, the constants $\tau$ and $\rho$ are chosen below.
			
            First of all, we observe that, since $\phi_\gtrless\in K_\gtrless$, then $\mathcal{Q}(\phi_\gtrless)\gtrless 0$. 
            Moreover, by decomposing $\phi_\gtrless$ in eigenfunctions of $L_b$, it is immediate to verify that \be\label{eq:est>}|\lambda_{\gtrless}|\int_{\partial B_1}\phi_{\gtrless}^2\,d\HH^{d-1}=\pm\lambda_{\gtrless}\int_{\partial B_1}\phi_\gtrless^2\,d\HH^{d-1}\le \pm \mathcal{Q}(\phi_\gtrless)=|\mathcal{Q}(\phi_\gtrless)|,\ee where $\lambda_{>}$ is the smallest positive eigenvalue and $\lambda_<$ is the largest negative eigenvalue.
			
			Recalling the definition of $\psi_\gtrless^r$ in \eqref{e:psi<>}, and exploiting the sign of  $\mathcal{Q}(\phi_\gtrless)$ in \eqref{eq:est>},
			we have \bea \int_0^1r^{d+2\beta-3}\mathcal{Q}(\psi_\gtrless^r)\,dr&=\pm\frac{|\mathcal{Q}(\phi_\gtrless)|}{(1-\rho)^{2\beta\pm 2\tau}}\int_\rho^1\frac{(r-\rho)^{2\beta\pm 2\tau}}{r^{2\beta}}r^{d+2\beta-3}\,dr
			\\&\le 
			|\mathcal{Q}(\phi_\gtrless)|\left(\pm \frac{1}{d+2\beta-2\pm 2\tau}+ C\rho\right),
			\eea for $\rho$ small enough.
			Moreover, a direct computation of $\partial_r\psi_\gtrless^r$ shows that \bea \int_{\partial B_1} (\partial_r \psi_\gtrless^r)^2\,d\HH^{d-1}=\frac{(r-\rho)^{2\beta\pm 2\tau-2}}{r^{2\beta}(1-\rho)^{2\beta\pm2\tau}}\left(\pm \tau+\beta \frac \rho r\right)^2
			\int_{\partial B_1}\phi_\gtrless^2\,d\HH^{d-1}.\eea Then, by using \eqref{eq:est>}, we have \bea\int_0^1\int_{\partial B_1} r^{d+2\beta-1}(\partial_r \psi_\gtrless^r)^2\,d\HH^{d-1}\,dr&
			\le C\left(\tau^2+\rho^2\right)\int_{\partial B_1}\phi_\gtrless^2\,d\HH^{d-1}\le C\left(\tau^2+\rho^2\right)|\mathcal{Q}(\phi_\gtrless)|.
			\eea  
			We also have $$\int_0^1r^{d+2\beta-3}\mathcal{Q}(\phi_\gtrless)\,dr=\pm\frac{|\mathcal{Q}(\phi_\gtrless)|}{d+2\beta-2}.$$ 
			
			Choose $\tau$, and set $\eps_0:=\frac{\tau}{d+2\beta-2}$ and $\rho:=\tau^{2}$. By collecting the previous computations, for every $\eps\in(0,\eps_0]$, we have \bea E_\gtrless&\le\left(\pm\frac{1}{d+2\beta-2\pm2\tau}\mp(1-\eps)\frac{1}{d+2\beta-2}+C\rho+C\tau^2+C\rho^2\right)|\mathcal{Q}(\phi_\gtrless)|.\eea
            Then
            \bea E_>\le\left(\frac{1}{d+2\beta-2+2\tau}-\left(1-\frac{\tau}{d+2\beta-2}\right)\frac{1}{d+2\beta-2}+C\tau^{2}\right)|\mathcal{Q}(\phi_>)|,\eea
            while \bea E_<\le\left(-\frac{1}{d+2\beta-2-2\tau}+\frac{1}{d+2\beta-2}+C\tau^2\right)|\mathcal{Q}(\phi_<)|.\eea
            Therefore, there exists $c'_0>0$ such that \bea E_\gtrless&\le(-2c'_0\tau+C\tau^{2})|\mathcal{Q}(\phi_\gtrless)|\le -c'_0\tau|\mathcal{Q}(\phi_\gtrless)|,
			\eea where in the last inequality we choose $\tau$ small enough, concluding the proof.
		\end{proof}

		\subsection{Estimate of $E_0$}
        We proceed by estimating $E_0$, using as a key ingredient the Łojasiewicz inequality.
		\begin{lemma}\label{lemma:uguale}
			Let $\mathcal{G}(\phi_0)>0$ and let $\rho\in(0,1)$. Then, there is $\overline\eps=\overline\eps(d,\gamma,b,\rho)>0$, $C_0=C_0(d,\gamma,b,\rho)>0$ and $\sigma=\sigma(d,\gamma,b)\in(0,1/2]$, such that,
			setting $$\eta_1(r):=C_0\eps \mathcal{G}(\phi_0)^\sigma(\rho-r)_+\quad\text{and}\quad \eps:=\overline \eps\,\mathcal{G}(\phi_0)^{1-2\sigma},$$ and choosing $\delta=\delta(d,\gamma,b,\rho)$ small enough, we have $E_0\le 0.$
		\end{lemma}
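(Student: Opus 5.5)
We reduce $E_0$ to a one-dimensional computation along the flow \eqref{eq:flow}. Write $g(t):=G(\mu(t))$, so that $\mathcal{G}(\psi_0^r)=g(\eta(r))$ and $g(0)=G(\mu^0)=\mathcal{G}(\phi_0)>0$. By \eqref{eq:flow}, $g'(t)=-|\nabla_\mu G(\mu(t))|\le 0$ and $|\mu'|\le1$. Hence $g$ is non-increasing, and $|\mu(t)-\mu^0|\le t$.

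Since $\eta\equiv0$ on $[\rho,1)$, the integrand $\mathcal{G}(\psi_0^r)-(1-\eps)\mathcal{G}(\phi_0)$ equals $\eps\,\mathcal{G}(\phi_0)$ there. The radial derivative $\partial_r\psi_0^r$ also vanishes on $(\rho,1)$. On $(0,\rho)$ we have $\partial_r\psi_0^r=\eta_1'(r)\,\big(\Phi_{\mu'}+\delta Y(\Phi_{\mu})[\Phi_{\mu'}]\big)$. Since $Y$ is analytic with $\delta Y(0)=0$, this gives $\int_{\partial B_1}(\partial_r\psi_0^r)^2\le C|\eta_1'|^2=C C_0^2\eps^2\mathcal{G}(\phi_0)^{2\sigma}$. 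Therefore
\[
E_0\le \eps\,\mathcal{G}(\phi_0)\int_0^1 r^{d+2\beta-3}\,dr+\int_0^\rho r^{d+2\beta-3}\big(g(\eta_1(r))-g(0)\big)\,dr + C C_0^2\eps^2\mathcal{G}(\phi_0)^{2\sigma}.
\]

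The key step is to show that $g$ drops by a definite amount proportional to $\eps\,\mathcal{G}(\phi_0)$. We apply the Łojasiewicz inequality to the analytic function $G$ near $0$, where $G(0)=0$: there are $\sigma\in(0,1/2]$ and $c_L>0$ with $|\nabla G(\mu)|\ge c_L|G(\mu)|^{1-\sigma}$ on $B_\rho$. Analyticity of $G$ comes from Proposition \ref{lemma:Lyapunov-Schmidt}, since $b>0$. Along the flow, as long as $g(t)\ge g(0)/2$, we get $g'(t)\le -c_L(g(0)/2)^{1-\sigma}$. Now $\eta_1(r)\le C_0\eps\mathcal{G}(\phi_0)^\sigma\rho$, and with $\eps=\overline\eps\,\mathcal{G}(\phi_0)^{1-2\sigma}$ this is $\overline\eps C_0\rho\,\mathcal{G}(\phi_0)^{1-\sigma}$. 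If $\overline\eps$ is small, the drop over this time span is at most $C\overline\eps C_0\rho\,\mathcal{G}(\phi_0)$, using $|g'|=|\nabla G|\le C|G|^{1-\sigma}$-type local bounds. So $g$ stays above $g(0)/2$ on the relevant range. This yields
\[
g(\eta_1(r))-g(0)\le -c\,\eta_1(r)\,\mathcal{G}(\phi_0)^{1-\sigma}=-c\,C_0\eps\,\mathcal{G}(\phi_0)(\rho-r).
\]
Also, $\mu(t)$ remains in $B_\rho$ and in $U$ provided $|\mu^0|\le C\delta$ is small. This is where the smallness of $\delta$ enters, together with the smallness of $\mathcal{G}(\phi_0)\lesssim\delta$.

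Integrating, the middle term is at most $-c\,C_0\,c(\rho)\,\eps\,\mathcal{G}(\phi_0)$ with $c(\rho)=\int_0^\rho r^{d+2\beta-3}(\rho-r)\,dr>0$. We choose $C_0$ large, depending on $\rho$, so that this beats the first term $\eps\,\mathcal{G}(\phi_0)/(d+2\beta-2)$ by a factor of $2$. The last term equals $CC_0^2\overline\eps\,\eps\,\mathcal{G}(\phi_0)^{1-2\sigma+2\sigma}=CC_0^2\overline\eps\,\eps\,\mathcal{G}(\phi_0)$. It is absorbed by choosing $\overline\eps$ small after $C_0$ is fixed. Hence $E_0\le0$.

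I expect the main obstacle to be the bookkeeping in the Łojasiewicz step. One must ensure that the flow remains in the analyticity neighborhood. One must also handle the possibility that $g$ reaches a critical point, which cannot happen while $g>0$ by Łojasiewicz, and keep the constant dependence consistent: $\sigma$ is fixed first, then $C_0(\rho)$, then $\overline\eps$, then $\delta$.
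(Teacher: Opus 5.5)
Your skeleton matches the paper's proof: split $E_0$ at $r=\rho$, use Łojasiewicz along the normalized gradient flow to get a drop $\gtrsim \eta_1(r)\,\mathcal{G}(\phi_0)^{1-\sigma}$, integrate explicitly in $r$, and absorb the kinetic term $CC_0^2\overline\eps\,\eps\,\mathcal{G}(\phi_0)$ by taking $\overline\eps$ small. However, the step where you show that $g$ stays above $g(0)/2$ on the time window $[0,\eta_1(0)]$ is wrong.

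There is no local bound $|\nabla G|\le C|G|^{1-\sigma}$. Łojasiewicz bounds $|\nabla G|$ only from below, and near points where $G=0$ but $\nabla G\neq 0$ the gradient is far larger than $|G|^{1-\sigma}$. For instance, take $G(\mu)=\mu_1\mu_2^2$, for which $\sigma=1/3$ is admissible, and $\mu^0=(s^{10},s)$. Then $g(0)=s^{12}$ and $|\nabla G(\mu^0)|\approx s^2$, so $g$ changes sign after time $\approx s^{10}$. This is well inside your window $\overline\eps C_0\rho\,g(0)^{2/3}=\overline\eps C_0\rho\,s^{8}$. The only valid upper bound is $|\nabla G|\le C$, which gives a drop $\lesssim \overline\eps C_0\rho\,\mathcal{G}(\phi_0)^{1-\sigma}$; this is not small compared with $\mathcal{G}(\phi_0)$.

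The inequality you actually need, $g(t)-g(0)\le -c\,t\,g(0)^{1-\sigma}$ for $t\le \eta_1(0)$, is still true, because a fast drop only helps. Since $g$ is non-increasing, there are two cases.
\begin{itemize}
\item If $g\ge g(0)/2$ on $[0,t]$, your argument gives $g(t)-g(0)\le -c_L(g(0)/2)^{1-\sigma}t$.
\item If $g(t)<g(0)/2$, then $g(t)-g(0)\le -g(0)/2\le -c_L(g(0)/2)^{1-\sigma}t$, provided $t\le (g(0)/2)^{\sigma}/c_L$.
\end{itemize}
The last condition holds on your whole window once $\overline\eps\le (2^\sigma c_L C_0\rho)^{-1}$, using $1-2\sigma\ge 0$ and $\mathcal{G}(\phi_0)\le 1$. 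This fits your order of constants: $\sigma$, then $C_0$, then $\overline\eps$, then $\delta$.

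The paper packages the same point more compactly. The map $t\mapsto g(t)^\sigma+C_1t$ is non-increasing with $C_1=\sigma c_L$. Hence $g(t)\le (g(0)^\sigma-C_1t)^{1/\sigma}\le g(0)-C_1g(0)^{1-\sigma}t$ for $t<T_*=C_1^{-1}g(0)^\sigma$, and the paper imposes $\eta_1\le T_*$ via smallness of $\eps$. With this replacement your argument is complete and coincides with the paper's.
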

		\begin{proof} 
			Let $G$ be the function defined in \eqref{def:G-finito-dim} and $\mu(t)$ be the gradient flow defined in \eqref{eq:flow}. By the Łojasiewicz inequality for the analytic function $G$ (see \cite{lojasiewicz}), there is $\sigma\in(0,1/2]$ and $U\subset \R^N$ such that \bea\label{eq:lojasievic} |G(\mu)|^{1-\sigma}\le C_L|\nabla_\mu G(\mu)|\quad\text{for every }\mu\in U
			\eea for some $C_L>0$.
			In particular, if we choose $\delta$ small enough we can apply the Łojasiewicz inequality, since $\mu(\eta(r))\in U$ for every $r\in(0,1)$.
    			Therefore $$\frac{d}{dt}G(\mu(t))=\nabla_\mu G(\mu(t))\cdot\mu'(t)=-|\nabla_\mu G(\mu(t))|\le -C_L^{-1}|G(\mu(t))|^{1-\sigma}.$$ 
            Then, setting $C_1:=C_L^{-1}\sigma$, the function $t\mapsto G(\mu(t))^{\sigma}+C_1t$ is non-increasing. Moreover, denoting by  $T_\ast:=C_1^{-1} G(\mu^0)^\sigma$, 
            we obtain \be\label{eq:previous-inequality} G(\mu(t))\le \left(G(\mu^0)^\sigma-C_1 t\right)^{1/\sigma}\le G(\mu^0)-C_1G(\mu^0)^{1-\sigma}t\quad\text{for every }t\in(0,T_\ast),\ee where in the last inequality we used that $(1-a)^{1/\sigma}\le 1-a$, where $a:=t/T_\ast\in(0,1)$.
			
			Now, set $$\eta_1(r):=\frac{d+2\beta}{C_1}\eps G(\mu^0)^\sigma\frac{(\rho-r)_+}{\rho^{d+2\beta-1}},$$ and consider $\eps$ sufficiently small so that $\eta_1(r)\le T_\ast$ for every $r\in(0,\rho)$. Then the previous inequality \eqref{eq:previous-inequality} with $t=\eta(r)$ gives
			\be\label{e:prima}G(\mu(\eta(r)))-(1-\eps)G(\mu^0)\le -(C_1 \eta_1(r)-\eps G(\mu^0)^\sigma)G(\mu^0)^{1-\sigma}.\ee
			Observing that $\mathcal{G}(\psi^r_0)=G(\mu(\eta(r)))$ and $\mathcal{G}(\phi_0)=G(\mu^0)$, we need to estimate \bea E_0&=\int_0^1 r^{d+2\beta-3}\Big(G(\mu(\eta(r)))-(1-\eps) G(\mu^0)\Big)\,dr+\int_0^1\int_{\partial B_1} r^{d+2\beta-1}(\partial_r \psi_0^r)^2\,d\HH^{d-1}\,dr \\&:=I_1+I_2,\eea 
            and we proceed as follows. For $I_1$, by applying \eqref{e:prima}, we compute 
            \bea I_1&\le -G(\mu^0)^{1-\sigma}\int_0^\rho r^{d+2\beta-3}\Big(C_1\eta_1(r)-\eps G(\mu^0)^\sigma\Big)\,dr+\int_\rho^1r^{d+2\beta-3}\eps G(\mu^0)\,dr\\&=-\eps G(\mu^0)\bigg(\int_0^\rho r^{d+2\beta-3}\Big((d+2\beta)\frac{\rho-r}{\rho^{d+2\beta-1}}-1 \Big)\,dr-\frac{1-\rho^{d+2\beta-2}}{d+2\beta-2}\bigg)
			\\&=-\frac{\eps G(\mu^0)}{(d+2\beta-2)(d+2\beta-1) },
			\eea where the last identity follows by an explicit integration.
			Regarding $I_2$, since $|\eta_1'(r)|^2\le C(\rho)\eps^2G(\mu^0)^{2\sigma},$ then
			we have $I_2\le C(\rho)\eps^2G(\mu^0)^{2\sigma}$.
			Therefore 
            $$E_0\le -\frac{\eps G(\mu^0)}{(d+2\beta-2)(d+2\beta-1) }+C(\rho)\eps^2G(\mu^0)^{2\sigma}.
            $$ Setting $\eps=\overline \eps G(\mu^0)^{1-2\sigma}$ and choosing $\overline \eps>0$ small enough depending on $\rho$, we get $E_0\le0$, concluding the proof.
		\end{proof}
		\subsection{Conclusion of the proof}\label{subsec:ultima} 
		We now conclude the proof of the epiperimetric inequality.
		\begin{proof}[Proof of \cref{t:epi-rad}]
			It is not restrictive to assume that $\overline W(z)>0$, otherwise the claim follows by choosing $h=z$. Moreover, we recall that, by slicing \cref{lemma:slicing} and the decomposition of the energy \cref{lemma:decomposition},
        \begin{align}\label{e:overWz}
        \begin{aligned}
        \overline{W}(z)
        &=\frac{1}{d+2\beta-2}\Big(\mathcal{G}(\phi_0)+\mathcal{Q}(\phi_<)+\mathcal{Q}(\phi_>)\Big) + R(\phi_<+\phi_>),
        \end{aligned}
        \end{align}
			with $R$ satisfying \eqref{e:E1}. 
            We also observe that, if we choose $\eta_1\equiv 0$, then 
        \be\label{e:estimate-simple-E0}
        E_0  = \eps\frac{\mathcal{G}(\phi_0)}{d+2\beta-2},
        \ee since, by definition of $\psi_0^r$ we have $\psi_0^r = \phi_0$.
            
            Now, let $\tau$, $\rho$ , $\eps_0$ and $c_0$ be the constants in \cref{lemma:maggiore}, and let $\psi_\gtrless$ be defined corresponding to the associated choices of $\tau$ and $\rho$. Thus, by combining \eqref{eq:epi-with-E} with \eqref{e:eqE>E<} we have
            \be\label{eq:fund-est-radial}
            \overline{W}(h)-(1-\eps)\overline{W}(z) \leq  -c_0\big(|\mathcal{Q}(\phi_<)|+|\mathcal{Q}(\phi_>)|\big)+E_0,
            \ee
            for every $\eps \in (0,\eps_0]$. Now, we split the proof into three cases.
			\begin{itemize}
				\item[(i)] If $\mathcal{G}(\phi_0)\le0$, then we can choose $\eta_1\equiv0$ so that $E_0$ satisfies \eqref{e:estimate-simple-E0}, and thus $E_0\le0$. Then the epiperimetric inequality holds with $\eps=\eps_0$ and $\sigma=0$, by \eqref{eq:fund-est-radial}
				
				\item[(ii)] Similarly, if  
                $$ \mathcal{G}(\phi_0)>0\quad \text{and}\quad |\mathcal{Q}(\phi_<)|+|\mathcal{Q}(\phi_>)|> \mathcal{G}(\phi_0),$$  we can still choose $\eta_1\equiv0$. Indeed, by combining \eqref{e:estimate-simple-E0} with \eqref{eq:fund-est-radial}, we have \bea \overline W(h)-(1-\eps)\overline W(z)&\le - c_0\left(|\mathcal{Q}(\phi_<)|+|\mathcal{Q}(\phi_>)|\right)+\frac{\eps}{d+2\beta-2}\mathcal{G}(\phi_0)\\&\le \left(-c_0+\frac{\eps}{d+2\beta-2}\right)\left(|\mathcal{Q}(\phi_<)|+|\mathcal{Q}(\phi_>)|\right),\eea
				and the epiperimetric inequality holds with $\eps=\min\{\eps_0,c_0(d+2\beta-2)\},$ and $\sigma=0$.
				\item[(iii)] Lastly, if
                $$\mathcal{G}(\phi_0)>0\quad\text{and}\quad
                |\mathcal{Q}(\phi_<)|+|\mathcal{Q}(\phi_>)|\le \mathcal{G}(\phi_0),$$ we choose $\eta_1$ as in \cref{lemma:uguale}. 
				By \cref{r:h-admissible}, since $\mathcal{G}(\phi_0)$ is small for $\delta$ small, it is not restrictive to assume that $\eps:=\overline \eps \mathcal{G}(\phi_0)^{1-2\sigma} \in(0,\eps_0)$, and so
                $$\overline W(h)-(1-\eps)\overline W(z)\le0,\quad \text{where } \eps:=\overline \eps \mathcal{G}(\phi_0)^{1-2\sigma} 
                $$
				On the other hand, by \eqref{eq:est>} and \eqref{e:overWz}, we have $\overline W(z)\le C_2\mathcal{G}(\phi_0)$,
                and thus, since $\overline W(z)>0$, it follows that $$\overline W(h)-(1-\overline\eps C_2^{2\sigma-1}\overline W(z)^{1-2\sigma})\overline W(z)\le\overline W(h)-(1-\overline\eps \mathcal{G}(\phi_0)^{1-2\sigma})\overline W(z)\le0. $$ Then the epiperimetric inequality holds with exponent $\sigma':=1-2\sigma\in[0,1)$ and constant $\eps':=\overline\eps C_2^{2\sigma-1}$. 
			\end{itemize}
			Finally, by the closeness assumption on the Weiss' energy in \eqref{eq:closness-assumption-radial}, we may assume that $$
            \eps'\,\overline W(z)^{\sigma'}\le \min\{\eps_0,c_0(d+2\beta-2)\},
            $$
            which concludes the proof of the logarithmic epiperimetric inequality.
		\end{proof}
		\begin{remark}\label{remark:sigma=0} We observe that if $b$ is sub-integrable, i.e., $G\le0$ in a neighborhood of $0$, then we are always in the case (i) above, i.e., $\mathcal{G}(\phi_0)\leq 0$. Then we can choose $\eta_1\equiv0$, so that $E_0\le0$ by \eqref{e:estimate-simple-E0}. 
        Then the epiperimetric inequality holds with $\sigma=0$.
			\end{remark}
	\subsection{Optimality of the logarithmic epiperimetric inequality}
		As already observed in \cref{remark:sigma=0}, when $b$ is sub-integrable one may choose $\sigma=0$ in the epiperimetric inequality, which in turn yields a $C^{1,\alpha}$ rate of convergence for blow-up sequences (see \cref{section:final}). We now show that this condition is in fact equivalent to sub-integrability: if $b$ is not sub-integrable, then one necessarily has $\sigma>0$, and the logarithmic rate of convergence in \eqref{eq:log-conv} is sharp.
		
		Let $b\in\mathcal{B}_0$ be a cone that is not sub-integrable.
        By following the notation of \cref{lemma:Lyapunov-Schmidt}, let $G:\R^N \to \R$ be the reduced (analytic) functional arising from the Lyapunov-Schmidt reduction (see \eqref{def:G-finito-dim}). We may suppose that $G\not\equiv0$, otherwise $b$ is integrable (see \cite[Lemma 1]{adamsimon}). Then, since $|\nabla_\mu G(0)|=|\nabla^2_\mu G(0)|=0$, there exists an integer $p\geq 3$ such that \be\label{eq:exp-G}G(\mu)=\sum_{j=p}^{+\infty} G_j(\mu),\ee where each $G_j(\mu)$ is a $j$-homogeneous polynomial and $G_p\not\equiv0$ is the first nontrivial term.

Since $b$ is not sub-integrable, necessarily $G_p(\mu^0)>0$ for some $\mu^0\in \mathbb{S}^{N-1}$, and in this case, we can construct a solution with a slower logarithmic rate.
		The following is a consequence of the construction developed by Adams-Simon in \cite[Section 4]{adamsimon}.
		\begin{lemma}\label{lemma:adamsimon}
			Let $b \in \mathcal{B}_0$ and assume that there exists $\mu^0\in \mathbb{S}^{N-1}\subset\R^N$ such that $G_p(\mu^0)>0$.
			Then, one may construct a solution $u$ to the Alt-Phillips problem whose blow-up at the origin is $b$ and such that, for $r$ sufficiently small,
			\be\label{e:log-decay}\|u_r-b\|_{L^\infty(B_1)}\ge \frac c{|\log r|^{1/(p-2)}},\ee for some constant $c>0$.
		\end{lemma}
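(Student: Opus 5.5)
We will follow the Adams--Simon construction and write the candidate solution in cylindrical logarithmic variables. Setting $t:=-\log r$ and $u(r,\theta)=r^\beta\big(b(\theta)+v(t,\theta)\big)$, the equation $\Delta u=\frac\gamma2 u^{\gamma-1}$ in $\{u>0\}$ becomes an evolution equation on $\partial B_1$,
\[
\partial_t^2 v-(d+2\beta-2)\,\partial_t v-\tfrac12\,\delta\mathcal{G}(v)=0,
\]
where $\delta\mathcal{G}$ is identified with its $L^2$ gradient. Since $b>0$ on $\partial B_1$, a small $v$ keeps $u>0$ away from the origin. The equation is therefore a genuine semilinear elliptic equation on the half-cylinder $(T,\infty)\times\partial B_1$, and $\mathcal{G}$ is analytic near $0$. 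This places us exactly in the setting of \cite[Section 4]{adamsimon}, with the damping constant $m:=d+2\beta-2>0$.

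\textbf{Reduced problem.} We use the Lyapunov--Schmidt reduction of \cref{lemma:Lyapunov-Schmidt} and look for $v(t)=\Phi_{\mu(t)}+Y(\Phi_{\mu(t)})+w(t)$ with $w(t)\in K^\perp$. To leading order the kernel component solves the first-order gradient-like ODE $m\,\mu'=-\tfrac12\nabla_\mu G(\mu)$; the second-order term $\mu''$ is of lower order for slowly varying $\mu$. We make the ansatz $\mu(t)=s(t)\mu^\ast+\text{corrections}$, where $\mu^\ast\in\mathbb{S}^{N-1}$ is a critical point of $G_p$ on the sphere at which $G_p(\mu^\ast)>0$. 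Such a point exists: the maximum of $G_p$ over $\mathbb{S}^{N-1}$ is positive by the hypothesis $G_p(\mu^0)>0$. At $\mu^\ast$ we have $\nabla G_p(\mu^\ast)=p\,G_p(\mu^\ast)\mu^\ast$, so the ODE reduces to
\[
m\,s'=-\tfrac p2\,G_p(\mu^\ast)\,s^{p-1}.
\]
Its solution is $s(t)\sim c\,t^{-1/(p-2)}$, which is positive and decays to $0$. The sign condition $G_p>0$ is precisely what makes the flow move inward toward $0$ at this slow rate rather than leaving a neighbourhood of $0$.

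\textbf{Full solution.} We then solve for the true solution as a perturbation of this approximate one by a fixed-point argument in weighted spaces. The weights are of the form $t^{-1/(p-2)-\kappa}$ for the kernel error and faster, exponentially weighted, decay for $w$. Two facts drive the argument: the operator $\partial_t^2-m\partial_t+L_b$ restricted to $K^\perp$ is invertible on decaying functions, because $L_b$ has a spectral gap there; and the linearization of the reduced ODE around $s(t)\mu^\ast$ can be inverted with polynomial weights. The nondegeneracy needed for the second step is handled as in Adams--Simon, either by choosing $\mu^\ast$ as a maximum of $G_p$ on the sphere or by working on a finite time interval $[T,T']$, letting $T'\to\infty$, and passing to a limit via compactness. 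We expect this step to be the main obstacle: controlling the higher-order terms $G_j$ for $j>p$ and the coupling between $w$ and $\mu$ in the presence of the degenerate direction tangent to the sphere. We take it over essentially verbatim from \cite{adamsimon}, since our energy $\mathcal{G}$ has the same structure: it is analytic and Fredholm, with finite-dimensional kernel $K$.

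\textbf{Conclusion.} The resulting $u=r^\beta(b+v)$ is positive in $B_{r_0}\setminus\{0\}$ and is a weak solution there. Since $u\in H^1$ and $\{0\}$ has zero capacity, it is a weak solution in $B_{r_0}$; rescaling to $B_1$, it is a solution in the sense of \cref{def:weak-sol}. Its blow-up at the origin is $b$, because $v(t)\to0$. Finally,
\[
\|u_r-b\|_{L^\infty(B_1)}\ge\|v(-\log r)\|_{L^\infty(\partial B_1)}\ge c\,|s(t)|\ge c\,|\log r|^{-1/(p-2)},
\]
where the middle inequality uses $\|\Phi_\mu\|_{L^\infty}\ge c|\mu|$ together with the fact that $Y$ and $w$ are of higher order. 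This gives \eqref{e:log-decay}.
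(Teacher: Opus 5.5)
Your proposal follows the same route as the paper: you pass to $t=-\log r$ and recognize $v_{tt}-m v_t-\frac12\delta\mathcal{G}(v)=0$, with $m=d+2\beta-2>0$, as the Euler--Lagrange equation in the Adams--Simon framework (analytic near $0$ because $b>0$, with reduced function $f=\frac12 G$), and then invoke \cite[Theorem 2]{adamsimon}, which the paper cites directly after checking the structural conditions $\partial_{qq}F=1$ and $q\,\partial_qF=q^2$. Two slips in your heuristic sketch are harmless because you defer that step to Adams--Simon: the $K^\perp$-remainder $w$ is forced by $\partial_t Y(\Phi_{\mu(t)})=\delta Y(\Phi_{\mu})[\Phi_{\mu'}]=O(|\mu||\mu'|)$, which is of order $t^{-p/(p-2)}$, so $w$ decays only polynomially and not exponentially; and the linearization on $K^\perp$ is $\partial_t^2-m\partial_t-L_b$, which is invertible on decaying functions only after prescribing data at $t=T$ for the positive modes of $L_b$.
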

		\begin{proof}
            Following the notation in \cite{adamsimon}, we set $\Sigma:=\partial B_1$,
            $V:=\Sigma\times \R$ and $m:=d+2\beta-2>0$, then, for a function $\psi=\psi(t,\theta):(0,+\infty)\times \Sigma\to\R$, we define the functional $$\mathscr{F}(\psi):=\int_{(0,+\infty)\times \Sigma}F(\theta,\psi,\nabla_\theta \psi,\psi_t)e^{-mt}\,dt\,d\HH^{d-1},$$
            where $$F(\theta,z,p,q)=\frac12|p|^2+\frac12q^2-\frac{\lambda(\beta)}2z^2+\frac12\Big((b+z)^\gamma-b^\gamma-\gamma b^{\gamma-1}z\Big).$$ We observe that $\partial_{qq}F=1$ and $q \partial_{q}F=q^2$. Moreover, since $b>0$ on $\Sigma$, the map $z\mapsto (b+z)^\gamma$ is analytic for $|z|$ small and consequently, $F$ is analytic in a neighborhood of $(z,p,q)=(0,0,0)$.
            
            We define the Euler-Lagrange operator associated to $\mathscr{F}$ as 
            $$\mathcal{M}(\psi):=\psi_{tt}-m\psi_t+\mathcal{M}_\Sigma(\psi),\quad \mathcal{M}_\Sigma(\psi):=\Delta_\theta\psi+\lambda(\beta)\psi-\frac\gamma2((b+\psi)^{\gamma-1}-b^{\gamma-1}).$$ Since
            $\Delta_\theta b+\lambda(\beta)b=\frac\gamma2b^{\gamma-1}$ on $\partial B_1$,
            a direct computation shows that  
            $u(r,\theta):=r^\beta b(\theta)+r^\beta \psi(-\log r,\theta)$ is a solution to the Alt-Phillips problem 
            if and only if $\mathcal{M}(\psi)=0$. Notice that 
            $$
            \mathscr{F}_\Sigma(\phi)=\frac12\mathcal{G}(\phi)\quad \text{and}\quad \mathcal{M}_\Sigma(\phi)=-\frac12\delta\mathcal{G}(\phi),\quad \text{for every }\phi \in H^1(\partial B_1).
            $$
            Then the linearization $L_\Sigma$ of $\mathcal{M}_\Sigma$ at $0$, gives exactly the operator $L_b$, and thus the map $Y$ constructed in \cref{lemma:Lyapunov-Schmidt} coincides with the map $H\colon \ker(L_\Sigma)\to \ker(L_\Sigma)^\perp$ in \cite[Section 1]{adamsimon}. By following this analogy, the analytic finite-dimensional map arising from the Lyapunov-Schmidt reduction in \cite{adamsimon} is given by $f=\frac12G$, with $G$ as in \eqref{def:G-finito-dim}.

            Thus, $G_p(\mu^0)>0$ implies $f_p(\mu^0)/m>0$ and by applying \cite[Theorem 2]{adamsimon}, we find a function $\psi=\psi(t,\theta)$ satisfying  $$\mathcal{M}(\psi)=0\text{ on }(0,+\infty)\times\Sigma\quad\text{and}\quad \psi(t,\theta)=\frac{\phi(\theta)}{(T+t)^{1/(p-2)}}+O((T+t)^{-1/(p-2)-\eps}),$$ for some $\phi\in K\setminus\{0\}$, $T>1$, and $\eps>0$. Finally, the function $u$ defined as $u(r,\theta)=r^\beta b(\theta)+r^\beta \psi(-\log r,\theta)$ satisfies the logarithmic decay \eqref{e:log-decay}.
		\end{proof}
		 
        We point out that in \cref{lemma:adamsimon} we constructed only a solution of the Alt-Phillips problem, and not necessarily a minimizer. These two notions are equivalent only for $\gamma\in[1,2)$, see \cref{prop:convex-functional}. 
\section{Integrability and bifurcations of the radial cone}\label{section:integrable-bifurcation}
		In this section we study the integrability (and the sub-integrability) of the radial cone $b_{\text{rad}}:=c_{\text{rad}}|x|^\beta$, as well as the bifurcation results for $b_{\text{rad}}$ in \cref{thm:bifurcation-radial}.
        As a key ingredient, we will use the numerical computations in \cref{prop:numerical} below to write the first order terms in the expansion of the reduced functional $G$ at $b_{\text{rad}}$ in \eqref{eq:exp-G}.
		
 We recall the kernel $K:=\ker(L_{b_{\mathrm{rad}}})$ of the linearized operator at $b_{\text{rad}}$, defined in \eqref{eq:general-kernel}, the notions of integrability and sub-integrability from \cref{def:integrability} and \cref{def:sub-integrability}, as well as the values $\gamma_{k,d}$ introduced in \eqref{eq:def-gammakd} for $k\in\N_{\ge3}$.     
\subsection{Integrability of the radial cone}
        The main result about the integrability of the radial cone is the following proposition.  
		\begin{proposition}\label{prop:optimality-log}
			Let $d\ge 2$ and $\gamma\in(0,2)\setminus \{1\}$. Then,
			\begin{itemize}
				\item[(i)] if $\gamma\not=\gamma_{k,d}$ for every $k\in\N_{\ge3}$, then $K=\{0\}$ and thus $b_{\text{rad}}$ is integrable; 
				\item[(ii)] if $\gamma=\gamma_{k,d}$ for some $k\in\N_{\ge3}$, then $K$ coincides with the space of spherical harmonics $\mathcal{H}_k(\mathbb{S}^{d-1})$ and $b_{\text{rad}}$ is not sub-integrable (thus non-integrable).
			\end{itemize}
		\end{proposition}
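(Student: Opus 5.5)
\emph{Kernel.} Since $b_{\text{rad}}$ is constant on $\partial B_1$, the operator $L_b$ has constant coefficients, and I would compute it explicitly. From $c_{\text{rad}}^{\gamma-2}=\frac2\gamma\lambda(\beta)$ we get $\frac\gamma2(\gamma-1)b^{\gamma-2}=(\gamma-1)\lambda(\beta)$ on $\partial B_1$, so
$$L_{b_{\text{rad}}}=-\Delta_\theta+(\gamma-2)\lambda(\beta).$$
Its eigenfunctions are therefore the spherical harmonics, and $\mathcal H_j(\mathbb S^{d-1})$ has eigenvalue $\lambda(j)-(2-\gamma)\lambda(\beta)$. Using $2-\gamma=2/\beta$, this gives $(2-\gamma)\lambda(\beta)=2(\beta+d-2)$. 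Hence $\mathcal H_j$ lies in $K$ if and only if $\beta=\frac{j(j+d-2)}{2}-d+2=\beta_{j,d}$. The constraint $\gamma\in(0,2)$ means $\beta>1$, so I would check the small degrees by hand. For $j=0$ and $j=1$ one gets $\beta\le\frac12$, which is impossible. For $j=2$ one gets $\beta=2$, i.e.\ $\gamma=1$, which is excluded. For $j\ge3$ one has $\beta_{j,d}>2$, which gives exactly $\gamma=\gamma_{j,d}\in(1,2)$. Since $\beta_{j,d}$ is strictly increasing in $j$, at most one degree can resonate. This proves (i), where $K=\{0\}$ makes integrability trivial, and it proves the identification $K=\mathcal H_k$ in (ii).

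\emph{Non-sub-integrability.} Next I would expand the reduced functional $G$ of \cref{lemma:Lyapunov-Schmidt} in powers of $\phi=\Phi_\mu\in\mathcal H_k$, using that $b\equiv c_{\text{rad}}$ is constant on the sphere. Write $a_m:=\binom{\gamma}{m}c_{\text{rad}}^{\gamma-m}$. Then
$$\mathcal G(\psi)=\mathcal Q(\psi)+a_3\!\int\psi^3+a_4\!\int\psi^4+\dots$$
The Lyapunov--Schmidt map satisfies $Y(\phi)=Y_2(\phi)+O(|\phi|^3)$ with $L_bY_2=-\tfrac32a_3P_{K^\perp}(\phi^2)$. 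Substituting, I expect
$$G_3(\phi)=a_3\int\phi^3,\qquad G_4(\phi)=a_4\int\phi^4-\tfrac94a_3^2\big\langle L_b^{-1}P_{K^\perp}\phi^2,\,P_{K^\perp}\phi^2\big\rangle,$$
and I would verify the constant in front of the second term.

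\emph{Case $k$ even.} I would take $\phi$ to be the zonal harmonic, i.e.\ a Gegenbauer polynomial $C_k^{(d-2)/2}(x_d)$. The triple-product integral $\int\phi^3$ is then a Gaunt-type coefficient, and it is nonzero for even $k$. Since $G_3$ is cubic, it is odd in $\mu$, so $G_3$ takes positive values. Hence $p=3$ in \eqref{eq:exp-G} and $G_3(\mu^0)>0$ for a suitable $\mu^0$, so $b$ is not sub-integrable.

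\emph{Case $k$ odd.} Here $\int\phi^3=0$ by antipodal symmetry, so I would need $G_4(\phi)>0$ for some $\phi$. First I would decompose $\phi^2=\sum_{j\in\{0,2,\dots,2k\}}h_j$ with $h_j\in\mathcal H_j$. Then
$$\big\langle L_b^{-1}P_{K^\perp}\phi^2,P_{K^\perp}\phi^2\big\rangle=\sum_{j}\frac{\|h_j\|_{L^2}^2}{\lambda(j)-\lambda(k)},$$
where only even $j\neq k$ occur since $k$ is odd. Now I would track signs for $\gamma\in(1,2)$. The coefficient $a_4$ is positive, because $\gamma(\gamma-1)(\gamma-2)(\gamma-3)>0$. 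The low modes $j<k$ have $\lambda(j)-\lambda(k)<0$ and contribute positively to $G_4$, while the high modes $j>k$ contribute negatively. To conclude I would pick a convenient test function, the sectorial harmonic $\phi=\mathrm{Re}(x_1+ix_2)^k$. The point of this choice is that $\phi^2=\tfrac12|x'|^{2k}+\tfrac12\mathrm{Re}(x_1+ix_2)^{2k}$ with $x'=(x_1,x_2)$. The second piece is already a pure $\mathcal H_{2k}$ harmonic, and the first is radial in $(x_1,x_2)$, so its harmonic decomposition is explicit. With this, $G_4(\phi)$ becomes a closed-form expression in $d$ and $k$, and positivity reduces to an inequality between rational functions of $(d,k)$. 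This is where I would invoke the explicit computations of \cref{prop:numerical}.

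\emph{Main obstacle.} The hardest step is this sign verification for odd $k$ uniformly in $d$ and $k$, because the negative high-mode contributions compete with $a_4\int\phi^4$ and with the low modes. If the sectorial choice fails in some range, I would try the zonal harmonic, or optimize over the span of the two. Once $G_p(\mu^0)>0$ is established, \cref{def:sub-integrability} fails, and non-integrability follows from \cite[Lemma 1]{adamsimon}.
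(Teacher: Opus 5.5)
Your kernel computation is exactly \cref{remark:after}. For non-sub-integrability you follow the paper's route, and your fourth-order formula agrees with \eqref{eq:G4} under $a_3=c_3/6$, $a_4=c_4/24$ and $\delta^2Y(0)[\phi,\phi]=2Y_2$.

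The step that fails is your even-$k$ case in dimension $d=2$, which the statement includes. On $\mathbb{S}^1$ every $\phi\in\mathcal{H}_k(\mathbb{S}^1)$ is a rotation of $\cos(k\theta)$, and $\int_0^{2\pi}\cos^3(k\theta)\,d\theta=0$ for all $k\ge1$. So $G_3\equiv0$ on $K$ whatever the parity of $k$. Your zonal harmonic $C_k^{(d-2)/2}$ has index $0$ there, and in the Chebyshev limit $T_k^2=\frac12(T_0+T_{2k})$ has no $T_k$-component, so the triple-product coefficient you rely on vanishes. Hence the claim $p=3$ is false in $d=2$ (e.g.\ at $\gamma_{4,2}=\frac74$), and your argument says nothing in that case.

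The fix is Case 1 of \cref{prop:numerical}: in $d=2$, go to fourth order for every $k\ge3$. For $\phi=\pi^{-1/2}\cos(k\theta)$ one has $\phi^2=\frac1{2\pi}(1+\cos 2k\theta)$, so only the modes $j=0$ and $j=2k$ occur. Your formula then gives
\[
G_4(\phi)=\frac{3a_4}{4\pi}+\frac{15\,a_3^2}{16\pi k^2}=\frac{c_4}{32\pi}+\frac{5c_3^2}{192\pi k^2}>0,
\]
so $G_4>0$ on the whole unit circle of coefficients by rotation invariance. For $d\ge3$ your even-$k$ argument is fine, since $a_{k,k,m}>0$ for $m>0$.

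For odd $k$, your fallback to the zonal harmonic is not a safe backup. The paper certifies $G_4(e_1)>0$ in the zonal class only for $k<k_d$. Its computer-algebra checks indicate the reverse sign for $d\ge8$, $k\ge k_d$, since the zonal branches lie on the side $\gamma<\gamma_{k,d}$. The sectorial choice works for all $d\ge3$ because of the pairing of the modes $2\ell$ and $2k-2\ell$, i.e.\ $J_\ell\ge0$ in \eqref{eq:first-step}. Deferring that to \cref{prop:numerical} is consistent with the paper, but it is the one genuinely nontrivial estimate here.
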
   
We point out that the case of the obstacle problem $\gamma=1$ corresponds to $k=2$, since $\gamma_{2,d}=1$ for every $d\ge2$. In this case, the kernel $K$ coincides with the space of spherical harmonics $\mathcal{H}_k(\mathbb{S}^{d-1})$, but, unlike \cref{prop:optimality-log}, $b_{\text{rad}}$ is integrable. 

\begin{remark}\label{remark:after}
Note that the characterization of the kernel in \cref{prop:optimality-log} is a straightforward consequence of the definition of $b_{\text{rad}}$. In fact, since $c_{\text{rad}}^{\gamma-2}=\frac2\gamma\lambda(\beta)$ we have
\be\label{eq:linearized-operator-radial}
L_{b_{\text{rad}}}=-\Delta_\theta - \lambda(\beta) + \frac{\gamma}{2}(\gamma-1)c_{\text{rad}}^{\gamma-2} = -\Delta_\theta -(2-\gamma)\lambda(\beta)
\ee
and the kernel $K$ is nontrivial if and only if 
$$
(2-\gamma)\lambda(\beta) = \lambda(k),\quad \text{for some $k\in \N_{\geq 3}$}.
$$
For $\gamma\in(0,2)\setminus \{1\}$, this condition is equivalent to requiring that  $\gamma=\gamma_{k,d}$, for some $k\in\N_{\ge3}$, and, in this case, $K$ coincides with the space of spherical harmonics $\mathcal{H}_k(\mathbb{S}^{d-1})$.
\end{remark}
		
		\subsection{Some numerical computations}
		In order to study the sub-integrability of the radial cone and the bifurcation results, we need to compute the first nontrivial term in the expansion \eqref{eq:exp-G} of the reduced functional $G$ associated to $\mathcal{G}$ at $b_{\text{rad}}$.
		In particular, we prove the following lemma. 
		\begin{lemma}\label{prop:numerical}
			Let $d\ge2$, $k\in\N_{\ge3}$ and $\gamma=\gamma_{k,d}$. Consider an orthonormal basis  $\Phi_1,\ldots,\Phi_N$ of $K$, and take $k_d$ as in \cref{thm:bifurcation-radial}. 
            Thus, let $G$ be the reduced functional defined in \eqref{def:G-finito-dim} associated to the radial cone $b_{\text{rad}}$, and $G_p$ be the first nontrivial term in the expansion \eqref{eq:exp-G}.
			\begin{itemize}
				\item[(i)] If $d=2$, then $p=4$ and $G_4(\mu)>0$ for every $\mu\in\mathbb{S}^{N-1}$.
				\item[(ii)] If $d\ge3$ and $k$ is odd, then $p=4$ and $G_4(\mu^0)>0$ for some $\mu^0\in\mathbb{S}^{N-1}$. More precisely 
				\begin{itemize}
					\item[1)] if $\Phi_1$ is the zonal spherical harmonic and $k<k_d$, then $G_4(e_1)>0$;
					\item[2)] if $\Phi_1$ is the sectorial spherical harmonic, then $G_4(e_1)>0$.
					\end{itemize}
				\item[(iii)] If $d\ge3$ and $k$ is even, then $p=3$ and $G_3(\mu^0)>0$ for some $\mu^0\in\mathbb{S}^{N-1}$. More precisely, if $\Phi_1$ is the zonal spherical harmonic, then $G_3(e_1)>0$.		
				\end{itemize}
		\end{lemma}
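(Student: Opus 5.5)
The plan is to compute the Taylor expansion of the reduced functional $G$ explicitly up to order four, using the structure $L_{b_{\text{rad}}}=-\Delta_\theta-\lambda(k)$ from \cref{remark:after}. Write $b=c_{\text{rad}}$ on the sphere and expand
\[
(b+\phi)^\gamma-b^\gamma-\gamma b^{\gamma-1}\phi=\tfrac{\gamma(\gamma-1)}{2}b^{\gamma-2}\phi^2+a_3\phi^3+a_4\phi^4+O(\phi^5),
\]
with $a_3=\binom{\gamma}{3}b^{\gamma-3}$ and $a_4=\binom{\gamma}{4}b^{\gamma-4}$. Then $\mathcal{G}(\phi)=\mathcal{Q}(\phi)+a_3\int\phi^3+a_4\int\phi^4+\dots$, where $\mathcal{Q}(\phi)=\int(|\nabla_\theta\phi|^2-\lambda(k)\phi^2)$ vanishes on $K=\mathcal{H}_k$. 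Since $\delta Y(0)=0$, we have $Y(\Phi_\mu)=Y_2(\mu)+O(|\mu|^3)$, and $Y_2$ is determined by the $K^\perp$-projection of the Euler–Lagrange equation at second order:
\[
L_b Y_2=-\tfrac{3}{2}a_3\,P_{K^\perp}(\Phi_\mu^2),
\]
with the normalization taken from the formula for $\tfrac12\delta\mathcal{G}$. Substituting gives the standard formula
\[
G(\mu)=a_3\int\Phi_\mu^3+\Big(a_4\int\Phi_\mu^4+3a_3\int\Phi_\mu^2Y_2+\mathcal{Q}(Y_2)\Big)+O(|\mu|^5),
\]
and using the equation for $Y_2$, the quartic term equals $a_4\int\Phi_\mu^4-\tfrac{9}{4}a_3^2\int P_{K^\perp}(\Phi_\mu^2)\,L_b^{-1}P_{K^\perp}(\Phi_\mu^2)$.

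For the cubic term, $\int\Phi^3=0$ whenever $k$ is odd, since $\Phi(-\theta)=-\Phi(\theta)$. So for $k$ odd, and for $d=2$ where $\int\cos^3(k\theta)=0$ and mixtures vanish by frequency counting, we get $p\ge4$. For $k$ even and $d\ge3$, the zonal harmonic (a Gegenbauer polynomial $C_k^{(d-2)/2}$) has $\int Z^3\neq0$. This follows from the classical Gaunt/linearization formula, whose coefficients for products of Gegenbauer polynomials are positive. Since $a_3$ has a definite sign for $\gamma\in(1,2)$ (here $\binom{\gamma}{3}<0$), we pick $\mu^0=\pm e_1$ to make $G_3>0$; odd homogeneity allows this sign flip. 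This gives (iii).

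For the quartic cases, decompose $\Phi^2=\sum_{j\text{ even}\le 2k}P_j$ into spherical harmonics of degree $j$. Then $L_b^{-1}$ acts on $P_j$ as $(\lambda(j)-\lambda(k))^{-1}$, and $j=k$ does not occur because $k$ is odd. Therefore
\[
G_4=\int\Phi^4\Big(a_4\Big)-\tfrac94 a_3^2\sum_{j\neq k}\frac{\|P_j\|^2}{\lambda(j)-\lambda(k)},
\qquad \int\Phi^4=\sum_j\|P_j\|^2 .
\]
In $d=2$, with $\Phi=\cos(k\theta+\alpha)$, only $j=0$ and $j=2k$ appear. Everything is then explicit in $\beta$, and positivity reduces to an algebraic inequality in $k$ after substituting $\gamma=\gamma_{k,2}$, i.e. $\beta=k^2/2$. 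Rotation invariance makes $G_4$ constant on the circle of each $\mathcal{H}_k$ pair, which gives (i).

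For $d\ge3$, sectorial harmonics $\mathrm{Re}(x_1+ix_2)^k$ restricted to the sphere have $\Phi^2$ with components only in degrees $0$ and $2k$ in a closed form, so the computation is again explicit. Zonal harmonics require all the squared linearization coefficients of Gegenbauer polynomials, which gives a finite sum whose sign must be checked as a function of $(d,k)$.

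The main obstacle is this final sign verification. Since $a_4$ and $-a_3^2/(\lambda(j)-\lambda(k))$ have competing signs for small $j$, one has to evaluate rational functions of $d$ and $k$ and show positivity. For the zonal case, this positivity holds only when $k<k_d$, and establishing that threshold likely needs careful asymptotics in $k$ for each $d$, combined with direct computation in finitely many cases.
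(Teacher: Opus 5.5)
Your expansion of $G$ is correct and coincides with the paper's. With $a_3=c_3/6$ and $a_4=c_4/24$, your quartic term is $\frac1{24}\bigl(c_4\int\phi^4+3c_3\int\phi^2\psi\bigr)$. Parts (i) and (iii) go through as you describe; in $d=2$ the two contributions are even separately positive.

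\textbf{The sectorial case fails for $d\ge3$.} It is not true that $\Phi^2$ has components only in degrees $0$ and $2k$. For $\Phi=c_0\mathrm{Re}((x_1+ix_2)^k)$ and $\rho^2:=x_1^2+x_2^2$ one has $\Phi^2=\frac{c_0^2}{2}\bigl(\mathrm{Re}((x_1+ix_2)^{2k})+\rho^{2k}\bigr)$. For $d\ge3$ the restriction of $\rho^{2k}$ to $\mathbb S^{d-1}$ is not constant, as it is in $d=2$. Instead it has nonzero harmonic components in every even degree $0\le 2\ell\le 2k$; for instance, for $d=k=3$, $(1-x_3^2)^3$ on $\mathbb S^2$ has a nonzero degree-$4$ component.

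The components with $k<2\ell<2k$ enter $G_4$ with the negative weight $-\frac94a_3^2/(\lambda(2\ell)-\lambda(k))$, which is largest in size for $2\ell=k+1$. Those with $0<2\ell<k$ enter with positive weight. So treating $\Phi^2-P_0$ as a degree-$2k$ harmonic gives a wrong value of $G_4$, and no valid sign conclusion follows.

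The paper computes the harmonic projections of $\frac12\rho^{2k}$ and their norms explicitly. It then shows that, in each pair of degrees $(2\ell,2k-2\ell)$ with $2\ell<k$, the positive $a_3^2$-contribution dominates the negative one. What remains is an explicit multiple of the norm of $\frac12\mathrm{Re}((x_1+ix_2)^{2k})$, whose coefficient is checked to be positive at $\gamma=\gamma_{k,d}$. The sectorial harmonic is what yields (ii) for every odd $k$; for $d\ge13$, where $k_d=3$, it is the only case available. So your argument does not establish (ii).

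\textbf{The odd zonal case is only sketched, and your sign heuristic is reversed.} Since $a_4>0$, and $-\frac94a_3^2/(\lambda(j)-\lambda(k))$ is positive for $j<k$, the competition comes from $j>k$, not from small $j$. For $3\le d\le7$ one has $k_d=+\infty$, so positivity is needed for all odd $k$ and cannot come from finitely many checks.

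The paper handles this as follows:
\begin{itemize}
\item It pairs degree $\ell$ with $2k-\ell$.
\item It bounds from below the ratio between the weights of these two terms (squared Gegenbauer linearization coefficients times norms) by explicit rational functions, using Gamma-function inequalities.
\item It verifies the resulting rational inequality by computer algebra.
\end{itemize}
Direct evaluation is used only for $d\ge8$, where $k<k_d$ leaves finitely many pairs $(d,k)$.

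Finally, the lemma only asserts positivity for $k<k_d$. You do not need to show that it fails for $k\ge k_d$, and the paper does not prove this.
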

		\begin{proof}
            As already observed,
            if we set $\gamma^*:=\gamma_{k,d}$ and $b^\ast:=b_{\text{rad}}$, then 
            \be\label{e:already-obs}
K=\mathcal{H}_k(\mathbb{S}^{d-1}),\quad  
            (b^*)^{\gamma-2} = \frac{2}{\gamma(2-\gamma^*)}\lambda(k)\quad\text{and}\quad L_{b^\ast} = -\Delta_\theta - \lambda(k).
            \ee
            Now, given $\phi \in K$ to be chosen later, let $\Phi_1,\dots,\Phi_N$ be an orthonormal basis of $K$ with $\Phi_1=\phi$. Let $\mathcal{G}$ be as in \eqref{def:G} and $G$ be the associated reduced functional arising from the Lyapunov-Schmidt reduction. By following the notation of \cref{lemma:Lyapunov-Schmidt}, given $Y\colon K\cap U\to K^\perp$ we set
            $$
            \psi:=\delta^2 Y(0)[\phi,\phi] \in K^\perp.
            $$
            By differentiating two times the first equation in \eqref{eq:cond-on-perp} in $K^\perp$, and using that $Y(0)=0$, $\delta Y(0)=0$, we infer that 
            \be\label{eq:proj-psi}
            P_{K^\perp}\left(\delta^2 \mathcal{G}(0)[\psi]+\delta^3 \mathcal{G}(0)[\phi,\phi]  \right) =0,
            \ee
            namely
            \bea\label{eq:def-psi}\psi=-(P_{K^\perp}\delta^2\mathcal{G}(0))^{-1}P_{K^\perp}\delta^3\mathcal{G}(0)[\phi,\phi]=-\frac12(L_{b^\ast}|_{K^\perp})^{-1}P_{K^\perp}\delta^3\mathcal{G}(0)[\phi,\phi].
            \eea
            If we proceed by computing the terms $G_3(e_1)$ and $G_4(e_1)$, we get
			\be\label{eq:G3}\begin{aligned} G_3(e_1)&=\frac{1}{6}\partial^3_{\mu_1}G(0)=\frac{1}6\delta^3\mathcal{G}(0)[\phi,\phi,\phi]= \frac{\gamma^*(\gamma^*-1)(\gamma^*-2)(b^*)^{\gamma^*-3}}{6}\int_{\partial B_1}\phi^3\,d\HH^{d-1}\end{aligned}\ee and, exploiting the equation in \eqref{eq:proj-psi} by testing it with $\psi \in K^\perp$ \begin{equation}\label{eq:G4}\begin{aligned} G_4(e_1)&=\frac1{24}\partial^4_{\mu_1}G(0)=\frac{1}{24}\Big(\delta^4\mathcal{G}(0)[\phi,\phi,\phi,\phi]+6\delta^3\mathcal{G}(0)[\phi,\phi,\psi]+3\delta^2\mathcal{G}(0)[\psi,\psi]\Big),\\&=\frac{1}{24}\Big(\delta^4\mathcal{G}(0)[\phi,\phi,\phi,\phi]+3\delta^3\mathcal{G}(0)[\phi,\phi,\psi]\Big)\\&=\frac{1}{24}\left(c_4\int_{\partial B_1}\phi^4\,d\HH^{d-1}+3c_3\int_{\partial B_1} \phi^2\psi\,d\HH^{d-1}\right),\end{aligned}\end{equation}
			where $c_3:=\gamma^*(\gamma^*-1)(\gamma^*-2)(b^*)^{\gamma^*-3}$, $c_4:=\gamma^*(\gamma^*-1)(\gamma^*-2)(\gamma^*-3)(b^*)^{\gamma^*-4}$. Notice that since $\gamma^*>1$, we have $c_3<0$ and $c_4>0$. Now we divide the rest of the proof into several cases.
            \\ \vspace{-0.3cm}
			
			\noindent\textit{Case  1: $d=2$.} Then $N=2$ and $\Phi_1:=\frac{1}{\sqrt{\pi}}\cos(k\theta)$, $\Phi_2:=\frac{1}{\sqrt{\pi}}\sin(k\theta)$.
            Since $$\int_{0}^{2\pi}\cos(k\theta)^3\,d\theta=\int_{0}^{2\pi}\sin(k\theta)^3\,d\theta=0$$ for every $k\in \N_{\geq 3}$, we infer that $G_3\equiv0$. Therefore, up to a rotation, it is sufficient to show that $G_4(e_1)>0$. Since $\phi(\theta):=\Phi_1(\theta)$, we have 
			$$\psi(\theta):=-\frac12 L_b^{-1}\left(\frac{c_3}{2\pi}(1+\cos(2k\theta))\right)=\frac{c_3}{4\pi}\left(\frac{1}{k^2}-\frac{\cos(2k\theta)}{3k^2}\right).$$ Then an explicit computation shows that $$G_4(e_1) = \frac{1}{24}\left(c_4\int_{0}^{2\pi}\phi^4\,d\theta+3c_3\int_{0}^{2\pi} \phi^2\psi\,d\theta\right)=\frac{c_4}{32\pi}+\frac{5c_3^2}{192\pi k^2}>0.$$ 
			\\ \vspace{-0.3cm}
			
			\noindent\textit{Case 2: $d\ge3$ and $k$ even.} We proceed by showing that $p=3$ and $G_3(e_1)>0$. 
			
			Consider $\phi(\theta):=-h_{k,m}^{-1/2}C_k^m(\cos\theta_1)$, where $C_k^m$ is the Gegenbauer polynomial with $m:=\frac{d-2}{2}$ and $h_{k,m}$ is chosen in such a way that $\phi$ is normalized in $L^2(\partial B_1)$. 
			Using the linearization coefficients for Gegenbauer polynomials (see e.g.~\cite{gegenbauer}), we have that $C_k^m(t) ^2= \sum_{\ell=0,2,\ldots,2k} a_{k,\ell,m} C_\ell^m(t).$
            Then \be\label{eq:phi2}\phi(\theta)^2=h_{k,m}^{-1}\sum_{\ell=0,2,\ldots,2k} a_{k,\ell,m} C_\ell^m(\cos\theta_1).\ee
            More precisely, we have $$h_{k,m}=\frac{2^{1-2m}\pi\Gamma(k+2m)}{k!(k+m)\Gamma(m)^2}\alpha_{d-2},\quad\text{where }\alpha_{d-2}:=\HH^{d-2}(\mathbb{S}^{d-2}),$$ and 
			\bea\label{eq:def-a}a_{k,\ell,m}:=\frac{
				\Gamma(\ell+1) \Gamma(m+\ell/2)^2\Gamma(m+k-\ell/2)
				\Gamma(2m+k+\ell/2)\Gamma(\ell+m+1)}{
				\Gamma(\ell/2+1)^2\Gamma(k-\ell/2+1)\Gamma(m)^2\Gamma(\ell+2m)\Gamma(m+\ell)\Gamma(m+1+k+\ell/2)}.\eea
            Since $k$ is even and the Gegenbauer polynomials are orthogonal on $[-1,1]$ with respect to the weight $(1-t^2)^{(d-3)/2}$, we have
            $$\int_{\partial B_1}\phi^3\,d\HH^{d-1}=-\alpha_{d-2}h_{k,m}^{-3/2}a_{k,k,m}\int_{-1}^1 C_k^m(t)^2(1-t^2)^{(d-3)/2}\,dt=-h_{k,m}^{-1/2}a_{k,k,m}<0.$$ 
            Since $c_3<0$, by \eqref{eq:G3} we infer that $G_3(e_1)>0$.	
			\\ \vspace{-0.3cm}
			
			\noindent\textit{Case 3: $d=3,4,5,6,7$, $k$ odd and $\Phi_1$ zonal spherical harmonic.} Since $k$ is odd, the integral $\int_{\partial B_1}\varphi^3\,d\HH^{d-1}$ vanishes for every spherical harmonic $\varphi \in \mathcal{H}_k(\mathbb{S}^{d-1})$, and thus $G_3\equiv0$. Thus, we proceed by computing $G_4$ with a specific choice of $\phi$.
			
			Consider $\phi(\theta):=h_{k,m}^{-1/2}C_k^m(\cos\theta_1)$, with the sign opposite to that chosen in Case 2. Recalling the formula in \eqref{eq:phi2} and using that $k$ is odd, we can identify $\psi$ as the unique solution to $\Delta_\theta\psi+\lambda(k)\psi=\frac{c_3}{2}\phi^2$ on $\partial B_1$, namely $$\psi(\theta)=\sum_{\ell=0,2,\ldots,2k} \frac{c_3 }{2(\lambda(k)-\lambda(\ell))}h_{k,m}^{-1}a_{k,\ell,m} C_\ell^m(\cos\theta_1).$$ Therefore, by \eqref{eq:G4}, we have
			\bea G_4(e_1)&=\frac1{24}h_{k,m}^{-2}\sum_{\ell=0,2,\ldots,2k}|a_{k,\ell,m}|^2h_{\ell,m}\left(c_4+\frac{3c_3^2}{2(\lambda(k)-\lambda(\ell))}\right)\\&=\frac1{24}h_{k,m}^{-2}|c_3|(b^*)^{-1}\sum_{\ell=0,2,\ldots,2k}|a_{k,\ell,m}|^2h_{\ell,m}\left((3-\gamma^*)+\frac{6(\gamma^*-1)\lambda(k)}{\lambda(k)-\lambda(\ell)}\right),\eea
			where in the last equality we used that $c_4
            =|c_3|(b^*)^{-1}(3-\gamma^*)$ and \eqref{e:already-obs}. Precisely, 
            \be\label{eq:computationc3}c_3=\gamma^*(\gamma^*-1)(\gamma^*-2)(b^*)^{\gamma^*-3} 
            = 
            -2(\gamma^*-1)\lambda(k)(b^*)^{-1}.\ee
            Therefore, it is sufficient to show that \be\label{eq:def-I}I_{k,d}:=\sum_{\ell=0,2,\ldots,2k}|a_{k,\ell,m}|^2h_{\ell,m}\left((3-\gamma^*)+\frac{3(\gamma^*-1)\lambda(k)}{\lambda(k)-\lambda(\ell)}\right)>0.\ee
			
			First, consider the function  $R_d(k,\ell):=\frac{|a_{k,\ell,m}|^2h_{\ell,m}}{|a_{k,2k-\ell,m}|^2h_{2k-\ell,m}}$ with $0\le \ell\le k-1$, namely  \bea R_d(k,\ell)&=
			\frac{\Gamma(\ell/2+1/2)^2\Gamma(k+\ell/2+2m)^2\Gamma(k-\ell/2+m+1/2)^2\Gamma(2k-\ell/2+m+1)^2}{\Gamma(\ell/2+m+1/2)^2\Gamma(k+\ell/2+m+1)^2\Gamma(k-\ell/2+1/2)^2\Gamma(2k-\ell/2+2m)^2}\\&\qquad \cdot \frac{(\ell+m)\Gamma(\ell+2m)\Gamma(2k-\ell+1)}{(2k-\ell+m)\Gamma(\ell+1)\Gamma(2k-\ell+2m)}.\eea
			Then, using the following inequality for the Gamma function $$\frac{\Gamma(x+1/2)^2}{\Gamma(x)^2}\ge \left(\sqrt{x}\left(1-\frac1{8x}\right)\right)^2\ge x-\frac14,$$ 
            we get
			\begingroup\allowdisplaybreaks\begin{align*} R_3(k,\ell)&=
				\frac{
					\Gamma(\ell/2+1/2)^{2}\Gamma(k+\ell/2+1)^{2}\Gamma(k-\ell/2+1)^{2}\Gamma(2k-\ell/2+3/2)^{2}(2\ell+1)}
				{\Gamma(\ell/2+1)^{2}\Gamma(k+\ell/2+3/2)^{2}\Gamma(k-\ell/2+1/2)^{2}\Gamma(2k-\ell/2+1)^{2}(4k-2\ell+1)
				}
				\\&\ge\frac{(\ell/2+3/4)(k+\ell/2+5/4)(k-\ell/2+1/4)(2k-\ell/2+3/4)(2\ell+1)}{(\ell/2+1/2)^2(k+\ell/2+1)^2(4k-2\ell+1)}=:U_3(k,\ell)
				,\\
				R_4(k,\ell)&=1=:U_4(k,\ell),\\
				R_5(k,\ell)&=\frac{\Gamma(\ell/2+1/2)^2\Gamma(k+\ell/2+3)^2\Gamma(k-\ell/2+2)^2\Gamma(2k-\ell/2+5/2)^2}{\Gamma(\ell/2+2)^2\Gamma(k+\ell/2+5/2)^2\Gamma(k-\ell/2+1/2)^2\Gamma(2k-\ell/2+3)^2}\\&\qquad \cdot\frac{(2\ell+3)(\ell+1)(\ell+2)}{(4k-2\ell+3)(2k-\ell+1)(2k-\ell+2)}
				\\&\ge\frac{(\ell/2+7/4)(k+\ell/2+9/4)(k-\ell/2+1/2)^2(k-\ell/2+5/4)(2k-\ell/2+11/4)}{(\ell/2+1/2)^2(\ell/2+3/2)^2(2k-\ell/2+5/2)^2}\\&\qquad\cdot\frac{(2\ell+3)(\ell+1)(\ell+2)}{(4k-2\ell+3)(2k-\ell+1)(2k-\ell+2)}:=U_5(k,\ell),\\
				R_6(k,\ell)&=\frac{(2k+\ell+6)^2(2k-\ell+3)(2k-\ell+1)(\ell+2)^2}{(\ell+1)(\ell+3)(4k-\ell+6)^2(2k-\ell+2)^2}:=U_6(k,\ell),\\
				R_7(k,\ell)&=\frac{\Gamma(\ell/2+1/2)^2\Gamma(k+\ell/2+5)^2\Gamma(k-\ell/2+3)^2\Gamma(2 k-\ell/2+7/2)^2}{\Gamma(\ell/2+3)^2\Gamma(k+\ell/2+7/2)^2\Gamma(k-\ell/2+1/2)^2\Gamma(2 k-\ell/2+5)^2}\\&\qquad\cdot\frac{(2\ell+5)(\ell+4)(\ell+3)(\ell+2)(\ell+1)}{(4 k-2\ell+5)(2 k-\ell+4)(2 k-\ell+3)(2 k-\ell+2)(2 k-\ell+1)}
				\\&\ge \frac{(\ell/2+11/4)(k+\ell/2+7/2)^2(k+\ell/2+17/4)(k-\ell/2+1/2)^2(k-\ell/2+3/2)^2}{(\ell/2+1/2)^2(\ell/2+3/2)^2(\ell/2+5/2)^2(2k-\ell/2+7/2)^2(2k-\ell/2+9/2)^2}\\&\qquad\cdot\frac{(k-\ell/2+9/4)(2k-\ell/2+19/4)(2\ell+5)(\ell+4)(\ell+3)(\ell+2)(\ell+1)}{(4 k-2\ell+5)(2 k-\ell+4)(2 k-\ell+3)(2 k-\ell+2)(2 k-\ell+1)}\\&:=U_7(k,\ell).
			\end{align*}\endgroup
			Let us consider $W_d(k,\ell):=U_d(k,\ell) A_d(k,\ell)+A_d(k,2k-\ell)$, where $$A_d(k,\ell):=3-\gamma^*+\frac{3(\gamma^*-1)\lambda(k)}{\lambda(k)-\lambda(\ell)}.$$
			With computer assistance (we used Mathematica), we can verify that for every $d=3,4,5,6,7$ $$W_d(k,\ell)> 0\quad\text{for every }k\ge3,\ 0\le \ell\le k-1.$$
            Therefore $$I_{k,d}\ge \sum_{\ell=0,2,\ldots,k-1} |a_{k,2k-\ell}|^2h_{2k-\ell,m}W_d(k,\ell)>0,$$ concluding the proof for $d=3,4,5,6,7$ and $k$ odd.
			\\ \vspace{-0.3cm}
			
			\noindent\textit{Case 4: $d\ge8, k$ odd satisfying $k<k_d$ and $\Phi_1$ zonal spherical harmonic.}
			Using the formula in \eqref{eq:def-I}, by direct computation we get
			\bea & I_{3,8} \approx 2.7 \cdot 10^5, \ I_{5,8} \approx 4.4 \cdot 10^6, \ I_{7,8} \approx 3.2 \cdot 10^7, \ I_{9,8} \approx 7.4 \cdot 10^7, \ I_{3,9} \approx 5.6 \cdot 10^5, \\& I_{5,9} \approx 8.5 \cdot 10^5, \ I_{3,10} \approx 8.7 \cdot 10^5, \ I_{3,11} \approx 9.7 \cdot 10^5, \ I_{3,12} \approx 5.8 \cdot 10^5,
            \eea
			which immediately concludes the proof.
				\\ \vspace{-0.3cm}
				
				\noindent\textit{Case 5. The case $d\ge3$, $k$ odd and $\Phi_1$ is a sectorial spherical harmonic.}
				Since $k$ is odd, as already observed in Case 3, $G_3\equiv0$ and we need to consider the term $G_4$.
				
                We first identify the elements in $K$ with their $k$-homogeneous extensions, and we take $\phi(x):=c_0\text{Re}((x_1+ix_2)^k)$, where $c_0>0$ is chosen in such a way that $\phi$ is normalized in $L^2(\partial B_1)$.\\
				For every $x\in\R^d$, we set $r:=|x|$ and $\rho:=\sqrt{x_1^2+x_2^2}$.
				We observe that given $m\in \N$ and $Q_{2m}$ a $2m$-homogeneous polynomial, we can write $Q_{2m}=\pi_{2m}(Q_{2m})+r^2 Q_{2m-2}$, where $\pi_{2m}(Q_{2m})$ is a $2m$-homogeneous harmonic polynomial and $Q_{2m-2}$ is a $(2m-2)$-homogeneous polynomial. Precisely, $\pi_{2m}(Q_{2m})$ is given by 
				$$
				\pi_{2m}(Q_{2m})(x)=\sum_{j=0}^{m}
				\frac{(-1)^j}{4^j j!\left(2m+d/2-1-j\right)_j}
				r^{2j}\Delta^jQ_{2m}(x),$$ where $(a)_n:=a(a+1)\cdots(a+n-1)$ is the Pochhammer symbol. 
				
				If $P$ is a $2k$-homogeneous polynomial, then we can write $$P(x)=\sum_{\ell=0}^{k}r^{2(k-\ell)} H_{2\ell}(x),$$ where each $H_{2\ell}$ is a $2\ell$-homogeneous harmonic polynomial. Precisely, applying first $\Delta^{k-\ell}$ and then the harmonic projection $\pi_{2\ell}$ to both sides of the identity above, we obtain
				\bea 
				H_{2\ell}(x)&=\frac{\pi_{2\ell}(\Delta^{k-\ell} P)(x)}{4^{k-\ell}(k-\ell)!(2\ell+d/2)_{k-\ell}},\eea since $\Delta^q(r^{2q}H_{2\ell})=4^qq!({2\ell}+d/2)_qH_{2\ell}$.
                Taking $P(x):=\frac12\rho^{2k}$, we get
				\be\label{eq:Hell-harmonic}
				H_{2\ell}(x)=
				\frac{(k!)^2}{2(k-\ell)!\left(2\ell+d/2\right)_{k-\ell}}
				\sum_{j=0}^{\ell}
				\frac{(-1)^j}{j!(\ell-j)!^2\left(2\ell+d/2-1-j\right)_j}
				r^{2j}\rho^{2(\ell-j)},
				\ee where we used the identity $$\Delta^m(\rho^{2k})= 4^m\frac{k!^2}{(k-m)!^2}\rho^{2(k-m)}$$ to compute both $\Delta^{k-\ell}(\rho^{2k})$ and $\Delta^{j}(\rho^{2\ell})$. Now, since $\mathrm{Re}(z)^2 = \frac12 ( \mathrm{Re}(z^2)+|z|^2)$ in $\C$, we have
				$$\phi^2(x) =c_0^2\sum_{\ell=0}^{k}r^{2(k-\ell)} P_{2\ell}(x), \quad\text{where} \quad\begin{cases}P_{2k}:=\varphi_{2k}+H_{2k}&\\
                P_{2\ell}:=H_{2\ell},&\text{if $\ell=0,\ldots,k-1$,}
                \end{cases}$$ and $\varphi_{2k}(x):=\frac12\text{Re}((x_1+ix_2)^{2k})$ and $H_{2\ell}$ are defined in \eqref{eq:Hell-harmonic}.
				
				We now observe that, since $H_{2\ell}$ is the harmonic projector of $\frac12\rho^{2k}$, then $$\int_{\partial B_1} H_{2\ell}^2\,d\HH^{d-1}=\frac12\int_{\partial B_1} H_{2\ell}\rho^{2k}\,d\HH^{d-1}.$$ Therefore, denoting by $\alpha_{d-1}:=\HH^{d-1}(\partial B_1)$, we can compute
				\be\label{eq:intpellk} \int_{\partial B_1} P_{2k}^2\,d\HH^{d-1}= \frac{1}{8} \frac{(2k)!}{(d/2)_{2k}}\alpha_{d-1} + \frac{d-2}{8} \left(2k + \frac{d}{2} - 1\right) \frac{(k!)^2}{(k + d/2 - 1)_{k+1}^2}\alpha_{d-1}=:A_{2k}+B_{2k},\ee and, for every $\ell=0,\ldots,k-1$
				\be\label{eq:intpell} \int_{\partial B_1} P_{2\ell}^2\,d\HH^{d-1}
				=\frac{d-2}{8} \left(2\ell + \frac{d}{2} - 1\right) \binom{k}{\ell}^2 \frac{(k!)^2}{(\ell + d/2 - 1)_{k+1}^2} \alpha_{d-1}:=B_{2\ell},
				\ee 
				where we used that $$\int_{\partial B_1} \varphi_{2k}H_{2k}\,d\HH^{d-1} =0\quad\text{and}\quad \int_{\partial B_1} \rho^{2m}\,d\HH^{d-1}=\alpha_{d-1} \frac{m!}{(d/2)_m}.$$
				Now, since $\psi$ is the unique solution to
				$\Delta_\theta \psi + \lambda(k) \psi = \frac{c_3}2\phi^2$ on $\partial B_1$, we get
				$$
				\psi= c_0^2\sum_{\ell=0}^k \frac{c_3}{2(\lambda(k)-\lambda(2\ell))}P_{2\ell}.
				$$
				Using \eqref{eq:intpellk} and \eqref{eq:intpell}, we have 
				\be\label{eq:g-step5}\begin{aligned}
				&G_4(e_1)=\frac1{24}\int_{\mathbb{S}^{d-1}} \left(c_4 \phi^4 + 3 c_3 \phi^2 \psi\right) \,d\mathcal{H}^{d-1} = \frac{c_0^4}{24}\sum_{\ell=0}^{k} \left(c_4 + \frac{3c_3^2}{2(\lambda(k)-\lambda(2\ell))} \right)
				\int_{\mathbb{S}^{d-1}} P_{2\ell}^2\,d\mathcal{H}^{d-1}\\&=\frac{c_0^4}{24}|c_3|(b^*)^{-1}\left(\left((3-\gamma^*)+\frac{3(\gamma^*-1)\lambda(k)}{\lambda(k)-\lambda(2k)}\right)A_{2k}+\sum_{\ell=0}^{k}\left((3-\gamma^*)+\frac{3(\gamma^*-1)\lambda(k)}{\lambda(k)-\lambda(2\ell)}\right)B_{2\ell}\right),\end{aligned}\ee
				where in the last equality we used again $c_4=|c_3|(b^*)^{-1}(3-\gamma^*)$ and \eqref{eq:computationc3}.
				
				The proof is concluded once we show that  \be\label{eq:first-step}J_{\ell}:=\frac{1}{\lambda(k)-\lambda(2\ell)}B_{2\ell}+\frac{1}{\lambda(k)-\lambda(2k-2\ell)}B_{2k-2\ell}\ge0,\ee
                for every $\ell=0,\ldots,(k-1)/2$. If this is the case, by substituting \eqref{eq:first-step} in \eqref{eq:g-step5}, we get 
				\bea G_4(e_1)&\ge  \frac{c_0^4}{24}|c_3|(b^*)^{-1}\left(\left((3-\gamma^*)+\frac{3(\gamma^*-1)\lambda(k)}{\lambda(k)-\lambda(2k)}\right)A_{2k}+(3-\gamma^*)\sum_{\ell=0}^{k}B_{2\ell}\right)\\&=\frac{c_0^4}{8}|c_3|(b^*)^{-1}\left((3-\gamma^*)+\frac{(\gamma^*-1)\lambda(k)}{\lambda(k)-\lambda(2k)}\right)A_{2k},
				\eea
				where in the last equality, we used that $$\sum_{\ell=0}^kB_{2\ell}=\sum_{\ell=0}^k\int_{\mathbb{S}^{d-1}}H_{2\ell}^2\,d\HH^{d-1}=\int_{\mathbb{S}^{d-1}}\left(\frac12\rho^{2k}\right)^2\,d\HH^{d-1}=2A_{2k}.$$ Finally, since $\gamma^*=\gamma_{k,d}$, we have $$G_4(e_1)\geq \frac{c_0^4}8|c_3|(b^*)^{-1}\frac{2k(k^2-2k+12+d(k-2))+8(d-2)}
				{(3k+d-2)(k^2+kd-2k-2d+4)}A_{2k}>0,$$ concluding the proof.
                
                We proceed by showing the validity of \eqref{eq:first-step} for every $\ell=0,\ldots,(k-1)/2$. Combining the definition of $B_{2\ell}$ with the fact that  
				$
				\lambda(k)-\lambda(2\ell)= (k-2\ell)(d-2+k+2\ell),
				$
				an explicit computation shows that
				\bea J_\ell&=\left(\frac{(d-2+4\ell)}{(d-2+k+2\ell)\left(d/2-1+\ell\right)_{k+1}^2}-\frac{(d-2+4k-4\ell)}{(d-2+3k-2\ell)\left(d/2-1+k-\ell\right)_{k+1}^2}\right)\\&\qquad \cdot\frac{(k!)^2\binom{k}{\ell}^2(d-2)\alpha_{d-1}}{16(k-2\ell)}.\eea
				Thus we only need to prove that
				\be\label{eq:ineq-harmonic}
				\frac{\left(d/2 - 1 + k - \ell\right)_{k+1}^{2}}{\left(d/2 - 1 + \ell\right)_{k+1}^{2}}\ge \frac{\left(d/2-1+k-\ell\right)^{2}}{\left(d/2-1+\ell\right)^{2}}
				\ge\frac{(d-2+4k-4\ell)(d-2+k+2\ell)}{(d-2+3k-2\ell)(d-2+4\ell)}.
				\ee
				The first inequality in \eqref{eq:ineq-harmonic} follows by the fact that $$ \frac{\left(d/2 - 1 + k - \ell\right)_{k+1}}
				{\left(d/2 - 1 + \ell\right)_{k+1}}=\prod_{j=0}^k\left(\frac{d/2-1+k-\ell+j}{d/2-1+\ell+j}\right)=\prod_{j=0}^k\left(1+\frac{k-2\ell}{d/2-1+\ell+j}\right),$$ then, since $k-2\ell\ge1$, all the factors in the product above are larger than $1$ and the bound follows by considering the term associated to $j=0$. For the second inequality in \eqref{eq:ineq-harmonic}, we observe that $$\frac{\left(d/2-1+k-\ell\right)^{2}}{\left(d/2-1+\ell\right)^{2}}
				-\frac{(d-2+4k-4\ell)(d-2+k+2\ell)}{(d-2+3k-2\ell)(d-2+4\ell)}=T(d,k,\ell),$$ where $$T(d,k,\ell):=\frac{2(k-2\ell)T_0(d,k,\ell)}
				{(d-2+2\ell)^2(d-2+4\ell)(d-2+3k-2\ell)}$$
				and
				$$
				T_0(d,k,\ell) := (d-2)^3+ 6k(d-2)^2 + (6k^2 +20k\ell-20\ell^2)(d-2) + 24 k\ell(k-\ell)
				$$
				Therefore $T(d,k,\ell)\ge0$ for every $\ell\le (k-1)/2$, which concludes the proof of \eqref{eq:first-step}.
			\end{proof}
	Finally, we can prove \cref{prop:optimality-log}. 
        \begin{proof}[Proof of \cref{prop:optimality-log}]
				It follows immediately by the characterization of the kernel proved in \cref{remark:after}, combined with \cref{prop:numerical}.
			\end{proof}

			\subsection{Bifurcations close to the radial cone}\label{subsec:bif}
            In the following subsections, we prove the bifurcation results for the radial cone in \cref{thm:bifurcation-radial}.
            First, we recall the following parameter-dependent version of the Lyapunov-Schmidt reduction in \cref{lemma:Lyapunov-Schmidt} for the radial cone $b_{\text{rad},\gamma}:=c_{\text{rad},\gamma}|x|^\beta,$ with $c_{\text{rad},\gamma}^{\gamma-2}:=\frac2\gamma \lambda(\beta)$.

        \begin{proposition}\label{lemma:Lyapunov-Schmidt-bifurcation}
			Let $d\geq 2$, $k\in \N_{\geq 3}$, $\gamma ^*:=\gamma_{k,d}$, $b^*:=b_{\text{rad},\gamma^*}\in\mathcal{B}_0$ and $K:=\ker(L_{b^*})$. Then there exist $\eps>0$, a neighborhood $U\subset K$ of $0$ in $C^{1,\alpha}(\partial B_1)$ and an analytic map $$Y:(K\cap U)\times I_\eps\to K^\perp\subset H^1(\partial B_1),$$
            with $I_\eps:=(\gamma^*-\eps,\gamma^*+\eps)$,  such that the following holds.
            \begin{itemize}
            \item $Y(0,\gamma^*)=0$, $\delta Y(0,\gamma^*)=0$ and $Y(0,\gamma)=0$ for $\gamma \in I_\eps$. Moreover,
            $$
			P_{K^\perp}\big(\delta\mathcal{G}(\phi+Y(\phi,\gamma),\gamma)\big)=0,\quad \mbox{for every }(\phi,\gamma) \in (K\cap U)\times I_\eps.
            $$
            \item Let $\Phi_1,\dots,\Phi_N$ be an orthonormal basis of $K$. Then, there exists $\rho>0$ such that, for every $\mu\in B_\rho\subset\R^N$, the reduced functional $G: B_\rho \times I_\eps \to \R$ is defined as
\bea\label{def:G-finito-dim-bif}
G(\mu,\gamma):=\mathcal{G}(\Phi_\mu +Y\left(\Phi_\mu,\gamma\right),\gamma),\quad \mbox{with}\quad\Phi_\mu:=\sum_{i=1}^N\mu_i\Phi_{i},
\eea
            and we have that $\overline \varphi \in H^1(\partial B_1)$ is a critical point of $\mathcal{G}(\cdot,\overline \gamma)$ for some $\overline \gamma \in I_\eps$, if and only if there exists $\overline \mu\in\R^N$ such that 
            $$
            \overline \varphi = \Phi_{\overline \mu} + Y(\Phi_{\overline \mu}, \overline \gamma) \quad \mbox{and}\quad \nabla_\mu G(\overline \mu,\overline \gamma)=0.
            $$
			\end{itemize}
		\end{proposition}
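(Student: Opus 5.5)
The plan is to run the standard Lyapunov--Schmidt reduction with $\gamma$ as an additional analytic parameter. The ingredients are the map $\Psi(\phi,\gamma):=\delta\mathcal{G}(\phi,\gamma)$ and the implicit function theorem on $K^\perp$.

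\textbf{Step 1: functional setting and analyticity.} Fix $\alpha\in(0,1)$ and write $\phi=\phi^K+\phi^\perp$ with $\phi^K\in K$ and $\phi^\perp\in K^\perp\cap C^{2,\alpha}(\partial B_1)$. Consider
\[
\mathcal{N}(\phi^K,\phi^\perp,\gamma):=P_{K^\perp}\Big(-\Delta_\theta(\phi^K+\phi^\perp)-\lambda(\beta_\gamma)(\phi^K+\phi^\perp)+\tfrac\gamma2\big((b_\gamma+\phi^K+\phi^\perp)^{\gamma-1}-b_\gamma^{\gamma-1}\big)\Big),
\]
viewed as a map into $K^\perp\cap C^{0,\alpha}(\partial B_1)$. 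Here $b_\gamma:=b_{\text{rad},\gamma}$ and $\beta_\gamma=2/(2-\gamma)$, so $\tfrac12\delta\mathcal{G}(\phi,\gamma)$ is represented by this expression. Since $b_\gamma=c_{\text{rad},\gamma}$ on $\partial B_1$ is a positive constant depending analytically on $\gamma$, and $(s,\gamma)\mapsto s^{\gamma-1}$ is jointly real-analytic near $s=c_{\text{rad},\gamma^*}>0$, the map $\mathcal{N}$ is real-analytic near $(0,0,\gamma^*)$. The key structural fact is $\mathcal{N}(0,0,\gamma)=0$ for every $\gamma$, because $b_\gamma$ solves its own equation. The partial derivative in $\phi^\perp$ at $(0,0,\gamma^*)$ equals $P_{K^\perp}L_{b^*}|_{K^\perp}$, which by \cref{remark:after} is $-\Delta_\theta-\lambda(k)$ restricted to $K^\perp=\mathcal{H}_k^\perp$. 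By Schauder theory this is an isomorphism $C^{2,\alpha}\cap K^\perp\to C^{0,\alpha}\cap K^\perp$.

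\textbf{Step 2: implicit function theorem.} The analytic implicit function theorem gives $\eps>0$, a neighborhood $U$ of $0$ in $K$ (finite-dimensional, so the choice of norm is irrelevant), and an analytic map $Y:(K\cap U)\times I_\eps\to K^\perp$ with $\mathcal{N}(\phi,Y(\phi,\gamma),\gamma)=0$. This map is unique in a small $C^{2,\alpha}$ ball. Uniqueness together with $\mathcal{N}(0,0,\gamma)=0$ yields $Y(0,\gamma)=0$. Differentiating the identity in $\phi$ at $(0,\gamma^*)$ gives $P_{K^\perp}L_{b^*}(\psi+\delta Y\psi)=0$ for $\psi\in K$. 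Since $L_{b^*}\psi=0$, this forces $L_{b^*}\delta Y(0,\gamma^*)\psi=0$ on $K^\perp$, hence $\delta Y(0,\gamma^*)=0$. The relation $P_{K^\perp}\delta\mathcal{G}(\phi+Y(\phi,\gamma),\gamma)=0$ is exactly the defining equation.

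\textbf{Step 3: characterization of critical points.} Set $G(\mu,\gamma)=\mathcal{G}(\Phi_\mu+Y(\Phi_\mu,\gamma),\gamma)$. By the chain rule,
\[
\partial_{\mu_i}G=\delta\mathcal{G}(\cdot)[\Phi_i+\partial_{\mu_i}Y].
\]
Since $\partial_{\mu_i}Y\in K^\perp$ and the $K^\perp$-part of $\delta\mathcal{G}$ vanishes, we get $\nabla_\mu G=P_K\delta\mathcal{G}(\Phi_\mu+Y(\Phi_\mu,\gamma),\gamma)$. So if $\nabla_\mu G(\bar\mu,\bar\gamma)=0$, then both projections vanish and $\bar\varphi$ is critical. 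Conversely, let $\bar\varphi$ be a small critical point of $\mathcal{G}(\cdot,\bar\gamma)$. By elliptic regularity it lies in $C^{2,\alpha}$ and is small there; I would tacitly restrict to such small critical points, as is implicit in the statement. Uniqueness in the implicit function theorem then gives $P_{K^\perp}\bar\varphi=Y(P_K\bar\varphi,\bar\gamma)$. Taking $\bar\mu$ as the coordinates of $P_K\bar\varphi$, we get $\nabla_\mu G(\bar\mu,\bar\gamma)=P_K\delta\mathcal{G}(\bar\varphi)=0$.

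\textbf{Main obstacle.} The delicate point is the converse direction in Step 3, namely upgrading an $H^1$ critical point near $0$ to $C^{2,\alpha}$-smallness so that the uniqueness of the implicit function theorem applies. I would use the $L^\infty$ smallness together with positivity of $b_\gamma+\bar\varphi$, which makes the nonlinearity smooth, and then bootstrap through the equation $-\Delta_\theta\bar\varphi=\lambda\bar\varphi-\tfrac\gamma2((b+\bar\varphi)^{\gamma-1}-b^{\gamma-1})$. This follows the standard argument of \cite[Section 3]{Simon} and \cite[Lemma B.1]{esv}, with the parameter $\gamma$ carried along.
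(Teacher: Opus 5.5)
Your proposal is correct and follows the same route the paper intends: it is the standard parameter-dependent Lyapunov--Schmidt reduction, namely the implicit function theorem on $K^\perp$ in H\"older spaces, with $Y(0,\gamma)=0$ obtained from uniqueness and from the fact that $0$ is a critical point of $\mathcal{G}(\cdot,\gamma)$ for every $\gamma$; the paper states the result without proof, citing \cite[Section 3]{Simon} and \cite[Lemma B.1]{esv}. For the converse direction, which indeed holds only for small critical points, you do not need to assume $L^\infty$-smallness separately: since $\gamma^*\in(1,2)$ one has $|(b_{\text{rad},\gamma}+s)^{\gamma-1}-b_{\text{rad},\gamma}^{\gamma-1}|\le C|s|$ for all $s\ge -b_{\text{rad},\gamma}$, so $|\Delta_\theta\bar\varphi|\le C|\bar\varphi|$ and the bootstrap to $C^{2,\alpha}$-smallness starts from $H^1$-smallness alone.
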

Since, at $\gamma=\gamma_{k,d}$, the kernel $K$ coincides with the space of spherical harmonics $\mathcal{H}_k(\mathbb{S}^{d-1})$, see \cref{prop:optimality-log}, it is useful to formulate a symmetry-restricted version of the Lyapunov-Schmidt reduction, adapted to invariant subspaces under suitable group actions.
            \begin{corollary}\label{lemma:Lyapunov-Schmidt-bifurcation-sym}
Assume the hypotheses of \cref{lemma:Lyapunov-Schmidt-bifurcation}. Let $\Sigma$ be a symmetry subgroup such that
$$
\mathcal{G}(\varphi\circ \sigma,\gamma) = \mathcal{G}(\varphi,\gamma)\quad \mbox{for every }\varphi \in H^1(\partial B_1), \ \gamma \in (1,2),\  \sigma \in \Sigma. 
$$
Let $(H^1(\partial B_1))^\Sigma:=\{\varphi\in H^1(\partial B_1): \varphi\circ \sigma = \varphi\,\mbox{ for all }\sigma \in \Sigma\}$ and let $K^\Sigma$ be the restricted kernel
\[
K^\Sigma:=K\cap (H^1(\partial B_1))^\Sigma.
\]
Then, the conclusion of \cref{lemma:Lyapunov-Schmidt-bifurcation} remains valid with $K$ and $H^1(\partial B_1)$ replaced by $K^\Sigma$ and $(H^1(\partial B_1))^\Sigma$.

Moreover, if $K^\Sigma$ is one-dimensional, i.e., $K^\Sigma=\text{span}\{\Phi_1\}$,
then there exists a branch of nontrivial $\Sigma$-symmetric critical points $\varphi_\gamma\in H^1(\partial B_1)$ of $\mathcal{G}(\cdot,\gamma)$, defined for $\gamma$ in a one-sided neighborhood of $\gamma^*$, tangent to $\Phi_1$ at $\gamma=\gamma^*$. Precisely, for such values of $\gamma$, the critical point $\varphi_\gamma$ is of the form
$$
\varphi_\gamma = \mu(\gamma)\Phi_1 + Y(\mu(\gamma)\Phi_1,\gamma),\quad \text{with }\mu(\gamma)\to 0,\,\text{as }\gamma \to \gamma^*.
$$
\end{corollary}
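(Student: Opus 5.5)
The first claim follows from the principle of symmetric criticality: $\Sigma$-invariance of $\mathcal{G}(\cdot,\gamma)$ is inherited by every object built in \cref{lemma:Lyapunov-Schmidt-bifurcation}, so the reduction can be carried out inside $(H^1(\partial B_1))^\Sigma$. I would argue in three steps. First, since $\sigma$ acts on the sphere by isometries (rotations or reflections), the Laplace--Beltrami operator commutes with $\varphi\mapsto\varphi\circ\sigma$. The radial cone $b^*$ is invariant, hence so is $L_{b^*}=-\Delta_\theta-\lambda(k)$. Consequently $K$, $K^\perp$ and the orthogonal projections $P_K$, $P_{K^\perp}$ all commute with the action, and $K^\perp\cap (H^1)^\Sigma$ is the orthogonal complement of $K^\Sigma$ inside $(H^1)^\Sigma$. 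Second, differentiating the invariance $\mathcal{G}(\varphi\circ\sigma,\gamma)=\mathcal{G}(\varphi,\gamma)$ gives the equivariance $\delta\mathcal{G}(\varphi\circ\sigma,\gamma)[\psi\circ\sigma]=\delta\mathcal{G}(\varphi,\gamma)[\psi]$. So if $\varphi$ is $\Sigma$-invariant, the gradient of $\mathcal{G}$ at $\varphi$ is $\Sigma$-invariant as well. Third, the map $Y$ is defined by the implicit function theorem applied to $P_{K^\perp}\delta\mathcal{G}(\phi+Y,\gamma)=0$, and this relies on $L_{b^*}|_{K^\perp}$ being invertible. For invariant $\phi$, both $Y(\phi,\gamma)$ and $Y(\phi,\gamma)\circ\sigma$ solve the same equation and lie in the same small neighborhood, so by uniqueness $Y(\phi,\gamma)$ is $\Sigma$-invariant. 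Equivalently, one can run the implicit function theorem directly in the closed subspace $(H^1)^\Sigma$, where $L_{b^*}$ restricted to $K^\perp\cap(H^1)^\Sigma$ remains invertible. Either way, $Y$ maps $K^\Sigma\times I_\eps$ into $K^\perp\cap (H^1)^\Sigma$.

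It remains to check that critical points of the restricted reduced functional are genuine critical points. Since $\delta\mathcal{G}$ at an invariant point is an invariant element, its component in $K$ lies in $K^\Sigma$. Hence $\nabla_\mu G^\Sigma=0$ forces the full first variation to vanish, which gives the equivalence claimed in \cref{lemma:Lyapunov-Schmidt-bifurcation} with $K$ replaced by $K^\Sigma$.

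For the bifurcation statement with $K^\Sigma=\operatorname{span}\{\Phi_1\}$, the reduced functional $g(\mu,\gamma):=G(\mu,\gamma)$ is an analytic function of two real variables. Since $Y(0,\gamma)=0$, we have $\partial_\mu g(0,\gamma)=0$ for all $\gamma$, so the equation factors analytically as $\partial_\mu g(\mu,\gamma)=\mu\, h(\mu,\gamma)$. Nontrivial critical points are then the zeros of $h$.
\begin{itemize}
\item \emph{Transversality.} We have $h(0,\gamma)=\partial^2_\mu g(0,\gamma)=\mathcal{Q}_\gamma(\Phi_1)$. At $\gamma^*$ this equals zero, and its $\gamma$-derivative is proportional to $\frac{d}{d\gamma}\big[(2-\gamma)\lambda(\beta(\gamma))\big]$, because $L_{b_{\text{rad},\gamma}}=-\Delta_\theta-(2-\gamma)\lambda(\beta)$ by \eqref{eq:linearized-operator-radial}. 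I would verify that this derivative is nonzero at $\gamma_{k,d}$, for instance by computing $(2-\gamma)\lambda(\beta)=2\beta-2+d\cdot(\ldots)$ directly in terms of $\beta$. This gives $\partial_\gamma h(0,\gamma^*)\neq0$.
\item \emph{Solving for $\gamma$.} By the analytic implicit function theorem, $h(\mu,\gamma)=0$ near $(0,\gamma^*)$ is the graph of a curve $\gamma=\gamma(\mu)$ with $\gamma(0)=\gamma^*$.
\item \emph{Inverting to get $\mu(\gamma)$.} If $\gamma(\mu)$ is nonconstant, its leading term is $c\mu^q$. When $q$ is even, the map $\mu\mapsto\gamma(\mu)$ covers a one-sided neighborhood of $\gamma^*$ and can be inverted on a half-line of $\mu$, giving $\mu(\gamma)\to0$. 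When $q$ is odd, it covers both sides, which in particular includes a one-sided neighborhood.
\item \emph{Degenerate case.} If $\gamma(\mu)\equiv\gamma^*$, there is a continuum of critical points at $\gamma^*$. This is excluded using the nonvanishing of $G_3$ or $G_4$ from \cref{prop:numerical}. For a general $\Sigma$ I would instead fall back on the observation that $\partial_\mu h(0,\gamma^*)$ and the higher derivatives, which are computed from $G_p$, determine the leading coefficient of $\gamma(\mu)$.
\end{itemize}
The resulting critical points are $\varphi_\gamma=\mu(\gamma)\Phi_1+Y(\mu(\gamma)\Phi_1,\gamma)$, which are nontrivial since $\mu\neq0$.

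The main obstacle is the transversality condition $\partial_\gamma \mathcal{Q}_\gamma(\Phi_1)|_{\gamma^*}\neq0$, together with keeping track of the fact that $b$, $\beta$ and $\lambda(\beta)$ all vary with $\gamma$. I expect this to reduce to the monotonicity in $\gamma$ of $(2-\gamma)\lambda(\beta)$, which is an explicit one-variable check that I would carry out first.
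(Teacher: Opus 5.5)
Your proposal follows essentially the paper's route: the first part is the equivariant Lyapunov--Schmidt reduction (which the paper calls a straightforward generalization, and which you justify through uniqueness in the implicit function theorem and symmetric criticality), and the second part is Crandall--Rabinowitz carried out by hand on the reduced equation $\mu\,h(\mu,\gamma)=0$. The transversality you flag as the main obstacle is a one-line check: $(2-\gamma)\lambda(\beta)=2(\beta+d-2)$ has $\gamma$-derivative $\beta^2>0$, which is exactly the paper's $\partial_\gamma c(\gamma^*)=-2(\beta^*)^2\neq0$.

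Your caveat about the vertical case is correct, and the paper's bare appeal to Crandall--Rabinowitz does not address it either. For an arbitrary $\Sigma$, parametrizing the branch by $\gamma$ on a one-sided neighborhood requires the reduced functional $G(\cdot,\gamma^*)$ on $K^\Sigma$ to be not identically zero, and your fallback does not supply this. It is \cref{prop:numerical} that provides this nondegeneracy in every case where the corollary is actually used.
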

\begin{proof}
The first part is a straightforward generalization of \cref{lemma:Lyapunov-Schmidt-bifurcation}. On the other hand, let $\Phi_1,\dots,\Phi_N$ be an orthonormal basis of $K$, since
$$\partial_\gamma \delta^2\mathcal{G}(0,\gamma^*)[\Phi_i]=\partial_\gamma c(\gamma^*)\Phi_i,\quad\text{with}\quad c(\gamma)=\gamma(\gamma-1)c_{\text{rad},\gamma}^{\gamma-2}-2\lambda(\beta)=2(\gamma-2)\lambda(\beta),$$ we deduce that 
\be\label{e:transversality-cra-rab}
\partial_\gamma \delta^2\mathcal{G}(0,\gamma^*)[\Phi_i]\in K, \qquad \text{for every $i=1,\dots,N$}.
\ee
In particular, if $K^\Sigma$ is one-dimensional, the condition \eqref{e:transversality-cra-rab} with $i=1$  coincides with the transversality condition of the linearized operator $L_{b_{\text{rad}}}$ at $\gamma=\gamma^*$. Thus, the second part of the result is a direct consequence of the Crandall-Rabinowitz bifurcation theorem (see for instance \cite[Section I.5]{Kielhofer2012}). 
\end{proof}

            \subsection{Expansion of the reduced bifurcation equation}\label{subsec:expansionbif}
            In this subsection, we derive the leading-order expansion of the reduced bifurcation equation at the resonant values
            $\gamma^*:=\gamma_{k,d}$ close to the radial cone $b^* := b_{\text{rad},{\gamma^*}}$ 
            This expansion will then serve as the starting point for the case-by-case analysis of the symmetry classes and dimensions considered in \cref{thm:bifurcation-radial}.

As already observed, we have
            $$
            K=\mathcal{H}_k(\mathbb{S}^{d-1}),\quad  
            (b^*)^{\gamma-2} = \frac{2}{\gamma(2-\gamma^*)}\lambda(k)\quad\text{and}\quad L_{b^\ast} = -\Delta_\theta - \lambda(k).
            $$
            We stress that, for $\eps$ small enough, by \cref{prop:optimality-log}, $\ker (L_{b_{\text{rad},\gamma}})=\{0\}$ for every $\gamma\in I_\eps\setminus \{\gamma^*\}$, where $$b_{\text{rad},\gamma}:=\left(\frac2\gamma\lambda(\beta)\right)^{\frac{1}{\gamma-2}}$$ on the sphere.
            
            By following the notation of \cref{lemma:Lyapunov-Schmidt-bifurcation} and \cref{lemma:Lyapunov-Schmidt-bifurcation-sym}, let $K^\Sigma$ be a reduced kernel (possibly coinciding with the kernel $K$), $\Phi_1,\ldots,\Phi_M$ be an orthonormal basis of $K^\Sigma$, with $M\leq N$. Let $Y$ be the Lyapunov-Schmidt map of \cref{lemma:Lyapunov-Schmidt-bifurcation-sym} and $G:B_\rho \times I_\eps \to \R$ be the corresponding reduced functional. Since $Y(0,\cdot)=0$ in $I_\eps$, we have 
            $$
            Y(0,\gamma^*)=\partial_\gamma Y(0,\gamma^*)=\partial_\gamma^2 Y(0,\gamma^*)=0,\quad \text{and}\quad \delta^2 Y(0,\gamma^*)[\Phi_i]=0,\quad \mbox{for every }i=1,\dots,M.
            $$
            On the other hand, by differentiating 
            \bea\label{e:proj}P_{(K^\Sigma)^\perp}\delta\mathcal{G}(\Phi_\mu+Y(\Phi_\mu,\gamma),\gamma)=0\eea
 with respect to $\mu \in\R^N $ and $ \gamma \in \R$, we infer that at $(0,\gamma^*)$
$$
P_{(K^\Sigma)^\perp}\left(\delta^2 \mathcal{G}(0,\gamma^*)[\partial_\gamma \delta Y(0,\gamma^*)[\Phi_i]]+\partial_\gamma \delta^2 \mathcal{G}(0,\gamma^*)[\Phi_i]\right)=0
$$
for every $i=1,\dots,M$. Thus
$$
\partial_\gamma \delta Y(0,\gamma^*)[\Phi_i] = -\frac12(L_{b}|_{(K^\Sigma)^\perp})^{-1}P_{(K^\Sigma)^\perp}\partial_\gamma\delta^2\mathcal{G}(0,\gamma^*)[\Phi_i]
$$
and by \eqref{e:transversality-cra-rab} we deduce that $\partial_\gamma \delta Y(0,\gamma^*)[\Phi_i]=0$, for every $i=1,\dots,M$. 
Then, we have
$$
Y(\Phi_\mu ,\gamma) = \frac12 \sum_{1\leq i,j\leq N}\delta^2 Y(0,\gamma^*)[\Phi_i,\Phi_j]\mu_i \mu_j + o(|(\mu,\gamma-\gamma^*)|^2).
$$
We notice that on one hand, the existence of a trivial branch for every $\gamma \in I_\eps$ is equivalent to  $\partial_{\mu_i}G(0,\gamma)=0$ for $i=1,\dots,M$, for every $\gamma \in I_\eps$. Therefore, $$\partial_\gamma\partial_{\mu_i}G(0,\gamma^\ast)=\partial_\gamma^2\partial_{\mu_i}G(0,\gamma^\ast)=0\quad\text{for every }i=1,\dots,M.$$ On the other hand, we infer that  
$$
\partial_{\mu_i} G(0,\gamma^*)=\partial^2_{\mu_i,\mu_j} G(0,\gamma^*) = 0 
\quad\text{for every }1\leq i,j\leq M.$$
By collecting the previous identities, we obtain that, for every $i=1,\dots,M$, the following expansion holds true 
\begin{align*}
\nabla_\mu G(\mu,\gamma)&= \frac12 \nabla^3_\mu G(0,\gamma^*)[\mu,\mu] + (\gamma-\gamma^*)  \nabla^2_\mu \partial_\gamma G(0,\gamma^*)[\mu]+   \frac16 \nabla^4_\mu G(0,\gamma^*)[\mu,\mu,\mu] \\
&\qquad + o(|\mu|^3 + |\mu||\gamma-\gamma^*|).
\end{align*}
Since $b$ is not integrable for $\gamma=\gamma^*$, then $G(\cdot,\gamma^*)\not\equiv 0$ (see \cite[Lemma 1]{adamsimon}). Then, since $|\nabla_\mu G(0,\gamma^*)|=|\nabla^2_\mu G(0,\gamma^*)|=0$, there exists an integer $p\geq 3$ such that the expansion of the function $G$ in \eqref{eq:exp-G} holds true for $\gamma=\gamma^*$, where each $\mu \mapsto G_j(\mu,\gamma^*)$ is a $j$-homogeneous polynomial and $G_p\not\equiv0$ is the first nontrivial term. Thus, we have
\begin{align*}
\nabla^2_\mu\partial_\gamma G(0, \gamma^*) &= \left(\partial_\gamma \delta^2\mathcal{G}(0,\gamma^*)[\Phi_i,\Phi_j]\right)_{ij} = \partial_\gamma c(\gamma^*) \mathrm{Id}_{\R^M},\\
\frac12\nabla^3_{\mu}G(0,\gamma^*)[\mu,\mu]&= \nabla_\mu G_3(\mu,\gamma^*), \qquad \frac{1}{6}\nabla^4_{\mu}G(0,\gamma^*)[\mu,\mu,\mu]=\nabla_\mu G_4(\mu,\gamma^*)
\end{align*}
and since $\partial_\gamma c(\gamma^*)=-2(\beta^*)^2$, for $(\mu,\gamma) \in B_\rho\times I_\eps$, the bifurcation equation takes the form
\be\label{e:expansion-bifurcation}
\nabla_\mu G_3(\mu,\gamma^*) -2(\gamma-\gamma^*)(\beta^*)^2  \mu +   \nabla_\mu G_4(\mu,\gamma^*) + o(|\mu|^3 + |\mu||\gamma-\gamma^*|)=0.
\ee
\subsection{Construction of the bifurcating branches}
We now apply the expansion of the reduced bifurcation equation in \eqref{e:expansion-bifurcation} to construct and classify the bifurcating branches arising in the symmetry classes and dimensions considered in \cref{thm:bifurcation-radial}.

\begin{proof}[Proof of \cref{thm:bifurcation-radial}]
We follow the notation introduced in \cref{subsec:bif} and \cref{subsec:expansionbif} and we strongly rely on the computations in the proof of  \cref{prop:numerical}. The proof is divided into several cases.
   \medskip
				
\noindent\textit{Case 1: $d=2$.} Then $N=2$ and $\Phi_1:=\frac{1}{\sqrt{\pi}}\cos(k\theta)$, $\Phi_2:=\frac{1}{\sqrt{\pi}}\sin(k\theta)$. 
By Case 1 of the proof of \cref{prop:numerical}, we already know that
$$
G_3(\cdot,\gamma_{k,2})\equiv 0\quad\text{for every $k\in \N_{\geq 3}$},\qquad 
G_4(e_1,\gamma_{k,2})>0.
$$
In this case, since the kernel $K$ is $O(2)$-equivariant, we can restrict to the symmetry-reduced subspace 
$$
K^\Sigma:=K\cap (H^1(\partial B_1))^\Sigma,\quad \text{with}\quad
(H^1(\partial B_1))^\Sigma:=\left\{\varphi \in H^1(\partial B_1)\colon \varphi(\theta) = \varphi(-\theta) \right\},
$$
and apply \cref{lemma:Lyapunov-Schmidt-bifurcation-sym}. Precisely, since $K^\Sigma = \text{span}\{\Phi_1\}$,
 the reduced bifurcation equation \eqref{e:expansion-bifurcation} takes the form 
$$
 -2(\gamma-\gamma^*)(\beta^*)^2  \mu +   4  G_4(e_1,\gamma^*)\mu^3 + o(|\mu|^3 + |\mu||\gamma-\gamma^*|)=0,
$$
for $(\mu,\gamma) \in (-\rho,\rho)\times I_\eps$. Thus
$$
 -2(\gamma-\gamma^*)(\beta^*)^2  +   4  G_4(e_1,\gamma^*)\mu^2 +o(|\mu|^2 + |\gamma-\gamma^*|)=0,
$$
and since $G_4(e_1,\gamma^*)>0$, there exists a branch of nontrivial critical points of the form $\varphi=\mu_\pm \Phi_1 + Y(\mu_\pm \Phi_1,\gamma)$, where $\mu_\pm: I_\eps \to (-\rho,\rho)$ is such that 
$$
\mu_\pm(\gamma) = \pm\frac{\beta^*}{\sqrt{2 G_4(e_1,\gamma^*)}}(\gamma-\gamma^*)_+^{1/2}
+o(|\gamma-\gamma^*|^{1/2})\quad \mbox{for }\gamma \in I_\eps.
$$
\smallskip
				
\noindent\textit{Case 2: $d\geq 3$, and $\Sigma=\Sigma_{\text{zon}}$ is the zonal symmetry subgroup.} Let $\Sigma_{\text{zon}}:= O(d-1)\subset O(d)$
be the subgroup that fixes the $x_d$-axis. Then 
$$
K^{\Sigma_{\text{zon}}} := K\cap (H^1(\partial B_1))^{\Sigma_{\text{zon}}} =  \text{span}\{\Phi_1\}\quad \text{and}\quad \Phi_1:=h_{k,m}^{-1/2}C_k^m(\cos\theta_d),
$$
where $C_k^m$ is the Gegenbauer polynomial with $m:=\frac{d-2}{2}$ and $h_{k,m}$ is chosen in such a way that $\Phi_1$ is normalized in $L^2(\partial B_1)$. Then, 
by combining Case 2-3-4 of the proof of \cref{prop:numerical} with
\cref{lemma:Lyapunov-Schmidt-bifurcation-sym}, we can study the bifurcation of nontrivial critical points as follows.
\begin{itemize}
    \item If $k$ is even, by Case 2 we have that $G_3(e_1,\gamma^*)>0$ and the reduced bifurcation equation \eqref{e:expansion-bifurcation} takes the form
    $$
    3 G_3(e_1,\gamma^*)\mu^2 -2(\gamma-\gamma^*)(\beta^*)^2  \mu + o(|\mu|^2+|\mu||\gamma-\gamma^\ast|)=0,
    $$
    with $(\mu,\gamma) \in (-\rho,\rho)\times I_\eps$. Since $G_3(e_1,\gamma^*)>0$, there exists a branch of nontrivial critical points of the form $\varphi=\mu \Phi_1 + Y(\mu \Phi_1,\gamma)$, with $\mu: I_\eps \to (-\rho,\rho)$ such that 
$$
\mu(\gamma) = \frac{2(\beta^*)^2}{3G_3(e_1,\gamma^*)}(\gamma-\gamma^*)
+o(|\gamma-\gamma^*|)\qquad \mbox{for }\gamma \in I_\eps.
$$
    \item If $k$ is odd, and either $d=3,4,5,6,7$ or $d\geq 8$ and $k<k_d$, then by Case 3 and Case 4
    $$
    G_3(\cdot ,\gamma^*)\equiv 0\quad\text{and}\quad G_4(e_1,\gamma^*)>0
    $$
    and the reduced bifurcation equation \eqref{e:expansion-bifurcation} takes the form
    $$
    -2(\gamma-\gamma^*)(\beta^*)^2  \mu +   4 G_4(e_1,\gamma^*)\mu^3 + o(|\mu|^3 + |\mu||\gamma-\gamma^*|)=0
    $$
    with $(\mu,\gamma) \in (-\rho,\rho)\times I_\eps$. Since $G_4(e_1,\gamma^*)>0$, there exists a branch of nontrivial critical points of the form $\varphi=\mu_\pm \Phi_1 + Y(\mu_\pm \Phi_1,\gamma)$, with $\mu_\pm: I_\eps \to (-\rho,\rho)$ such that 
$$
\mu_\pm(\gamma) = \pm \frac{\beta^*}{\sqrt{2 G_4(e_1,\gamma^*)}}(\gamma-\gamma^*)_+^{1/2}
+o(|\gamma-\gamma^*|^{1/2})\qquad \mbox{for }\gamma \in I_\eps.
$$
\end{itemize}

\smallskip
				
\noindent\textit{Case 3: $d\geq 3$, $k$ odd and $\Sigma=\Sigma_{\text{sec}}$ is the sectorial symmetry group.} Let $x=(x_1,x_2,x')\in\R\times\R\times \R^{d-2}$ and consider $\Sigma_{\text{sec}}:= D_k \times O(d-2)\subset O(d)$, where $O(d-2)$ acts on the $x'$-variables and $D_k\subset O(2)$ is generated by the rotations of angle $2\pi/k$ in the variables $(x_1,x_2)\in\R^2$, i.e., 
$$
R_{2\pi/k}(x_1,x_2) = \left(x_1\cos(2\pi/k) - x_2 \sin(2\pi/k), x_1\sin(2\pi/k) + x_2\cos(2\pi/k)\right),
$$
and by the reflection in the $x_2$-variable, i.e., $\sigma(x_1,x_2,x')=(x_1,-x_2,x')$. Then, for $k$ odd,
$$
K^{\Sigma_{\text{sec}}} := K\cap (H^1(\partial B_1))^{\Sigma_{\text{sec}}} =  \text{span}\{\Phi_1\}\quad \text{and}\quad\Phi_1:=c_0\text{Re}((x_1+ix_2)^k), 
$$
where $c_0>0$ is chosen in such a way that $\Phi_1$ is normalized in $L^2(\partial B_1)$.
By Case 5 of the proof of \cref{prop:numerical}, we have
$$
G_3(\cdot ,\gamma^*)\equiv 0\quad\text{and}\quad G_4(e_1,\gamma^*)>0,
$$
and by \cref{lemma:Lyapunov-Schmidt-bifurcation-sym}, the reduced bifurcation equation \eqref{e:expansion-bifurcation} takes the form
    $$
    -2(\gamma-\gamma^*)(\beta^*)^2  \mu +   4 G_4(e_1,\gamma^*)\mu^3 + o(|\mu|^3 + |\mu||\gamma-\gamma^*|)=0
    $$
    with $(\mu,\gamma) \in (-\rho,\rho)\times I_\eps$. Since $G_4(e_1,\gamma^*)>0$, there exists a branch of nontrivial critical points of the form $\varphi=\mu_\pm \Phi_1 + Y(\mu_\pm \Phi_1,\gamma)$, with $\mu_\pm: I_\eps \to (-\rho,\rho)$ such that 
$$
\mu_\pm(\gamma) = \pm \frac{\beta^*}{\sqrt{2 G_4(e_1,\gamma^*)}}(\gamma-\gamma^*)_+^{1/2}
+o(|\gamma-\gamma^*|^{1/2})\qquad \mbox{for }\gamma \in I_\eps.
$$

\smallskip
				
\noindent\textit{Case 4: $d\geq 3$, $k$ even and $\Sigma=\Sigma_{\text{sec}}$ is the sectorial symmetry group.} As before, let $x=(x_1,x_2,x')\in\R\times\R\times \R^{d-2}$ and consider $\Sigma_{\text{sec}}:= D_k \times O(d-2)\subset O(d)$, with $D_k:=\langle R_{2\pi/k},\sigma \rangle\subset O(2)$. 

Then, for $k$ even, $K^{\Sigma_{\text{sec}}} = \text{span}\{\Phi_1,\Phi_2\},$ with
$$
\Phi_1:=c_0\text{Re}((x_1+ix_2)^k),\qquad \Phi_2:=c_1|x|^{k} P_{k/2}^{(m-1,0)}\left(\frac{2(x_1^2+x_2^2)}{|x|^2}-1\right),
$$
where $m:=\frac{d-2}{2}$, $P_{k/2}^{m-1,0}$ is a Jacobi polynomial of degree $k/2$, and $c_0$, $c_1>0$ are chosen in such a way that $\Phi_1$, $\Phi_2$ are normalized in $L^2(\partial B_1)$. Notice that, if $d=3$, the function $\Phi_2$ coincides with the generator of $K^{\Sigma_{\text{zon}}}$ in Case 2. Let us assume there exists a branch of nontrivial critical points tangent to the sectorial symmetric solution $\Phi_1$. Then, for $\gamma \to \gamma^*$ we would have $\mu_2=o(\mu_1)$, and so, for $i=1,2$
$$
\partial_{\mu_i} G_j(\mu,\gamma^*) = \partial_{\mu_i} G_j \left(\left(1,\frac{\mu_2}{\mu_1}\right),\gamma^\ast\right)\mu_1^{j-1} = \partial_{\mu_i} G_j (e_1,\gamma^*) \mu_1^{j-1}+o(\mu_1^{j-1})
$$
where we used that $O(\mu_1^{j-2}|\mu_2|)=o(\mu_1^{j-1})$. 
Therefore, the first equation in 
the reduced bifurcation system \eqref{e:expansion-bifurcation} takes the form
$$
\partial_{\mu_1} G_3 (e_1,\gamma^*)  \mu_1^{2} -2(\gamma-\gamma^*)(\beta^*)^2  \mu_1  + o(|\mu_1|^2+|\mu_1||\gamma-\gamma^\ast|)=0, 
$$
and since by Case 5 of the proof of \cref{prop:numerical}, we know that $\partial_{\mu_1}G_3(e_1,\gamma^*)=0$, we infer
\be\label{e:olitte}
\gamma-\gamma^* =  o(|\mu_1|).
\ee
Similarly, if we consider the second equation in  \eqref{e:expansion-bifurcation}, by \eqref{e:olitte} we have
$$
\partial_{\mu_2} G_3(e_1,\gamma^*)\mu_1^2  =  2(\gamma-\gamma^*)(\beta^*)^2  \mu_2+ o(|\mu_1|^2 + |\mu_1||\gamma-\gamma^*|) = 
o(|\mu_1|^2),
$$ since $\mu_2=o(\mu_1)$.
This implies that $\partial_{\mu_2}G_3(e_1,\gamma^*) =0$. However, by direct computation, we infer that 
$$
\partial_{\mu_2}G_3(e_1,\gamma^*) = \frac{c_3}{2}\int_{\partial B_1}\Phi_1^2\Phi_2\,d\HH^{d-1}=
\frac{c_3}{2}c_0^2c_1\pi^{d/2}\binom{k}{k/2}\frac{\Gamma(k+1)\Gamma(k/2+m)}{\Gamma(m)\Gamma(3k/2+m+1)},
$$
where $c_3:=\gamma^*(\gamma^*-1)(\gamma^*-2)(b^*)^{\gamma^*-3}$. The left-hand side above is different from zero and leads to a contradiction.
\end{proof}
\section{Minimality of the radial cone}\label{section:minimality-radial-cone}
            In this section we prove the minimality result for the radial cone $b_{\text{rad}}:=c_{\text{rad}}|x|^\beta$, stated in \cref{thm:minimality-of-radial-cone}. 
            Throughout the section, we assume that $d\ge3$ and $\gamma\in(0,1)$. Indeed, 
            the case $\gamma \in [1,2)$ is already covered by \cref{prop:convex-functional}, where $b_{\mathrm{rad}}$ is shown to be a minimizer, while in dimension $d=2$ the radial cone is not minimizing for $\gamma\in(0,1)$, as shown in \cref{prop:no-singular-points-dim-2}. 

We recall the definition of $\gamma_\Delta(d)$ in \eqref{eq:def-gamma-delta}. By the analysis of Savin-Yu \cite{savinyu-altphillips-stable}, for $\gamma\ge \gamma_\Delta(d)$, the minimality of $b_{\mathrm{rad}}$ is equivalent to its one-sided minimality from below (see  \cref{def:minimo-alto-basso}). Therefore, in order to prove the minimality of the radial cone, it is enough to establish this weaker minimality property. To this aim, by exploiting a spherical rearrangement, we reduce the analysis to a one-dimensional minimization problem.

In the following, competitors with infinite one-dimensional energy are included in the admissible classes. The only possible loss of integrability occurs in the derivative terms near $0$. However, this does not affect the minimality conditions, since in that case the desired inequalities are trivially satisfied. Therefore, we consider competitors in $AC_{\mathrm{loc}}(0,1]$, i.e., the class of one-dimensional functions that are absolutely continuous on $[\eps,1]$, for every $\eps \in (0,1)$.

			\begin{lemma}\label{lemma:radial-one-dimensional}
			    Let $d\geq 3$ and $\gamma \in (0,1)$. Then, the radial cone $b_{\text{rad}}$ is a one-sided minimizer from below, if and only if the one-dimensional profile $b(r):=c_{\text{rad}}\,r^\beta$ is a one-sided minimizer from below of  $$J(v):=\int_0^1 \left(|v'|^2+v^\gamma\right)r^{d-1}\,dr.$$ More precisely, this means that $J(b)\le J(v)$ for every $v\in AC_{\text{loc}}(0,1]$ such that $v(1)=c_{\text{rad}}$ and $0\le v(r)\le b(r)$, for every $r\in(0,1)$.
			\end{lemma}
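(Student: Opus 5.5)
The plan is to prove the two implications separately, using radial functions in one direction and a spherical rearrangement in the other.

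\emph{Minimality of $b_{\text{rad}}$ implies minimality of the profile.} Let $v\in AC_{\text{loc}}(0,1]$ satisfy $v(1)=c_{\text{rad}}$ and $0\le v\le b$. We consider the radial function $V(x):=v(|x|)$. This function agrees with $b_{\text{rad}}$ on $\partial B_1$ and satisfies $0\le V\le b_{\text{rad}}$. Moreover,
\[
\mathcal{J}_\gamma(V,B_1)=\HH^{d-1}(\partial B_1)\,J(v),
\]
and the same identity holds for $b$. Hence $J(b)\le J(v)$ follows directly. If $J(v)=+\infty$, there is nothing to prove.

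\emph{Minimality of the profile implies minimality of $b_{\text{rad}}$.} Let $w\in H^1(B_1)$ with $0\le w\le b_{\text{rad}}$ and $w=b_{\text{rad}}$ on $\partial B_1$. The negative part of a competitor can be discarded, since replacing $w$ by $w_+$ lowers the energy, so we may assume $w\ge0$. We then reduce to a radial competitor in one of the following two ways.

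The first option is a \emph{spherical (Schwarz-type) rearrangement}. Replace $w$ by its symmetric decreasing rearrangement on each sphere $\partial B_r$, or more simply take the monotone (Schwarz) rearrangement, which preserves the constraints. The standard P\'olya--Szeg\H{o} inequality decreases the Dirichlet energy and preserves $\int w^\gamma$, but it only yields a function that is radially decreasing. That would give the wrong monotonicity, since $b$ is increasing in $r$.

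I therefore prefer a more robust second option: \emph{averaging over spheres}. Set
\[
v(r):=\Big(\fint_{\partial B_1} w(r\theta)^{2}\,d\HH^{d-1}\Big)^{1/2}.
\]
Then $v(1)=c_{\text{rad}}$, $0\le v\le b$, and by Cauchy--Schwarz $|v'|^2\le \fint |\partial_r w|^2\le \fint|\nabla w|^2$. For the potential term, however, $\fint w^\gamma\le v^\gamma$ by Jensen (concavity), which goes the wrong way.

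To fix this, I instead use $v:=\big(\fint w^\gamma\big)^{1/\gamma}$, so that the potential term is exactly matched. It remains to show that $|v'|^2\le\fint |\partial_r w|^2$. Writing $v^\gamma=\fint w^\gamma$ gives
\[
\gamma v^{\gamma-1}v'=\fint \gamma w^{\gamma-1}\partial_r w .
\]
By H\"older,
\[
|v'|\le v^{1-\gamma}\Big(\fint w^{2(\gamma-1)}\Big)^{1/2}\Big(\fint(\partial_r w)^2\Big)^{1/2},
\]
and the middle factor is not controlled by $v^{\gamma-1}$ for $\gamma<1$. So this averaging also fails in general. This is the main obstacle.

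The fallback is a \emph{sliced minimization with a fixed radial coordinate}. For each direction $\theta$, the function $r\mapsto w(r\theta)$ is an admissible one-dimensional competitor for almost every $\theta$, since it is absolutely continuous, takes the value $c_{\text{rad}}$ at $r=1$, and satisfies the constraint $0\le\cdot\le b$. Applying the one-dimensional minimality ray by ray gives
\[
J(b)\le \int_0^1\big((\partial_r w)^2+w^\gamma\big)r^{d-1}\,dr .
\]
Integrating over $\theta\in\partial B_1$ and using $|\partial_r w|\le|\nabla w|$ then yields
\[
\mathcal{J}_\gamma(b_{\text{rad}})=\HH^{d-1}(\partial B_1)J(b)\le\mathcal{J}_\gamma(w).
\]
This slicing argument avoids any rearrangement, and I expect it to be the actual proof. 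The only care needed is that almost every ray restriction of an $H^1$ function lies in $AC_{\text{loc}}(0,1]$ and carries the boundary value: this holds after choosing the precise representative, via absolute continuity on almost every line in polar coordinates away from the origin, and via the trace at $r=1$.
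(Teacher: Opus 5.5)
Your final ray-by-ray argument is correct, and it takes a genuinely different route from the paper's.

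The paper does use a spherical rearrangement, but of the complement. For a competitor $v$ with $0\le v\le b_{\text{rad}}$ and $v=c_{\text{rad}}$ on $\partial B_1$, it sets $w:=c_{\text{rad}}-v\ge0$, takes its decreasing Schwarz rearrangement $w^\ast$, and uses $v^\ast:=c_{\text{rad}}-w^\ast$. Since $c_{\text{rad}}-b_{\text{rad}}=c_{\text{rad}}(1-|x|^\beta)$ is itself radially decreasing, the bound $w\ge c_{\text{rad}}-b_{\text{rad}}$ survives the rearrangement, so $v^\ast\le b_{\text{rad}}$. P\'olya--Szeg\H{o} and equimeasurability then give $\int_{B_1}|\nabla v^\ast|^2\le\int_{B_1}|\nabla v|^2$ and $\int_{B_1}(v^\ast)^\gamma=\int_{B_1}v^\gamma$. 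This is exactly the fix for the ``wrong monotonicity'' that made you abandon rearrangements.

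Your slicing argument bypasses this entirely. It needs neither P\'olya--Szeg\H{o} nor any monotonicity of the obstacle, only that the obstacle is radial and the boundary datum is constant, and it simply discards the tangential part of the gradient. The cost is the routine measure-theoretic verification you already flag:
\begin{itemize}
\item the ACL property in polar coordinates on annuli $B_1\setminus B_\eps$;
\item agreement of the ray limit at $r=1$ with the trace for a.e.\ $\theta$;
\item upgrading $0\le w(r\theta)\le b(r)$ from a.e.\ $r$ to every $r$ by continuity.
\end{itemize}
The rearrangement, in turn, produces for each competitor an explicit radial competitor of no larger energy, which is slightly more than the equivalence requires.

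In a write-up you can drop the two averaging attempts. For the converse direction, note that when $J(v)<\infty$ the function $V(x)=v(|x|)$ lies in $H^1(B_1)$: it is bounded, $\nabla V\in L^2(B_1)$, and the origin has zero capacity.
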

            \begin{proof}
                For the sake of readability, we set $c:=c_{\text{rad}}$.
				For every function $v\in H^1(B_1)$ such that $v=c$ on $\partial B_1$ and $0\le v\le b_{\text{rad}}\le c$ in $B_1$, we define $w:=c-v\ge0$. Now, let $w^\ast$ be the decreasing spherical rearrangement of $w$, and consider  $v^\ast:=c-w^\ast$. On one hand, we have $v^\ast=c$ on $\partial B_1$.
                On the other hand, since $w\ge c-b_{\text{rad}}=c(1-|x|^\beta)$, we infer that $w^\ast\ge c-b_{\text{rad}}$, namely $v^\ast\le b_{\text{rad}}$. Similarly, we deduce that $v^\ast\ge0$.
                By a standard spherical rearrangement argument, we also have 
                $$\int_{B_1}|\nabla v^\ast|^2\,dx\le \int_{B_1}|\nabla v|^2\,dx\quad\text{and}\quad \int_{B_1}(v^\ast)^\gamma\,dx= \int_{B_1}v^\gamma\,dx,$$
                which implies that  $\mathcal{J}_\gamma(v^\ast)\le \mathcal{J}_\gamma(v)$. Therefore, it is not restrictive to consider radial competitors in the minimization problem for $\mathcal{J}_\gamma$. Finally, the conclusion follows by computing $\mathcal{J}_\gamma$ for radial functions.
            \end{proof}
   
            \begin{proposition}\label{prop:minimality-radial}
                Let $d \geq 3$ and $\gamma \in (0,1)$. For every $h\in AC_{\text{loc}}(0,1]$, we set \be\label{eq:def-mathcal-L}
                \mathcal{L}(h):=\int_0^1\Big(A(d,\gamma)t^2(\partial_th)^2 +\Psi(h)\Big)\,dt,
                \ee
                where \bea\Psi(h):=(1-h)^\gamma-1+\gamma h-\frac\gamma2h^2\quad\text{and} \quad A(d,\gamma):=
				\frac{\gamma(d+2\beta-2)^2}{2\lambda(\beta)}.\eea
                Then $b_{\text{rad}}$ is a one-sided minimizer from below if and only if $\mathcal{L}(h)\ge0$ for every $h\in AC_{\text{loc}}(0,1]$ satisfying $h(1)=0$ and $0\le h\le 1$. 
            \end{proposition}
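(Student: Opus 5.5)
By \cref{lemma:radial-one-dimensional}, $b_{\text{rad}}$ is a one-sided minimizer from below if and only if $J(b)\le J(v)$ for every $v\in AC_{\text{loc}}(0,1]$ with $v(1)=c_{\text{rad}}$ and $0\le v\le b$. The plan is to parametrize these competitors multiplicatively, setting
\[
v(r)=b(r)\big(1-h(r)\big),\qquad h:=1-\frac{v}{b}.
\]
This gives a bijection between admissible $v$ and functions $h\in AC_{\text{loc}}(0,1]$ with $h(1)=0$ and $0\le h\le 1$. The goal is then to show that $J(v)-J(b)$ equals a positive constant times $\mathcal{L}(h)$ after a change of variables. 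Throughout, write $c:=c_{\text{rad}}$ and $m:=d+2\beta-2$.

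\textbf{Step 1: expand the integrand.} Since $v'=c\beta r^{\beta-1}(1-h)-cr^\beta h'$ and $\beta\gamma=2\beta-2$, we have
\[
\big(|v'|^2+v^\gamma\big)r^{d-1}=r^{m-1}\Big(c^2\beta^2(1-h)^2-2c^2\beta r(1-h)h'+c^2r^2(h')^2+c^\gamma(1-h)^\gamma\Big).
\]
The cross term equals $c^2\beta\, r^{m}\big((1-h)^2\big)'$, so I integrate it by parts.

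\textbf{Step 2: integrate by parts.} The boundary term at $r=1$ is $c^2\beta$. The boundary term at $0$ vanishes because $r^m(1-h)^2\to0$, as $h$ is bounded. Integrating on $[\eps,1]$ and letting $\eps\to0$ is legitimate whenever $J(v)<\infty$; if $J(v)=\infty$ the inequality is trivial, as the paper already notes. Using $\beta^2-\beta m=-\lambda(\beta)$, this yields
\[
J(v)=c^2\beta+\int_0^1 r^{m-1}\Big(-c^2\lambda(\beta)(1-h)^2+c^\gamma(1-h)^\gamma+c^2r^2(h')^2\Big)\,dr.
\]

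\textbf{Step 3: identify $\Psi$.} The normalization $c^{\gamma-2}=\tfrac2\gamma\lambda(\beta)$ gives $c^\gamma=\tfrac{2\lambda(\beta)}{\gamma}c^2$. Therefore the zeroth-order part of the integrand is
\[
\tfrac{2\lambda(\beta)}{\gamma}c^2\Big((1-h)^\gamma-\tfrac\gamma2(1-h)^2\Big)=\tfrac{2\lambda(\beta)}{\gamma}c^2\big(\Psi(h)+1-\tfrac\gamma2\big).
\]
The constant $1-\tfrac\gamma2$ and the boundary term $c^2\beta$ are exactly what $h\equiv0$ produces, so together they account for $J(b)$. Hence
\[
J(v)-J(b)=c^2\int_0^1 r^{m-1}\Big(r^2(h_r)^2+\tfrac{2\lambda(\beta)}{\gamma}\Psi(h)\Big)\,dr.
\]

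\textbf{Step 4: change variables.} Set $t=r^m$, so that $dt=m r^{m-1}dr$ and $r\partial_r=m\,t\partial_t$. Then
\[
J(v)-J(b)=\frac{c^2}{m}\int_0^1\Big(m^2t^2(\partial_th)^2+\tfrac{2\lambda(\beta)}{\gamma}\Psi(h)\Big)\,dt=\frac{2\lambda(\beta)c^2}{\gamma m}\,\mathcal{L}(h),
\]
with $A(d,\gamma)=\gamma m^2/(2\lambda(\beta))$, as in the statement.

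The map $r\mapsto r^m$ preserves the classes $AC_{\text{loc}}(0,1]$, the boundary condition $h(1)=0$ and the constraint $0\le h\le1$. The prefactor is positive. Hence $J(b)\le J(v)$ for all admissible $v$ if and only if $\mathcal{L}(h)\ge0$ for all admissible $h$.

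The only delicate point is the integration by parts near $r=0$, where integrability of the separate terms may fail. I would handle it as described in Step 2: work on $[\eps,1]$ and pass to the limit when $J(v)<\infty$, and otherwise note that the desired inequality holds trivially. Everything else is algebra.
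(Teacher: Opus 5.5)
Your proposal is correct and follows essentially the paper's proof: the same substitution $v=(1-h)b$, integration by parts of the cross term, identification of $\Psi$ through the normalization of $c_{\mathrm{rad}}$, and the change of variables $t=r^{d+2\beta-2}$. The only cosmetic difference is that you compute with the explicit powers of $b=c\,r^\beta$ (via $\beta^2-\beta m=-\lambda(\beta)$), where the paper invokes the radial equation $(r^{d-1}b')'=\frac\gamma2 b^{\gamma-1}r^{d-1}$.

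For the infinite-energy case, note also that $J(v)=+\infty$ if and only if $\mathcal L(h)=+\infty$, since each of $|v'|^2r^{d-1}$ and $r^{m+1}(h')^2$ is bounded by twice the other plus a bounded multiple of $r^{m-1}$. This is what makes the ``trivial'' case work in both directions of the equivalence.
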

            \begin{proof}
			  In order to study the minimality from below of $b_{\text{rad}}$, we use \cref{lemma:radial-one-dimensional}. Hence, let $c:=c_{\text{rad}}$ and $v\in AC_{\text{loc}}(0,1]$ such that $0\le v(r)\le b(r):=cr^\beta$ and $v(1)=c$. Then, we write $v=(1-h)b$, where $h\in AC_{\text{loc}}(0,1]$ satisfies $h(1)=0$ and $0\le h\le 1$. By direct computation $$|v'|^2=(-h'b+(1-h)b')^2=(h')^2b^2+(1-h)^2(b')^2+((1-h)^2)'bb'$$
                Integrating by parts the last term, and using that $(r^{d-1}b')'=\frac\gamma2 b^{\gamma-1}r^{d-1}$ we get 
                $$\int_0^1((1-h)^2)'bb'r^{d-1}\,dr=\beta c^2-\int_0^1(1-h)^2\Big((b')^2+\frac\gamma2 b^{\gamma}\Big)r^{d-1}\,dr.$$
                Since $v^\gamma-b^\gamma=((1-h)^\gamma-1) b^\gamma$, we get $$J(v)-J(b)=\beta c^2+\int_0^1\left((h')^2b^2-(b')^2+\left((1-h)^\gamma-1-\frac\gamma2(1-h)^2\right) b^\gamma\right)r^{d-1}\,dr.$$  
                Integrating again by parts and using the equation of $b$ as above, we obtain $$\int_0^1 (b')^2r^{d-1}\,dr=\beta c^2-\int_0^1\frac\gamma2b^\gamma r^{d-1}\,dr.$$
                Therefore, we have $$J(v)-J(b)=\int_0^1\Big((h')^2b^2+\Psi(h)b^\gamma\Big)r^{d-1}\,dr=c^\gamma\int_0^1 \left(\frac{\gamma}{2\lambda(\beta)}r^2(h')^2+\Psi(h)\right)r^{d+2\beta-3}\,dr,$$ where we used that $b=cr^\beta$ and $c^{2-\gamma}=\frac{\gamma}{2\lambda(\beta)}$. Now, set $t:=r^{d+2\beta-2}$ and $\widetilde h(t):=h(r)$, then $$dt=(d+2\beta-2)r^{d+2\beta-3}\,dr\quad\text{and}\quad\partial_t\widetilde h=h'(r) \frac{r}{(d+2\beta-2)t}.$$
                Then, we obtain
                \bea J(v)-J(b)=\frac{c^\gamma}{d+2\beta-2}\int_0^1\Big(A(d,\gamma)t^2(\partial_t\widetilde h)^2 +\Psi(\widetilde h)\Big)\,dt=\frac{c^\gamma}{d+2\beta-2}\mathcal{L}(\widetilde h),\eea concluding the proof since $\widetilde h(1)=0$ and $0\le \widetilde h\le 1$.     
			\end{proof}
            
            \begin{corollary}\label{lemma:minimality1}
				Let $d\geq 3$ and $\gamma \in (0,1)$, we consider $\Psi(h)$ and $A(d,\gamma)$ as in \cref{prop:minimality-radial}. Suppose there exists a calibration $C\in C^1([0,1])$, such that \be\label{eq:calibration}C(0)=0\quad\text{and}\quad C(h)+\Psi(h)\ge \frac{(C'(h))^2}{4A(d,\gamma)}\quad\text{for every }h\in[0,1].\ee Then $b_{\text{rad}}$ is a one-sided minimizer from below.
			\end{corollary}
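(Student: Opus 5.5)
By \cref{prop:minimality-radial}, it suffices to show that $\mathcal{L}(h)\ge0$ for every $h\in AC_{\text{loc}}(0,1]$ with $h(1)=0$ and $0\le h\le1$. The plan is a one-dimensional calibration (null-Lagrangian) argument. The key observation is that, for the weight $t^2$ in front of $(\partial_t h)^2$, the derivative $\frac{d}{dt}\big(t\,C(h(t))\big)=C(h)+t\,C'(h)\partial_t h$ produces exactly a zeroth-order term and a cross term. These can be compared with the integrand of $\mathcal{L}$ by completing a square.

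\textbf{Completing the square.} Writing $A:=A(d,\gamma)$, $h_t:=\partial_t h$ and $C=C(h)$, $C'=C'(h)$, we have pointwise
\[
A t^2 h_t^2+\Psi(h)=\Big(\sqrt{A}\,t\,h_t+\tfrac{C'}{2\sqrt A}\Big)^2-t\,C'h_t-\tfrac{(C')^2}{4A}+\Psi(h)\ge -\tfrac{d}{dt}\big(t\,C(h)\big)+\Big(C+\Psi(h)-\tfrac{(C')^2}{4A}\Big).
\]
By \eqref{eq:calibration} the last bracket is non-negative for $h\in[0,1]$, so the integrand is bounded below by $-\frac{d}{dt}(tC(h(t)))$. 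Here we use that $C\in C^1([0,1])$ and that $t\mapsto C(h(t))$ is absolutely continuous on $[\eps,1]$.

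\textbf{Integration and boundary terms.} Integrating over $[\eps,1]$ gives
\[
\int_\eps^1\big(At^2h_t^2+\Psi(h)\big)\,dt\ge -C(h(1))+\eps\,C(h(\eps))=\eps\,C(h(\eps)),
\]
using $h(1)=0$ and $C(0)=0$. Since $C$ is continuous on $[0,1]$ and $0\le h\le1$, we have $|\eps C(h(\eps))|\le\eps\|C\|_\infty\to0$.

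\textbf{Passage to the limit.} It remains to let $\eps\to0$ on the left-hand side. The term $\Psi(h)$ is bounded on $[0,1]$, hence integrable, and $At^2h_t^2\ge0$. By monotone convergence for the derivative part and dominated convergence for the $\Psi$ part, the left-hand side converges to $\mathcal{L}(h)$, possibly $+\infty$, which is the allowed infinite-energy case. This yields $\mathcal{L}(h)\ge0$, and \cref{prop:minimality-radial} concludes.

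The only mildly delicate point is this last limit together with the vanishing of the boundary term at $t=0$. Both are handled by the boundedness of $h$ and of $C$ on $[0,1]$, so no decay assumption on $h$ near $0$ is needed.
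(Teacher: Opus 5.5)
Your proof is correct and is essentially the paper's argument. The paper substitutes $t=e^{-\tau}$, $H(\tau)=h(e^{-\tau})$, and uses Young's inequality to bound the integrand below by $\frac{d}{d\tau}\big(e^{-\tau}C(H)\big)$, which is exactly your $-\frac{d}{dt}\big(t\,C(h)\big)$ written in the logarithmic variable; your truncation to $[\eps,1]$, followed by monotone and dominated convergence, just handles the possibly infinite energy a bit more carefully than the paper's direct integration over $(0,\infty)$.
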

\begin{proof}
The result is an application of \cref{prop:minimality-radial}. First, given $h\in AC_{\text{loc}}(0,1]$ satisfying $h(1)=0$ and $0\le h\le 1$, we take $H(\tau):=h(e^{-\tau})$. Then, by applying the change of coordinates $t=e^{-\tau}$, we get that \eqref{eq:def-mathcal-L} can be rewritten as
\be\label{e:inequ-H} 
\mathcal{L}(h)=\int_0^\infty e^{-\tau}\Big(A(d,\gamma)(\partial_\tau H)^2 +\Psi(H)\Big)\,d\tau.
\ee
Now, let $C(h)$ be a calibration satisfying \eqref{eq:calibration}, then Young's inequality implies that $$e^{-\tau}\Big(A(d,\gamma) (\partial_\tau H)^2+\Psi(H)\Big)\ge e^{-\tau}\Big(C'(H)\partial_\tau H-C(H)\Big)=\frac{d}{d\tau} \Big(e^{-\tau}C(H)\Big).$$ 
On one hand, since $H(0)=h(1)=0$ and $C(0)=0$, we get $e^{-\tau}C(H(\tau))\to0$ as $\tau\to0^+$. On the other hand, since $H(t)\in[0,1]$ and $C$ is bounded, we have $e^{-\tau}C(H(\tau))\to0$ as $\tau\to+\infty$. Thus, by integrating \eqref{e:inequ-H} on $(0,\infty)$ and applying the last inequality, we get
$$
\mathcal{L}(h) \geq \lim_{\tau \to +\infty}e^{-\tau} C(H(\tau))-\lim_{\tau \to 0^+} e^{-\tau} C(H(\tau)) = 0,
$$
concluding the proof.
			\end{proof}
            In the next lemma we construct two explicit calibrations satisfying \eqref{eq:calibration}. These calibrations will be crucial in the study of the minimality of the radial cone in \cref{thm:minimality-of-radial-cone}.         
\begin{lemma}\label{lemma:minimality2}
Let $d\geq 3$, $\gamma \in (0,1)$ and $\Psi(h)$ be as in \cref{prop:minimality-radial}. Consider            
$$
C_\alpha(h):=U_\alpha(h)-\Psi(h),\quad\text{where }\,U_\alpha(h):=(1-h)^\gamma\Big(1-(1-h)^{\alpha}\Big)^2.
$$
Then $C_\alpha$ satisfies \eqref{eq:calibration} for the following choices of $\alpha$:
             \begin{itemize}
                \item[(i)] if $\gamma\ge \frac{2}{d-2}$, we may take $\alpha:=\frac{2-\gamma}2$;
                \item[(ii)] if $\gamma\geq \gamma_\Delta(d)$ and $\gamma \geq \frac23$, we may take $\alpha:=\sqrt{\frac{\gamma(2-\gamma)}2}$.
            \end{itemize}
			\end{lemma}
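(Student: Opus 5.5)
The plan is to reduce \eqref{eq:calibration} for $C_\alpha$ to an explicit one-variable inequality and then verify it in each case. First, $C_\alpha(0)=U_\alpha(0)-\Psi(0)=0$, since both terms vanish at $h=0$. Because $C_\alpha+\Psi=U_\alpha\ge0$, \eqref{eq:calibration} is equivalent to
\[
|U_\alpha'(h)-\Psi'(h)|\le 2\sqrt{A(d,\gamma)\,U_\alpha(h)}\qquad\text{for every }h\in[0,1].
\]
I will substitute $s:=1-h\in[0,1]$. A direct differentiation gives $\Psi'(h)=\gamma s-\gamma s^{\gamma-1}$ and
\[
U_\alpha'(h)=-\gamma s^{\gamma-1}(1-s^\alpha)^2+2\alpha s^{\gamma+\alpha-1}(1-s^\alpha).
\]
Combining these with $1-(1-s^\alpha)^2=2s^\alpha-s^{2\alpha}$ yields
\[
U_\alpha'-\Psi'=s^{\gamma+\alpha-1}\big(2(\gamma+\alpha)-(\gamma+2\alpha)s^\alpha\big)-\gamma s .
\]
Since $\sqrt{U_\alpha}=s^{\gamma/2}(1-s^\alpha)$, after dividing by $s^{\gamma/2}$ the condition becomes
\begin{equation*}
\Big|s^{\frac\gamma2+\alpha-1}\big(2(\gamma+\alpha)-(\gamma+2\alpha)s^\alpha\big)-\gamma s^{1-\frac\gamma2}\Big|\le 2\sqrt{A(d,\gamma)}\,(1-s^\alpha),\qquad s\in(0,1].
\end{equation*}
The endpoint $s=0$ (that is, $h=1$) is handled by continuity.

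\emph{Case (i).} Take $\alpha=\frac{2-\gamma}{2}$. Then $\frac\gamma2+\alpha-1=0$ and $1-\frac\gamma2=\alpha$, so the left-hand side collapses to $2(\gamma+\alpha)(1-s^\alpha)$. The inequality is therefore equivalent to the purely algebraic condition $(1+\tfrac\gamma2)^2\le A(d,\gamma)$. To check it, I will write $A=\frac{\gamma(d+2\beta-2)^2}{2\beta(\beta+d-2)}$ with $\beta=\frac{2}{2-\gamma}$, clear denominators, and simplify. I expect the resulting polynomial inequality in $(d,\gamma)$ to factor as a positive quantity times $(\gamma(d-2)-2)$, which gives exactly the threshold $\gamma\ge\frac2{d-2}$. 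This is routine algebra.

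\emph{Case (ii).} Take $\alpha=\sqrt{\gamma(2-\gamma)/2}$. Here no cancellation occurs, and I will substitute $x:=s^\alpha\in(0,1]$, so that the powers $s^{\frac\gamma2+\alpha-1}$ and $s^{1-\frac\gamma2}$ become $x^{p}$ and $x^{q}$ for explicit exponents $p$ and $q$. The inequality then reads $|f(x)|\le 2\sqrt A\,(1-x)$, where $f(x)=x^{p}\big(2(\gamma+\alpha)-(\gamma+2\alpha)x\big)-\gamma x^{q}$ satisfies $f(1)=0$.
\begin{itemize}
\item Near $x=1$, the condition is $|f'(1)|\le 2\sqrt A$. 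I expect this to reduce precisely to $\Delta(d,\gamma)\ge0$, i.e.\ $\gamma\ge\gamma_\Delta(d)$. The choice of $\alpha$ is the one that optimizes this linearized (stability) condition, in the sense that $\alpha$ is the root of the indicial equation.
\item Globally, I plan to show that $x\mapsto f(x)/(1-x)$ attains its extremal absolute value at $x=1$. The intended route is a sign and monotonicity analysis, showing that $f$ is convex or concave on $(0,1]$, or that $f'$ does not change sign; this is where I expect $\gamma\ge\frac23$ to be used, to control the signs of $p$, $q$ and $p-q$.
\end{itemize}
The main obstacle is this global comparison in case (ii), since $f$ involves two non-integer powers. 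If a direct convexity argument fails, I would fall back on bounding each term separately using $x^a\ge 1-a(1-x)$ or the reverse inequality, depending on the sign of $a-1$.
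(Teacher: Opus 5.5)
Your reduction to $|U_\alpha'-\Psi'|\le 2\sqrt{A(d,\gamma)\,U_\alpha}$ and your case (i) are exactly the paper's argument. The factorization you anticipate is $A-\frac{(\gamma+2)^2}{4}=\frac{(d-1)(2-\gamma)^2((d-2)\gamma-2)}{4((d-1)(2-\gamma)+\gamma)}$. Your local analysis in case (ii) is also right. With $\theta:=\alpha/\gamma=\sqrt{(2-\gamma)/(2\gamma)}$, your exponents are $p=1-\theta$ and $q=\theta$, and
\[
f(x)=\gamma\bigl(2(1+\theta)x^{1-\theta}-(1+2\theta)x^{2-\theta}-x^{\theta}\bigr),\qquad f(1)=0,\qquad f'(1)=-4\gamma\theta=-4\alpha .
\]
Then $4\alpha\le 2\sqrt{A}$ is equivalent to $A\ge2\gamma(2-\gamma)$, i.e.\ to $\Delta(d,\gamma)\ge0$.

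\textbf{The gap.} The global bound $|f(x)|\le 2\sqrt{A}\,(1-x)$ on $(0,1)$ is the entire content of case (ii), and you only list options for proving it. Most of those options fail:
\begin{itemize}
\item Convexity of $f$ would give $f(x)\ge f(1)+f'(1)(x-1)=4\alpha(1-x)$, which is the wrong direction.
\item A sign condition on $f'$ is unavailable: for $\theta<1$ one has $f'(0^+)=+\infty$ while $f'(1)<0$.
\item The termwise Bernoulli fallback breaks down at the endpoint $\gamma=\gamma_\Delta(d)$, which the lemma includes and where $\theta<1$ for $d=3,4$. There $2\sqrt A=|f'(1)|$, so the inequality is sharp to first order at $x=1$, and every term would have to be bounded by its tangent line at $x=1$. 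But the term $-\gamma x^\theta$ needs a lower bound on the concave function $x^\theta$, for which the tangent line is an upper bound. Replacing it by $x^\theta\ge x$ produces slope $\gamma(3\theta+1)>4\gamma\theta$.
\end{itemize}

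\textbf{How to close it.} The remaining option, concavity, does work and closes the gap quickly. One computes
\[
f''(x)=\gamma(1-\theta)\,x^{-1-\theta}\Bigl(\theta x^{2\theta-1}-2\theta(1+\theta)-(1+2\theta)(2-\theta)x\Bigr).
\]
Since $\gamma<1$ gives $\theta>\tfrac12$, we have $x^{2\theta-1}\le1$, so the bracket is at most $-\theta(1+2\theta)<0$ on $(0,1)$. The hypothesis $\gamma\ge\tfrac23$ is exactly $\theta\le1$, so $f''\le0$. As $1-\theta\ge0$, $f$ is continuous on $[0,1]$ with $f(0)\ge0$: it equals $0$ if $\theta<1$, while $f\equiv4\gamma(1-x)$ if $\theta=1$. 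Concavity then yields both bounds at once. The chord gives $f(x)\ge(1-x)f(0)\ge0$, and the tangent at $x=1$ gives $f(x)\le4\alpha(1-x)\le2\sqrt A\,(1-x)$.

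\textbf{Comparison with the paper.} This is a bit more economical than the paper's proof. In your variable $x$ (the paper's $s$), the paper first checks $f\ge0$ directly. It then bounds $x^{1-\theta}\le1$ in the term $(1+2\theta)x^{1-\theta}(1-x)$ and reduces to $g(x)=(2\theta-1)(1-x)-x^{1-\theta}+x^\theta\ge0$, proved via convexity of $g$. Both routes rely on $\tfrac12<\theta\le1$.
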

            \begin{proof}
            For the sake of readability, we omit the dependence of $C$ and $U$ on $\alpha$. 
                First, we observe that $C(0)=0$.
                For $t:=1-h$, we compute \bea U'(h)&=-\gamma t^{\gamma-1}(1-t^\alpha)^2+t^{\gamma}2(1-t^\alpha)\alpha t^{\alpha-1}
                \\&=-\gamma t^{\gamma-1}(1-2t^{\alpha}+t^{2\alpha})+2\alpha t^{\alpha+\gamma-1}(1-t^{\alpha})\\&=-\gamma t^{\gamma-1}+2\gamma t^{\alpha+\gamma-1}-\gamma t^{2\alpha+\gamma-1}+2\alpha t^{\alpha+\gamma-1}-2\alpha  t^{2\alpha+\gamma-1}\\&=-\gamma t^{\gamma-1}+2(\gamma+\alpha) t^{\alpha+\gamma-1}-(\gamma+2\alpha) t^{2\alpha+\gamma-1}.
                \eea
                We also have
                $\Psi'(h)=-\gamma t^{\gamma-1}+\gamma t$, then \be\label{eq:C'}C'(h)=2(\gamma+\alpha) t^{\alpha+\gamma-1}-(\gamma+2\alpha) t^{2\alpha+\gamma-1}-\gamma t.\ee 
                We divide the rest of the proof into two cases.
                \\ \vspace{-0.3cm}

    \noindent\textit{Case 1.} If $\gamma\ge \frac{2}{d-2}$, then we choose $\alpha:=\frac{2-\gamma}2$. By \eqref{eq:C'}, we have
                $$C'(h)=(\gamma+2)t^\frac\gamma2
                -(\gamma+2)t=(\gamma+2)t^\frac\gamma2(1-t^{\frac{2-\gamma}2})=(\gamma+2)\sqrt{U(h)}.$$ 
                We now observe that the condition $\gamma\ge \frac{2}{d-2}$ is equivalent to requiring that $A(d,\gamma)\ge \frac{(\gamma+2)^2}{4}$. Indeed, by direct computation
                \be\label{eq:boundA}A(d,\gamma)- \frac{(\gamma+2)^2}{4}=(d-1)(2-\gamma)^2\frac{(d-2)\gamma-2}{4(d(2-\gamma)-2(1-\gamma))},\ee where $d(2-\gamma)-2(1-\gamma)=(d-1)(2-\gamma)+\gamma>0$. Therefore, by definition of $C$, we have $$C(h)+\Psi(h)=U(h)=\frac{(C'(h))^2}{(\gamma+2)^2}\ge\frac{(C'(h))^2}{4A(d,\gamma)},$$ namely $C(h)$ satisfies \eqref{eq:calibration}.
                \\ \vspace{-0.3cm}

    \noindent\textit{Case 2.} If $\gamma\ge\gamma_\Delta(d)$ and $\gamma\ge\frac23$, we choose $\alpha:=\sqrt{\frac{\gamma(2-\gamma)}2}$, and we want to prove that 
    \be\label{eq:calibration2}C(h)+\Psi(h)\ge \frac{(C'(h))^2}{4\cdot2\gamma(2-\gamma)}.\ee
    Indeed, if \eqref{eq:calibration2} holds, then $C(h)$ is a calibration satisfying \eqref{eq:calibration} once we notice that $A(d,\gamma)\ge 2\gamma(2-\gamma)$. This last condition 
    is equivalent to requiring that $\gamma\ge\gamma_\Delta(d)$, in fact $$A(d,\gamma)-2\gamma(2-\gamma)=\frac{\gamma\big((d-2)^2-4(1-\gamma)\lambda(\beta)\big)}{2\lambda(\beta)}=\frac{\gamma}{2\lambda(\beta)}\Delta(d,\gamma),$$ where $\Delta(d,\gamma)$ is defined in \eqref{e:Delta-d-gamma}. Thus we only need to prove \eqref{eq:calibration2}.

    By \eqref{eq:C'}, we have that $C(h)$ satisfies the inequality in \eqref{eq:calibration2} if and only if
                $$|2(\gamma+\alpha) t^{\alpha+\gamma-1}-(\gamma+2\alpha) t^{2\alpha+\gamma-1}-\gamma t|\le 2\sqrt{2\gamma(2-\gamma)}t^\frac\gamma2(1-t^\alpha)=4\alpha t^\frac\gamma2(1-t^\alpha).$$
                Dividing by $\gamma t^{\gamma-1}$, this is equivalent to requiring that, for $s:=t^\alpha$ and $\theta:=\frac{\alpha}{\gamma}$ $$|2(1+\theta)s-(1+2\theta)s^2-s^{2\theta}|\le 4\theta s^\theta(1-s)\qquad\text{for }s \in (0,1),$$
               where we used that $s^{\theta}=t^{\alpha^2/\gamma}=t^{\frac{2-\gamma}2}$. 
               
               First we notice that $2(1+\theta)s-(1+2\theta)s^2-s^{2\theta}\ge0$. Indeed, since $\gamma<1$, we have $\theta>\frac12$, and thus $s^{2\theta}\le s$, for $s\in(0,1)$. On the other hand, since $\gamma\ge\frac23$, then $\theta\le 1$ and thus $$2(1+\theta)s-(1+2\theta)s^2-s^{2\theta}=s-s^{2\theta}+(1+2\theta)s(1-s)\le s-s^{2\theta}+(1+2\theta)s^\theta(1-s).$$ Then, we only need to prove that $s-s^{2\theta}\le (2\theta-1)s^\theta(1-s)$ for every $s \in (0,1)$. Dividing by $s^{\theta}$, it is sufficient to prove that $$g(s):=(2\theta-1)(1-s)-s^{1-\theta}+s^\theta\ge0\quad\text{for every }s\in(0,1).$$ Since $\theta\in(\frac12,1]$, then $$g'(s)=-(2\theta-1)-(1-\theta)s^{-\theta}+\theta s^{\theta-1}\quad\text{and}\quad g''(s)=\theta(1-\theta)s^{-\theta-1}(1-s^{2\theta-1})\ge0.$$ Then $g'$ is non-decreasing, and since $g'(1)=0$, then $g'\le0$ in $(0,1)$. Then $g$ must be non-increasing, and since $g(1)=0$, we obtain $g(s)\ge0$, concluding the proof.
            \end{proof}
         
In contrast, with the aim of identifying the values of $\gamma$ for which the radial cone is not minimizing, in the following lemma we construct an explicit competitor showing that $b_{\text{rad}}$ fails to be minimizing in a suitable range of $\gamma$. We point out that this is relevant only in dimensions $d\geq 6$, since for $d=3,4,5$ the range provided by the lemma lies below $\gamma_\Delta(d)$.

	\begin{lemma}\label{lemma:minimality3}
				Let $d\ge3$, $\gamma\in(0,1)$ and $\eps\in(0,1)$. We define
				$$h_{\eps}(t):=\begin{cases}
				    1\quad&\text{ for } t\in (0,\eps],\\
                    1-(1-z_{\eps}(t))^\beta\quad&\text{ for } t\in (\eps,1],
				\end{cases}$$
                where \be\label{eq:sceltap}z_{\eps}(t):=\frac{\eps^p(t^{-p}-1)}{1-\eps^p}\quad\text{and}\quad p:=\frac{(2-\gamma)(2+\gamma)}{4A(d,\gamma)}\ee
                If $\gamma<\frac{2}{d-2}$ and $\gamma<\frac23$, then there exists $\overline{\eps}>0$ sufficiently small, such that $\mathcal{L}(h_{\overline{\eps}})<0$. In particular, $b_{\text{rad}}$ is not minimizing. 
			\end{lemma}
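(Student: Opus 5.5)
The plan is to compute $\mathcal{L}(h_\eps)$ explicitly and extract its leading behaviour as $\eps\to0$. By \cref{prop:minimality-radial}, it is enough to exhibit one admissible $h$ with $h(1)=0$, $0\le h\le1$ and $\mathcal{L}(h)<0$. Then $b_{\text{rad}}$ fails to be a one-sided minimizer from below, hence is not minimizing. Admissibility of $h_\eps$ is immediate: $z_\eps$ decreases from $1$ at $t=\eps$ to $0$ at $t=1$, so $h_\eps$ is continuous, $h_\eps(1)=0$, and $0\le h_\eps\le1$.

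\emph{Step 1: the two contributions.} On $(0,\eps]$ we have $h_\eps\equiv1$, which contributes $\eps\,\Psi(1)=-\eps\left(1-\frac\gamma2\right)<0$. On $(\eps,1]$ we write $1-h_\eps=(1-z_\eps)^\beta$ and use $\beta\gamma=2\beta-2$. This gives
\[
\Psi(h_\eps)=(1-z)^{2\beta-2}-1+\gamma\bigl(1-(1-z)^\beta\bigr)-\tfrac\gamma2\bigl(1-(1-z)^\beta\bigr)^2,\qquad
t\,\partial_t h_\eps=-\beta p\,(1-z)^{\beta-1}\Bigl(z+\tfrac{\eps^p}{1-\eps^p}\Bigr),
\]
where we use $t\,\partial_t z_\eps=-p\bigl(z_\eps+\frac{\eps^p}{1-\eps^p}\bigr)$. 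We then change variables $t\mapsto z$, with $dt=-\frac{t}{p}\,\frac{dz}{z+\eps^p/(1-\eps^p)}$ and $t=\eps\bigl(\eps^p+(1-\eps^p)z\bigr)^{-1/p}$. After this substitution, $\mathcal{L}(h_\eps)$ becomes $\eps$ times $\Psi(1)$ plus an integral over $z\in(0,1)$ whose integrand has a weight of the form $\bigl(\eps^p+(1-\eps^p)z\bigr)^{-1/p}$ multiplying an explicit function of $z$.

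\emph{Step 2: leading order.} We set $s=t/\eps$, so that $z\approx s^{-p}$ for $s\in(1,1/\eps)$. Then
\[
\mathcal{L}(h_\eps)\approx\eps\Bigl(\Psi(1)+\int_1^{1/\eps}\bigl(A\,s^2(\partial_s h)^2+\Psi(h)\bigr)\,ds\Bigr),\qquad h(s)=1-(1-s^{-p})^\beta .
\]
In the $z$ variable the bracket is a combination of integrals $\int_0^1 z^{a}(1-z)^{b}\,dz$, that is, Beta functions in $p,\beta,\gamma$. For $z$ small the integrand behaves like $z^{1-1/p}\bigl(Ap^2\beta^2-\frac{\gamma(2-\gamma)}2\beta^2\bigr)$, because $\Psi(h)\approx-\frac{\gamma(2-\gamma)}2h^2$. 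So the large-$s$ tail must be treated separately according to the sign of $2p-1$. If $2p>1$, the limit $\Lambda(d,\gamma):=\lim_{\eps\to0}\eps^{-1}\mathcal{L}(h_\eps)$ is finite and we must show $\Lambda<0$. If $2p\le1$, the tail dominates and the sign of $\mathcal{L}(h_\eps)$ is that of the coefficient above.

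\emph{Step 3: the sign (main obstacle).} This is where the two hypotheses enter. The condition $\gamma<\frac2{d-2}$ is equivalent, by \eqref{eq:boundA}, to $A(d,\gamma)<\frac{(\gamma+2)^2}{4}$, that is $p>\frac{2-\gamma}{2+\gamma}$. The condition $\gamma<\frac23$ should control the remaining exponents, and the restriction $\theta=\alpha/\gamma$-type comparison of powers appears as in \cref{lemma:minimality2}. The choice of $p$ in \eqref{eq:sceltap} is presumably the optimizer of the leading-order quadratic balance. I would first try to evaluate $\Lambda$ in closed form via Beta-function identities, using $\beta\gamma=2\beta-2$ to collapse the exponents. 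I would then reduce $\Lambda<0$ to a polynomial inequality in $A$, $p$ and $\gamma$, and check it under the two hypotheses; this is the step I expect to be hardest. If the closed form is too messy, the fallback is a comparison argument: bound $\Psi(h)$ above by its quadratic part plus an explicit negative cubic correction, valid for $\gamma<1$, and show that the resulting simpler integral is negative. Once negativity holds for small $\eps$, choosing $\overline\eps$ accordingly and applying \cref{prop:minimality-radial} concludes.
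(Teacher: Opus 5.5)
Your setup matches the paper's: admissibility, the rescaling $t=\eps s$, and the expansion $\mathcal{L}(h_\eps)=\eps\Lambda+o(\eps)$. But the proposal stops exactly at the step that carries the content of the lemma: you never prove $\Lambda<0$. The missing idea is that the potential contribution is a multiple of the \emph{same} integral as the kinetic one.

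Set
\[
\sigma:=\gamma\beta=2\beta-2,\qquad y(s):=1-s^{-p},\qquad I(p):=\int_1^\infty(1-s^{-p})^\sigma s^{-2p}\,ds>0,\qquad F(x):=x^\sigma-1+\tfrac\gamma2-\tfrac\gamma2x^{\sigma+2}.
\]
Then $\Psi(h(s))=F(y(s))$ and $F(0)=\Psi(1)$. Since $\frac\gamma2(\sigma+2)=\sigma$, you get $F'(x)=\sigma x^{\sigma-1}(1-x^2)$ and $F(1)=F'(1)=0$. Integrate by parts twice, using $sy'(s)=p(1-y)$ and
\[
\tfrac{d}{ds}\bigl(sy^\sigma(1-y)^2\bigr)=p\sigma y^{\sigma-1}(1-y)^2(1+y)-\bigl(2p(\sigma+1)-1\bigr)y^\sigma(1-y)^2 .
\]
All boundary terms are $O(s^{1-2p})$, hence vanish since $p>\frac12$. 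This gives
\[
\Psi(1)+\int_1^\infty F(y(s))\,ds=-\bigl(2p(\sigma+1)-1\bigr)I(p),\qquad \int_1^\infty A\,s^2(\partial_sh)^2\,ds=A\beta^2p^2\,I(p).
\]
Hence $\Lambda=\bigl(A\beta^2p^2-2(\sigma+1)p+1\bigr)I(p)$. The $p$ in \eqref{eq:sceltap} is the minimizer of this quadratic, and its minimum value is $1-\frac{(2+\gamma)^2}{4A(d,\gamma)}$. By \eqref{eq:boundA} this is negative exactly when $\gamma<\frac2{d-2}$.

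Your Beta-function route can reproduce this identity. To do so you must regularize the individually divergent integrals $\int_0^1z^{-1/p-1}(\cdots)\,dz$ and use the recursions for $B(a+2,b)$ and $B(a,b+2)$ with $a=-1/p$, $b=\sigma+1$. As written, this step is only announced.

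You also misread the role of the hypotheses. The condition $\gamma<\frac23$ has nothing to do with a $\theta$-type comparison as in \cref{lemma:minimality2}. It is needed only to ensure $p>\frac12$, so that $I(p)<\infty$, the boundary terms vanish, and the $o(\eps)$ expansion holds. You already had this in hand: $\gamma<\frac2{d-2}$ gives $p>\frac{2-\gamma}{2+\gamma}$, and $\frac{2-\gamma}{2+\gamma}>\frac12$ if and only if $\gamma<\frac23$. So the case $2p\le1$ in your Step 2 never occurs.

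Finally, your fallback cannot prove the lemma on its whole range. The fallback bounds $\Psi$ above by its quadratic part plus a negative cubic term. For $d\ge6$, the case that matters, the leading coefficient $1-\frac{(2+\gamma)^2}{4A}$ vanishes at $\gamma=\frac2{d-2}$. This is consistent with the calibration of \cref{lemma:minimality2}(i), which gives minimality for $\gamma\ge\frac2{d-2}$. Thus $\Lambda\to0^-$ as $\gamma\uparrow\frac2{d-2}$. By contrast, any strict upper bound $\tilde\Psi\ge\Psi$ adds $\eps\int_1^\infty(\tilde\Psi-\Psi)(h(s))\,ds$, which stays bounded away from zero there. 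An exact evaluation of $\Lambda$, such as the identity above, is therefore unavoidable.
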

			
			\begin{proof}
				We first compute
				$$\int_0^1t^2 (\partial_t h_{\eps})^2\,dt=\int_{\eps}^1\beta^2(1-z_\eps(t))^{2\beta-2}\frac{p^2\eps^{2p}}{(1-\eps^p)^2}t^{-2p}\,dt.$$
                By substituting $t=\eps s$ and using that $z_\eps(\eps s)=\frac{s^{-p}-\eps^p}{1-\eps^p}$, we have
                \be\label{eq:contribution1}
                \int_0^1t^2 (\partial_t h_{\eps})^2\,dt =\frac{\eps\beta^2p^2}{(1-\eps^p)^{2\beta}}\int_{1}^{\frac1\eps}(1-s^{-p})^{2\beta-2}s^{-2p}\,ds =\eps\beta^2p^2\int_{1}^{\infty}(1-s^{-p})^{2\beta-2}s^{-2p}\,ds+o(\eps).
                \ee
				Notice that the integral in the right-hand side is finite if and only if $p>1/2$. In order to verify such condition, we notice that, by our choice of $p$, it is equivalent to requiring that $$A(d,\gamma)<\frac{(2-\gamma)(2+\gamma)}2.$$
                Thus, since $\gamma<\frac{2}{d-2}$, by \eqref{eq:boundA} we have \be\label{eq:boundA1}A(d,\gamma)<\frac{(\gamma+2)^2}4< \frac{(2-\gamma)(2+\gamma)}2,\ee where the second inequality follows by the fact that $\gamma<\frac{2}{3}$. Therefore $p>1/2$ is verified. 
                
                Since $h_{\eps}\equiv1$ on $(0,\eps)$, then we have				\be\label{eq:contribution2}\int_0^\eps \Psi(h_{\eps})\,dt=\eps\,\Psi(1)=\eps\left(
                -1+\frac{\gamma}{2}\right).\ee
				On the other hand, on $(\eps,1)$, by the change of variables $t=\eps s$ and using again that $z_\eps(\eps s)=\frac{s^{-p}-\eps^p}{1-\eps^p}$, we have
				\be\label{eq:contribution3}\begin{aligned} \int_\eps^1 \Psi(h_{\eps})\,dt&=\eps\int_1^{\frac1\eps}\Psi\left(1-\left(1-\frac{s^{-p}-\eps^p}{1-\eps^p}\right)^\beta\,\right)\,ds\\&=
				\eps\int_1^{\frac1\eps}\left(\left(1-\frac{s^{-p}-\eps^p}{1-\eps^p}\right)^{\gamma\beta}-1+\frac\gamma2-\frac\gamma2\left(1-\frac{s^{-p}-\eps^p}{1-\eps^p}\right)^{2\beta}\,\right)\,ds\\&=\eps\int_1^{\infty}\left((1-s^{-p})^{\gamma\beta}-1+\frac\gamma2-\frac\gamma2(1-s^{-p})^{2\beta}\right)\,ds+o(\eps).\end{aligned}\ee 
				If $\sigma:=\gamma\beta=2\beta-2$, then we claim that \be\label{eq:contrubution4}\Psi(1)+\int_1^{\infty}\left((1-s^{-p})^{\sigma}-1+\frac\gamma2-\frac\gamma2(1-s^{-p})^{\sigma+2}\right)\,ds=-(2p(\sigma+1)-1)\int_{1}^{\infty}(1-s^{-p})^{\sigma}s^{-2p}\,ds.\ee 
                               If the claim \eqref{eq:contrubution4} is true, we can conclude the proof by showing that $\mathcal{L}(h_\eps)<0$. Indeed, by combining \eqref{eq:contribution1}, \eqref{eq:contribution2}, \eqref{eq:contribution3} and \eqref{eq:contrubution4}, we get
                $$\mathcal{L}(h_\eps)=\eps\left(A(d,\gamma)\frac{\sigma^2}{\gamma^2}p^2-2p(\sigma+1)+1\right)\int_1^\infty (1-s^{-p})^\sigma s^{-2p}\,ds+o(\eps).$$
        
        We point out that the exponent $p$ in \eqref{eq:sceltap} is chosen as the minimizer of the quadratic coefficient. Indeed, differentiating this term with respect to $p$ and imposing that the derivative vanishes yields the choice in 
        \eqref{eq:sceltap}, i.e.,
                $$p=\frac{\gamma^2(\sigma+1)}{A(d,\gamma)\sigma^2}=\frac{(2-\gamma)(2+\gamma)}{4A(d,\gamma)}.$$ 
                Using the second expression for $p$ above, together with the identities $\sigma^2=\frac{4\gamma^2}{(2-\gamma)^2}, \sigma+1=\frac{2+\gamma}{2-\gamma}$, we obtain
                \bea \mathcal{L}(h_\eps)&=\eps\left(\frac{\sigma^2}{\gamma^2}\frac{(2-\gamma)^2(2+\gamma)^2}{16 A(d,\gamma)}-\frac{(2-\gamma)(2+\gamma)}{2A(d,\gamma)}(\sigma+1)+1
                \right)\int_1^\infty (1-s^{-p})^\sigma s^{-2p}\,ds+o(\eps)\\&=\eps\left(-\frac{(2+\gamma)^2}{4 A(d,\gamma)}+1
                \right)\int_1^\infty (1-s^{-p})^\sigma s^{-2p}\,ds+o(\eps).\eea Since $A(d,\gamma)<\frac{(\gamma+2)^2}4$ by \eqref{eq:boundA1}, we finally have that $\mathcal{L}(h_\eps)<0 $, by choosing $\eps$ small enough.

                It remains to prove the claim \eqref{eq:contrubution4}. First, if we define $$y(s):=1-s^{-p}\quad\text{and}\quad F(x):=x^{\sigma}-1+\frac\gamma2-\frac\gamma2x^{\sigma+2},$$
                we can rewrite \eqref{eq:contrubution4} as
$$
\Psi(1) + \int_1^\infty F(y(s))\,ds = -(2p(\sigma+1)-1)\int_1^\infty (1-y(s))^2 (y(s))^\sigma\,ds.
$$
Since $\sigma=\frac{2\gamma}{2-\gamma}$, we have $\frac{\gamma}{2}(\sigma+2)=\sigma$, and so \be\label{eq:derivativeF}F'(x)=\sigma x^{\sigma-1}-\frac\gamma2(\sigma+2)x^{\sigma+1}=\sigma x^{\sigma-1}(1-x^2).\ee
Then, we have that $F(0)=\Psi(1)$, $F(1)=F'(1)=0$ and, since $F$ is regular near $x=1$, a Taylor expansion gives
$$sF(y(s))=sO\big((1-y(s))^2\big)=O(s^{1-2p})\to0\quad\text{as }s\to+\infty,$$ 
where the asymptotic relies on the lower bound $p>1/2$.
Thus, integrating by parts we get 
\begin{align*}\int_1^\infty F(y(s))\,ds &=\lim_{s \to +\infty}sF(y(s)) - F(0)-
\int_1^\infty s F'(y(s))y'(s)\,ds\\ &= -\Psi(1)-
\int_1^\infty s F'(y(s))y'(s)\,ds.
\end{align*}
Using that $s y'(s)=p(1-y)$ and \eqref{eq:derivativeF}, we have
$$\Psi(1)+\int_1^\infty F(y(s))\,ds=-p\sigma
\int_1^\infty y(s)^{\sigma-1}(1-y(s))^2(1+y(s))\,ds.$$
Finally, the conclusion follows by integrating on $(1,\infty)$ the following identity
$$\frac{d}{ds}\left(sy^\sigma(1-y)^2\right)=p\sigma y^{\sigma-1}(1-y)^2(1+y)-\big(2p(\sigma+1)-1\big)y^\sigma(1-y)^2.
$$
We simply point out that there are no boundary contributions at $s=1$ or at $s=+\infty$. Indeed, $y(1)=0$, while $s y^\sigma(1-y)^2=O(s^{1-2p})\to 0$ as $s\to+\infty$.
			\end{proof}
			Now we can prove the main result for the minimality of the radial cone in \cref{thm:minimality-of-radial-cone}.
			\begin{proof}[Proof of \cref{thm:minimality-of-radial-cone}] 
                Since the Alt-Phillips functional is convex for $\gamma \in [1,2)$, it is not restrictive to consider the case $\gamma \in (0,1)$, see \cref{prop:convex-functional}. As proved in \cite{savinyu-altphillips-stable}, if $\gamma\ge \gamma_{\Delta}(d)$, then $b_{\text{rad}}$ is minimizing if and only if $b_{\text{rad}}$ is one-sided minimizing from below. We split the proof into three cases.

                \begin{itemize}
                    \item[(i)] Suppose that $d=3,4,5$. 
                In this case, since $\gamma_{\Delta}(d)\ge\frac{2}3$, the assumptions of case (ii) of  
\cref{lemma:minimality2} are satisfied and so $b_{\text{rad}}$ is minimizing. 
\item[(ii)] Now consider the case $d\geq 6$ and $\gamma\ge\frac{2}{3}$. Since $\gamma_{\Delta}(d)<\frac{2}{3}$, we proceed again by applying case (ii) of \cref{lemma:minimality2} and we conclude that $b_{\text{rad}}$ is minimizing.

\item[(iii)] Finally, we suppose that $d\ge6$ and $\gamma\in(0,\frac23)$. In this case, we proceed by applying case (i) of \cref{lemma:minimality2} and \cref{lemma:minimality3}. Thus, we obtain that $b_{\text{rad}}$ is minimizing if and only if $\gamma\ge \frac{2}{d-2}$, concluding the proof.
                \end{itemize}
			\end{proof}

			\section{Epiperimetric inequality for the one-dimensional cone}\label{section-1d}
			In this section we prove an epiperimetric inequality near the one-dimensional cone $b_{\text{one}}:=c_{\text{one}}|x_d|^\beta$, which is the only singular cone in $\mathcal{B}_{d-1}$, up to a rotation. We also show that $b_{\text{one}}$ is not integrable in the sense of \cref{def:integrability}, see \cref{remark:non-int-one-dim-cone}.
            
            We notice that $b_{\text{one}}$ is minimizing if and only if $\gamma\in[1,2)$, since there are no minimizing cones in dimension $d=2$ for $\gamma\in(0,1)$. The case $\gamma=1$ corresponding to the obstacle problem has already been extensively studied \cite{Weiss-improvement,csv18,fs19}, so we restrict our analysis to $\gamma>1$.
			\begin{proposition}[Epiperimetric inequality]\label{prop:epi-one-dim-cone} Let $\gamma\in (1,2)$, then 
            the epiperimetric inequality in \cref{thm:epiperimetric-combined} holds with $b=b_{\text{one}}$ and $\sigma=0$, under the closeness assumption 
            \bea \|z-b_{\text{one}}\|_{H^1(B_1)}\le \delta.\eea 
			\end{proposition}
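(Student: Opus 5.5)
The plan is to argue by contradiction in the spirit of Weiss, as outlined in the introduction, with the family $b_\xi$ serving as an explicit replacement for a Lyapunov-Schmidt reduction.

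Suppose the statement fails for $\sigma=0$. Then there are $\eps_k\to0$ and traces $c_k$ with $\beta$-homogeneous extensions $z_k$ satisfying $\|z_k-b_{\text{one}}\|_{H^1(B_1)}\to0$, such that every admissible $h$ with $h=c_k$ on $\partial B_1$ has
\[
W(h)-W(b_{\text{one}})>(1-\eps_k)\bigl(W(z_k)-W(b_{\text{one}})\bigr).
\]
The first step is a projection. For each $k$, choose a rotation $R_k$ and a parameter $\xi_k\in\R^{d-1}$ minimizing $\|c_k-R_kb_{\xi_k}\|_{L^2(\partial B_1)}$. After rotating, we may take $R_k=\mathrm{Id}$. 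The rotations remove the modes $\text{sgn}(x_d)|x_d|^{\beta-1}x_i$, and the parameter $\xi$ removes the modes $|x_d|^{\beta-1}x_i$, because $\partial_\xi b_\xi|_{\xi=0}$ spans exactly these (up to constants). By \cref{lemma:ker-1d}, the remainder $\phi_k:=c_k-b_{\xi_k}$ is then $L^2$-orthogonal to $\ker L_{b_{\text{one}}}$, up to errors of order $o(\|\phi_k\|)+O(|\xi_k|^2)$.

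Next we normalize. Set $\eps_k':=\|\phi_k\|_{H^1(\partial B_1)}$ and $w_k:=\phi_k/\eps_k'$, so that $w_k\rightharpoonup w$ weakly in $H^1(\partial B_1)$. We then test the contradiction inequality against two competitors.

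\emph{Competitor (a).} Take $h=b_{\xi_k}+(\text{radial cut-off of }\phi_k)$, the standard Weiss competitor $r^\beta b_{\xi_k}+r^{\beta}\eta(r)\phi_k$. For this to be admissible, it must stay non-negative. Non-negativity is automatic where $b_{\xi_k}$ is bounded away from zero. Near $\{x_d=0\}$ it holds because $c_k\ge0$ and we interpolate between $c_k$ and $b_{\xi_k}\geq0$, taking the competitor as a convex combination $r^\beta((1-\eta)b_{\xi_k}+\eta c_k)$.

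\emph{Energy expansion.} Expanding the Weiss energy to second order around $b_{\xi_k}$ produces a first-variation error $\delta W(b_{\xi_k})[\phi_k]$, since $b_{\xi}$ is not a solution. The key point is that this error concentrates on the thin wedge $\{|x_d|\lesssim|\xi_k||x'|\}$, where the two half-plane profiles overlap or fail to match. We bound it by $C|\xi_k|^{\beta-1}\|\phi_k\|_{L^2(\text{wedge})}$ together with small terms.

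\emph{Using the energy drop of $b_\xi$.} \cref{lemma:negative-b} gives
\[
W(b_\xi)-W(b_{\text{one}})\approx-\kappa_\beta|\xi|^{2\beta-1}.
\]
Since this quantity is negative, the contradiction inequality forces $|\xi_k|^{2\beta-1}\lesssim(\eps_k')^2$. Hence the concentrated error is $o((\eps_k')^2)$, because $\beta-1+(2\beta-1)^{-1}\cdot\ldots$ works out and $\beta>2$.

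\emph{Passing to the limit.} Dividing by $(\eps_k')^2$, we obtain:
\begin{itemize}
\item $w$ solves $L_{b_{\text{one}}}w=0$ weakly away from $\{x_d=0\}$, since the equation can only be tested there;
\item $w$ vanishes on $\{x_d=0\}$ by the uniform closeness and the $C^\beta$ bounds, so $w\in\ker L_{b_{\text{one}}}$ by \cref{lemma:ker-1d};
\item $w$ is orthogonal to the kernel by construction, so $w=0$.
\end{itemize}

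\emph{Strong convergence and conclusion.} It remains to upgrade weak to strong convergence, which contradicts $\|w_k\|=1$. Using the contradiction inequality with competitor (a), with eigenmode-wise exponents $\beta\pm\tau$ on the spectral decomposition of $L_{b_{\text{one}}}$ restricted to $\{x_d\neq0\}$, we would get
\[
\mathcal{Q}(w_k)\ge c\|w_k\|^2_{H^1}-o(1).
\]
The left-hand side is compared with the contradiction inequality, which gives $\limsup\mathcal{Q}(w_k)\le 0$ up to low modes. The finitely many negative modes converge strongly by compactness, which yields $\|w_k\|_{H^1}\to0$.

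\emph{Main obstacle.} I expect the hardest step to be controlling the first variation of $b_{\xi_k}$ near $\{x_d=0\}$, together with the non-integrable semilinear term $\frac\gamma2(\gamma-1)b^{\gamma-2}w^2$. This term is singular like $|x_d|^{-2}$, so a Hardy inequality on $\{x_d\ne0\}$ is needed to keep it coercive and to justify the limit. I would handle it by splitting $\partial B_1$ into $\{|x_d|>s\}$ and $\{|x_d|\le s\}$, applying Hardy on the strip, and letting $s\to0$ after $k\to\infty$, using the quantitative decay from \cref{lemma:negative-b}.
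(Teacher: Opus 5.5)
\paragraph{The gap: the first-variation estimate.} Your overall architecture is the right one: contradiction, projection onto the family $b_{\xi,\omega}$, testing the linearized equation only away from $\{x_d=0\}$, \cref{lemma:ker-1d}, and the energy drop of \cref{lemma:negative-b}. But the step you leave as ``works out'' does not work, and it is the heart of the proof.

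Write $b_{k,1},b_{k,2}$ for the two half-plane profiles of $b_{\xi_k}$. The first-variation density $f_k:=\gamma(b_{\xi_k}^{\gamma-1}-b_{k,1}^{\gamma-1}-b_{k,2}^{\gamma-1})\le0$ lives on the overlap wedge $D_k$. There $0\le -f_k\le C|\xi_k|^{\beta(\gamma-1)}=C|\xi_k|^{\beta-2}$, and $|D_k|\le C|\xi_k|$. Any linear estimate, such as Cauchy--Schwarz plus the thin-wedge bound $\|\phi\|_{L^2(D_k)}\lesssim|\xi_k|^{1/2}\|\phi\|_{H^1(B_1)}$, therefore gives at best $|\int f_k(z_k-b_{\xi_k})|\lesssim|\xi_k|^{\beta-1}\eta_k$. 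Here $\eta_k$ is your $\eps_k'$, and this bound is attained by general $H^1$ perturbations (a positive bump of height $\eta_k$ across the wedge).

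For this to be $o(\eta_k^2)$ you need $|\xi_k|^{2\beta-2}=o(\eta_k^2)$. The energy drop only gives $|\xi_k|^{2\beta-1}\lesssim\eta_k^2$, which leaves a bound of order $\eta_k^{2-1/(2\beta-1)}\gg\eta_k^2$. A factor $|\xi_k|^{1/2}$ is missing, and $\beta>2$ does not help.

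Even $|\xi_k|^{2\beta-1}\lesssim\eta_k^2$ is not free. It needs an upper bound $W(h)-W(b_{\xi_k})\le C\eta_k^2$, and that quantity contains the same first-variation term. The paper avoids estimating $f_k$ here. Take $h=(1-\zeta)z_k+\zeta b_{\xi_k}$ with $\zeta$ radial. Convexity of $Q_k$ (this is where $\gamma>1$ enters) and homogeneity make the combination $\int(f_k(z_k-b_{\xi_k})+Q_k(z_k-b_{\xi_k}))$ appear on both sides with different radial weights. This gives the a priori bound $\le C\eta_k^2$, improved to $\le\tau\eta_k^2$ once $w=0$.

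\paragraph{The missing idea: nonlinear absorption.} What is needed is absorption into $Q_k$, not a linear bound. Since $f_k\le0$ and $z_k-b_{\xi_k}\ge-b_{\xi_k}$, only $(z_k-b_{\xi_k})^+$ contributes with the bad sign. The elementary inequality $b^{\gamma-1}t^+\le\sigma Q_b(t)+C_\sigma b^\gamma$, with $Q_b(t)=(b+t)^\gamma-b^\gamma-\gamma b^{\gamma-1}t$, then gives
\[
-\int f_k(z_k-b_{\xi_k})\le C\sigma\int Q_k(z_k-b_{\xi_k})+C_\sigma\int_{D_k}b_{\xi_k}^\gamma .
\]
The last integral is at most $C|\xi_k|^{2\beta-1}$, exactly the energy-drop scale. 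The $Q_k$-term is absorbed using the a priori bound above.

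\paragraph{Why your fallbacks do not work.} The Hardy inequality on $\{x_d\neq0\}$ cannot replace this step, because it requires vanishing on $\{x_d=0\}$. Here $c_k$ is an arbitrary non-negative $H^1$ trace and $b_{\xi_k}>0$ on the wedge.

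Only $H^1$ closeness is assumed, so there are no $C^\beta$ bounds either. Your claimed vanishing of $w$ on $\{x_d=0\}$ is therefore unjustified, though also unnecessary, since \cref{lemma:ker-1d} needs no interface condition.

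The same objection hits your final step. The quadratic form of $L_{b_{\text{one}}}$ is infinite on functions not vanishing on the equator. Moreover, modifying eigenmodes separately with exponents $\beta\pm\tau$ destroys non-negativity of the competitor near $\{x_d=0\}$.

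No spectral decomposition is needed. Once $w=0$ and the first-variation term is $o(\eta_k^2)$, the plain competitor $(1-\zeta)z_k+\zeta b_{\xi_k}$ together with $Q_k\ge0$ gives $\int_{B_{1/2}}|\nabla w_k|^2\to0$. This contradicts the normalization.
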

			We point out that the epiperimetric inequality in \cref{prop:epi-one-dim-cone} is classical, namely $\sigma=0$, although the one-dimensional cone is not integrable, see \cref{remark:non-int-one-dim-cone}.
			\subsection{Linearized operator around the one-dimensional cone}
            In the following lemma, we characterize the kernel of the linearized operator near the one-dimensional cone for $\gamma>1$.

            We point out that, for $\gamma>1$, no additional boundary condition across the interface $\{x_d=0\}$ is needed in order to classify the $\beta$-homogeneous solutions of the associated linearized equation. Indeed, it suffices to impose the weak equation in $\{|x_d|>0\}$.
            
			\begin{lemma}\label{lemma:ker-1d} Let $\gamma\in(1,2)$ and $w\in H^1(B_1)$ be a $\beta$-homogeneous weak solution of
				\bea
				\Delta w=\frac\gamma2(\gamma-1)b_{\text{one}}^{\gamma-2}w\quad\text{in }B_1\cap\{|x_d|>0\},
				\eea then, there exists $ c_i^{(1)},c_i^{(2)}\in\R$ such that $$w(x)=\sum_{i=1}^{d-1} c_i^{(1)}|x_d|^{\beta-1}x_i+\sum_{i=1}^{d-1}c_i^{(2)}\,\text{sgn}(x_d)|x_d|^{\beta-1}x_i.$$ 
                In particular, the kernel of the spherical linearized operator in \eqref{eq:general-kernel} at $b=b_{\text{one}}$ is given by \bea\ker( L_{b_{\text{one}}})=\text{span}\left\{|x_d|^{\beta-1}x_i,\text{sgn}(x_d)|x_d|^{\beta-1}x_i\right\}_{i=1}^{d-1}.\eea
			\end{lemma}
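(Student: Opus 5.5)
Since $b_{\text{one}}$ depends only on $x_d$, on $\{|x_d|>0\}$ the equation reads $\Delta w=\frac\gamma2(\gamma-1)c_{\text{one}}^{\gamma-2}|x_d|^{-2}w=\beta(\beta-1)|x_d|^{-2}w$, using $c_{\text{one}}^{\gamma-2}=\beta^2$ and $\frac\gamma2(\gamma-1)\beta^2=\beta(\beta-1)$. The latter identity follows from $\gamma=2-\frac2\beta$: indeed $\frac\gamma2=\frac{\beta-1}{\beta}$ and $\gamma-1=\frac{\beta-2}{\beta}$, so the coefficient is $(\beta-1)(\beta-2)$. I would recheck this, since the target functions suggest the coefficient should be $(\beta-1)(\beta-2)$ rather than $\beta(\beta-1)$. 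The structure of the argument is then separation of variables in the half-spaces $\{x_d>0\}$ and $\{x_d<0\}$ separately. By the symmetry $x_d\mapsto -x_d$ it suffices to classify $\beta$-homogeneous $H^1$ solutions in the upper half-space $\{x_d>0\}$ of $\Delta w=(\beta-1)(\beta-2)x_d^{-2}w$, and then glue. The two halves are independent, which gives exactly the span of $|x_d|^{\beta-1}x_i$ and $\mathrm{sgn}(x_d)|x_d|^{\beta-1}x_i$ (even and odd combinations).

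In the half-space I would expand $w(x',x_d)$ in the tangential variables. Concretely, I would write $w=x_d^{\beta}\,g(x'/x_d)$ by homogeneity, or use a Fourier transform in $x'$ on slabs. The cleanest approach is the substitution $w=x_d^{a}v$ with $a\in\{\beta-1,\,2-\beta\}$, the roots of $a(a-1)=(\beta-1)(\beta-2)$. Choosing $a=\beta-1$ gives $\Delta v+\frac{2(\beta-1)}{x_d}\partial_d v=0$, a weighted (Baouendi–Grushin/Muckenhoupt $A_2$-type) Laplacian $\operatorname{div}(x_d^{2\beta-2}\nabla v)=0$. Here $v$ is $1$-homogeneous. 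The finiteness of $\int|\nabla w|^2$ near $\{x_d=0\}$, with $w\in H^1$, should force $v$ to belong to the natural weighted space for the weight $x_d^{2(\beta-1)}$, and should select the $\beta-1$ branch over the singular $2-\beta$ branch. The singular branch gives $w\sim x_d^{2-\beta}$, and $\int x_d^{2(1-\beta)}$ diverges when $\beta\ge3/2$, which holds since $\gamma>1$ implies $\beta>2$. Thus the exponent condition $\gamma>1$ is exactly what excludes the second branch without imposing any transmission condition, consistent with the remark preceding the lemma.

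Next I would show that a $1$-homogeneous solution of $\operatorname{div}(x_d^{2\beta-2}\nabla v)=0$ in the weighted energy class is a linear function of $x'$ only. The method is to even-reflect in $x_d$ and use known Liouville theorems for $A_2$-weighted equations (as in Caffarelli–Silvestre-type extensions with $a=2\beta-2$, which is outside $(-1,1)$, so here I would instead argue via the weight $|x_d|^{2\beta-2}$ as a "dimension $2\beta-1$" extra radial variable). Tangential derivatives $\partial_{x_i}v$, $i<d$, are $0$-homogeneous solutions, hence constant by a Liouville/regularity argument. Therefore $v=\sum c_ix_i+f(x_d)$ with $f$ $1$-homogeneous, and solving $(x_d^{2\beta-2}f')'=0$ gives $f=cx_d$ or $f=cx_d^{3-2\beta}$. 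In terms of $w$, the term $x_d^{\beta-1}\cdot x_d=x_d^\beta$ is not a solution unless I recheck the exponents. The point is that the ODE term must be discarded by homogeneity plus energy: it yields $x_d^\beta$, which corresponds to the eigenvalue equation only if $\lambda$ matches, and this is to be verified. Either way only the $x_i x_d^{\beta-1}$ modes survive.

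The main obstacle is the regularity/Liouville step for the degenerate weighted equation up to $\{x_d=0\}$ with no boundary condition imposed. I would handle it by an explicit spherical-harmonic-type expansion instead. Writing $w$ on $\mathbb S^{d-1}\cap\{x_d>0\}$ in the variable $\theta_d$ times spherical harmonics of $\mathbb S^{d-2}$ in $x'/|x'|$ reduces the problem to a hypergeometric ODE in each mode. The $H^1$ condition at $\theta_d=\pi/2$ (i.e. $x_d=0$) together with regularity on the axis then kills all but the degree-$1$ harmonic mode with the $x_d^{\beta-1}$ behavior.
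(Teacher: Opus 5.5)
\textbf{There is a genuine gap: the step you leave open is the one that carries the lemma.} Your skeleton matches the paper's. The coefficient is $(\beta-1)(\beta-2)=\alpha(\alpha-1)$ with $\alpha=\beta-1$; your first identity $\frac\gamma2(\gamma-1)\beta^2=\beta(\beta-1)$ is a slip that you correct in the same sentence. The substitution $w=x_d^{\alpha}v$ gives $\operatorname{div}(x_d^{2\alpha}\nabla v)=0$ with $v$ $1$-homogeneous. The tangential derivatives $v_i=\partial_i v$ are $0$-homogeneous solutions, and the two half-spaces are glued at the end.

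What is missing is the proof that $v_i$ is constant, with no condition imposed on $\{x_d=0\}$. Neither tool you offer closes it: $x_d^{2\alpha}$ is not an $A_2$ weight, as you note, and the ``dimension $2\beta-1$'' idea is only named. The paper's argument rests on two facts:
\begin{itemize}
\item Because $v_i$ is $0$-homogeneous, its spherical equation $\operatorname{div}_\theta(x_d^{2\alpha}\nabla_\theta v_i)=0$ on $(\partial B_1)^+$ has no zeroth-order (eigenvalue) term. This term is present for $v$ and $w$, which is why an energy identity tells you nothing at their level.
\item Because $2\alpha>1$, the equator $\{x_d=0\}\cap\partial B_1$ has zero capacity for the weight $x_d^{2\alpha}$. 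Hence $C^\infty_c((\partial B_1)^+)$ is dense in $H^1((\partial B_1)^+,x_d^{2\alpha})$.
\end{itemize}
So $v_i$ can be used as its own test function, which gives $\int_{(\partial B_1)^+}x_d^{2\alpha}|\nabla_\theta v_i|^2\,d\HH^{d-1}=0$. This is what makes ``no transmission condition'' rigorous. Your count on the branch $x_d^{2-\beta}$ only excludes it among separated solutions.

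\textbf{The step after that is wrong as written.} If $v=a\cdot x'+f(x_d)$ with $f$ $1$-homogeneous, then automatically $f=cx_d$ on $\{x_d>0\}$. The solutions of $(x_d^{2\alpha}f')'=0$ are $c_1+c_2x_d^{3-2\beta}$. These are not $1$-homogeneous, and $cx_d$ is not among them. Computing directly, $\operatorname{div}(x_d^{2\alpha}\nabla v)=2\alpha c\,x_d^{2\alpha-1}$, which forces $c=0$. There is nothing about $x_d^\beta$ left to ``recheck''.

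\textbf{Your fallback mode expansion is a legitimate alternative route, but as written it is only an assertion.} It has one real advantage: excluding the singular branch at $x_d=0$ becomes a one-dimensional Frobenius check on each mode (via Parseval in $\omega$), instead of a capacity and density statement. Writing $\theta=(\sin\phi\,\omega,\cos\phi)$ with $\omega\in\mathbb{S}^{d-2}$, you would need:
\begin{itemize}
\item each coefficient $u_k(\phi)$ is a classical solution of the mode ODE on $(0,\pi/2)$;
\item the $H^1$ bound kills the branch $\cos^{1-\alpha}\phi$ at the equator and the singular Frobenius branch at the pole;
\item the quantization that singles out $k=1$.
\end{itemize}
For the last point, the ansatz $u_k=\sin^k\phi\cos^{\alpha}\phi\,h(\cos 2\phi)$ gives a Jacobi equation. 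Its solutions admissible at both ends are Jacobi polynomials of degree $p$, and they exist only if $\lambda(\beta)=\lambda(k+\beta-1+2p)$, i.e.\ $k+2p=1$. This is the same computation the paper performs for cylindrical cones in \cref{lemma:kernel-cylindric}. The case $d=2$, where there is no pole, needs a separate line.
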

			\begin{proof}
				Let us set $x=(x',x_d)\in \R^{d-1}\times \R$. By the definition of $b_{\text{one}}$, we get that $w$ solves $$\Delta w=\alpha(\alpha-1)\frac{w}{|x_d|^2}\quad\text{in }B_1\cap\{|x_d|>0\},\quad\text{where }\alpha:=\beta-1.$$ If $v:B_1\cap\{x_d>0\}\to\R$ is defined by $v(x):=x_d^{-\alpha}w(x)$, then $v$ is $1$-homogeneous and solves \be\label{eq:equation-v}\text{div}(x_d^{2\alpha}\nabla v)=0\quad\text{in }B_1^+:=B_1\cap\{x_d>0\}.\ee 
                Let $i=1,\dots,d-1$, then $v_i:=\partial_i v$ is a $0$-homogeneous solution to \eqref{eq:equation-v}, namely  \be\label{eq:test}\text{div}_{\theta}(x_d^{2\alpha}\nabla_\theta v_i)=0\quad\text{in }(\partial B_1)^+:=(\partial B_1)\cap\{ x_d>0\},\ee where $\text{div}_{\theta}$ is the spherical divergence operator.
				
                We now observe that, since $\alpha>1/2$, the boundary $\{x_d=0\}\cap \partial B_1$ has zero weighted capacity with respect to the weight $x_d^{2\alpha}$, and thus the space $C^\infty_c((\partial B_1)^+)$ is dense in the weighted Sobolev space $H^1((\partial B_1)^+,x_d^{2\alpha})$ (see for instance \cite[Proposition 2.2]{STV}).
				Then, we may use $v_i$ itself as a test function in the weak formulation of \eqref{eq:test}, obtaining that 
                $$\int_{(\partial B_1)^+}x_d^{2\alpha}|\nabla_\theta v_i|^2\,d\HH^{d-1}=0.$$
                Therefore, for every $i=1,\dots,d-1$, $v_i$ is constant on ${(\partial B_1)^+}$ and so  $v(x)= a\cdot x'+ cx_d$, for some $a\in\R^{d-1}$, $c \in \R$. 
                Using \eqref{eq:equation-v}, we get $$0=\text{div}(x_d^{2\alpha}\nabla v)=2\alpha cx_d^{2\alpha-1}\quad\text{in }B_1^+,$$ namely $c=0$. Then, by the definition of $v$, we obtain $w(x)=x_d^{\beta-1} (a\cdot x')$ in $B_1^+$.  
                
                Similarly, we obtain $w(x)=(-x_d)^{\beta-1}\widetilde a\cdot x'$ in $B_1\cap\{x_d<0\}$, for some $\widetilde a\in\R^{d-1}$. Finally, the conclusion follows by choosing $c_i^{(1)}:=(a_i+\widetilde a_i)/2$ and $c_i^{(2)}:=(a_i-\widetilde a_i)/2$.
			\end{proof}
             \begin{remark}
                 A similar Liouville-type theorem was proved in \cite[Lemma 3.5]{ros-restrepo} for the linearized operator around the flat solution. 
            In the present setting, one has to couple the corresponding Liouville classifications on the two sides of the interface. On each side, the possible normal behaviors include the two modes $(x_d)_+^{\beta-1}x_i$ and $(x_d)_+^{2-\beta}x_i$, but the $H^1$-admissibility rules out the singular branch $(x_d)_+^{2-\beta}x_i$.
             \end{remark}
             
             \begin{remark}
                 We point out that, unlike the obstacle problem $\gamma=1$, in the case $\gamma\in(1,2)$ the kernel of the linearized operator contains the functions $|x_d|^{\beta-1}x_i$, which give the non-integrability of the one-dimensional cone, see \cref{remark:non-int-one-dim-cone}.
			
             \end{remark}
			\subsection{The family $b_{\xi,\omega}$}
            We point out a fundamental difference between the two families of functions $\text{sgn}(x_d)|x_d|^{\beta-1}x_i$ and $|x_d|^{\beta-1}x_i$ appearing in the kernel of the linearized operator in \cref{lemma:ker-1d}.
            The functions $\text{sgn}(x_d)|x_d|^{\beta-1}x_i$ are generated by rotations of $b_{\text{one}}$, whereas the modes $|x_d|^{\beta-1}x_i$ do not arise from rotations. One way to construct the latter family is to rotate the two half-plane components of $b_{\text{one}}$ in antipodal directions.

Precisely, let $\xi,\omega \in \R^{d-1}$. We denote by $R_\omega\in SO(d)$ the rotation of angle $\arctan|\omega|$ which fixes the subspace of horizontal vectors orthogonal to $\omega$ and rotates the two-dimensional plane spanned by $e_d$ and  $(\omega,0)$. The orientation is chosen so that 
$$
R_\omega e_d=\frac{(\omega,1)}{\sqrt{1+|\omega|^2}},$$ and we set $R_0:=\mathrm{Id}$.
For $i=1,2$ we define 
$$
b_{\xi,\omega,i}:= c_{\text{one}} (x\cdot \nu_i(\xi,\omega))_+^\beta\quad\text{where}\quad\nu_1(\xi,\omega):=R_\omega\frac{(\xi,1)}{\sqrt{1+|\xi|^2}},\quad \nu_2(\xi,\omega):=R_\omega\frac{(\xi,-1)}{\sqrt{1+|\xi|^2}},
$$
and finally, we set $b_{\xi,\omega}:=b_{\xi,\omega,1}+b_{\xi,\omega,2}$, namely $$b_{\xi,\omega}:=c_{\text{one}}\left(x\cdot R_\omega\frac{(\xi,1)}{\sqrt{1+|\xi|^2}}\right)_+^\beta+c_{\text{one}}\left(x\cdot R_\omega\frac{(\xi,-1)}{\sqrt{1+|\xi|^2}}\right)_+^\beta.$$
	
             In this construction, the parameter $\omega$ describes a standard rotation of $b_{\mathrm{one}}$
    whose action generates the modes $\text{sgn}(x_d)|x_d|^{\beta-1}x_i$. By contrast, $\xi$ describes an antipodal rotation of the two half-plane components, producing the modes $|x_d|^{\beta-1}x_i$.
    
    Indeed, we first note that $\partial_{\xi_i}\nu_1(0,0)=\partial_{\xi_i}\nu_2(0,0)=e_i$. Moreover, if $A_i:=\partial_{\omega_i}R_\omega|_{\omega=0}$, then 
    $$
    A_ie_d=e_i,\qquad A_ie_j=-\delta_{ij}e_d,\quad\text{for }j=1,\ldots,d-1.
    $$
    Hence $\partial_{\omega_i} \nu_1(0,0)=e_i$ and $\partial_{\omega_i} \nu_2(0,0)=-e_i$. Therefore
    \be\label{e:useful-for-notintegrable}\partial_{\xi_i} b_{\xi,\omega}\Big|_{(\xi,\omega)=(0,0)}=c_{\text{one}} \beta |x_d|^{\beta-1}x_i\quad\text{and}\quad \partial_{\omega_i} b_{\xi,\omega}\Big|_{(\xi,\omega)=(0,0)}=c_{\text{one}} \beta\, \text{sgn}(x_d)|x_d|^{\beta-1}x_i.\ee

Last, we observe that $b_{\xi,\omega}$ is obtained from $b_{\xi,0}$ by the rotation $R_\omega$, namely $b_{\xi,\omega}=R_\omega b_{\xi,0}$. Accordingly, when $\omega=0$ we omit the dependence on the parameter $\omega$ and simply write $b_\xi:=b_{\xi,0}$. On the other hand, the family $b_{\xi,\omega}$ consists of solutions only for $\xi=0$. In particular, the $\xi$-directions produce a non-trivial first variation.

            In the next lemma we show that the Weiss' energy of $b_{\xi}$ is strictly smaller than the one of $b_{\text{one}}$, and their difference is comparable to $|\xi|^{2\beta-1}$. This asymptotic will be crucial in the proof of \cref{prop:epi-one-dim-cone} to estimate the first variation produced by $b_{\xi}$.
            \begin{lemma}\label{lemma:negative-b}
				Let $b_{\xi}:=b_{\xi,0}$, then for every $\xi\in\R^{d-1}$ small enough, we have that $$ W(b_{\xi})-W(b_{\text{one}})=-\kappa_\beta|\xi|^{2\beta-1}+o(|\xi|^{2\beta-1})\quad\text{as }|\xi|\to0^+,$$ for some $\kappa_\beta>0$ depending only on $d$ and $\gamma$.             
            \end{lemma}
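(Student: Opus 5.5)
The plan is to reduce $W(b_\xi)-W(b_{\text{one}})$ to an integral of an explicit negative function over the thin wedge where the two half-plane components of $b_\xi$ overlap, and then read off the power $|\xi|^{2\beta-1}$ by scaling.

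\textbf{Step 1: rewrite $W$ for homogeneous functions.} For a $\beta$-homogeneous $u\in C^1$ with $\Delta u\in L^1_{\text{loc}}$, Euler's identity $x\cdot\nabla u=\beta u$ and an integration by parts give
$$\int_{B_1}|\nabla u|^2\,dx=\beta\int_{\partial B_1}u^2\,d\HH^{d-1}-\int_{B_1}u\Delta u\,dx,\qquad\text{hence}\qquad W(u)=\int_{B_1}\big(u^\gamma\ind_{\{u>0\}}-u\Delta u\big)\,dx.$$
Write $b_\xi=b_1+b_2$, where $b_i:=c_{\text{one}}(x\cdot\nu_i)_+^\beta$ and $\nu_i=\nu_i(\xi,0)$. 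Each $b_i$ is a flat solution of class $C^1$ (since $\beta>1$). In the sense of distributions,
$$\Delta b_i=\frac\gamma2 b_i^{\gamma-1}\ind_{\{b_i>0\}},$$
and this is locally integrable because $\gamma-1>-1$. The same identity with $b_{\text{one}}$ in place of $b_\xi$ shows $W(b_{\text{one}})=2W(b_i)$, using rotation invariance of $W$.

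\textbf{Step 2: localize to the overlap wedge.} Outside $O:=\{x\cdot\nu_1>0\}\cap\{x\cdot\nu_2>0\}$, the function $b_\xi$ coincides with a single $b_i$. Subtracting, we obtain
$$W(b_\xi)-W(b_{\text{one}})=\int_{O\cap B_1}f(b_1,b_2)\,dx,$$
where
$$f(a,b):=(a+b)^\gamma-a^\gamma-b^\gamma-\frac\gamma2\big(ab^{\gamma-1}+ba^{\gamma-1}\big).$$

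\textbf{Step 3: strict negativity of $f$.} We claim $f(a,b)<0$ whenever $a,b>0$. Write
$$(a+b)^\gamma-a^\gamma-b^\gamma=\gamma\int_0^b\big((a+\tau)^{\gamma-1}-\tau^{\gamma-1}\big)\,d\tau.$$
Since $\gamma-1\in(0,1)$, the map $\tau\mapsto\tau^{\gamma-1}$ is strictly subadditive, so $(a+\tau)^{\gamma-1}-\tau^{\gamma-1}<a^{\gamma-1}$ for $\tau>0$. This gives $(a+b)^\gamma-a^\gamma-b^\gamma<\gamma ba^{\gamma-1}$. By symmetry the same quantity is also $<\gamma ab^{\gamma-1}$. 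Averaging the two bounds yields $f<0$. The function $f$ is moreover $\gamma$-homogeneous.

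\textbf{Step 4: scaling.} Set $s:=x'\cdot\xi/|\xi|$ and $t_{1,2}:=\big(|\xi|s\pm x_d\big)/\sqrt{1+|\xi|^2}$, so that $b_i=c_{\text{one}}t_i^\beta$. Then $O=\{t_1,t_2>0\}$, which forces $|x_d|<|\xi|s$, a wedge of opening of order $|\xi|$ around the half-hyperplane $\{x_d=0,\ s>0\}$.

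Change variables $(s,x_d)\mapsto(t_1,t_2)$, keeping the remaining $d-2$ horizontal coordinates $x''$ fixed. The Jacobian factor is $\sqrt{1+|\xi|^2}/(2|\xi|)$, and the constraint becomes $t_1+t_2\le 2|\xi|s/\sqrt{1+|\xi|^2}$. Rescale $t_i=|\xi|\tau_i$. This produces the factor $|\xi|^{2+\gamma\beta-1}=|\xi|^{2\beta-1}$, using $\gamma\beta=2\beta-2$. The result is
$$W(b_\xi)-W(b_{\text{one}})=|\xi|^{2\beta-1}\Big(\frac{1}{2}\int_{D}f\big(c_{\text{one}}\tau_1^\beta,c_{\text{one}}\tau_2^\beta\big)\,d\tau\,ds\,dx''+o(1)\Big),$$
where $D=\{\tau_i>0,\ \tau_1+\tau_2<2s,\ s^2+|x''|^2<1\}$. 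The $o(1)$ collects the factors $\sqrt{1+|\xi|^2}$ and the $O(|\xi|^2)$ deformation of the ball constraint $s^2+|x''|^2+x_d^2<1$, since $x_d=O(|\xi|)$ on the wedge. Define
$$\kappa_\beta:=-\frac12\int_D f\big(c_{\text{one}}\tau_1^\beta,c_{\text{one}}\tau_2^\beta\big)\,d\tau\,ds\,dx''.$$
Then $\kappa_\beta>0$ by Step 3, and it is finite since $f$ is bounded on the bounded set $D$.

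\textbf{Main obstacle.} The only delicate points are the sign of $f$ and the rigorous justification of $W(u)=\int(u^\gamma-u\Delta u)$ for the non-smooth $b_\xi$. The sign of $f$ is handled by the subadditivity argument in Step 3. For the integration by parts, $b_\xi\in C^1$ and $\Delta b_\xi\in L^1$, so one integrates by parts on $\{|x\cdot\nu_i|>\eps\}$ and lets $\eps\to0$.
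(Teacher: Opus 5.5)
Your strategy is the paper's. You integrate by parts to localize $W(b_\xi)-W(b_{\text{one}})$ to the overlap wedge, use that the integrand is negative there, and rescale the wedge, whose width is of order $|\xi|$, to extract $|\xi|^{1+\beta\gamma}=|\xi|^{2\beta-1}$. The differences are cosmetic. The paper keeps the asymmetric term $\gamma f_t^{\gamma-1}g_t$ from a single integration by parts, reduces the sign to $\Psi(r)=(1+r)^\gamma-1-r^\gamma-\gamma r<0$, and rescales only $x_d=ts$ with $\xi=te_1$. Your symmetrized $f$ and your null coordinates $t_1,t_2$ are equivalent to this. Your subadditivity argument is exactly a proof of $\Psi<0$, which the paper simply asserts. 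Also, for $\gamma\in(1,2)$ one has $\beta>2$, so $b_\xi\in C^2$ and Step 1 needs no approximation.

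There is, however, a real error in Step 4 that makes your formula for $\kappa_\beta$ wrong as written. After the change of variables $(s,x_d)\mapsto(t_1,t_2)$, $s$ is no longer an independent variable: $s=\sqrt{1+|\xi|^2}\,(t_1+t_2)/(2|\xi|)$ identically. So the ``constraint'' $t_1+t_2\le 2|\xi|s/\sqrt{1+|\xi|^2}$ is an identity, and your $D$, in the variables $(\tau_1,\tau_2,s,x'')$, is $(d+1)$-dimensional. Consequently $\int_D\cdots\,d\tau\,ds\,dx''$ is not the limit of the energy difference. The Jacobian is also $ds\,dx_d=\frac{1+|\xi|^2}{2|\xi|}\,dt_1\,dt_2$ rather than what you wrote, though that slip is harmless at leading order. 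Substituting $s$ and $x_d=\sqrt{1+|\xi|^2}\,(t_1-t_2)/2$ correctly and setting $t_i=|\xi|\tau_i$ gives
\[
W(b_\xi)-W(b_{\text{one}})=|\xi|^{2\beta-1}\,\frac{1+|\xi|^2}{2}\int_{D_\xi}f\big(c_{\text{one}}\tau_1^\beta,c_{\text{one}}\tau_2^\beta\big)\,d\tau\,dx'',
\]
where $D_\xi:=\{\tau_1,\tau_2>0,\ (1+|\xi|^2)\big(\frac14(\tau_1+\tau_2)^2+\frac14|\xi|^2(\tau_1-\tau_2)^2\big)+|x''|^2<1\}$. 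By dominated convergence, the correct constant is
\[
\kappa_\beta=-\frac12\int_{D_0}f(c_{\text{one}}\tau_1^\beta,c_{\text{one}}\tau_2^\beta)\,d\tau\,dx'',\qquad D_0:=\{\tau_1,\tau_2>0,\ \tfrac14(\tau_1+\tau_2)^2+|x''|^2<1\}.
\]
This $\kappa_\beta$ is positive and finite by your Step 3. It coincides with the paper's $-I_0$ under $\tau_{1,2}=x_1\pm s$, in the paper's notation $s=x_d/t$. With this correction your proof is complete.
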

			\begin{proof}
Without loss of generality, we suppose that $\xi=te_1$, for some $t>0$ small enough. We also call $f_t=b_{te_1,0,1}$ and $g_t=b_{te_1,0,2}$. Denoting by $D_t:=\{f_t>0\}\cap\{g_t>0\}\cap B_1=\{|x_d|< tx_1\}\cap B_1$ and using that $W(b_{\text{one}})=W(f_0)+W(g_0)$ and that $W(f_0)=W(f_t)$ and $W(g_0)=W(g_t)$, we have, by integration by parts, that \be\label{eq:eq12} W(b_{te_1})-W(b_{\text{one}})=W(f_t+g_t)-W(f_t)-W(g_t)=\int_{D_t}\Big((f_t+g_t)^\gamma- f_t^\gamma-g_t^\gamma-\gamma f_t^{\gamma-1}g_t\Big)\,dx,\ee
				where we used that the latter integral is zero when computed on $B_1\setminus D_t$. Denoting by $x'=(x_1,\ldots,x_{d-1})$, we change coordinates by defining $s:=x_d/t$ and $E_t$ to be the rescaling of $D_t$ along the $e_d$-direction, i.e., $$ E_t:=\{(x',s):\ x_1>0, \ |s|< x_1,\ |x'|^2+t^2s^2<1\}.$$ Notice that $$f_t(x_1,ts)=c_\beta\left(\frac{1}{\sqrt{1+t^2}}\right)^\beta t^\beta(x_1+s)^\beta\quad\text{and}\quad g_t(x_1,ts)=c_\beta\left(\frac{1}{\sqrt{1+t^2}}\right)^\beta t^\beta(x_1-s)^\beta.$$ We consider the function $$\Theta(x_1,s):=\left((x_1+s)^\beta+(x_1-s)^\beta\right)^\gamma-(x_1+s)^{\beta\gamma}-(x_1-s)^{\beta\gamma}-\gamma (x_1+s)^{\beta(\gamma-1)}(x_1-s)^\beta.$$ Since $dx=t\,dx'\,ds$, by \eqref{eq:eq12}, we have $$W(b_{\xi})-W(b_{\text{one}})=c_\beta^\gamma(1+t^2)^{-\beta\gamma/2} t^{\beta\gamma+1}\int_{E_t}\Theta(x_1,s)\,dx'\,ds=-\kappa_\beta t^{2\beta-1}+t^{2\beta-1}R(t),$$ where, for $I_t:=c_\beta^\gamma\int_{E_t}\Theta(x_1,s)\,dx'\,ds$, we define $$\kappa_\beta:=- I_0\quad\text{and}\quad R(t):=\big((1+t^2)^{-\beta\gamma/2}-1\big)I_0+(1+t^2)^{-\beta\gamma/2}(I_t-I_0).$$ 
				Then the claim follows by the fact that $R(t)\to 0$ as $t\to0^+$ and that $I_0<0$, since in $E_0$ we have $$\Theta(x_1,s)=x_1^{\beta\gamma}\Theta(1,s/x_1)=x_1^{\beta\gamma}(1+s/x_1)^{\beta\gamma}\Psi\left(\frac{(1-s/x_1)^\beta}{(1+s/x_1)^\beta}\right),$$ where $\Psi(r):=(1+r)^\gamma-1-r^\gamma-\gamma r<0$ for every $r>0$.
			\end{proof}

			\subsection{Proof of the epiperimetric inequality}
			Now we can proceed with the proof of the epiperimetric inequality near the one-dimensional cone $b_{\text{one}}$ in \cref{prop:epi-one-dim-cone}.
			\begin{proof}[Proof of \cref{prop:epi-one-dim-cone}]
				By contradiction, let us suppose that there is a sequence of non-negative traces $c_j\in H^1(\partial B_1)$ and constants $\eps_j\to0^+$, $\delta_j\to0^+$ such that if $z_j$ is the $\beta$-homogeneous extension of $c_j$, then we have for $b:=b_{\text{one}}$
				$$\|z_j-b\|_{H^1(B_1)}\le \delta_j$$
				and \be\label{eq:contr-hyp1} (1-\eps_j)(W(z_j)-W(b))<W(h)-W(b),\ee for every non-negative function $h\in H^1(B_1)$ such that $h=c_j$ on $\partial B_1$.
				
				For every $j$, we can find $\xi_j,\omega_j \in \R^{d-1}$ such that $b_j:=b_{\xi_j,\omega_j}$ is the projection of $z_j$ into the space $\{b_{\xi,\omega}\}_{\xi,\omega\in\R^{d-1}}$, i.e., \be\label{eq:best-approx}\eta_j:=\|z_j-b_{j}\|_{H^1(B_1)}=\min_{\xi,\omega\in\R^{d-1}}\|z_j-b_{\xi,\omega}\|_{H^1(B_1)}\le \|z_j-b\|_{H^1(B_1)}\le \delta_j.\ee
                Up to a rotation, it is not restrictive to assume that $\omega_j=0$ and so that $\eta_j,\xi_j\to0$, as $j\to+\infty$. Lastly, for $i=1,2$, we set $b_{j,i}:=b_{\xi_j,0,i}$.
				
				By \cref{lemma:negative-b}, we have that $W(b_{j})\le W(b)$, then the contradiction hypothesis \eqref{eq:contr-hyp1} becomes
				\be\label{eq:contrad-hyp2}(1-\eps_j)(W(z_j)-W(b_j))<W(h)-W(b_j).\ee
                Moreover, we have that $\eta_j>0$, otherwise we can take $h=b_j=z_j$ in \eqref{eq:contrad-hyp2}, obtaining a contradiction.
                
				By integrating by parts, we first observe that for every non-negative $h\in H^1(B_1)$\bea W_0(h)-W_0(b_j)&=\int_{B_1}|\nabla (h-b_j)|^2\,dx-\beta\int_{\partial B_1}(h-b_j)^2\,d\HH^{d-1}+2\int_{B_1}\nabla b_j\cdot \nabla (h-b_j)\,dx\\&\qquad-2\beta \int_{\partial B_1}b_j (h-b_j)\,d\HH^{d-1}\\&=\int_{B_1}|\nabla (h-b_j)|^2\,dx-\beta\int_{\partial B_1}(h-b_j)^2\,d\HH^{d-1}-2\int_{B_1}\Delta b_j (h-b_j)\,dx.\eea
				Using that $ 2\Delta b_j=\gamma b_{j,1}^{\gamma-1}+\gamma b_{j,2}^{\gamma-1}$, we have 
				\be\label{eq:formula-in-epi} W(h)-W(b_j)=W_0(h-b_j)+
				\int_{B_1}\Big(f_j(h-b_j)+Q_j(h-b_j)\Big)\,dx,\ee where \be\label{def:fj}f_j:=\gamma  \big(b_{j}^{\gamma-1}-b_{j,1}^{\gamma-1}-b_{j,2}^{\gamma-1}\big)\le0,\ee 
                and \bea Q_j(w)&:=(b_j+w)^\gamma-b_j^\gamma-\gamma b_j^{\gamma-1}w=\int_0^1\int_0^t \gamma(\gamma-1)(b_j+sw)^{\gamma-2}
				w^2\,ds\,dt\ge0\eea is defined for every $w$ such that $b_j+w\ge0$. We stress that the term $f_j$ corresponds to the first variation, which is non-zero, since $b_{\xi,\omega}$ is not a solution for $\xi\not=0$.
				
                Therefore, the inequality \eqref{eq:contrad-hyp2} becomes
				\be\label{eq:contrad-hyp3}
				\begin{aligned}
					(1-\eps_j)&\bigg( W_0(z_j-b_j)+\int_{B_1}\Big(f_j(z_j-b_j)+Q_j(z_j-b_j)\Big)\,dx
					\bigg)\\&\qquad\qquad\qquad\le W_0(h-b_j)+\int_{B_1}\Big(f_j(h-b_j)+Q_j(h-b_j)\Big)\,dx.
				\end{aligned}
				\ee
				Now, we introduce $w_j:=(z_j-b_j)/\eta_j$, and we choose a subsequence $j\to+\infty$ such that $ w_j\to w$ weakly in $H^1(B_1)$, strongly in $L^2(B_1)$, to a $\beta$-homogeneous $w \in H^1(B_1).$
                
                We divide the rest of the proof in several steps.
				\\ \vspace{-0.3cm}
				
				\noindent\textit{Step 1.} Let us show that there exists $C>0$ such that  \be\label{eq:proof-step-1-cylindrical}\int_{B_1}\bigg(\frac{f_j(z_j-b_j)}{\eta_j^2}+\frac{Q_j(z_j-b_j)}{\eta_j^2}\bigg)\,dx\le C,\ee for every $j>0$ large enough. 
				To prove such uniform estimate, consider     $h:=(1-\zeta)z_j+\zeta b_j$ where $\zeta\in C^\infty_c(B_1)$ is a radial function satisfying $0\le\zeta\le1$. Since $h\ge 0$ in $B_1$ and $h=c_j$ on $\partial B_1$, it is an admissible competitor in \eqref{eq:contrad-hyp3}.
                
                We observe that since $\gamma\in (1,2)$, the function $\varphi\mapsto Q_j(\varphi)$ is convex and, being $Q_j(0)=0$, it implies that
				\bea Q_j((1-\zeta)(z_j-b_j))\le
				(1-\zeta)Q_j(z_j-b_j).\eea 
				Using that $w_j$ is bounded in $H^1(B_1)$, then \eqref{eq:contrad-hyp3} reads as
				\bea(1-\eps_j) \int_{B_1}\bigg(\frac{f_j(z_j-b_j)}{\eta_j^2}+\frac{Q_j(z_j-b_j)}{\eta_j^2}\bigg)\,dx\le C+\int_{B_1} (1-\zeta)\bigg(\frac{f_j(z_j-b_j)}{\eta_j^2}+\frac{Q_j(z_j-b_j)}{\eta_j^2}\bigg)\,dx.
				\eea
				Then $$\int_{B_1}(\zeta-\eps_j)\bigg(\frac{f_j(z_j-b_j)}{\eta_j^2}+\frac{Q_j(z_j-b_j)}{\eta_j^2}\bigg)\,dx\le C.$$
				Using that $\zeta$ is radial and the homogeneity of the terms involved, we get
				$$\int_0^1 ( \zeta(r)-\eps_j)r^{d+2\beta-3}\,dr \int_{\partial B_1} \bigg(\frac{f_j(z_j-b_j)}{\eta_j^2}+\frac{Q_j(z_j-b_j)}{\eta_j^2}\bigg)\,d\HH^{d-1}\le C.$$ 
                Finally, \eqref{eq:proof-step-1-cylindrical}, follows by choosing a suitable $\zeta$ and by exploiting the homogeneity of terms in \eqref{eq:proof-step-1-cylindrical}.\\ \vspace{-0.3cm}
				
				\noindent\textit{Step 2.}
				We prove that $w\in H^1(B_1)$ is a weak solution to the linearized equation \be\label{eq:proof-step-2-cylindrical}
				\Delta w=\frac\gamma2(\gamma-1)b^{\gamma-2}w\quad\text{in }B_1\cap\{|x_d|>0\}.
				\ee
                We will use as a key ingredient that the support of the first variation $f_j$ concentrates near $\{x_d=0\}$ as $j\to+\infty$.
                
				Given $\zeta\in C^\infty_c(B_1\cap\{|x_d|>0\})$, $0\le\zeta\le1$ and $\varphi\in H^1(B_1)\cap L^\infty(B_1)$, we choose $h:=\zeta (b_j+\eta_j\varphi)+(1-\zeta)z_j$. Since the support of $\zeta$ is away from $\{x_d=0\}$ and $\varphi$ is bounded, then $h\ge0$ for $j$ large enough, thus $h$ is an admissible competitor in \eqref{eq:contrad-hyp3}.
				
				Since $w_j$ is bounded in $H^1(B_1)$ and $h-b_j=(1-\zeta)(z_j-b_j)+\zeta\eta_j\varphi$, then \eqref{eq:contrad-hyp3} reads as
				\bea 
				\int_{B_1}&|\nabla w_j|^2\,dx+\int_{B_1}\bigg(\frac{f_j(z_j-b_j)}{\eta_j^2}+\frac{Q_j(z_j-b_j)}{\eta_j^2}\bigg)\,dx\\&\qquad\le C\eps_j+\int_{B_1}\Big(|\nabla(\zeta\varphi)|^2+|\nabla ((1-\zeta)w_j)|^2+2\nabla(\zeta\varphi)\cdot \nabla((1-\zeta)w_j)\Big)\,dx\\&\qquad\qquad+\int_{B_1}\bigg(\frac{f_j(h-b_j)}{\eta_j^2}+\frac{Q_j(h-b_j)}{\eta_j^2}\bigg)\,dx,
				\eea where we used \eqref{eq:proof-step-1-cylindrical}.
				Therefore
				\bea 
				\int_{B_1}&(1-(1-\zeta)^2)|\nabla w_j|^2\,dx+\int_{B_1}\bigg(\frac{f_j(z_j-b_j)}{\eta_j^2}+\frac{Q_j(z_j-b_j)}{\eta_j^2}\bigg)\,dx\\&\qquad\le C\eps_j+\int_{B_1}\Big(|\nabla(\zeta\varphi)|^2+|\nabla\zeta|^2 w_j^2-2w_j(1-\zeta)\nabla \zeta\cdot\nabla w_j
				\\&\qquad\qquad+2\nabla(\zeta\varphi)\cdot \nabla((1-\zeta)w_j)\Big)\,dx+\int_{B_1}\bigg(\frac{f_j(h-b_j)}{\eta_j^2}+\frac{Q_j(h-b_j)}{\eta_j^2}\bigg)\,dx. 
				\eea
				Now we notice that $Q_j(z_j-b_j)=Q_j(h-b_j)$ where $\zeta=0$. Moreover, since $\xi_j\to 0$, the support of $f_j$ in $B_1$ concentrates near $\{x_d=0\}$. Indeed $f_j$ is nontrivial whenever both $b_{j,1}$ and $b_{j,2}$ are strictly positive. On the other hand, since $\zeta$ vanishes in a neighborhood of $\{x_d=0\}$, then, for $j$ large enough $$f_j(z_j-b_j)-f_j(h-b_j)=\zeta f_j(z_j-b_j)-\zeta\eta_jf_j\varphi\equiv0.$$ Therefore
				\bea 
				\int_{B_1}&(1-(1-\zeta)^2)|\nabla w_j|^2\,dx+\int_{B_1\cap\,\text{supp}(\zeta)}\frac{Q_j(z_j-b_j)}{\eta_j^2}\,dx\\&\qquad\le C\eps_j+\int_{B_1}\Big(|\nabla(\zeta\varphi)|^2+|\nabla\zeta|^2 w_j^2-2w_j(1-\zeta)\nabla \zeta\cdot\nabla w_j
				\\&\qquad\qquad+2\nabla(\zeta\varphi)\cdot \nabla((1-\zeta)w_j)\Big)\,dx+\int_{B_1\cap\,\text{supp}(\zeta)}\frac{Q_j(h-b_j)}{\eta_j^2}\,dx. 
				\eea
				Since $\text{supp}(\zeta)\cap \{x_d=0\}=\emptyset$, by passing to the limit as $j\to+\infty$ we get
				\bea 
				\int_{B_1}&|\nabla w|^2\,dx+\int_{B_1\cap\,\text{supp}(\zeta)}\frac\gamma2(\gamma-1)b^{\gamma-2}w^2\,dx\\&\qquad\le \int_{B_1}|\nabla(\zeta \varphi+(1-\zeta)w)|^2\,dx+\int_{B_1\cap\,\text{supp}(\zeta)}\frac\gamma2(\gamma-1)b^{\gamma-2}(\zeta\varphi+(1-\zeta)w)^2\,dx. 
				\eea
                Notice that by approximation we can drop the assumption $\varphi\in L^\infty(B_1)$. Finally, let $B\Subset B_1\cap\{|x_d|>0\}$, by choosing $\zeta\equiv 1$ in $B$ and $\varphi=w$ outside $B$, we obtain the desired minimality condition
				\bea 
				\int_{B}&|\nabla w|^2\,dx+\frac\gamma2(\gamma-1)\int_{B}b^{\gamma-2}w^2\,dx\le \int_{B}|\nabla\varphi|^2\,dx+\frac\gamma2(\gamma-1)\int_{B}b^{\gamma-2}\varphi^2\,dx, 
				\eea which concludes the proof of \eqref{eq:proof-step-2-cylindrical}.
				\\ \vspace{-0.3cm}
				
				\noindent\textit{Step 3.} We proceed by showing that $w\equiv0$. This result is a direct consequence of the fact that $b_j$ is the best approximation of $z_j$ in the class $\{b_{\xi,\omega}\}_{\xi,\omega\in\R^{d-1}}$. Indeed, since $w\in H^1(B_1)$ is a $\beta$-homogeneous solution of \eqref{eq:proof-step-2-cylindrical}, by \cref{lemma:ker-1d} we have $$w(x)=\sum_{i=1}^{d-1} c_i^{(1)}|x_d|^{\beta-1}x_i+\sum_{i=1}^{d-1}c_i^{(2)}\,\text{sgn}(x_d)|x_d|^{\beta-1}x_i,\quad \text{for some } c_i^{(1)},c_i^{(2)}\in\R.
                $$
				By \eqref{eq:best-approx}, the map $(\xi,\omega)\mapsto \|z_j-b_{\xi,\omega}\|^2_{H^1(B_1)}$ has a minimum at $(\xi,\omega)=(\xi_j,0)$, then by differentiating it we get \be\label{eq:passing-to-limit}\left\langle w_j, \partial_{\xi_i} b_{\xi,\omega}\Big|_{(\xi,\omega)=(\xi_j,0)}\right\rangle_{H^1}=\left\langle w_j, \partial_{\omega_i} b_{\xi,\omega}\Big|_{(\xi,\omega)=(\xi_j,0)}\right\rangle_{H^1}=0,\ee 
                for every $i=1,\ldots,d-1$. In the previous equation we denote by $\langle\cdot,\cdot\rangle_{H^1}$ the scalar product in $H^1(B_1)$.
				By combining the weak convergence $w_j\to w$ in $H^1(B_1)$, with the strong convergence $$\partial_{\xi_i} b_{\xi,\omega}\Big|_{(\xi,\omega)=(\xi_j,0)} \to \partial_{\xi_i} b_{\xi,\omega}\Big|_{(\xi,\omega)=(0,0)}\quad\text{and}\quad \partial_{\omega_i} b_{\xi,\omega}\Big|_{(\xi,\omega)=(\xi_j,0)} \to \partial_{\omega_i} b_{\xi,\omega}\Big|_{(\xi,\omega)=(0,0)}$$ in $H^1(B_1)$, we can pass to the limit as $j\to+\infty$ in \eqref{eq:passing-to-limit}. Hence, by \eqref{e:useful-for-notintegrable}, we get $c_i^{(1)}=c_i^{(2)}=0$ for every $i=1,\ldots,d-1$, thus $w\equiv0$.
				\\ \vspace{-0.3cm}
				
				\noindent\textit{Step 4.} We now improve the estimate \eqref{eq:proof-step-1-cylindrical}, by showing that for every $\tau>0$, we have \be\label{eq:step4-cylindrical}
				\int_{B_1}\bigg(\frac{f_j(z_j-b_j)}{\eta_j^2}+\frac{Q_j(z_j-b_j)}{\eta_j^2}\bigg)\,dx\le \tau
				\ee
				for every $j>j_0(\tau)$ large enough. This improvement follows from the fact that we now know that $w_j\to 0$ weakly in $H^1(B_1)$, strongly in $L^2(B_1)$.
				Indeed, by the weak convergence in $H^1(B_1)$, we observe that as $j\to+\infty$
				$$
				\int_{B_1}|\nabla w_j|^2\,dx-\int_{B_1}|\nabla ((1-\zeta)w_j)|^2\,dx
				=\int_{B_1}(2\zeta-\zeta^2)|\nabla w_j|^2\,dx+o(1).
				$$ 
                Then, since $w_j$ is bounded in $H^1(B_1)$, we get $$\int_{B_1}|\nabla ((1-\zeta)w_j)|^2\,dx-(1-\eps_j)\int_{B_1}|\nabla w_j|^2\,dx\le C\eps_j+\int_{B_1}\zeta^2|\nabla w_j|^2\,dx+o(1).$$
                Then, by using the competitor $h:=(1-\zeta)z_j+\zeta b_j$ as in Step 1, the same computation shows that
				\bea \int_{B_1}&(\zeta-\eps_j)\bigg(\frac{f_j(z_j-b_j)}{\eta_j^2}+\frac{Q_j(z_j-b_j)}{\eta_j^2}\bigg)\,dx\le C\eps_j+\int_{B_1}\zeta^2|\nabla w_j|^2\,dx+o(1).
				\eea
				First, by exploiting \eqref{eq:proof-step-1-cylindrical} and the homogeneity of the terms involved, we get \bea \int_0^1& \zeta(r) r^{d+2\beta-3}\,dr\int_{\partial B_1}\bigg(\frac{f_j(z_j-b_j)}{\eta_j^2}+\frac{Q_j(z_j-b_j)}{\eta_j^2}\bigg)\,d\HH^{d-1}\\&\qquad\le C\eps_j+\int_0^1 \zeta(r)^2 r^{d+2\beta-3}\,dr\int_{\partial B_1}|\nabla w_j|^2\,d\HH^{d-1}+o(1),
				\eea as $j\to+\infty$. Finally, since  $w_j$ is bounded in $H^1(B_1)$, by choosing $\zeta(r):=\tau\zeta_0(r)$, where $\zeta_0\in C^\infty_c(B_1)$ is a radial cut-off function satisfying $0\le\zeta_0\le 1$, we obtain the claim, up to replacing $C\tau$ with $\tau$.
				\\ \vspace{-0.3cm}
				
				\noindent\textit{Step 5.} In this step, we prove that for every $\tau>0$
				\be\label{eq:step5-cylindrical}\frac{|\xi_j|^{2\beta-1}}{\eta_j^2}\le \tau,\ee for every $j\geq j_0(\tau)$ large enough. This essentially follows by \cref{lemma:negative-b}.
				Indeed, choosing $h=z_j$ in \eqref{eq:contr-hyp1}, we get $W(z_j)-W(b)\ge0$, which implies that $W(h)-W(b)\ge (1-\eps_j)(W(z_j)-W(b))\ge0$ for every admissible competitor $h$. In particular, since $\xi_j\to0$, by \cref{lemma:negative-b}  we have \be\label{eq:proof-one-eq1}\frac{\kappa_{\beta}}2|\xi_j|^{2\beta-1}\le W(b)-W(b_j)\le W(h)-W(b_j).
                \ee 
                On the other hand, since $w_j\to 0$ weakly in $H^1(B_1)$, strongly in $L^2(B_1)$, we get 
				$$\int_{B_1}|\nabla ((1-\zeta)w_j)|^2\,dx
				=\int_{B_1}(1-\zeta)^2|\nabla w_j|^2\,dx+o(1),\qquad \text{as $j\to +\infty$}.
                $$
                Now, we consider the same competitor of Step 1, i.e., $h=(1-\zeta)z_j+\zeta b_j,$ with $\zeta\in C^\infty_c(B_1)$ radial such that $0\le\zeta\le1$. Arguing as in Step 1, for $j$ large enough we obtain \be\label{eq:proof-one-eq2} \frac{W(h)-W(b_j)}{\eta_j^2}\le\int_{B_1} (1-\zeta)^2|\nabla w_j|^2\,dx+\int_{B_1}(1-\zeta)\bigg(\frac{f_j(z_j-b_j)}{\eta_j^2}+\frac{Q_j(z_j-b_j)}{\eta_j^2}\bigg)\,dx+o(1).
				\ee
                Combining \eqref{eq:proof-one-eq1} and \eqref{eq:proof-one-eq2}, and choosing $ \zeta(r)\equiv 1$ in $(0,1-\rho)$, we have
				\bea \frac{\kappa_{\beta}}2\frac{|\xi_j|^{2\beta-1}}{\eta_j^2}&\le \int_{1-\rho}^1(1-\zeta(r))^2r^{d+2\beta-3}\,dr\int_{\partial B_1}|\nabla w_j|^2\,d\HH^{d-1}\\&\qquad+\int_{1-\rho}^1(1-\zeta(r))r^{d+2\beta-3}\,dr\int_{\partial B_1}\bigg(\frac{f_j(z_j-b_j)}{\eta_j^2}+\frac{Q_j(z_j-b_j)}{\eta_j^2}\bigg)\,d\HH^{d-1}+o(1)
				\eea
				where we used the homogeneity of the terms involved. 
                By using \eqref{eq:proof-step-1-cylindrical} with the boundedness of $w_j$ in $H^1(B_1)$, we get $$\frac{\kappa_{\beta}}2\frac{|\xi_j|^{2\beta-1}}{\eta_j^2}\le C\int_{1-\rho}^1r^{d+2\beta-3}\,dr+o(1),$$ 
                which concludes the proof of \eqref{eq:step5-cylindrical} once we choose  $\rho$ small and then $j$ large enough, possibly depending on $\tau$.
				\\ \vspace{-0.3cm}
				
				\noindent\textit{Step 6.} Next we prove that for every $\tau>0$
				\be\label{eq:step6-cylindrical}\left|\int_{B_1} \frac{f_j(z_j-b_j)}{\eta_j^2}\,dx\right|\le \tau,\ee for every $j\geq j_0(\tau)$ large enough. 
                This is the fundamental step in order to prove that the first variation around $b_j$ vanishes in the limit $j\to \infty$. 
                
                Since $Q_j(z_j-b_j)\ge0$, from \eqref{eq:step4-cylindrical} we infer that  $\int_{B_1} f_j(z_j-b_j)\,dx \le \tau\eta_j^2$. Thus, we only need to show that $ -\int_{B_1} f_j(z_j-b_j)\,dx \le \tau\eta_j^2$.
				
				Set $D_j:=\{b_{j,1}>0\}\cap\{b_{j,2}>0\}\cap B_1$. Since outside $D_j$ we have $f_j\equiv0$, then by \eqref{def:fj} \be\label{eq:fjbound}0\le -f_j\le C(b_{j,1}^{\gamma-1}+b_{j,2}^{\gamma-1})\chi_{D_j}\le Cb_j^{\gamma-1}\chi_{D_j}.\ee  
				On the other hand, since $D_j \subset \{|x_d|\leq x\cdot \xi_j\}\cap B_1$, 
                we have $|D_j|\le C|\xi_j|$. Now, using that $b_j\le C|\xi_j|^{\beta}$ in $D_j$, we have \be\label{eq:eq1-cylindrical} \int_{D_j}b_j^\gamma\,dx\le C|\xi_j|^{2\beta-1}.
				\ee
				
				Now we proceed by showing that for every $\sigma>0$ there exists $C_\sigma>0$ such that for every $b\ge0, t\ge-b$, we have \be\label{eq:claim-cylindrical}b^{\gamma-1}t^+\le \sigma Q_b(t)+C_\sigma b^{\gamma},\quad\text{where}\quad Q_b(t)=(b+t)^\gamma-b^\gamma-\gamma b^{\gamma-1}t.\ee
				This claim is trivial if either $t\le0$ or $t\ge0$ and $b=0$. Thus, suppose that $t\ge0, b>0$ and define $s:=t/b$, then the claim \eqref{eq:claim-cylindrical} is equivalent to showing that $$s\le \sigma((1+s)^\gamma-1-\gamma s)+C_\sigma.$$ This statement is proved once we notice that the function $s\mapsto g(s):=s-\sigma((1+s)^\gamma-1-\gamma s)$ is continuous, $g(0)=0$ and satisfies $g(s)\to-\infty$ as $s\to+\infty$. This observation concludes the proof of the claim \eqref{eq:claim-cylindrical}.
				
				By combining \eqref{eq:claim-cylindrical} together with \eqref{eq:fjbound}, and by using that $z_j-b_j\ge -b_j$, we get \bea -f_j(z_j-b_j)\le-f_j(z_j-b_j)^+\le Cb_j^{\gamma-1}(z_j-b_j)^+\ind_{D_j}\le C\sigma Q_j(z_j-b_j)+C_\sigma b_j^{\gamma}\ind_{D_j},\eea where in the last inequality we also used that $Q_j(z_j-b_j)\ge0$.
				
				Integrating in $B_1$ and using \eqref{eq:eq1-cylindrical}, we have \bea -\int_{B_1}f_j(z_j-b_j)\,dx&\le C\sigma \int_{B_1} Q_j(z_j-b_j)\,dx+C_\sigma|\xi_j|^{2\beta-1}\\&\le -C\sigma \int_{B_1}f_j(z_j-b_j)\,dx +C\tau \eta_j^2+C_\sigma \tau \eta_j^2,\eea where the second inequality follows by using \eqref{eq:step4-cylindrical} and \eqref{eq:step5-cylindrical}. Choosing first $\sigma$ small enough and then replacing $C\tau+C_\sigma\tau$ with $\tau$, we obtain \eqref{eq:step6-cylindrical}.
				\\ \vspace{-0.3cm}
				
				\noindent\textit{Step 7.} Finally, we conclude the proof by showing that $w_j \to 0$ strongly in $H^1(B_1)$, which is a contradiction with the normalization $\|w_j\|_{H^1(B_1)}=1$. Let $\zeta\in C^\infty_c(B_1)$ be a radial function with $0\le\zeta\le1$, and consider $h:=(1-\zeta)z_j+\zeta b_j$ as an admissible competitor in \eqref{eq:contrad-hyp3}. Thus, as in Step 1, we get 
				\bea \int_{B_1}&|\nabla  w_j|^2\,dx+\int_{B_1}\bigg(\frac{f_j(z_j-b_j)}{\eta_j^2}+\frac{Q_j(z_j-b_j)}{\eta_j^2}\bigg)\,dx\\&\qquad\le C\eps_j+ \int_{B_1}\Big(|\nabla \zeta|^2w_j^2+(1-\zeta)^2|\nabla w_j|^2-2w_j(1-\zeta)\nabla \zeta\cdot\nabla w_j\Big)\,dx\\&\qquad\qquad+\int_{B_1}(1-\zeta)\bigg(\frac{f_j(z_j-b_j)}{\eta_j^2}+\frac{Q_j(z_j-b_j)}{\eta_j^2}\bigg)\,dx,
				\eea for some $C>0$ independent of $j$.
				Therefore $$\int_{B_1}(1-(1-\zeta)^2)|\nabla w_j|^2\,dx+\int_{B_1}\zeta\bigg(\frac{f_j(z_j-b_j)}{\eta_j^2}+\frac{Q_j(z_j-b_j)}{\eta_j^2}\bigg)\,dx\le C\eps_j+o(1),$$ as $j\to+\infty$.
				Since $Q_j(z_j-b_j)\ge0$, by choosing a radial function $\zeta$ such that $\zeta\equiv1$ on $B_{1/2}$ and using \eqref{eq:step6-cylindrical}, we get 
				$$\int_{B_{1/2}}|\nabla w_j|^2\,dx\le C\left|\int_{B_1}\frac{f_j(z_j-b_j)}{\eta_j^2}\,dx\right|+o(1)\le C\tau+o(1),\qquad \text{as $j\to+\infty$}.$$ In the previous estimate, we also exploit the homogeneity of the terms in order to remove $\zeta$ from the integral. Moreover, by using the homogeneity of $w_j$, we can replace the integral on $B_{1/2}$ in the left hand side with the same integral on $B_1$, up to a multiplicative constant. Since $\tau>0$ was arbitrary, the conclusion follows.
			\end{proof}
			\begin{remark}[Non-translationality of $b_{\text{one}}$]\label{remark:non-translation-one-dim-cone} By a slight modification of \cref{lemma:ker-1d}, we can prove that the only $(\beta-1)$-homogeneous solutions of the linearized operator $-\Delta+\frac\gamma2(\gamma-1)b^{\gamma-2}$ are $|x_d|^{\beta-1}$ and $\text{sgn}(x_d)|x_d|^{\beta-1}$. While $\text{sgn}(x_d)|x_d|^{\beta-1}$ is a derivative of $b_{\text{one}}$, the mode $|x_d|^{\beta-1}$ does not come from a partial derivative, and gives the non-translationality of $b_{\text{one}}$.
            \end{remark}
            \begin{remark}[Non-integrability of $b_{\text{one}}$]\label{remark:non-int-one-dim-cone}
				By a slight refinement of the epiperimetric inequality in \cref{prop:epi-one-dim-cone}, we can show that $b_{\text{one}}$ is not integrable, in the sense of \cref{def:integrability}.
                
                We already know that $b_{\text{one}}$ is not integrable through rotations. Indeed, by \cref{lemma:ker-1d}, the kernel of $L_{b_{\text{one}}}$ contains the functions $|x_d|^{\beta-1} x_i$, which are not generated by rotations. 
                
                Let us now prove the non-integrability of $b_{\text{one}}$ with respect to a generic family of solutions. We first observe that, in the proof of \cref{prop:epi-one-dim-cone}, the strict inequality in the contradiction hypothesis \eqref{eq:contr-hyp1} was used only to ensure that $\eta_j>0$. Hence, if in \cref{prop:epi-one-dim-cone} we add the assumption $z\not \in \{b_{\xi,\omega}\}_{\xi,\omega\in\R^{d-1}}$,
                then the same contradiction argument yields a strict epiperimetric inequality: there exists a competitor $h$ such that 
                $$W(h)-W(b_{\text{one}})< (1-\eps)(W(z)-W(b_{\text{one}})).
                $$
                Assume now that there exists a family of solutions $\psi_t$ such that $\delta\mathcal{G}(\psi_t)=0$ and $\psi_0=0$. Set $v_t:=r^\beta b_{\text{one}}+r^\beta \psi_t$. Then $v_t$ is a solution of the Alt-Phillips problem and, since $\gamma>1$, it is also a minimizer, see \cref{prop:convex-functional}. Suppose by contradiction that, for some sufficiently small $t\neq 0$, $v_t\not \in \{b_{\xi,\omega}\}_{\xi,\omega\in\R^{d-1}}.$
                Since $\delta\mathcal{G}(\psi_t)=0$, the map $t\mapsto \mathcal{G}(\psi_t)$ is constant, and so $W(v_t)=W(v_0)=W(b_{\text{one}})$. The strict epiperimetric inequality then gives a competitor $h_t\in H^1(B_1)$, with $h_t=v_t$ on $\partial B_1$, such that 
                $$W(h_t)-W(b_{\text{one}})< (1-\eps)(W(v_t)-W(b_{\text{one}}))=0.$$ 
                Thus
                $W(h_t)<W(b_{\text{one}})=W(v_t)$, contradicting the minimality of $v_t$. 
                
                Therefore $v_t \in \{b_{\xi,\omega}\}_{\xi,\omega\in\R^{d-1}}$, for every $t$ small enough. On the other hand, since $b_{\xi,\omega}$ is a solution if and only if $\xi=0$, we must have $v_t=b_{0,\omega(t)}$ for some $\omega(t)$ with $\omega(0)=0$. Consequently, by \eqref{e:useful-for-notintegrable}, we have  $\partial_t v_t\big|_{t=0}\in \text{span}\{\text{sgn}(x_d)|x_d|^{\beta-1}x_i\}_{i=1}^{d-1}$. Thus, it follows that the directions $\text{span}\{|x_d|^{\beta-1}x_i\}_{i=1}^{d-1}$ cannot be generated by a family of solutions of this type, and so $b_{\text{one}}$ is not integrable.                \end{remark}
			
			\section{The translational cones}\label{section-translational}
			In this section, we begin the study of cylindrical extensions of cones in the Alt-Phillips problem, for $\gamma>1$. More precisely, given $b\in\mathcal B_\ell$, with $\ell=1,\ldots,d-2$, we write
$$
b(y,z)=B(y),\quad (y,z)\in\R^{d-\ell}\times \R^\ell,
$$
where $B$ is a cone in $\R^{d-\ell}$ satisfying $B>0$ in $\R^{d-\ell}\setminus\{0\}$, that is, $B\in\mathcal B_0$ in dimension $d-\ell$. We point out that the case $\ell=0$ was already treated in \cref{section-positive}, while the case $\ell=d-1$ corresponds to the one-dimensional cone $b_{\text{one}}$, studied in \cref{section-1d}.

            We recall the spherical linearized operator $L_{b}$ and its kernel $K_b:=\ker(L_{b})$, defined in \eqref{eq:linearized-operator} and \eqref{eq:general-kernel} respectively. Throughout the section, we use the analogous notation $L_B$ and $K_B$ for the cone $B$. Notice that $K_B\subset K_b$, under the natural cylindrical identification, and thus the following decomposition holds 
            \be\label{eq:Kbz-decomposition}K_b=K_B\oplus K_b^z.\ee
            Here $K_b^z$ denotes the complement of the  kernel $K_B$ in $K_b$. The main results of this section can be summarized as follows. 
            \begin{itemize}
                \item[(i)] We characterize $K_b^z$ in terms of the $(\beta-1)$-homogeneous solutions of the linearized operator $L_B$ (see \cref{prop:kernel-cylindric}). This explains the role of the translational cones, defined in \cref{def:beta-1translational}.
                \item[(ii)] If a cone $B$ is translational, then $K_b^z$ is generated by rotations (see \cref{corollary:corollary1}). This is a crucial point in the proof of the epiperimetric inequality in \cref{section:intermediate-epiperimetric}.
                \item[(iii)] If $b$ is integrable through rotations (see \cref{def:integrability-through-rotations}), then $b$ is translational, see \cref{corollary:corollary1}.
                \item[(iv)] The cylindrical cones $b_\ell$ defined in \eqref{eq:parabolas} are translational, for $\ell=0,\ldots,d-2$, and we have a complete characterization of the kernel $K_{b_\ell}$, see \cref{prop:radial-cone-is-int-traslations} and \cref{corollary:kernel-cylindrical}.
                \item[(v)] In some regimes of $d$ and $\gamma$, we can verify the translational hypothesis, even if the cones are not explicitly known, see \cref{prop:trans0} and \cref{rem:translational-bif}.
                \item[(vi)] Finally, we show some integrability through rotations results in dimension $d=2,3$, see \cref{corollary:integrability2}.
            \end{itemize}
            \subsection{Kernel of cylindrical extensions}
			In the following proposition we characterize the $z$-component $K_b^z$ of the kernel in \eqref{eq:Kbz-decomposition} in terms of the eigenspace $E_B^{\beta-1}$ defined in \cref{def:space-Ebmu}. We also show that the translational condition is stable under cylindrical extensions, namely $E_b^{\beta-1}=E_B^{\beta-1}$, under the natural cylindrical identification. 
			\begin{proposition}\label{prop:kernel-cylindric}
			    Let $\gamma\in(1,2)$ and $b\in\mathcal B_\ell$, for some $\ell=1,\dots,d-2$. Then $$K_b=K_B\oplus K_b^z,\quad\text{where}\quad K_b^z=\textrm{span}\left\{z_j|y|^{\beta-1}\psi\left(\frac{y}{|y|}\right):\ \psi\in E^{\beta-1}_B,\ j=1,\ldots,\ell\right\}.$$ Moreover $E_b^{\beta-1}=E_B^{\beta-1}$.
			\end{proposition}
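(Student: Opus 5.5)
The plan is to work with homogeneous extensions and separate variables in $z$. An element $\phi\in K_b$ corresponds to a $\beta$-homogeneous function $w(y,z)$ solving $\Delta w=\frac\gamma2(\gamma-1)B(y)^{\gamma-2}w$ in $\{y\neq0\}$, and $\{y=0\}$ has zero capacity since $\ell\le d-2$. Because the potential depends only on $y$, each $z$-derivative $\partial_{z_j}w$ is again a solution, now $(\beta-1)$-homogeneous. Iterating, $\partial^\alpha_z w$ with $|\alpha|=k$ is $(\beta-k)$-homogeneous. These derivatives are no longer in $H^1$ near $\{y=0\}$, so I will work in weighted or local spaces away from the singular set, where the solutions are smooth by elliptic regularity because $B>0$.

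The first key step is a Liouville-type claim: for $\mu<\beta-1$, no nontrivial $\mu$-homogeneous solution exists with the admissible growth near $\{y=0\}$. Two facts drive this. First, restricted to fixed $z$, the operator in $y$ is $-\Delta_y+\frac\gamma2(\gamma-1)B^{\gamma-2}$ with a potential homogeneous of degree $-2$ in $y$. Second, $\mu$-homogeneous solutions in $y$ correspond to eigenvalues $\lambda_{d-\ell}(\mu)-\lambda_{d-\ell}(\beta)$ of $L_B$. The potential $\frac\gamma2(\gamma-1)B^{\gamma-2}\ge c|y|^{-2}$ is positive, so $L_B+\lambda_{d-\ell}(\beta)>0$, and this forces any admissible homogeneity to satisfy a lower bound. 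I would try to show that every solution on $\R^{d-\ell}$ in $y$ at fixed $z$ has homogeneity at least $\beta-1$ in $y$. This uses that $\partial_{y_i}B$ are positive-ish solutions of degree $\beta-1$: the coordinate derivatives $\partial_{y_i}B$ are $(\beta-1)$-homogeneous Jacobi fields, and convexity of $B$ by \cref{prop:convexity-of-blowups} gives $\nabla B\cdot \nu$ of controlled sign. From this I would conclude that $\beta-1$ is the lowest admissible $y$-homogeneity with finite $H^1_{\rm loc}$ energy. The $H^1$ admissibility of $w$ requires the $y$-homogeneity of the pieces to exceed $(2-(d-\ell))/2$, and the lowest eigenvalue of $L_B+\lambda_{d-\ell}(\beta)$ pins down the branch.

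The second step expands $w$ as a polynomial in $z$. Since the second $z$-derivatives are $(\beta-2)$-homogeneous, and $(\beta-2)<\beta-1$, they vanish by the Liouville step. Hence $w(y,z)=w_0(y)+\sum_j z_j w_j(y)$. Here $w_0$ is $\beta$-homogeneous in $y$, so $w_0\in K_B$, and each $w_j$ is a $(\beta-1)$-homogeneous solution in $y$, so $w_j\in E_B^{\beta-1}$. This yields $K_b=K_B\oplus K_b^z$ with the stated description. The converse inclusion is immediate, since $z_j\psi(y)$ solves the equation because $\Delta_z$ kills it. The same argument with $\beta$ replaced by $\beta-1$ gives $E_b^{\beta-1}=E_B^{\beta-1}$, because first $z$-derivatives then have degree $\beta-2$ and vanish.

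The main obstacle is the Liouville step, specifically justifying that $(\beta-2)$-homogeneous solutions vanish. These are not $H^1$ near the spine, so I must track integrability. The approach I would try first is the following. Write a $(\beta-2)$-homogeneous solution $v$, at each fixed direction of the $y$-sphere, through a spherical expansion in eigenfunctions of $L_B$ on $\mathbb S^{d-\ell-1}$, coupled with $z$ through $\Delta_z$. Then use that $v=\partial_{z}^2w$ with $w\in H^1$ implies $v$ has the integrability inherited from $w$, namely $\int |y|^{4}|v|^2$ locally finite. Finally, show that positivity of the potential excludes the low-homogeneity branches in $y$.
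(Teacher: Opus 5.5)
\textbf{Verdict: genuine gap.} The architecture is sound: differentiate in $z$, show that the $(\beta-2)$-homogeneous derivatives vanish, then read off $w=w_0(y)+\sum_j z_jw_j(y)$. However, the Liouville step it rests on is stated with the wrong threshold, and the tools you propose cannot prove even the correct version.

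\textbf{The threshold $\beta-1$ is false.} Your claim that no admissible solution has homogeneity $\mu<\beta-1$, i.e.\ that $\beta-1$ is the lowest admissible $y$-homogeneity, fails for every base cone. The first eigenfunction of $L_B$ on $\mathbb S^{d-\ell-1}$ is positive. Each $\partial_{y_i}B$ changes sign, since a positive $\beta$-homogeneous $B$ cannot be monotone in $y_i$. Hence the ground state has homogeneity $\mu_1<\beta-1$, and $|y|^{\mu_1}\psi_1(y/|y|)$ is a nontrivial $H^1_{\rm loc}(\R^d)$ solution. For $B=c_\ell|y|^\beta$ this is explicit: $|y|^{\mu_1}$ with $\lambda_{d-\ell}(\mu_1)=(\gamma-1)\lambda_{d-\ell}(\beta)$ and $\mu_1\in(\beta-2,\beta-1)$. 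So the Jacobi fields $\partial_{y_i}B$ give only an \emph{upper} bound on the bottom of the spectrum. Positivity of the potential gives only $y$-homogeneity $>0$, which is too weak because $\beta>2$.

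\textbf{The correct bound and the missing idea.} What is true and sufficient is $E_B^\mu=\{0\}$ for $\mu\le\beta-2$. The missing idea is the positive supersolution $B^{\gamma-1}$. By the equation of $B$, $(L_B+\lambda_{d-\ell}(\beta))B^{\gamma-1}\ge(\gamma-1)\lambda_{d-\ell}(\beta)B^{\gamma-1}$. Therefore the first eigenvalue of $L_B+\lambda_{d-\ell}(\beta)$ is at least $(\gamma-1)\lambda_{d-\ell}(\beta)=(\beta-2)(\beta+d-\ell-2)>\lambda_{d-\ell}(\beta-2)$.

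\textbf{The Liouville step is posed for the wrong object.} Even with the correct threshold, you state it for functions of $y$ at fixed $z$, but $\partial_z^2w$ does not solve the $y$-equation at fixed $z$. What you need is that a jointly $(\beta-2)$-homogeneous admissible solution on $\R^d$ vanishes. That requires converting joint homogeneity into $y$-homogeneity mode by mode, and selecting the admissible branch at $\{y=0\}$. This is exactly \cref{lemma:kernel-cylindric}. In the coordinates $\theta=(\omega\sin\alpha,\eta\cos\alpha)$, expanding in eigenfunctions of $L_B$ and in $\mathcal H_m(\mathbb S^{\ell-1})$ turns the $\alpha$-equation into a Jacobi equation. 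Its $H^1$-admissible solutions force the $y$-homogeneity to equal $\sigma-(m+2p)$.

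\textbf{Your branch selection is too weak.} The integrability you propose, $\int|y|^4|\partial_z^2w|^2<\infty$, does not exclude the singular branch $|y|^{2-(d-\ell)-\mu}$ whenever $\mu<(8-(d-\ell))/2$. An example is the ground-state mode when $d-\ell=2$ and $\gamma<3/2$. For that mode the $\alpha$-solution regular at $z=0$ necessarily has a singular component at $y=0$, yet it passes your weighted test.

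\textbf{The fix.} Expand $w$ itself, use $w\in H^1$ to keep only the regular branches, and combine this with the spectral bound above. Then for $\sigma=\beta$ only $m+2p\le1$ survives, and for $\sigma=\beta-1$ only $m+2p=0$ survives. This gives both claims directly, at which point the $z$-differentiation is no longer needed.
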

            The proof of \cref{prop:kernel-cylindric} follows from the general decomposition of the eigenspaces $E_b^{\sigma}$ proved in the next lemma.
			\begin{lemma}\label{lemma:kernel-cylindric}
				Let $\gamma \in (0,2)$ and $b\in\mathcal B_\ell$, for some $\ell=1,\dots,d-2$. Consider the following cylindrical coordinates: given $\theta \in \mathbb{S}^{d-1}$, set
                $$
                \theta=\left(\omega\sin\alpha,\eta \cos\alpha\right),
                \quad\text{where}\quad \omega\in \mathbb{S}^{d-\ell-1},\ \eta\in \mathbb{S}^{\ell-1},\ \alpha\in(0,\pi/2). 
                $$
                Then, for every $\sigma \in\R$ the following decomposition holds true
                \be\label{eq:block-decomposition}E_b^\sigma=\bigoplus_{m\ge0}\ \bigoplus_{p\ge0}\text{span}\{q^\sigma_{m,p}\}\otimes\mathcal H_m(\mathbb{S}^{\ell-1})\otimes E_B^{\sigma-(m+2p)},\ee
where, if we denote by $P_p^{(k_1,k_2)}$ a Jacobi polynomial of degree $p\in \N_{\ge0}$, we set \be\label{eq:defqmpsigma}q_{m,p}^\sigma(\alpha):=(\sin\alpha)^{\sigma-(m+2p)}(\cos\alpha)^m P_p^{({\sigma-(m+2p)}+\frac{d-\ell}2-1,m+\frac\ell2-1)}(\cos2\alpha).\ee
Moreover, if $\gamma\in (1,2)$, then $E^{\mu}_B=\{0\}$ for every $\mu\le\beta-2$.
			\end{lemma}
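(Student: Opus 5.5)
The plan is separation of variables in the cylindrical coordinates $(\omega,\eta,\alpha)$ with $y=r\sin\alpha\,\omega$ and $z=r\cos\alpha\,\eta$. An element $\phi\in E_b^\sigma$ is the same as a $\sigma$-homogeneous solution $w=r^\sigma\phi$ of $-\Delta w+\frac\gamma2(\gamma-1)b^{\gamma-2}w=0$ in $\{b>0\}=\{y\neq0\}$. Since $b(y,z)=B(y)=|y|^\beta B(\omega)$, the potential is $\frac\gamma2(\gamma-1)B(\omega)^{\gamma-2}|y|^{-2}$. The operator therefore splits as
\[
\Delta_z+\Big(\partial_{\rho\rho}+\tfrac{d-\ell-1}{\rho}\partial_\rho+\rho^{-2}\big(\Delta_\omega-\tfrac\gamma2(\gamma-1)B(\omega)^{\gamma-2}\big)\Big),\qquad \rho=|y|.
\]

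First I expand in the $\omega$-variable using a complete eigenbasis of $-\Delta_\omega+\frac\gamma2(\gamma-1)B^{\gamma-2}$ on $\mathbb S^{d-\ell-1}$, which has discrete spectrum because $B>0$. Each eigenvalue can be written uniquely as $\lambda_{d-\ell}(\mu)$ with $\mu\ge\mu_0:=-\frac{d-\ell-2}{2}$, and its eigenspace is exactly $E_B^\mu$. I take $\mu\ge\mu_0$ because the solution branch with $\mu<\mu_0$ is excluded by $H^1$. I expand in the $\eta$-variable using $\mathcal H_m(\mathbb S^{\ell-1})$. The coefficient of $\psi\otimes Y_m$ with $\psi\in E_B^\mu$ then has the form $r^\sigma g(\alpha)$. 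Inserting $g=(\sin\alpha)^\mu(\cos\alpha)^m f(\cos2\alpha)$ turns the ODE in $\alpha$ into a Jacobi equation with parameters $(\mu+\frac{d-\ell}2-1,\,m+\frac\ell2-1)$.

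The solutions that are $H^1$ on the sphere, that is, regular both at $\alpha=\pi/2$ (where $z=0$) and at $\alpha=0$ (the set $y=0$, which is handled by the choice $\mu\ge\mu_0$), are exactly the polynomial solutions. These exist iff $\sigma=\mu+m+2p$ for some $p\in\N_{\ge0}$. This is the usual quantization obtained from a hypergeometric series: the non-polynomial solution is singular at one endpoint, so it fails to be $H^1$ there or violates the kernel condition. The outcome is $q^\sigma_{m,p}$ with $\mu=\sigma-(m+2p)$, which is \eqref{eq:block-decomposition}. Completeness of the bases and orthogonality of the blocks give the direct sum. Since $\{b=0\}$ has zero capacity, the equation only needs to hold on $\{y\neq0\}$, as in \eqref{eq:general-kernel-only-positive}.

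For the last claim I use $\gamma\in(1,2)$. The potential is then nonnegative, so $\lambda_{d-\ell}(\mu)\ge0$ is the bottom of the spectrum, while $\mu\le\beta-2$ would require a smaller eigenvalue. A direct check: since $B$ is convex and $\beta$-homogeneous (\cref{prop:convexity-of-blowups}), the derivatives $\partial_{y_i}B$ are $(\beta-1)$-homogeneous Jacobi fields. I would use them together with a positivity argument. The first eigenfunction is positive, and a positive $\mu_1$-homogeneous supersolution gives $\mu_1\ge$ the homogeneity of any positive solution. The candidate positive solution is $B$ itself, but $B$ is not a Jacobi field. The better route is to take $\partial_e B$ for a direction $e$: it is positive on a half-sphere and vanishes on its boundary. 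By comparison of the first Dirichlet eigenvalue on the half-sphere with the first eigenvalue on the whole sphere, any eigenfunction with homogeneity $\mu$ has $\mu\ge\beta-1-1$, with strict inequality. Equivalently, as in Simon's style, a sign-changing Jacobi field with homogeneity $\le\beta-2$ would contradict the nodal-domain argument.

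The main obstacle is this last step. Justifying the strict inequality rigorously (the first nodal domain being exactly a hemisphere, plus strict domain monotonicity) is where the difficulty lies, and I would try the hemisphere comparison first.
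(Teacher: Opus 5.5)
Your separation of variables for \eqref{eq:block-decomposition} is essentially the paper's argument: $L_b$ is diagonal in $(\alpha,\omega,\eta)$, you use the ansatz $(\sin\alpha)^\mu(\cos\alpha)^m h(\cos2\alpha)$, and Jacobi quantization forces $\mu=\sigma-(m+2p)$. That part is fine. The gap is in the last claim.

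Set $\widetilde L_B:=-\Delta_\omega+\frac\gamma2(\gamma-1)B^{\gamma-2}$ on $\mathbb S^{d-\ell-1}$. Its first eigenfunction is positive, and $\mu\mapsto\lambda_{d-\ell}(\mu)$ is increasing on $[\mu_0,\infty)$. Hence $E_B^\mu=\{0\}$ for all $\mu\le\beta-2$ amounts to the \emph{lower} bound $\mu_1(\widetilde L_B)>\lambda_{d-\ell}(\beta-2)$ on the bottom of the spectrum, and none of your steps produces it:
\begin{itemize}
\item Nonnegativity of the potential only gives $\mu_1(\widetilde L_B)\ge0$, i.e.\ $\mu\ge0$. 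This says nothing about $\mu\in[0,\beta-2]$, since $\beta>2$.
\item The comparison with $\partial_eB$ goes the wrong way. $\partial_eB$ is a positive Dirichlet eigenfunction of $\widetilde L_B$ on its nodal domain $\Omega^+$, so $\lambda^D_1(\Omega^+)=\lambda_{d-\ell}(\beta-1)$. (In general $\Omega^+$ is only a topological half-sphere, not a geometric one.) Domain monotonicity then yields $\mu_1(\widetilde L_B)<\lambda_{d-\ell}(\beta-1)$, which is an \emph{upper} bound.
\item Nodal-domain arguments for sign-changing Jacobi fields cannot help either. If any eigenvalue were $\le\lambda_{d-\ell}(\beta-2)$, so would be the first one, and its eigenfunction is positive.
\end{itemize}

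What is missing is a positive supersolution on the \emph{whole} sphere. You gesture at this, but your candidates fail: $B$ is not a supersolution with a controlled constant, and $\partial_eB$ changes sign. The paper uses $\psi:=B^{\gamma-1}>0$, the trace of the $(\beta-2)$-homogeneous function $b^{\gamma-1}=\frac2\gamma\Delta b$. From $\Delta_\omega B+\lambda_{d-\ell}(\beta)B=\frac\gamma2B^{\gamma-1}$ one gets
\[
\widetilde L_B\psi=(\gamma-1)\lambda_{d-\ell}(\beta)\psi+(\gamma-1)(2-\gamma)B^{\gamma-3}|\nabla_\omega B|^2\ \ge\ (\gamma-1)\lambda_{d-\ell}(\beta)\psi .
\]
Testing against the first eigenfunction gives $\mu_1(\widetilde L_B)\ge(\gamma-1)\lambda_{d-\ell}(\beta)=(\beta-2)(\beta+d-\ell-2)$. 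This is strictly larger than $\lambda_{d-\ell}(\beta-2)=(\beta-2)(\beta+d-\ell-4)$, which in turn is $\ge\lambda_{d-\ell}(\mu)$ for $\mu_0\le\mu\le\beta-2$.

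This is the correct form of your convexity intuition. Differentiating the Jacobi field $\partial_eb$ once more gives $-\Delta(\partial_{ee}b)+\frac\gamma2(\gamma-1)b^{\gamma-2}\partial_{ee}b=\frac\gamma2(\gamma-1)(2-\gamma)b^{\gamma-3}(\partial_eb)^2\ge0$. Summing over $e=e_1,\dots,e_{d-\ell}$ produces exactly the supersolution $\Delta b=\frac\gamma2 b^{\gamma-1}$. So it is second derivatives of $b$, not first derivatives, that supply the needed lower bound.
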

			\begin{proof}
				In the new coordinates $\theta=\left(\omega\sin\alpha,\eta \cos\alpha\right)$ the spherical Laplacian $\Delta_{\theta}$ is given by
				$$\Delta_\theta=\partial_{\alpha\alpha}+\big((d-\ell-1)\cot\alpha-(\ell-1)\tan\alpha\big)\partial_\alpha+\frac1{\sin^2\alpha}\Delta_\omega+\frac{1}{\cos^2\alpha}\Delta_\eta.$$
				Then, the linearized operator $L_b$ in $\mathbb{S}^{d-1}$ can be written as
				$$L_b=-\partial_{\alpha\alpha}-\Big((d-\ell-1)\cot\alpha-(\ell-1)\tan\alpha\Big)\partial_\alpha+\frac{1}{\sin^2\alpha}\big(L_B+\lambda_{d-\ell}(\beta)\big)-\frac{1}{\cos^2\alpha}\Delta_\eta-\lambda_d(\beta),$$ where we used that, since $b(\alpha,\omega,\eta)=(\sin\alpha)^\beta B(\omega)$, then $b^{\gamma-2}=(\sin\alpha)^{-2}B^{\gamma-2}$.
				
				Thus, the operator $L_b$ is diagonal with respect to the product decomposition of the variables: it involves the $\eta$-variable only through $-\Delta_\eta$, the
$\omega$-variable only through $L_B$, and the remaining variable only through
derivatives in $\alpha$. Hence, the variables can be separated and every $\phi\in E_b^\sigma$ can be written as a finite sum of functions of the form
$\phi(\alpha,\omega,\eta)=q(\alpha)\,\psi(\omega)\,Y(\eta),$
where
$$
        \psi\in E_B^\mu,
        \quad\text{i.e., } 
        L_B\psi=
        \bigl(\lambda_{d-\ell}(\mu)-\lambda_{d-\ell}(\beta)\bigr)\psi\quad\text{on }\mathbb{S}^{d-\ell-1},
$$
and $Y\in \mathcal{H}_{m}(\mathbb{S}^{\ell-1})$, i.e., $
        -\Delta_\eta Y=\lambda_\ell(m)Y$ 
        on $\mathbb S^{\ell-1}$. Therefore, a direct substitution of $\psi$ and $Y$ gives
				$$-q''-\Big((d-\ell-1)\cot\alpha-(\ell-1)\tan\alpha\Big)q'+\frac{\lambda_{d-\ell}(\mu)}{\sin^2\alpha}q+\frac{\lambda_\ell(m)}{\cos^2\alpha}q=\lambda_d(\sigma)\,q.$$
				Using the ansatz $q(\alpha)= (\sin\alpha)^\mu (\cos\alpha)^m h(\cos 2\alpha)$, we obtain a Jacobi-type equation for $h$
                $$(1-t^2)h''+\left(b_m-a_\mu-(a_\mu+b_m+2)t\right)h'+\Lambda h=0,$$ where
                $$
                a_\mu:=\mu+\frac{d-\ell}2-1,\qquad b_m:=m+\frac\ell2-1\quad\text{and}\quad \Lambda:=\frac{\lambda_d(\sigma)-\lambda_d(\mu+m)}4.
                $$
                The $H^1$-admissible solutions occur precisely when $h$ is a Jacobi polynomial $h(t)=P_p^{(a_\mu,b_m)}(t)$ and $\Lambda=p(p+a_\mu+b_m+1)$, for some $p\in\N_{\ge0}$. This last condition is equivalent to requiring that $\mu=\sigma-(m+2p)$. Then, $q(\alpha)$ must be of the form $q_{m,p}^\sigma(\alpha)$, as in \eqref{eq:defqmpsigma}. This concludes the proof of the decomposition \eqref{eq:block-decomposition}.
				
				In order to conclude the proof, we need to check that for $\gamma\in(1,2)$, we have $E^{\mu}_B=\{0\}$ for every $\mu\le\beta-2$.
                If $\widetilde L_B=L_B+\lambda_{d-\ell}(\beta)$ and $\psi=B^{\gamma-1}>0$, then on the sphere $\mathbb{S}^{d-\ell-1}$, we have $$\widetilde L_B\psi=(\gamma-1)\lambda_{d-\ell}(\beta)B^{\gamma-1}+(\gamma-1)(2-\gamma)B^{\gamma-3}|\nabla_\omega B|^2\ge(\gamma-1)\lambda_{d-\ell}(\beta)\psi, $$
                where we used that $\gamma>1$.
                Then, if we denote by $\mu_1(\widetilde L_B)$ the first eigenvalue of $\widetilde L_B$, we infer that $\mu_1(\widetilde L_B)\ge (\gamma-1)\lambda_{d-\ell}(\beta)$. Therefore, if $E^{\mu}_B\not=\{0\}$ for some $\mu\le\beta-2$, then $\lambda_{d-\ell}(\mu)$ would be an eigenvalue of $\widetilde L_B$, and thus $\lambda_{d-\ell}(\mu)\ge \mu_1(\widetilde L_B)$. Combining the above two inequalities, we get $$\lambda_{d-\ell}(\mu)\ge (\gamma-1)\lambda_{d-\ell}(\beta).$$ 
				On the other hand, since $\mu\le \beta-2$, then $$\lambda_{d-\ell}(\mu)\le \lambda_{d-\ell}(\beta-2)=(\beta-2)(\beta+d-\ell-4)<(\beta-2)(\beta+d-\ell-2)=(\gamma-1)\lambda_{d-\ell}(\beta),$$ where we used that $\mu>1-\frac{d-\ell}2$, by $H^1$-admissibility.
				This is the desired contradiction. 
				\end{proof}
            \begin{proof}[Proof of \cref{prop:kernel-cylindric}] The result is a direct consequence of \cref{lemma:kernel-cylindric}. Indeed, since $q^\beta_{0,0}(\alpha)=(\sin\alpha)^\beta$ and $q_{1,0}^\beta(\alpha)=(\sin\alpha)^{\beta-1}\cos\alpha$, then, by taking $\sigma=\beta$ in the decomposition \eqref{eq:block-decomposition}, we see that only the cases $m+2p=0$ and $m+2p=1$ survive, i.e., 
            \begin{align*}
            E^\beta_b &= q^\beta_{0,0}\otimes\mathcal{H}_0(\mathbb{S}^{\ell-1})\otimes E^\beta_B
            \,\oplus\, q^\beta_{1,0}\otimes\mathcal{H}_1(\mathbb{S}^{\ell-1})\otimes E^{\beta-1}_B\\
            &= (\sin \alpha)^\beta  E^\beta_B
            \,\oplus\, (\sin\alpha)^{\beta-1}\cos\alpha\otimes\mathcal{H}_1(\mathbb{S}^{\ell-1})\otimes E^{\beta-1}_B
            \end{align*}
            Since $K_b=r^\beta E_b^\beta$ and $K_B= r^\beta (\sin \alpha)^\beta  E^\beta_B$, we infer that 
            $$
            K_b = K_B \,\oplus \,  r^\beta(\sin\alpha)^{\beta-1}\cos\alpha\otimes\mathcal{H}_1(\mathbb{S}^{\ell-1})\otimes E^{\beta-1}_B
            $$
            and the last expression gives $K_b^z$. On the other hand, let $\psi\in E_B^{\beta-1}$ and choose one of the coordinate spherical harmonics $Y_1(\eta)=\eta_j$, for some $j=1,\ldots,\ell$, in $\mathcal{H}_1(\mathbb{S}^{\ell-1})$. Then, we can write
$
r^\beta q_{1,0}^\beta(\alpha)\psi(\omega)Y_1(\eta)
=
z_j |y|^{\beta-1}\psi(\omega),
$ as we claimed.

                Finally, if we choose $\sigma=\beta-1$ in \eqref{eq:block-decomposition}, then only the cases $m+2p=0$ survive, and thus $E_b^{\beta-1}=E_B^{\beta-1}$, concluding the proof.
            \end{proof}
           As an immediate corollary of \cref{prop:kernel-cylindric}, we get the following result.
            \begin{corollary}\label{corollary:corollary1}
				Let $\gamma\in (1,2)$ and $b\in\mathcal{B}_\ell$ for some $\ell=1,\dots,d-2$. Then, the following conditions are equivalent:
                \begin{itemize}
                    \item $b$ is translational;
                    \item $B$ is translational;
                    \item $K^z_b$ is generated by mixed rotations, namely
				\be\label{eq:Krot}K^z_b=\text{span}\{z_j\partial_{y_i}B:\ i=1,\ldots, d-\ell, \ j=1,\ldots,\ell\}.\ee
                \end{itemize}  
                In particular, if $b$ is integrable through rotations, then $b$ is translational. 
			\end{corollary}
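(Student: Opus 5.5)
The plan is to deduce everything from \cref{prop:kernel-cylindric}, which gives $E_b^{\beta-1}=E_B^{\beta-1}$ and
$$K_b^z=\text{span}\left\{z_j|y|^{\beta-1}\psi\left(\tfrac{y}{|y|}\right):\ \psi\in E^{\beta-1}_B,\ j=1,\ldots,\ell\right\}.$$

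\emph{First equivalence.} Since $b(y,z)=B(y)$, we have $\partial_{z_j}b=0$ and $\partial_{y_i}b=\partial_{y_i}B$. Hence $\text{span}\{\partial_{x_1}b,\dots,\partial_{x_d}b\}=\text{span}\{\partial_{y_i}B\}$ under the cylindrical identification. Combined with $E_b^{\beta-1}=E_B^{\beta-1}$, this shows that $b$ is translational if and only if $B$ is translational.

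\emph{Second equivalence.} Each $\partial_{y_i}B$ is $(\beta-1)$-homogeneous and solves the linearized equation of $B$ (differentiate the equation of $B$ on $\{B>0\}$), so it lies in $E_B^{\beta-1}$. Writing $|y|^{\beta-1}\psi(y/|y|)$ for the homogeneous extension of $\psi$, the map
$$\psi\mapsto \big(z_j|y|^{\beta-1}\psi(y/|y|)\big)_{j=1}^{\ell}$$
is linear and injective. Therefore the span in \eqref{eq:Krot} equals $K_b^z$ if and only if $\text{span}\{\partial_{y_i}B\}=E_B^{\beta-1}$, which is exactly the statement that $B$ is translational. If $\text{span}\{\partial_{y_i}B\}$ were strictly smaller than $E_B^{\beta-1}$, an extra $\psi$ would produce elements $z_1|y|^{\beta-1}\psi$ in $K_b^z$ outside the span, since $z_1$ factors uniquely.

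\emph{Final claim.} Suppose $b$ is integrable through rotations. Then $K_b$ is spanned by the generators $x_j\partial_{x_i}b-x_i\partial_{x_j}b$. These fall into three types:
\begin{itemize}
\item $y$–$y$ rotations, which lie in $K_B$;
\item $z$–$z$ rotations, which vanish;
\item mixed rotations $z_j\partial_{y_i}B$, which lie in $K_b^z$.
\end{itemize}
Since $K_b=K_B\oplus K_b^z$, projecting onto $K_b^z$ along $K_B$ shows that $K_b^z=\text{span}\{z_j\partial_{y_i}B\}$. This is the third condition, so $b$ is translational.

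The only delicate point is the directness of this projection. It needs that $K_B$ consists of $z$-independent functions, while every nonzero element of $K_b^z$ is linear in $z$. This directness is already contained in \cref{prop:kernel-cylindric}, so I expect no real obstacle.
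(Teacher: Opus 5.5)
Your proposal is correct and follows essentially the same route as the paper. Both equivalences are read off from \cref{prop:kernel-cylindric}, using $E_b^{\beta-1}=E_B^{\beta-1}$, $\partial_{z_j}b=0$, $\partial_{y_i}b=\partial_{y_i}B$ and the description of $K_b^z$; the final claim sorts the rotational generators into $y$--$y$ rotations (in $K_B$), $z$--$z$ rotations (trivial) and mixed rotations, and your projection onto $K_b^z$ along $K_B$ together with the comparison of $z$-coefficients simply makes explicit steps the paper leaves implicit.
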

			\begin{proof}
				The first equivalence follows from the identity $E_b^{\beta-1}=E_B^{\beta-1}$, proved in \cref{prop:kernel-cylindric}, together with the relations $\partial_{z_j}b=0$ and $\partial_{y_i}b=\partial_{y_i}B$. The second equivalence follows from the observation that $b$ is translational if and only if
$
E_B^{\beta-1}=\text{span}\{\partial_{y_i}B:\ i=1,\ldots,d-\ell\},
$
which is in turn equivalent to
$
K_b^z=\text{span}\{z_j\partial_{y_i}B:\ i=1,\ldots,d-\ell,\ j=1,\ldots,\ell\}.
$

                The last statement follows from the fact that if $b$ is integrable through rotations, then every element in $K_b^z$ is generated by rotations. Since rotations in the $y$-variables belong to $K_B$ and rotations in the $z$-variables are trivial, the elements of $K_b^z$ must be generated by mixed rotations.
			\end{proof}

            We point out that, in the previous corollary, the assumption $\ell\not=0$ is used to recover information on $E_B^{\beta-1}$ from $K_b^z$, by using \cref{lemma:kernel-cylindric}, which is absent when $\ell=0$.
        We also have the following characterization. 
			\begin{corollary}\label{corollary:corollary3}
				Let $\gamma\in (1,2)$ and $b\in\mathcal{B}_\ell$ with $\ell=1,\dots,d-2$. Then $b$ is integrable through rotations if and only if $B$ is translational and integrable through rotations.
			\end{corollary}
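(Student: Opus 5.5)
We need to prove: $b$ integrable through rotations iff $B$ is translational and integrable through rotations. The plan is to combine the kernel decomposition of \cref{prop:kernel-cylindric} with an explicit description of the rotational derivatives of $b$. By \cref{prop:kernel-cylindric}, $K_b=K_B\oplus K_b^z$, where $K_B$ consists of $z$-independent functions, namely the $\beta$-homogeneous Jacobi fields of $B$ extended trivially in $z$, and $K_b^z$ consists of functions that are linear in $z$. On the other side, we split the generators $x_j\partial_{x_i}b-x_i\partial_{x_j}b$ of the rotational derivatives of $b$ into three families: rotations purely in the $y$-variables, $y_j\partial_{y_i}B-y_i\partial_{y_j}B$, which are exactly the rotational derivatives of $B$ and lie in $K_B$; rotations purely in the $z$-variables, which vanish since $\partial_{z}b=0$; and mixed rotations, which equal $\pm z_j\partial_{y_i}B$ because $\partial_{z_j}b=0$. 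Hence
$$
\text{span}\{\text{rotational derivatives of } b\}=R_B\oplus M,\quad R_B:=\text{span}\{y_j\partial_{y_i}B-y_i\partial_{y_j}B\},\quad M:=\text{span}\{z_j\partial_{y_i}B\}.
$$
Moreover $R_B\subset K_B$ and $M\subset K_b^z$, the latter because $\partial_{y_i}B\in E_B^{\beta-1}$ (differentiate the equation of $B$) together with \cref{prop:kernel-cylindric}.

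Since the two splittings $K_B\oplus K_b^z$ and $R_B\oplus M$ are compatible, with $z$-independent parts matched to $z$-independent parts and $z$-linear parts to $z$-linear parts, the equality $K_b=R_B\oplus M$ holds if and only if $K_B=R_B$ and $K_b^z=M$ both hold. To justify this componentwise comparison rigorously, we project onto the subspace of $z$-independent functions and onto the subspace of functions linear in $z$; these projections are well defined on $K_b$, for instance by parity in $z$, i.e. $\phi(y,z)\pm\phi(y,-z)$, which preserves $K_b$ because $b$ is even in $z$.

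Finally, we identify each condition. The equality $K_B=R_B$ is exactly the statement that $B$ is integrable through rotations, using the identification of $K_B$ with the kernel of $L_B$ on $\mathbb{S}^{d-\ell-1}$ through the homogeneous extension. The equality $K_b^z=M$ is \eqref{eq:Krot}, which by \cref{corollary:corollary1} is equivalent to $B$ being translational. This establishes the claim.

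The main obstacle is bookkeeping rather than analysis. First, we must check that the identification of $K_B$ inside $K_b$ in \cref{prop:kernel-cylindric} agrees with the rotational derivatives of $B$ under the same cylindrical identification. Second, we must verify the parity argument that allows the componentwise comparison.
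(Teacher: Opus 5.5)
Your proposal is correct and follows the paper's argument. The paper likewise uses $K_b=K_B\oplus K_b^z$ from \cref{prop:kernel-cylindric} to say that $b$ is integrable through rotations if and only if both summands are generated by rotations. It then identifies the first condition with integrability through rotations of $B$, and the second, via \cref{corollary:corollary1}, with translationality. Your sorting of the rotational derivatives into $y$-rotations, $z$-rotations and mixed rotations, together with the even/odd-in-$z$ projection, spells out the componentwise comparison that the paper leaves implicit.
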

			\begin{proof}
				By \cref{prop:kernel-cylindric}, $b$ is integrable through rotations if and only if both $K_B$ and $K_b^z$ are generated by rotations. The first condition is equivalent to saying that $B$ is integrable through rotations, while the second condition is equivalent, by \cref{corollary:corollary1}, to saying that $b$, and thus $B$, is translational.
			\end{proof}
			\subsection{Kernel of the parabola cones}
			We can use \cref{prop:kernel-cylindric} to describe the kernel of the linearized operator $L_b$ for the family of parabola cones $b_\ell$ defined in \eqref{eq:parabolas}. In particular, we obtain that $b_\ell$ is translational for $\ell=0,\ldots,d-2$ and $\gamma>1$.
			
			First we prove that the radial cone is translational.
			\begin{lemma}\label{prop:radial-cone-is-int-traslations}
				Let $\gamma\in(0,2)$, then the radial cone $b_{\text{rad}}:=c_{\text{rad}}|x|^\beta$ is translational.
			\end{lemma}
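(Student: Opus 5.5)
We need to show that $E^{\beta-1}_{b_{\text{rad}}}=\text{span}\{\partial_{x_i}b_{\text{rad}}\}_{i=1}^d$. The plan is to compute the operator explicitly. By \eqref{eq:linearized-operator-radial} in \cref{remark:after}, we have $L_{b_{\text{rad}}}=-\Delta_\theta-(2-\gamma)\lambda(\beta)$. Hence, by \cref{def:space-Ebmu},
\[
E^{\beta-1}_{b_{\text{rad}}}=\ker\bigl(-\Delta_\theta-(2-\gamma)\lambda(\beta)-\lambda(\beta-1)+\lambda(\beta)\bigr)=\ker\bigl(-\Delta_\theta-(\gamma-1)\lambda(\beta)-\lambda(\beta-1)\bigr).
\]
This is the eigenspace of $-\Delta_\theta$ with eigenvalue $\Lambda:=\lambda(\beta-1)+(\gamma-1)\lambda(\beta)$. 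It is therefore nontrivial if and only if $\Lambda=\lambda(k)=k(k+d-2)$ for some $k\in\N_{\ge0}$, and in that case it equals $\mathcal H_k(\mathbb S^{d-1})$.

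The first step is to identify $\Lambda$. We use $\gamma-1=(\beta-2)/\beta$, which holds because $\beta=2/(2-\gamma)$ gives $\gamma=2-2/\beta$. Then
\[
(\gamma-1)\lambda(\beta)=(\beta-2)(\beta+d-2),\qquad \lambda(\beta-1)=(\beta-1)(\beta+d-3).
\]
Summing the two, we expect (and it should be checked carefully) that $\Lambda$ equals $d-1=\lambda(1)$ exactly when the Weiss-type relation holds. This is consistent with the fact that $\partial_{x_i}|x|^\beta=\beta|x|^{\beta-2}x_i$ is the $(\beta-1)$-homogeneous extension of the degree-one harmonic $\theta_i$. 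It is also immediate that each $\partial_{x_i}b_{\text{rad}}$ solves the linearized equation: differentiate $\Delta b=\frac\gamma2 b^{\gamma-1}$ in $x_i$ to get $\Delta(\partial_i b)=\frac\gamma2(\gamma-1)b^{\gamma-2}\partial_i b$. So $\mathcal H_1\subset E^{\beta-1}$ holds automatically, which forces $\Lambda=\lambda(1)$.

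The second step, and the only real content, is that the eigenspace of $-\Delta_\theta$ for the eigenvalue $\lambda(1)=d-1$ is exactly $\mathcal H_1(\mathbb S^{d-1})$. This holds because the eigenvalues $\lambda(k)$ of $-\Delta_\theta$ are distinct for distinct $k$ when $d\ge2$, and $\lambda$ is strictly increasing on $[0,\infty)$. Hence $E^{\beta-1}_{b_{\text{rad}}}=\mathcal H_1=\text{span}\{\theta_i\}=\text{span}\{\partial_{x_i}b_{\text{rad}}\}$ restricted to the sphere, which is the claim.

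I do not expect any serious obstacle. The only delicate point is to use the $H^1$ definition of the kernel on the whole sphere, which is legitimate since $b_{\text{rad}}>0$ there, rather than only on $\{b>0\}$. The argument works for all $\gamma\in(0,2)$, including $\gamma<1$, because nothing beyond the explicit constant $c_{\text{rad}}$ is used.
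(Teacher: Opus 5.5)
Your approach is the same as the paper's: realize $E^{\beta-1}_{b_{\text{rad}}}$ as an eigenspace of $-\Delta_\theta$, show the eigenvalue is $d-1$, and identify $\mathcal H_1(\mathbb S^{d-1})$ with the traces of $\partial_{x_j}b_{\text{rad}}=c_{\text{rad}}\beta|x|^{\beta-2}x_j$. The proof holds once you fix a sign error in your displayed kernel.

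The error is in the simplification: $-(2-\gamma)\lambda(\beta)+\lambda(\beta)=+(\gamma-1)\lambda(\beta)$. So the kernel is $E^{\beta-1}_{b_{\text{rad}}}=\ker\bigl(-\Delta_\theta-\Lambda\bigr)$ with $\Lambda=\lambda(\beta-1)-(\gamma-1)\lambda(\beta)$, a difference rather than a sum.

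With the correct sign you do not need to hedge. The identity $(\beta-1)(\beta+d-3)-(\beta-2)(\beta+d-2)=d-1$ holds for every $\beta$, with no extra ``Weiss-type relation.'' This is exactly the identity the paper uses. Your sum, by contrast, exceeds $d-1$ by $2(\beta-2)(\beta+d-2)$, so it equals $d-1$ only when $\gamma=1$. Taken literally, your displayed formula contradicts the conclusion you then draw.

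Your fallback argument is sound on its own. Differentiating $\Delta b=\frac\gamma2 b^{\gamma-1}$ shows $\theta_j\in E^{\beta-1}_{b_{\text{rad}}}$. Since $b_{\text{rad}}^{\gamma-2}$ is constant on the sphere, $E^{\beta-1}_{b_{\text{rad}}}$ is a Laplace eigenspace for a single constant eigenvalue, so this forces that eigenvalue to be $\lambda(1)$. That is why the conclusion survives the slip.
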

			\begin{proof}
				As already observed in \eqref{eq:linearized-operator-radial}, we have $L_{b_{\text{rad}}}=-\Delta_\theta-(2-\gamma)\lambda(\beta)$. Moreover, since $\lambda(\beta-1)-(\gamma-1)\lambda(\beta)=d-1$, then $E^{\beta-1}_{b_{\text{rad}}}=\mathcal{H}_1(\mathbb{S}^{d-1})$ and it is generated by  coordinate spherical harmonics of the form $\theta_j$, with $j=1,\ldots,d$. On the other hand, the $(\beta-1)$-homogeneous extension of $\theta_j$ is given by $|x|^{\beta-2}x_j$, 
which coincides, up to a multiplicative constant, with $\partial_{x_j}b_{\mathrm{rad}}$, as desired.
			\end{proof}
			
			The following corollary follows by combining the characterization of the kernel of $L_{b_{\text{rad}}}$ in $\mathbb{R}^{d-\ell}$, established in \cref{prop:optimality-log}, with the cylindrical extension argument of \cref{prop:kernel-cylindric}. We recall the values $\gamma_{k,d}$ defined in \eqref{eq:def-gammakd}.
			\begin{corollary}\label{corollary:kernel-cylindrical}
				Let $\gamma\in (1,2)$ and $b_\ell\in\mathcal{B}_\ell$ be a parabola cone as in \eqref{eq:parabolas}, for some $\ell=1,\ldots,d-2$. 
				\begin{itemize}
					\item[(i)] If $\gamma\not=\gamma_{k,d-\ell}$ for every $k\in\N_{\ge3}$, then $$K_{b_\ell}=\text{span}\{|y|^{\beta-2}y_iz_j,\  i=1,\ldots,d-\ell,\ j=1,\ldots,\ell\}.$$
					\item[(ii)] If $\gamma=\gamma_{k,d-\ell}$ for some $k\in\N_{\ge3}$, then $$K_{b_\ell}=|y|^{\beta}\mathcal{H}_k(\mathbb{S}^{d-\ell-1})\oplus \text{span}\{|y|^{\beta-2}y_iz_j,\  i=1,\ldots,d-\ell,\ j=1,\ldots,\ell\},$$ where $\mathcal{H}_k(\mathbb{S}^{d-\ell-1})$ is the space of spherical harmonics of degree $k$ on $\mathbb{S}^{d-\ell-1}$.
				\end{itemize}
				In particular $b_\ell$ is integrable through rotations if and only if $\gamma\not=\gamma_{k,d-\ell}$ for every $k\in\N_{\ge3}$. On the other hand, $b_\ell$ is translational for every $\gamma\in (1,2)$.
			\end{corollary}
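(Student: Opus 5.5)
The plan is to apply \cref{prop:kernel-cylindric} to the base cone $B=b_{\text{rad}}$ in $\R^{d-\ell}$, so that
$$K_{b_\ell}=K_B\oplus K_{b_\ell}^z,\qquad K_{b_\ell}^z=\text{span}\Big\{z_j|y|^{\beta-1}\psi\big(\tfrac{y}{|y|}\big):\ \psi\in E_B^{\beta-1},\ j=1,\ldots,\ell\Big\}.$$
Both summands can then be computed separately and explicitly.

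For the $z$-component, \cref{prop:radial-cone-is-int-traslations}, applied in dimension $d-\ell$, shows that $E_B^{\beta-1}=\mathcal H_1(\mathbb S^{d-\ell-1})$ is spanned by the coordinate functions $\omega_i$. This follows from the identity $\lambda_{d-\ell}(\beta-1)-(\gamma-1)\lambda_{d-\ell}(\beta)=d-\ell-1=\lambda_{d-\ell}(1)$ together with the form $L_B=-\Delta_\omega-(2-\gamma)\lambda_{d-\ell}(\beta)$ from \eqref{eq:linearized-operator-radial}. Consequently
$$K_{b_\ell}^z=\text{span}\{|y|^{\beta-2}y_iz_j\}_{i,j},$$
which is, up to constants, $\text{span}\{z_j\partial_{y_i}B\}$. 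The same identity shows that $B$ is translational, and hence $b_\ell$ is translational by \cref{corollary:corollary1}. This holds for every $\gamma\in(1,2)$.

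For $K_B$, \cref{remark:after} and \cref{prop:optimality-log} in dimension $d-\ell$ give two cases: $K_B=\{0\}$ when $\gamma\neq\gamma_{k,d-\ell}$, and $K_B=\mathcal H_k(\mathbb S^{d-\ell-1})$ when $\gamma=\gamma_{k,d-\ell}$. Under the cylindrical identification this yields $|y|^\beta\mathcal H_k(\mathbb S^{d-\ell-1})$, which gives (i) and (ii). One side remark is needed: $\gamma_{k,d-\ell}$ is well defined and lies in $(1,2)$ for $k\ge3$, and the case $d-\ell=1$ does not arise since $\ell\le d-2$. For $d-\ell=2$, $\mathcal H_k(\mathbb S^1)$ is the span of $\cos k\theta$ and $\sin k\theta$.

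Integrability through rotations then follows from \cref{corollary:corollary3}: $b_\ell$ is integrable through rotations if and only if $B$ is translational (already shown) and $B$ is integrable through rotations. Since $B$ is radial, all rotational derivatives of $B$ vanish, so $B$ is integrable through rotations exactly when $K_B=\{0\}$, that is, when $\gamma\neq\gamma_{k,d-\ell}$. When $\gamma=\gamma_{k,d-\ell}$, $K_B\neq\{0\}$ contains elements that are not rotational derivatives, so integrability through rotations fails.

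The only step that needs care is the eigenvalue identification for the radial base: checking that $\lambda_{d-\ell}(\mu)=(2-\gamma)\lambda_{d-\ell}(\beta)$ forces $\mu=1$ for the $(\beta-1)$-modes and $\mu=k$ for the $\beta$-modes. This is a routine algebraic computation.
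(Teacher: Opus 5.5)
Your proposal is correct and follows the same route as the paper, which proves the corollary by combining \cref{prop:kernel-cylindric} with \cref{prop:radial-cone-is-int-traslations} (to get $E_B^{\beta-1}=\mathcal H_1(\mathbb S^{d-\ell-1})$) and \cref{prop:optimality-log} (to get $K_B$). Your use of \cref{corollary:corollary1} and \cref{corollary:corollary3} for the translationality and integrability-through-rotations claims just makes explicit what the paper leaves implicit.
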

            \begin{proof}
                The result follows by combining \cref{prop:optimality-log}, \cref{prop:kernel-cylindric} and \cref{prop:radial-cone-is-int-traslations}.
            \end{proof}
            We stress that \cref{corollary:kernel-cylindrical} is also valid for $\ell=0$ (see \cref{prop:optimality-log}), whereas the case $\ell=d-1$ is different for at least two reasons. First, by \cref{lemma:ker-1d}, the kernel of $L_{b_{\text{one}}}$ is not sensitive to the value $\gamma_{k,d}$. Secondly, by \cref{remark:non-translation-one-dim-cone}, the one-dimensional cone is not translational.
\subsection{Translationality in low dimensions.}
In this subsection, we prove that in some regimes the translationality property can be verified directly, even if we do not have a characterization of the cones.
\begin{proposition}\label{prop:trans0}
	Let $b\in\mathcal{B}_\ell$, for $\ell=0,\ldots,d-2$, and suppose that \bea\text{either}\qquad d=2,3\,\text{ and }\, \gamma \in (1,2)\qquad\text{or}\qquad d\ge 4\,\text{ and }\,\gamma\in\left(1,\frac32+\frac{1}{2d}\right).\eea
	Then $b$ is translational.
\end{proposition}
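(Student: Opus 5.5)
By \cref{corollary:corollary1}, $b\in\mathcal B_\ell$ is translational if and only if its base cone $B$ is, so the plan is to prove the statement only for positive cones $B\in\mathcal B_0$ in dimension $n:=d-\ell\in\{2,\dots,d\}$. Since $\frac32+\frac1{2n}\ge\frac32+\frac1{2d}$, the hypothesis on $\gamma$ is inherited by $n$. For $\gamma\in(1,2)$ we set $\widetilde L_B:=L_B+\lambda_n(\beta)$. Then $E_B^{\beta-1}$ is the eigenspace of $\widetilde L_B$ with eigenvalue $\lambda_n(\beta-1)$. Each derivative $\partial_{y_i}B$ is a $(\beta-1)$-homogeneous solution of the linearized equation, so $\operatorname{span}\{\partial_{y_i}B\}\subset E_B^{\beta-1}$. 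These $n$ functions are linearly independent: a relation $\sum a_i\partial_{y_i}B=0$ means $B$ is invariant in the direction $a$, which is impossible since $B>0$ away from $0$ and $B$ is homogeneous. Hence the claim is equivalent to the dimension bound $\dim E_B^{\beta-1}\le n$.

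The spectral picture I would establish is the following. As in the proof of \cref{lemma:kernel-cylindric}, testing with $\psi=B^{\gamma-1}>0$ gives a Picone-type lower bound $\mu_1(\widetilde L_B)\ge(\gamma-1)\lambda_n(\beta)=(\beta-2)(\beta+n-2)$. Moreover
$$\lambda_n(\beta-1)-(\gamma-1)\lambda_n(\beta)=n-1.$$
The ground state is positive, while every $\partial_{y_i}B$ changes sign: by homogeneity and convexity (\cref{prop:convexity-of-blowups}), $\partial_{y_i}B$ is odd-like along the $y_i$-direction. So $\lambda_n(\beta-1)$ lies strictly above $\mu_1$. The goal is to show that $\lambda_n(\beta-1)$ is exactly the \emph{second} eigenvalue $\mu_2(\widetilde L_B)$ and that $\mu_{n+2}(\widetilde L_B)>\lambda_n(\beta-1)$. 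To get the upper control I would refine the Picone identity. For $w=\psi v$,
$$\int|\nabla_\omega w|^2+c\,w^2\;\ge\;\int\psi^2|\nabla_\omega v|^2+\Big((\gamma-1)\lambda_n(\beta)+(\gamma-1)(2-\gamma)B^{-2}|\nabla_\omega B|^2\Big)w^2,$$
and the remaining task is a weighted Poincaré/spectral estimate for $\int\psi^2|\nabla v|^2$ on $\mathbb S^{n-1}$ with weight $\psi^2=B^{2\gamma-2}$. I would bound this weighted operator from below by comparing with the radial case $B=b_{\text{rad}}$, where everything is explicit (\cref{prop:radial-cone-is-int-traslations}) and the spectrum of $L_{b_{\rm rad}}$ consists of spherical harmonics. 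A Rayleigh-quotient argument on the $(n+1)$-codimensional complement of the ground state and of the span of $\partial_{y_i}B$ should then yield $\mu_{n+2}\ge(\gamma-1)\lambda_n(\beta)+\lambda_n(2)$ up to a perturbation controlled by $2-\gamma$. Requiring this to exceed $\lambda_n(\beta-1)$, that is requiring a surplus larger than $n-1$, is where I expect the threshold $\gamma<\frac32+\frac1{2n}$ to appear. A cheap check is the radial profile, where the next level beyond $\mathcal H_1$ is $\mathcal H_2$ and the gap is $\lambda(2)-\lambda(1)=n+1$.

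For the low-dimensional cases I would argue separately, with no restriction on $\gamma\in(1,2)$. If $n=2$, $\widetilde L_B$ is a periodic Sturm–Liouville operator on the circle, so every eigenvalue has multiplicity at most $2$. Since $\partial_{y_1}B,\partial_{y_2}B$ already give two independent eigenfunctions, we get $E_B^{\beta-1}=\operatorname{span}\{\partial_{y_1}B,\partial_{y_2}B\}$ immediately. If $n=3$, I would use that $\partial_eB$ has exactly two nodal domains: convexity of $B$ makes $\{\partial_eB=0\}\cap\mathbb S^2$ a single curve. This forces $\lambda_3(\beta-1)=\mu_2$. Cheng's multiplicity bound for the second eigenvalue of a Schrödinger operator on $\mathbb S^2$ is $3$, which again gives $\dim E_B^{\beta-1}\le3=n$.

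The main obstacle is the general-$n$ step, namely proving $\mu_{n+2}(\widetilde L_B)>\lambda_n(\beta-1)$ for a cone $B$ that is not explicitly known. The first thing I would try is the refined Picone identity above, combined with the fact that the $\partial_{y_i}B$ are themselves eigenfunctions. The idea is to write any $w\in E_B^{\beta-1}$ orthogonal to them and test the quadratic form on $w$ against the convexity information $D^2B\ge0$, via a Bochner-type identity for the $(\beta-1)$-homogeneous extension. The aim is to show that the resulting coercivity constant is positive exactly when $\gamma<\frac32+\frac1{2n}$.
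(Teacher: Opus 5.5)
Your reduction to a positive base cone $B$ in dimension $n=d-\ell$ is correct, and so are the inheritance of the threshold, the linear independence of the $\partial_{y_i}B$, and the Sturm--Liouville argument for $n=2$.

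\textbf{Gap in the general-$n$ step.} This step is the only route available when $d\ge4$, and you leave it as a program. The ground-state transform with $\psi=B^{\gamma-1}$ leaves you with a weighted eigenvalue problem for an unknown weight $B^{2\gamma-2}$. Neither ``comparison with the radial case'' nor a Bochner identity based on $D^2B\ge0$ comes with a mechanism that would bound $\mu_{n+2}$. The missing idea is a convex splitting of the operator,
\[
\widetilde L_B=(2-\gamma)(-\Delta_\omega)+(\gamma-1)\Big(-\Delta_\omega+\frac\gamma2B^{\gamma-2}\Big).
\]
The equation $\Delta_\omega B+\lambda_n(\beta)B=\frac\gamma2B^{\gamma-1}$ says exactly that $B>0$ is an eigenfunction, hence the ground state, of the second operator, with eigenvalue $\lambda_n(\beta)$. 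So $B$, not $B^{\gamma-1}$, is the right positive test function here. Min--max then gives
\[
\mu_{n+2}(\widetilde L_B)\ge(2-\gamma)\mu_{n+2}(-\Delta_{\mathbb S^{n-1}})+(\gamma-1)\lambda_n(\beta)=2n(2-\gamma)+(\gamma-1)\lambda_n(\beta).
\]
This exceeds $\lambda_n(\beta-1)=(\gamma-1)\lambda_n(\beta)+n-1$ exactly when $\gamma<\frac32+\frac1{2n}$. Note that $\lambda_n(2)=2n$ enters multiplied by $2-\gamma$, not ``up to a perturbation''. Since $\mu_1$ is simple and $\dim E_B^{\beta-1}\ge n\ge2$, you get $\mu_1<\lambda_n(\beta-1)$. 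So this eigenvalue can only occupy positions $2,\dots,n+1$, and $\dim E_B^{\beta-1}\le n$. No convexity of $B$ is needed.

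\textbf{Gap in the case $n=3$.} The splitting above already covers $n=3$ for $\gamma<\frac53$, so a separate argument is needed only for $\gamma\in[\frac53,2)$. There your key step, ``$\partial_eB$ has two nodal domains, hence $\lambda_3(\beta-1)=\mu_2$'', does not follow. Courant's theorem bounds the number of nodal domains by the index, not conversely. For instance, Lewy's spherical harmonics of odd degree $k\ge3$ have exactly two nodal domains, yet their eigenvalue is $k(k+1)$, far above $\mu_2$. Two nodal domains only give $\mu_2\le\lambda_3(\beta-1)$, which any sign-changing eigenfunction already gives. Without equality, Cheng's multiplicity bound yields nothing useful.

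The paper avoids spectral counting in this case:
\begin{enumerate}
\item Since $D^2B>0$ away from the origin, $y=\nabla B(x)$ is a diffeomorphism of $\R^3\setminus\{0\}$.
\item Setting $w(\nabla B(x)):=v(x)$ turns a $(\beta-1)$-homogeneous solution $v$ of the linearized equation into a $1$-homogeneous $W^{2,2}_{\mathrm{loc}}$ solution of $\mathrm{tr}\big((D^2B)^2D^2w\big)=0$. The lower-order terms cancel because $\nabla\Delta B=\frac\gamma2(\gamma-1)B^{\gamma-2}\nabla B$ and Euler's identity gives $\nabla w\cdot y=w$.
\item After normalizing by a power of $|y|$, this equation is uniformly elliptic.
\item The three-dimensional linearity theorem for $1$-homogeneous solutions then gives $w(y)=a\cdot y$, that is, $v=\nabla B\cdot a$.
\end{enumerate}
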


\begin{lemma}\label{lemma:trans1}
	Let $d=3$, $\gamma \in (1,2)$ and $b\in\mathcal{B}_0$. Then $b$ is translational.
\end{lemma}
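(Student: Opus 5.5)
The plan is to identify $E_b^{\beta-1}$ with the eigenspace of a Schrödinger operator on $\mathbb{S}^2$ at its \emph{second} eigenvalue. Then I would use Cheng's bound that such an eigenspace has dimension at most $3$ on $\mathbb{S}^2$.

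Set $\widetilde L_b:=-\Delta_\theta+\frac\gamma2(\gamma-1)b^{\gamma-2}$ on $\mathbb{S}^2$. Since $b\in\mathcal{B}_0$ is positive, the potential is smooth and bounded, and $E_b^{\beta-1}=\ker(\widetilde L_b-\lambda(\beta-1))$. Differentiating the equation of $b$ shows that each $\partial_{x_i}b$ ($i=1,2,3$) restricted to the sphere lies in $E_b^{\beta-1}$. These three functions are linearly independent: a relation $a\cdot\nabla b\equiv0$ would make $b$ invariant under translations in the direction $a$, which is impossible for a cone that is positive away from $0$. So it suffices to prove $\dim E_b^{\beta-1}\le 3$.

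First, I would show that $\lambda(\beta-1)$ is the second eigenvalue of $\widetilde L_b$. The first eigenfunction is positive, hence not orthogonal to nothing in particular but certainly of constant sign, whereas each $\partial_{x_i}b$ changes sign. Indeed, $\partial_{x_i}b$ is odd along the lines through the minimum point $0$ of the convex function $b$ (convexity from \cref{prop:convexity-of-blowups}); more simply, $\int_{\mathbb{S}^2}\partial_{x_i}b\,\varphi_1\,d\HH^{2}=0$ forces a sign change. This gives $\lambda_1<\lambda(\beta-1)$. Next I would show that $\partial_{x_i}b$ has exactly two nodal domains on $\mathbb{S}^2$. Let $\Gamma_t:=\{b=t\}$, a convex surface enclosing $0$. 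By homogeneity, the nodal domains of $\partial_{x_i}b$ on the sphere correspond, via radial projection, to those on the single level set $\Gamma_1$. On $\Gamma_1$, $\{\partial_{x_i}b>0\}$ is the set of points whose outer normal has positive $e_i$-component, which is a connected open "hemisphere" of the convex surface; the same holds for the negative set. Given two nodal domains, Courant's theorem together with the fact that an eigenfunction of eigenvalue $\ge\lambda_3$ orthogonal to $\varphi_1,\varphi_2$ cannot have only two nodal domains does not directly apply. So I would argue instead as follows: if $\lambda_2<\lambda(\beta-1)$, take a suitable combination $\sum a_i\partial_{x_i}b$ vanishing on a chosen nodal line pattern and derive a contradiction via the standard Courant-type argument (restriction to a nodal domain gives a Dirichlet problem with first eigenvalue $\lambda(\beta-1)$, strictly larger than $\lambda_2$). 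This establishes $\lambda(\beta-1)=\lambda_2$.

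Finally, I would apply Cheng's theorem: on $\mathbb{S}^2$ the multiplicity of the second eigenvalue of $-\Delta+V$ with $V$ smooth is at most $3$. This yields $E_b^{\beta-1}=\mathrm{span}\{\partial_{x_1}b,\partial_{x_2}b,\partial_{x_3}b\}$, that is, $b$ is translational.

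The main obstacle is the step $\lambda(\beta-1)=\lambda_2$, namely excluding eigenvalues strictly between $\lambda_1$ and $\lambda(\beta-1)$. I would first try a direct route: the two-nodal-domain property of $\partial_{x_i}b$, combined with the strict convexity of the level sets of $b$ (which I would need to justify for positive cones, for instance via the strong maximum principle applied to $D^2b\ge0$), should give that the nodal domains are Dirichlet domains whose first eigenvalue is $\lambda(\beta-1)$. Then a domain-monotonicity and dimension-count argument should rule out an intermediate eigenvalue.
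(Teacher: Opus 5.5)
Your preliminary steps are fine. The functions $\partial_{x_1}b,\partial_{x_2}b,\partial_{x_3}b$ are linearly independent elements of $E_b^{\beta-1}$. They change sign, since for instance $\partial_{x_i}b(\pm e_i)=\pm\beta\, b(\pm e_i)$; your remark that $\partial_{x_i}b$ is odd along lines through $0$ is not correct, because $b$ need not be even. Cheng's theorem would give $\dim E_b^{\beta-1}\le 3$ \emph{provided} $\lambda(\beta-1)$ is the second eigenvalue of $\widetilde L_b$.

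That identification is the genuine gap, and it carries the whole difficulty.
\begin{itemize}
\item Courant's theorem bounds the number of nodal domains from above in terms of the index, never from below. On $\mathbb{S}^2$ an eigenfunction with exactly two nodal domains can lie arbitrarily high in the spectrum (Lewy's odd-degree spherical harmonics). So the two-nodal-domain property of each $a\cdot\nabla b$ does not exclude an eigenvalue in $(\lambda_1,\lambda(\beta-1))$.
\item The domain-monotonicity contradiction you sketch, $\lambda_2=\lambda_1^D(D)\ge\lambda_1^D(\{\pm a\cdot\nabla b>0\})=\lambda(\beta-1)$, requires a nodal domain $D$ of the hypothetical second eigenfunction to be contained in a nodal domain of some $a\cdot\nabla b$. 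The two free parameters in $a$ only let you route the nodal curve of $a\cdot\nabla b$ through two prescribed points. A nodal domain bounded by a seam-like curve fits into no such ``hemisphere''.
\item The only spectral tool in sight is the Weyl comparison of \cref{lemma:trans2}. It shows $\mu_5(\widetilde L_b)>\lambda(\beta-1)$, and hence your claim, only for $\gamma<\frac53$, a range where translationality already follows from \cref{lemma:trans2}.
\end{itemize}
For $\gamma\in[\frac53,2)$, which is precisely what \cref{lemma:trans1} adds, you have no control on the position of $\lambda(\beta-1)$. For higher eigenvalues, multiplicity bounds of Cheng type exceed $3$.

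The paper avoids spectral ordering altogether.
\begin{itemize}
\item Given $v\in E_b^{\beta-1}$, extended $(\beta-1)$-homogeneously, it defines $w$ by $w(\nabla b(x))=v(x)$. This makes sense because $D^2b>0$ (\cref{prop:convexity-of-blowups}) turns $\nabla b$ into a diffeomorphism of $\R^3\setminus\{0\}$.
\item Then $w$ is $1$-homogeneous. By Euler's relation, $\nabla w\cdot\nabla(\Delta b)=\frac\gamma2(\gamma-1)b^{\gamma-2}\,\nabla w(y)\cdot y=\frac\gamma2(\gamma-1)b^{\gamma-2}w$, so the zero-order terms cancel and $\mathrm{tr}\big((D^2b)^2D^2w\big)=0$.
\item This equation becomes uniformly elliptic after dividing the homogeneous coefficient matrix by the appropriate power of $|y|$.
\item In three dimensions $w\in W^{2,2}_{\mathrm{loc}}(\R^3)$, and the linearity theorem for $1$-homogeneous solutions of uniformly elliptic equations in $\R^3$ gives $w(y)=a\cdot y$, i.e.\ $v=a\cdot\nabla b$.
\end{itemize}
So the dimension-three input is a Liouville-type rigidity for $1$-homogeneous solutions, not a bound on eigenvalue multiplicity. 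No information on where $\lambda(\beta-1)$ sits in the spectrum is needed.
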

\begin{proof}
	Let $v(r,\theta)=r^{\beta-1}\psi(\theta)$ be the $(\beta-1)$-homogeneous extension of a trace $\psi\in E_b^{\beta-1}$. We need to show that $v$ is a partial derivative of $b$.
	
	By the convexity of $b$ in \cref{prop:convexity-of-blowups}, we have that $D^2 b>0$ in $\R^3\setminus\{0\}$. Then $y=\nabla b(x)$ is a diffeomorphism in $\R^3\setminus\{0\}$ and we set $w(y):=v(x)$, where $y:=\nabla b(x)$. Notice that, since $v$ is $(\beta-1)$-homogeneous and $b$ is $\beta$-homogeneous, the function $w$ is necessarily $1$-homogeneous. Since $d=3$, we also observe that $w\in W^{2,2}_{\text{\loc}}(\R^3)$, indeed $$\int_{B_\eps}|D^2 w|^2\,dy\le C\int_{B_\eps} \frac1{|y|^2}\,dy<+\infty.$$
    The proof will be complete once we show that $w$ satisfies an equation of the form
\be\label{eq-for-w-a}
    \text{tr}(A D^2 w)=0
    \qquad\text{in }\mathbb R^3\setminus\{0\},
\ee
for some uniformly elliptic matrix $A=(a_{ij})\in \R^{3\times 3}$. Indeed, since $w$ is
$1$-homogeneous and $w\in W^{2,2}_{\text{loc}}(\mathbb R^3)$, we can then infer from
\cite{nadirashvili} that $w$ is linear. Hence there exists $a\in\mathbb R^3$ such that
$w(y)=y\cdot a$. In terms of $v$, this gives $v=\nabla b\cdot a$, and the proof follows.

	In order to prove \eqref{eq-for-w-a}, we write $v(x)=w(\nabla b(x))$. For the sake of readability, in the following computation we emphasize the dependence on $x$. First, we have $$\Delta_x v=\Delta_x \big(w(\nabla_x b(x))\big)=\mathrm{tr}\left((D^2_x b)^2 D^2 w\right) +\nabla w\cdot \nabla_x (\Delta_x b(x)).$$ Since $\Delta_x b=\frac\gamma2 b^{\gamma-1}$, the second term in the right-hand side becomes $$\nabla w\cdot \nabla_x(\Delta_x b(x))=\frac\gamma2(\gamma-1)b^{\gamma-2}\nabla w\cdot \nabla_x b=\frac\gamma2(\gamma-1)b^{\gamma-2} \nabla w\cdot y=\frac\gamma2(\gamma-1)b^{\gamma-2} w,$$ where in the last equality we used that $w$ is $1$-homogeneous and $y=\nabla_x b(x)$.
	Combining the above two identities and using that $\Delta_x v=\frac\gamma2(\gamma-1)b^{\gamma-2}v$, we infer that $$\mathrm{tr}\left((D_x^2b)^2 D^2 w(y)\right)=0\quad\text{in }\R^3\setminus\{0\}.$$
    Since $D^2b$ is $(\beta-2)$-homogeneous, if we set $\widetilde A(y):=(D^2b(x))^2$, then $\widetilde A(y)$ is $\lambda$-homogeneous, with $\lambda:=\frac{2(\beta-2)}{\beta-1}$. 
Moreover, since $\widetilde A$ is continuous and positive on the sphere $\partial B_1$, there exists $\Lambda>0$ such that $\widetilde A$ is uniformly elliptic on $\partial B_1$.
    Finally, \eqref{eq-for-w-a} follows by setting $A(y):=|y|^{-\lambda}\widetilde A(y)$.
\end{proof}

\begin{lemma}\label{lemma:trans2}
	Let $b\in\mathcal{B}_0$ and suppose that $\gamma\in\left(1,\frac32+\frac{1}{2d}\right)$, then $b$ is translational.
\end{lemma}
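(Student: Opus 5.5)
The plan is to adapt the argument of \cref{lemma:trans1} to general dimension, replacing the two-dimensional-type Liouville theorem of \cite{nadirashvili} (which is only available for $d=3$) by a Cordes-type argument. The assumption $\gamma<\frac32+\frac1{2d}$ should be exactly what makes the associated equation close enough to the Laplacian. Let $\psi\in E_b^{\beta-1}$ and let $v=r^{\beta-1}\psi$ be its $(\beta-1)$-homogeneous extension. By \cref{prop:convexity-of-blowups}, $D^2b\ge0$; since $b\in\mathcal B_0$ is positive away from the origin, we will argue, as in \cref{lemma:trans1}, that $D^2b>0$ on $\R^d\setminus\{0\}$. Then $y=\nabla b(x)$ is a diffeomorphism of $\R^d\setminus\{0\}$, and $w(y):=v(x)$ is $1$-homogeneous.

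Exactly as in \cref{lemma:trans1}, the equations $\Delta v=\frac\gamma2(\gamma-1)b^{\gamma-2}v$ and $\Delta b=\frac\gamma2 b^{\gamma-1}$ give
\[
\mathrm{tr}\big(\widetilde A(y)D^2w(y)\big)=0 \quad\text{in }\R^d\setminus\{0\},\qquad \widetilde A(y):=(D^2b(x))^2 .
\]
Also $w\in W^{2,2}_{\rm loc}(\R^d)$ for $d\ge3$, since $|D^2w|\le C|y|^{-1}$; the case $d=2$ is covered by the other branch of \cref{prop:trans0}. The goal is to show that $w$ is linear, so that $v=\nabla b\cdot a$ and $b$ is translational.

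To prove linearity I would use a Cordes (Miller-type) argument rather than Nadirashvili's result. For a $1$-homogeneous $w$, the function $|D^2w|^2$ is $(-2)$-homogeneous. On the sphere one can integrate the Bochner/Miranda--Talenti identity $\int|D^2w|^2=\int(\Delta w)^2$ (valid for homogeneous functions after a radial cutoff, with boundary terms controlled by homogeneity). Combining it with $\mathrm{tr}(\widetilde A D^2w)=0$ gives
\[
\int(\Delta w)^2\,\eta\le \Big(1-\frac{(\mathrm{tr}\widetilde A)^2}{d\,|\widetilde A|^2}\Big)^{-1}\!\cdot\text{(error)},
\]
so that $D^2w\equiv0$ as soon as the Cordes condition
\[
\frac{|\widetilde A|^2}{(\mathrm{tr}\widetilde A)^2}<\frac1{d-1}
\]
holds pointwise. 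Concretely, the step is to write $\Delta w=\mathrm{tr}\big((I-\kappa\widetilde A)D^2w\big)$ with the optimal $\kappa=\mathrm{tr}\widetilde A/|\widetilde A|^2$, and estimate $|\Delta w|\le|I-\kappa\widetilde A|\,|D^2w|$ with $|I-\kappa\widetilde A|^2=d-(\mathrm{tr}\widetilde A)^2/|\widetilde A|^2<1$.

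The main obstacle is verifying this pinching condition for $\widetilde A=(D^2b)^2$ with only the information available about an unknown cone $b$. I would use three facts.
\begin{itemize}
\item The Euler relations: $D^2b\,x=(\beta-1)\nabla b$ and $x\cdot D^2b\,x=\beta(\beta-1)b$.
\item The trace identity $\mathrm{tr}D^2b=\frac\gamma2 b^{\gamma-1}$.
\item A lower bound on the eigenvalues of $D^2b$.
\end{itemize}
For the lower bound, the natural candidate is obtained by applying to the tangential Hessian the quantity $B^{\gamma-1}$ used in \cref{lemma:kernel-cylindric}, together with the radial eigenvalue of order $\beta(\beta-1)b/|x|^2$. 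The aim is to bound the ratio between the largest and smallest eigenvalues of $D^2b$ by an explicit function of $\gamma$. One then checks that the Cordes inequality holds precisely when $\gamma<\frac32+\frac1{2d}$, which corresponds to $\beta<\frac{4d}{d-1}$.

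If a pointwise bound is out of reach, I would fall back on an integrated version. One multiplies the equation by $\Delta w$ and integrates over $\partial B_1$, weighted by the radial profile. The spherical information on $b$ (for instance $\int_{\partial B_1}(\Delta_\theta b+\lambda(\beta)b)=\frac\gamma2\int b^{\gamma-1}$) should then be enough to control the off-diagonal part on average, and the threshold on $\gamma$ should come out of the resulting quadratic inequality.
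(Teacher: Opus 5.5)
Your reduction is set up correctly, but the one step everything depends on is missing. That step is the pointwise Cordes pinching $|\widetilde A|^2/(\mathrm{tr}\,\widetilde A)^2<\frac1{d-1}$ for $\widetilde A=(D^2b)^2$, and it cannot be recovered from your ingredients. The Euler relations and $\mathrm{tr}\,D^2b=\frac\gamma2 b^{\gamma-1}$ only fix the trace and the action of $D^2b$ on $x$. They give no control of the ratio of the eigenvalues of $D^2b$ for an unknown cone. The computation with $B^{\gamma-1}$ in \cref{lemma:kernel-cylindric} bounds the first eigenvalue of a spherical operator, not the Hessian of $b$.

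Worse, your claim that the Cordes threshold coincides with $\gamma<\frac32+\frac1{2d}$ is false already for $b_{\mathrm{rad}}$. There $D^2b$ has eigenvalues proportional to $1$ (multiplicity $d-1$) and $\beta-1$. With $s=(\beta-1)^2$ your condition reads $(d-1)(d-1+s^2)<(d-1+s)^2$, i.e. $s<\frac{2(d-1)}{d-2}$. For $d=3$ this gives $\gamma<\frac43$, whereas the lemma claims $\gamma<\frac53$.

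The Miranda--Talenti step is also not harmless. For $1$-homogeneous $w$ the cutoff terms are of the same order as the bulk terms. On $\partial B_1$ one only gets $\int|D^2w|^2=\int(\Delta w)^2-(d-2)\int w\,\Delta w$. The last term is positive on the degree-$k\ge2$ modes, which worsens the constant. So the change of variables $y=\nabla b(x)$ combined with a Cordes argument cannot reach the stated range. The integrated fallback is not yet an argument.

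The missing idea is to avoid any pointwise information on $b$ and argue spectrally on $\partial B_1$. Write
\[
-\Delta_\theta+\tfrac\gamma2(\gamma-1)b^{\gamma-2}=(2-\gamma)(-\Delta_\theta)+(\gamma-1)\bigl(-\Delta_\theta+\tfrac\gamma2 b^{\gamma-2}\bigr).
\]
Since $\Delta_\theta b+\lambda(\beta)b=\frac\gamma2 b^{\gamma-1}$ and $b>0$, the function $b$ is the ground state of the second bracket, with eigenvalue $\lambda(\beta)$.

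Apply Weyl's inequality $\mu_{d+2}(L_1+L_2)\ge\mu_{d+2}(L_1)+\mu_1(L_2)$, using $\mu_{d+2}(-\Delta_\theta)=2d$ on $\mathbb S^{d-1}$. This gives $\mu_{d+2}\ge 2d(2-\gamma)+(\gamma-1)\lambda(\beta)$. Since $\lambda(\beta-1)=(\gamma-1)\lambda(\beta)+d-1$, this exceeds $\lambda(\beta-1)$ exactly when $2d(2-\gamma)>d-1$, i.e. $\gamma<\frac32+\frac1{2d}$. This is where the threshold comes from.

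On the other hand, $\partial_{x_1}b,\dots,\partial_{x_d}b$ are $d$ linearly independent elements of $E_b^{\beta-1}$. They are independent because $\nabla b\cdot a\equiv0$ would make $b$ invariant along $a$, forcing $b(ta)=b(0)=0$. The simple ground state lies strictly below $\lambda(\beta-1)$, since that eigenvalue has multiplicity at least $d\ge2$. Counting eigenvalues up to $\mu_{d+1}$ then gives $E_b^{\beta-1}=\mathrm{span}\{\partial_{x_1}b,\dots,\partial_{x_d}b\}$.

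This route needs no strict convexity, no Legendre-type change of variables and no Liouville theorem, and it covers every $d\ge2$.
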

\begin{proof}
	We first recall the following Weyl's inequality: if $L_1$ and $L_2$ are two self-adjoint elliptic operators on $\partial B_1$, then $$\mu_k(L_1+L_2)\ge \mu_k(L_1)+\mu_1(L_2),$$ where $\mu_j(L)$ is the $j$-th eigenvalue of $L$. Applying this inequality with $L_1:=(2-\gamma)(-\Delta_\theta)$, $L_2:= (\gamma-1)(-\Delta_\theta+\frac\gamma2b^{\gamma-2})$ and $k:=d+2$, we obtain that $$\mu_{d+2}\left(-\Delta_\theta+\frac\gamma2(\gamma-1)b^{\gamma-2}\right)\ge (2-\gamma)\mu_{d+2}(-\Delta_\theta)+\mu_1(L_2)=(2-\gamma)2d+(\gamma-1)\lambda(\beta),$$ where we used that $(\gamma-1)b$ is the first eigenfunction of $L_2$, since $b>0$ on $\partial B_1$. By using that $\lambda(\beta-1)-(\gamma-1)\lambda(\beta)=d-1,$  $$\mu_{d+2}\left(-\Delta_\theta+\frac\gamma2(\gamma-1)b^{\gamma-2}\right)-\lambda(\beta-1)\ge (2-\gamma)2d-(d-1)>0$$ for $\gamma<3/2+1/(2d)$.
	
	On the other hand, the operator $-\Delta_\theta+\frac\gamma2(\gamma-1)b^{\gamma-2}$ has at least $d$ eigenfunctions corresponding to the eigenvalue $\lambda(\beta-1)$, which are exactly the partial derivatives of $b$. Adding also the first eigenfunction, the previous inequality implies that there are exactly $d$ eigenfunctions corresponding to the eigenvalue $\lambda(\beta-1)$, concluding the proof. 
 \end{proof}

\begin{proof}[Proof of \cref{prop:trans0}]
	By \cref{corollary:corollary1}, the translationality of a cylindrical extension is equivalent to proving the translationality for the base cone in $\mathcal{B}_0$.
	Then the result follows by combining \cref{prop:dim2-integrability}, \cref{lemma:trans1} and \cref{lemma:trans2}.
\end{proof}
\begin{remark}[Local bifurcations of radial cone are translational ]\label{rem:translational-bif}
By the parameter-dependent Lyapunov-Schmidt reduction \cref{lemma:Lyapunov-Schmidt-bifurcation}, the bifurcating cones $b_{\text{rad},\gamma}$ of \cref{thm:bifurcation-radial} depend continuously on $\gamma$. Hence, by standard perturbation
theory for self-adjoint elliptic operators, the eigenvalues of the linearized operator $L_{b_{\text{rad},\gamma}}$ vary continuously along the bifurcating branches $\gamma\in I_\varepsilon$. 
Since the radial cone $b_{\text{rad},\gamma}$ is
translational for every $\gamma\in(0,2)$, by
\cref{prop:radial-cone-is-int-traslations}, the continuity of the spectrum
implies that, after possibly shrinking the neighborhood of the resonant value $\gamma_{k,d-\ell}$, the local bifurcating cones in \cref{thm:bifurcation-radial} are translational as well.
\end{remark}
\subsection{Integrability and translationality in low dimensions.} We prove the following result concerning the integrability through rotations and the translationality in dimension $d=2$. 
			\begin{proposition}\label{prop:dim2-integrability}
				Let $d=2$ and $\gamma\in (1,2)$. Then, every cone $b\in\mathcal{B}_0$ is translational. Moreover, every non-radial cone $b\in\mathcal{B}_0$ is integrable through rotations.
			\end{proposition}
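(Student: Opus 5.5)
In dimension $d=2$ a cone $b\in\mathcal{B}_0$ is $b(r,\theta)=r^\beta g(\theta)$ with $g>0$ $2\pi$-periodic, and the profile equation is the ODE
\[
g''+\beta^2 g=\tfrac{\gamma}{2}g^{\gamma-1}\quad\text{on }\mathbb{S}^1 .
\]
Everything reduces to one-dimensional Sturm--Liouville theory for the linearized operator $L_b=-\partial_{\theta\theta}-\beta^2+\frac\gamma2(\gamma-1)g^{\gamma-2}$ on $\mathbb{S}^1$. The plan is to count eigenfunctions using multiplicity bounds for periodic Hill-type operators. For each eigenvalue, the space of periodic solutions of the second-order ODE has dimension at most $2$.

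\textbf{Translationality.} We need $E_b^{\beta-1}$, i.e.\ the eigenvalue $\lambda_2(\beta-1)-\lambda_2(\beta)=(\beta-1)^2-\beta^2$ of $L_b$. The two derivatives $\partial_{x_1}b,\partial_{x_2}b$ give, on $\mathbb{S}^1$, two traces in $E_b^{\beta-1}$. They are linearly independent: otherwise $a\cdot\nabla b\equiv 0$ on $\mathbb{S}^1$, hence on $\mathbb R^2$ by homogeneity, so $b$ would be invariant in a direction, contradicting $b>0$ away from $0$ together with $b(0)=0$ and $\beta$-homogeneity. Since the ODE eigenspace has dimension at most $2$, we get $E_b^{\beta-1}=\mathrm{span}\{\partial_{x_1}b,\partial_{x_2}b\}$. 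This step is immediate.

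\textbf{Integrability through rotations.} Here $K_b=E_b^\beta=\ker L_b$, and $g'=\partial_\theta g$ (the rotational derivative) lies in $K_b$. It is nonzero precisely when $b$ is non-radial. We must show $\dim K_b=1$, i.e.\ exclude a second periodic solution $\varphi$ of $L_b\varphi=0$. I would use the reduction-of-order/Wronskian argument. A second solution is $\varphi=g'\int (g')^{-2}$ on intervals between zeros of $g'$. By Floquet theory, both solutions are periodic iff the monodromy is the identity, which happens iff the period function of the conservative ODE $g''+\beta^2g-\frac\gamma2 g^{\gamma-1}=0$ has vanishing derivative along the energy level of $g$. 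The ODE is Hamiltonian with energy $E=\frac12(g')^2+\frac{\beta^2}{2}g^2-\frac12 g^\gamma$, and nonradial cones are closed orbits around the center $g\equiv c_{\mathrm{rad}}$ whose period divides $2\pi$. So the key claim is that the period map $E\mapsto T(E)$ is strictly monotone. Given this, the second solution $\partial_E g$ picks up a secular term proportional to $T'(E)\,g'$ and is not periodic, so $K_b=\mathrm{span}\{g'\}$, which is exactly the rotational derivative.

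\textbf{Main obstacle.} The hard step is monotonicity of the period function for the potential $V(g)=\frac{\beta^2}{2}g^2-\frac12 g^\gamma$ on $g>0$, with $\gamma\in(1,2)$. I would attack it with Chicone's criterion: the period is monotone if $(V-V_{\min})/(V')^2$ is convex. Alternatively one can cite the analysis of \cite{bonorinoetall}. That work counts the $\lfloor\sqrt{2\beta}\rfloor$ nontrivial solutions as the integers $k$ with $2\pi/k$ lying in the range of $T$ on $(\pi\sqrt{2/(2-\gamma)}/\beta\cdot\ldots)$, and this counting relies on exactly this strict monotonicity, between the linearized period $2\pi/(\beta\sqrt{2-\gamma})$ at the center and the limit $\pi/\beta$ of touching orbits. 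As a consistency check, the endpoints agree with the resonances $\gamma_{k,2}$ from \cref{prop:optimality-log}. If Chicone's criterion is delicate near $g\to0$, I would fall back on the substitution $g=u^{2/(2-\gamma)}$ to regularize, or directly differentiate the period integral.
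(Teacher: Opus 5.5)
Your proposal is essentially the paper's proof. Translationality comes from the two-dimensionality of the solution space of the angular ODE together with the linear independence of $\partial_{x_1}b,\partial_{x_2}b$. Integrability through rotations comes from the second solution $\psi=\partial_E g$, whose secular term $\psi(\theta+2\pi)-\psi(\theta)=-kT'(E_0)\,g'(\theta)$ (where $E_0$ is the energy of $g$ and $2\pi=kT(E_0)$) rules out periodicity, with $T'>0$ taken from \cite{bonorinoetall}.

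Only your side remarks need correcting. What is needed is $T'(E_0)\neq0$, not mere strict monotonicity. Also, the period of the orbits approaching $\{g=0\}$ tends to $\pi$ (a half-plane profile lives on an arc of length $\pi$), not $\pi/\beta$. So $T$ increases from $\pi\sqrt{2-\gamma}$ to $\pi$, and this is what produces the resonances $k=\sqrt{2\beta}$, i.e.\ $\gamma=\gamma_{k,2}$.
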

			\begin{proof}
				We divide the proof in two steps.
				\\ \vspace{-0.3cm}
				
				\noindent\textit{Step 1.} Let us show that $b$ is translational. By translation invariance of the equation, we have $$\partial_{x_1}b,\partial_{x_2}b\in E_b^{\beta-1}.$$ On the other hand, since $b\in\mathcal{B}_0$, the corresponding angular equation is a second-order linear ODE on $\mathbb{S}^1$, and thus its space of solutions is at most two-dimensional. Therefore, the claim follows once we show that $\partial_{x_1}b$, $\partial_{x_2}b$ are linearly independent. If this is not the case, then there would exist $a\in\R^2\setminus\{0\}$, such that $\nabla b\cdot a\equiv0$. Thus $b$ would be one-dimensional, contradicting $b\in\mathcal{B}_0$. This concludes the proof of Step 1.
				\\ \vspace{-0.3cm}
				
				\noindent\textit{Step 2.} Assume that $b\not=b_{\text{rad}}$, then we prove that $b$ is integrable through rotations. 
				Thus, after relabeling as $b$ the trace of the blow-up on the sphere $\mathbb{S}^1$, we have $b>0$, with $b$ non-constant and $b\not\equiv b_*$, where $b_*:=c_{\mathrm{rad}}$. Given $b'(\theta):=\partial_\theta b(\theta)$, the result follows once we prove that $K_b=\text{span}\{b'\}$.
				Since $b'\in K_b$ by rotational invariance of the equation, it is enough to show that there are no other $2\pi$-periodic solutions of $L_b\phi=0$. By definition, we have that $b$ solves the ODE $$b''+f(b)=0\text{ on } \mathbb{S}^1,\quad\text{where}\quad f(s)=\beta^2 s-\frac\gamma 2 s^{\gamma-1}.$$ 
				Set $F(\tau):=\int_0^\tau f(s)\,ds$, and  observe that $b_\ast$ is the unique critical point of $F$ on $(0,\infty)$, i.e., $F'(b_\ast)=0$. Moreover, $F$ is strictly decreasing on $(0,b_\ast)$ and strictly increasing on $(b_\ast,\infty)$. By the conservation of the energy, we have that $E_0=\frac12 (b'(\theta))^2+F(b(\theta))$, for every $\theta \in [0,2\pi]$.
				
				Since $b$ is positive and non-constant, if
				$m:=\min_{\mathbb{S}^1} b$ and $M:=\max_{\mathbb{S}^1} b$, then \be\label{eq:mbastM}0<m<b_\ast<M.\ee
                Indeed, if $b(\theta_0)=m$, evaluating the ODE at $\theta_0$ and using that $b''(\theta_0)\ge0$, we have $f(m)\le0$ and thus $0<m\le b_\ast$. Moreover, if by contradiction $m=b_{\ast}$, then $m=b(\theta_0)=b_{\ast}$ and consequently $b'(\theta_0)=0$. Thus, by uniqueness of the Cauchy problem, $b\equiv b_{\ast}$, which is a contradiction. Applying a similar argument to $M$, we deduce \eqref{eq:mbastM}.
                
				Since at those points where $b=m$ and $b=M$, we have $b'=0$, we infer that $E_0=F(m)=F(M)$. On one hand, since $F(0)=0$, $F$ is strictly decreasing on $(0,b_\ast)$, and $0<m<b_\ast$, we have $$E_0\in (F(b_\ast),0).$$

				On the other hand, since $F(b_\ast)<0$, $F(s)\to+\infty$ as $s\to+\infty$, and $F$ is strictly increasing on $(b_\ast,+\infty)$, there exists a unique $q\in(b_\ast,+\infty)$ such that $F(q)=0$.
				Now, for every $E\in (F(b_\ast),0)$, there exist unique turning values $m(E)\in (0,b_\ast)$, and $M(E)\in (b_\ast,q)$ such that $F(m(E))=F(M(E))=E.$ 
				The minimal period of the corresponding positive periodic orbit is
				$$T(E):=\frac{1}{\sqrt{2}}\int_{m(E)}^{M(E)}\frac{1}{\sqrt{E-F(s)}}\,ds.$$
				By \cite{bonorinoetall}, the period function $T(E)$ satisfies $T'(E)>0$ in the whole interval $(F(b_\ast),0)$. In particular, $T'(E_0)\neq 0$.
				
				After a rotation, we may assume that $b(0)=m$, and thus $b'(0)=0$. For $E$ close to $E_0$, let $B:=B(\theta,E)$ be the solution of
				$$\partial_{\theta\theta} B+f(B)=0\text{ on } \mathbb{S}^1,\quad B(0,E)=m(E),\quad \partial_\theta B(0,E)=0,$$ so that $b(\theta)=B(\theta,E_0).$
				Defining $\psi(\theta):=\partial_E B(\theta,E)\big|_{E=E_0}$, we have $L_b\psi=0$. 
                
                We also observe that, by differentiating $F(m(E))=E$, we have $m'(E_0)\not=0$, and thus $b'$ and $\psi$ are linearly independent, since $b'(0)=0$ and $\psi(0)\not=0$.
				Moreover, as we observed in Step 1, the solution space of the ODE $L_b\phi=0$ is two-dimensional and so every solution must be a linear combination of $b'$ and $\psi$. The elements of $K_b$ are precisely the $2\pi$-periodic ones.
                
				Differentiating the identity $B(\theta+T(E),E)=B(\theta,E)$ with respect to $E$, and using that $\partial_\theta B(\cdot,E_0)=b'$, we obtain that for $\theta \in [0,2\pi]$
				$$\psi(\theta+T_0)+T'(E_0)\,b'(\theta+T_0)=\psi(\theta),\quad\text{where }T_0:=T(E_0).$$
				Since $b'$ is $T_0$-periodic, $b'\not\equiv 0$, and $T'(E_0)\not=0$, we obtain that $\psi$ is not $T_0$-periodic. Since $b$ is $2\pi$-periodic and $T_0$ is its minimal period, then $T_0$ divides $2\pi$, namely there exists $k\in\N$ such that $2\pi=kT_0$. Iterating the previous identity, we get $$\psi(\theta+2\pi)-\psi(\theta)=-kT'(E_0)\,b'(\theta)\not=0,$$ and thus $\psi$ is not $2\pi$-periodic. This means that $K_b=\text{span}\{b'\}$, concluding the proof.
			\end{proof}
			The next corollary shows that any cylindrical extension of a non-radial two-dimensional positive cone is integrable through rotations.
            \begin{corollary}\label{corollary:integrability1}
				Let $\gamma\in (1,2)$ and $d\geq 3$. Let $b\in\mathcal{B}_{d-2}$ and $b\not=b_{d-2}$, then it is integrable through rotations.
			\end{corollary}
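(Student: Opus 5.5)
The plan is to reduce the statement to the two-dimensional base cone and combine \cref{corollary:corollary3} with \cref{prop:dim2-integrability}. Write $b(y,z)=B(y)$ with $(y,z)\in\R^2\times\R^{d-2}$, where $B\in\mathcal{B}_0$ is a positive cone in dimension $2$ for the same $\gamma\in(1,2)$. By \cref{corollary:corollary3}, since $\ell=d-2\in\{1,\ldots,d-2\}$ (because $d\ge3$), $b$ is integrable through rotations if and only if $B$ is translational and $B$ is integrable through rotations. It therefore suffices to verify these two properties for $B$.

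Translationality of $B$ follows directly from Step 1 of \cref{prop:dim2-integrability}: every cone in $\mathcal{B}_0$ in dimension $2$ is translational. For integrability through rotations, I would use Step 2 of the same proposition, which requires $B$ to be non-radial. This is where the hypothesis $b\neq b_{d-2}$ enters.

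If $B$ were radial, then $B(y)=c|y|^\beta$ with $B$ a $\beta$-homogeneous solution. Then $c$ is forced by the equation $\Delta B=\frac\gamma2 B^{\gamma-1}$, which gives $c^{\gamma-2}=\frac2\gamma\lambda_2(\beta)$. So $B=b_{\text{rad}}$ in $\R^2$, and hence $b=c_{d-2}|y|^\beta=b_{d-2}$ by \eqref{eq:parabolas}. This contradicts the assumption, so $B$ is a non-radial cone in $\mathcal{B}_0$ in dimension $2$. By \cref{prop:dim2-integrability}, $B$ is then integrable through rotations, and the conclusion follows.

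The only point needing care is this identification of a radial base cone with $b_{d-2}$, that is, checking that the constant is uniquely determined by the equation (with the convention that rotations of $b$ are also counted as parabola cones if needed). Everything else is a direct citation of earlier results.
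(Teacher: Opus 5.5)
Your proposal is correct and follows the paper's own argument: reduce to the two-dimensional positive base cone $B$, then combine \cref{corollary:corollary3} with the translationality and non-radial integrability statements of \cref{prop:dim2-integrability}. Your explicit check that a radial base cone must have $c^{\gamma-2}=\frac{2}{\gamma}\lambda_2(\beta)$, and hence that $b=b_{d-2}$, is a detail the paper leaves implicit.
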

			\begin{proof}
				If $b\in\mathcal{B}_{d-2}$, then $b$ is the cylindrical extension of a two-dimensional positive cone. Then the result follows by combining \cref{corollary:corollary3} and \cref{prop:dim2-integrability}.
			\end{proof}
            In the next corollary we summarize the results for cones in dimension $d=2$ and $d=3$.
			\begin{corollary}\label{corollary:integrability2}
				Let $\gamma\in (1,2)$, then:
                \begin{itemize} 
                \item if $d=2$, every cone $b$ is either integrable through rotations, or is a cylindrical cone $b=b_{\ell}$ for $\ell=0,1$;
                \item if $d=3$, every cone $b$ is either integrable through rotations, or is a cylindrical cone $b=b_{\ell}$ for $\ell=0,1,2$ or $b\in\mathcal{B}_0$.
                \end{itemize}
			\end{corollary}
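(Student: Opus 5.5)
The plan is to use the classification of blow-ups for $\gamma\in(1,2)$ in \cref{prop:classification-of-blowups-gamma-maggiore-1}: every singular cone belongs to $\mathcal{B}_\ell$ for some $\ell=0,\dots,d-1$. We then run through the possible values of $\ell$, separately for $d=2$ and for $d=3$.

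Case $\ell=d-1$. The cone is a rotation of $b_{\text{one}}$, which is exactly $b_{d-1}$ in the notation of \eqref{eq:parabolas}. So it falls in the list of exceptions $b=b_\ell$ with $\ell=d-1$. This is $\ell=1$ when $d=2$ and $\ell=2$ when $d=3$.

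Case $\ell=d-2$ (with $d\ge 3$). \cref{corollary:integrability1} shows that any $b\in\mathcal{B}_{d-2}$ with $b\neq b_{d-2}$ is integrable through rotations. The only remaining option is $b=b_{d-2}$, which is the exception $\ell=1$ when $d=3$.

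Case $\ell=0$ with $d=2$. \cref{prop:dim2-integrability} shows that every non-radial $b\in\mathcal{B}_0$ is integrable through rotations. What is left is $b_{\text{rad}}=b_0$.

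This settles $d=2$, since then $\ell\in\{0,1\}$ and both values have been handled.

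For $d=3$, the cases $\ell=2$ and $\ell=1$ are covered above. The case $\ell=0$ gives the option $b\in\mathcal{B}_0$, which appears in the statement with no extra claim, so it is automatic; it also contains $b_0=b_{\text{rad}}$.

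There is no real obstacle here, because the statement is only a bookkeeping summary. The one point to check carefully is the identification of $b_{\text{one}}$ with the parabola cone $b_{d-1}$. Comparing the constants, $c_{d-1}^{\gamma-2}=\frac{2}{\gamma}\lambda_1(\beta)=\frac{2}{\gamma}\beta(\beta-1)$. Using $\beta-1=\gamma\beta/2$, this equals $\beta^2=c_{\text{one}}^{\gamma-2}$, so the two cones coincide.
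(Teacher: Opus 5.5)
Your proposal is correct and follows the same route as the paper, which simply cites \cref{prop:dim2-integrability} for $d=2$ and \cref{corollary:integrability1} for $d=3$. You additionally spell out two steps the paper leaves implicit: the reduction to the classes $\mathcal{B}_\ell$ via \cref{prop:classification-of-blowups-gamma-maggiore-1}, and the identification $b_{\text{one}}=b_{d-1}$ through the check $c_{d-1}^{\gamma-2}=\frac{2}{\gamma}\beta(\beta-1)=\beta^2$.
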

			\begin{proof}
				The case $d=2$ follows by \cref{prop:dim2-integrability}, while $d=3$ follows by \cref{corollary:integrability1}.
			\end{proof}

        We also have the following rate characterization for minimizing cones in dimension $d=2$.
            \begin{remark}\label{rem:caso2special}
		Let $d=2$ and $\gamma\in (1,2)$, for a blow-up $b\in\mathcal{B}$ we have the following dichotomy.
        \begin{itemize}
            \item If $b=b_{\text{rad}}$ and $\gamma=\gamma_{k,2}=2-\frac{4}{k^2}$, for some $k\in \N_{\ge3}$, then the logarithmic convergence in \eqref{eq:log-conv} to $b_{\text{rad}}$ is sharp;
            \item otherwise, the logarithmic rate in the convergence \eqref{eq:log-conv} can be improved to $r^\alpha$. 
        \end{itemize}
		
        Indeed, we can combine the two epiperimetric inequalities \cref{t:epi-rad} and \cref{prop:epi-one-dim-cone}, together with the integrability through rotations for cones $b\not\in \{b_{\text{rad}}, b_{\text{one}}\}$ in \cref{prop:dim2-integrability} and the results on the radial cone \cref{prop:optimality-log}. Then the conclusion follows from the result in \cref{section:final}.
	\end{remark}			
			\section{Epiperimetric inequality for translational cones}\label{section:intermediate-epiperimetric}
			
			In this section we prove a logarithmic epiperimetric inequality for cylindrical extensions of cones, when $\gamma\in (1,2)$, under the additional assumption that $b$ is translational. Let $b\in\mathcal{B}_\ell$, for some $\ell=1,\ldots,d-2$, then we write \be\label{eq:bB2}b(y,z)=B(y),\quad (y,z)\in\R^{d-\ell}\times \R^\ell,\ee where $B>0$ in $\R^{d-\ell}\setminus\{0\}$.             
            Recalling the decomposition of the kernel           $K_b=K_B\oplus K_b^z$ given in \eqref{eq:Kbz-decomposition}, by \cref{prop:kernel-cylindric}, the assumption that $b$ is a translational cone is equivalent to $K_b^z$ being generated by rotations. We emphasize that $K_{b}^z$ represents only the part of the rotational kernel which is not of the form $|y|^\beta f(y/|y|)$, and not the full rotational kernel. Indeed, the rotational directions depending only on the $y$-variables are contained in $K_B$, and hence are excluded from $K_{b}^z$.

			The following is the main result of this section.
			\begin{proposition}[Logarithmic epiperimetric inequality]\label{t:epi-cylindrical}
			    Let $\gamma\in (1,2)$, then
            the epiperimetric inequality in \cref{thm:epiperimetric-combined} holds for cones $b\in\mathcal{B}_\ell$, $\ell=1,\ldots,d-2$ that are translational, under the closeness assumptions
            \be\label{eq:closness-assumption-cylindrical}\|z-b\|_{H^1(B_1)}\le \delta,\quad \|c-b\|_{L^\infty(\partial B_1)}\le \delta\quad\text{and}\quad |W(z)-W(b)|\le \delta.\ee 
				Moreover, if $b$ is sub-integrable (see \cref{def:sub-integrability}), then we can take $\sigma=0$.
			\end{proposition}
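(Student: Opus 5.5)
We follow the hybrid scheme sketched in the introduction: a Weiss-type contradiction argument in which the linearization is done not around $b$ but around the finite-dimensional family
$$\mathcal{S}:=\{R(b+\phi+Y(\phi)):\ \phi\in K_B\cap U,\ R\in SO(d)\},$$
where $Y\colon K_B\to N_B$ is the analytic map of the \emph{partial} Lyapunov-Schmidt reduction for the base cone $B\in\mathcal{B}_0$ in $\R^{d-\ell}$. This map is given by \cref{lemma:Lyapunov-Schmidt} applied in dimension $d-\ell$ and extended cylindrically. By \cref{corollary:corollary1}, translationality gives $K_b^z=\mathrm{span}\{z_j\partial_{y_i}B\}$. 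Hence every kernel direction is tangent to $\mathcal{S}$: the $K_B$-directions come from $\phi$, and the $K_b^z$-directions come from mixed rotations.

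The argument proceeds as follows.

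\textbf{Step 1 (projection).} For each trace $c$, choose $R$ and $\phi$ realizing the $H^1$-best approximation of $z$ in $\mathcal{S}$. After rotating, assume $R=\mathrm{Id}$ and write $b_\phi:=b+\phi+Y(\phi)$, which depends only on $y$. The $y$-only structure gives $\delta\mathcal{G}(\phi+Y(\phi))[\psi]=0$ for every $\psi\in N_B\oplus\mathcal{O}_b$, with the remaining first variation equal to $\nabla_\mu G$ on $K_B$. Here $G$ is the reduced functional of $B$. Since $b_\phi\ge \overline c|y|^\beta$-type bounds hold, $b_\phi$ is admissible.

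\textbf{Step 2 (constructive part on the projection).} Exactly as in \cref{lemma:uguale}, run the gradient flow of $G$ from the coordinates of $\phi$. This produces a radial interpolation $h_0(r,\theta)=r^\beta b_{\phi(\eta(r))}$. The Łojasiewicz inequality yields
$$W(h_0)-W(b)\le (1-\eps G^{\,1-2\sigma})(W(b_\phi)-W(b)),$$
with $\sigma=0$ (and $\eta\equiv0$) when $G\le0$, i.e.\ in the sub-integrable case, or when $G(\phi)\le 0$. Non-negativity of $h_0$ holds because it stays in $X_B$.

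\textbf{Step 3 (contradiction part on the remainder).} Suppose, by contradiction, that there are traces $c_j$ with $\eps_j\to0$ violating the inequality with competitors of the form
$$h=\zeta_1 h_0+(1-\zeta_1)z_j+\zeta_2\eta_j\varphi.$$
Here $\eta_j:=\|z_j-b_{\phi_j}\|_{H^1}$, $\zeta_1,\zeta_2$ are cut-offs, and $\zeta_2$ is supported away from $\{y=0\}$. If $\eta_j$ is negligible compared with $|W(b_{\phi_j})-W(b)|$, Step 2 already gives the inequality. Otherwise, set $w_j:=(z_j-b_{\phi_j})/\eta_j\rightharpoonup w$. We then argue in four substeps:
\begin{itemize}
\item Expanding $W$ around $b_{\phi_j}$ with the Taylor remainder $Q_j\ge0$ (convexity for $\gamma>1$), we obtain a uniform bound, as in Step 1 of \cref{prop:epi-one-dim-cone}.
\item Testing with $\varphi$ away from $\{y=0\}$, which is a set of zero capacity, shows that $w$ solves the linearized equation. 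Hence $w\in K_b$.
\item The orthogonality conditions from the best approximation, taken in both the $K_B$- and the mixed-rotation directions, force $w=0$.
\item Strong convergence then contradicts $\|w_j\|=1$.
\end{itemize}
The cross term coming from the first variation of $b_{\phi_j}$ along $K_B$ equals $\nabla_\mu G\cdot(\cdot)$. It is of order $|\nabla G|\,\eta_j$, and it is absorbed using the energy decrease obtained in Step 2.

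\textbf{Main obstacle.} The hardest point is Step 3. We must show that the limit lies in the kernel and vanishes while the competitor simultaneously incorporates the flow: the cut-offs must not destroy the gain obtained in Step 2, and the $L^\infty$ closeness is needed to keep $h\ge0$ near $\{y=0\}$. Our first attempt would be to split $W(h)-W(b)$ into three pieces: $W(h_0)-W(b)$, a quadratic term in $\eta_j w_j$, and a mixed term. The mixed term would be controlled by $C|\nabla_\mu G(\phi_j)|\eta_j\le \tau\eta_j^2+C_\tau|\nabla G|^2$. The term $C_\tau|\nabla G|^2$ would then be absorbed by the flow gain, which is $\gtrsim|\nabla G|\,G^{1-\sigma}$ in the Łojasiewicz regime.
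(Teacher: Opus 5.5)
There is a genuine gap, exactly at the point you flag as the main obstacle: the linear cross term, which your choice of projection creates.

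\textbf{Why the cross term survives.} You define $b_{\phi_j}$ by $H^1$-best approximation in $\mathcal S$. The remainder $w_j$ is then (asymptotically) $H^1$-orthogonal to $K_b$. It does not lie in $K_b^z\oplus N_B\oplus\mathcal O_b$, which is the complement of $K_B$ on which $\delta\mathcal G(\phi_0)$ vanishes. The condition $\Pi\psi=0$ defining $\mathcal O_b$ means $\int_{\partial B_1}\psi\,|y|^{-2}\chi\,d\HH^{d-1}=0$ for all $\chi\in X_B$. This is an orthogonality with weight $|y|^{-2}$, so $\mathcal O_b$ is not $L^2$- or $H^1$-orthogonal to $K_B$; take for instance $\psi=\rho^\beta f(\omega)g(\rho)$ with a suitable $g$. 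Hence $w_j$ generally has a non-trivial $K_B$-component in the oblique splitting, and $\delta W(q_j)[z_j-q_j]\sim\nabla_\mu G(\mu_j)\cdot a_j\,\eta_j$ survives. After division by $\eta_j^2$ it is of order $|\nabla_\mu G(\mu_j)|/\eta_j$, and nothing ties $|\nabla_\mu G(\mu_j)|$ to $\eta_j$ along a contradicting sequence. It also reappears in every test of your linearization step, since $\eta_j\,\delta W(\bar q_j)[\zeta_2\varphi]\neq0$ for general $\varphi$.

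\textbf{Why your absorption fails.}
\begin{itemize}
\item It runs the \L{}ojasiewicz inequality backwards. That inequality gives $|\nabla_\mu G|\ge C_L^{-1}|G|^{1-\sigma}$, hence $|\nabla_\mu G|^2\gtrsim|\nabla_\mu G|\,G^{1-\sigma}$. So $C_\tau|\nabla_\mu G|^2$ dominates the flow gain rather than being dominated by it.
\item Even an optimally tuned descent step only gains $c|\nabla_\mu G|^2$ with a fixed $c$. This cannot absorb $C_\tau|\nabla_\mu G|^2$ once $\tau$ is small.
\item If $G(\mu_j)\le0$ there is no flow gain at all, yet $\nabla_\mu G(\mu_j)$ need not vanish. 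This includes the whole sub-integrable case, where you want $\sigma=0$ with $\eta\equiv0$. A radial cut-off competitor reproduces only a fraction $\theta<1$ of the linear term, so a negative cross term leaves an error of order $|\nabla_\mu G(\mu_j)|\eta_j$ that would have to be $o(\eta_j^2)$.
\item Your regime split does not help either. To pass from Step 2 to the actual trace $c_j$ you need $\eta_j^2+|\nabla_\mu G(\mu_j)|\eta_j\ll\eps\,G$, not a comparison of $\eta_j$ with $W(b_{\phi_j})-W(b)$.
\end{itemize}

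\textbf{The missing idea: choose the projection so that the cross term vanishes identically.}
\begin{itemize}
\item First remove the $K_b^z$-component of $c-b$ by a rotation, as in \cref{lemma:kill-rotation}.
\item Take $\phi$ to be the $K_B$-component of $c-b$ in the oblique splitting $(K_b^z)^\perp=K_B\oplus N_B\oplus\mathcal O_b$ of \cref{prop:decompositionH1}, and set $\varphi:=c-b-\phi-Y(\phi)\in\mathcal M_b$.
\item Then $\delta\mathcal F(b+\phi_0)[\varphi]=0$ exactly: on $N_B$ by the partial Lyapunov-Schmidt reduction, on $\mathcal O_b$ by \cref{lemma:variation-Pi}. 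Consequently $W(z)-W(q)=W_0(z-q)+\int_{B_1}Q_q(z-q)\,dx$ has no linear term.
\item Likewise $\delta W(\bar q)[h-\bar q]=0$ for every competitor whose slices differ from those of $\bar q$ by elements of $\mathcal M_b$ on $(\rho,1)$ and coincide with them on $(0,\rho)$.
\end{itemize}
This is what lets the three-case choice of $\eta_1$ give $E_j\le\widetilde\eps_j\,\mathcal Q_{q_j}(z_j-q_j)$, with $E_j\le0$ when $G\le0$. The linearized equation is then first derived only against test functions with slices in $\mathcal M_b$, where the first variation at $\bar q_j$ vanishes. It is extended by density in the weighted space, using $b^{(\gamma-2)/2}w\in L^2$ and $K_b=\ker\delta^2\mathcal G(0)$. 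Finally, your best-approximation orthogonality is replaced by $w\in K_b\cap\mathcal M_b=\{0\}$, since every $w_j$ already lies in the closed subspace $\mathcal M_b$.
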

			\begin{remark}\label{rem:cyl1}
				By \cref{corollary:kernel-cylindrical}, if $b_\ell$ is a cylindrical cone as in \eqref{eq:parabolas}, for some $\ell=1,\ldots,d-2$, then $b_\ell$ is translational. Therefore, the epiperimetric inequality in \cref{t:epi-cylindrical} applies.
			\end{remark}
            \begin{remark}\label{rem:cyl2}
            By \cref{corollary:corollary1}, if $b$ is integrable through rotations, then $b$ is translational. Therefore, under the assumption of integrability through rotations, the epiperimetric inequality of \cref{t:epi-cylindrical} applies with $\sigma=0$.
			\end{remark}
			Let $b\in\mathcal{B}_\ell$ be as in \eqref{eq:bB2}, throughout the section, we will use the following coordinates. Given $\theta \in \mathbb{S}^{d-1}$, we write \be\label{eq:coordinates-theta}\theta=(\rho\,\omega,\sqrt{1-\rho^2}\,\eta)\quad\text{where}\quad \rho\in(0,1),\ \omega\in \mathbb{S}^{d-\ell-1},\ \eta\in \mathbb{S}^{\ell-1}.\ee
		      We also set $$X_B:=\{\phi\in H^1(\partial B_1):\ \phi=|y|^\beta f(y/|y|)=\rho^\beta f(\omega),\ f\in H^1(\mathbb{S}^{d-\ell-1}) \}.$$
              We notice that $b\in X_B$, since $b=b(\rho,\omega)=\rho^\beta B(\omega)$.
			\subsection{Decomposition of $H^1(\partial B_1)$} 
            In the next proposition, we prove a key oblique decomposition of $H^1(\partial B_1)$.
            \begin{proposition}\label{prop:decompositionH1}
			    Let $\gamma\in (1,2)$ and $b\in\mathcal{B}_\ell$ be a translational cone as in \eqref{eq:bB2}. Then
                \bea H^1(\partial B_1)=K_b^z\oplus (K_b^z)^\perp\quad\text{and} \quad (K_b^z)^\perp=K_B\oplus \mathcal{M}_b=K_B\oplus N_B\oplus \mathcal{O}_b.\eea
                Here $\mathcal M_b:=N_B\oplus\mathcal O_b$, $N_B$ denotes the orthogonal complement of $K_B$ in $X_B$, so that $X_B=K_B\oplus N_B$, and $\mathcal{O}_b$ is defined in \eqref{def:Ob} below. 
                
                Moreover, $\mathcal{O}_b$ is tangent to the level sets of the functional $\mathcal G$ along $X_B$, namely, for every $\phi_0\in X_B$ such that $b+\phi_0\ge0$, we have
			\be\label{eq:propertyRb}\delta\mathcal{G}(\phi_0)[\psi]=0\quad\text{for every }\psi\in \mathcal{O}_b.
			\ee
            \end{proposition}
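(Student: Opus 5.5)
The plan is to build $\mathcal O_b$ as the kernel of an explicit ``averaging'' projection $\Pi$ that sees exactly what the first variation of $\mathcal G$ at points of $X_B$ can detect. The orthogonality of the various pieces then comes from parity in the $\eta$-variable.

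\emph{Step 1: the first variation at $X_B$ is a weighted pairing.} Take $\phi_0=\rho^\beta f(\omega)\in X_B$ with $b+\phi_0\ge0$. Its $\beta$-homogeneous extension is $|y|^\beta f(y/|y|)$, so $\Delta_\theta\phi_0+\lambda_d(\beta)\phi_0=\rho^{\beta-2}\big(\Delta_\omega f+\lambda_{d-\ell}(\beta)f\big)$. Also $b+\phi_0=\rho^\beta(B+f)$, so $(b+\phi_0)^{\gamma-1}-b^{\gamma-1}=\rho^{\beta(\gamma-1)}\big((B+f)^{\gamma-1}-B^{\gamma-1}\big)$, and $\beta(\gamma-1)=\beta-2$. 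Hence, formally,
\[
\tfrac12\delta\mathcal G(\phi_0)[\psi]=\int_{\partial B_1}\rho^{\beta-2}\,g_f(\omega)\,\psi\,d\HH^{d-1},
\qquad
g_f:=-\Delta_\omega f-\lambda_{d-\ell}(\beta)f+\tfrac\gamma2\big((B+f)^{\gamma-1}-B^{\gamma-1}\big).
\]
Writing $d\HH^{d-1}=\rho^{d-\ell-1}(1-\rho^2)^{(\ell-2)/2}\,d\rho\,d\omega\,d\eta$ in the coordinates \eqref{eq:coordinates-theta}, I would define
\[
\Pi\psi(\omega):=\int_0^1\!\!\int_{\mathbb S^{\ell-1}}\rho^{\beta-2}\,\psi(\rho,\omega,\eta)\,\rho^{d-\ell-1}(1-\rho^2)^{\frac{\ell-2}2}\,d\eta\,d\rho .
\]
This gives $\delta\mathcal G(\phi_0)[\psi]=2\int_{\mathbb S^{d-\ell-1}}g_f\,\Pi\psi\,d\omega$, which is the claim of \cref{lemma:variation-Pi} announced in the introduction. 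The weight $\rho^{\beta-2}$ is bounded because $\beta>2$ for $\gamma>1$, so $\Pi:H^1(\partial B_1)\to H^1(\mathbb S^{d-\ell-1})$ is bounded, and \eqref{eq:propertyRb} holds on $\ker\Pi$. To make this rigorous for $f$ only in $H^1$, I will not integrate by parts pointwise. Instead I will compute $\int\nabla_\theta\phi_0\cdot\nabla_\theta\psi$ in the coordinates $(\rho,\omega,\eta)$, average in $\eta$, and integrate by parts only in $\rho$, where $\phi_0$ is an explicit power. This leaves an $\omega$-weak expression paired with $\Pi\psi$ and $\Pi$ applied to $\nabla_\omega\psi$.

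\emph{Step 2: the decomposition.} Set $\mathcal O_b:=\ker\Pi\cap(K_b^z)^\perp$ and $P\psi:=c_\beta^{-1}\rho^\beta\,\Pi\psi\in X_B$, where $c_\beta:=\HH^{\ell-1}(\mathbb S^{\ell-1})\int_0^1\rho^{2\beta-2}\rho^{d-\ell-1}(1-\rho^2)^{(\ell-2)/2}d\rho>0$. Then $\Pi(\rho^\beta f)=c_\beta f$, so $P$ is a projection onto $X_B$ with kernel $\ker\Pi$, and $X_B\cap\ker\Pi=\{0\}$. By \cref{prop:kernel-cylindric} and \cref{corollary:corollary1}, translationality gives $K_b^z=\mathrm{span}\{z_j\partial_{y_i}B\}$, and these functions are odd in $\eta$. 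Elements of $X_B$ are even in $\eta$, and both the $L^2$ and $H^1$ products on $\partial B_1$ respect the reflection $\eta\mapsto-\eta$. Hence $X_B\perp K_b^z$, and $\Pi$ annihilates $K_b^z$. So for $\psi\in(K_b^z)^\perp$ both $P\psi$ and $\psi-P\psi$ lie in $(K_b^z)^\perp$, which gives $(K_b^z)^\perp=X_B\oplus\mathcal O_b=K_B\oplus N_B\oplus\mathcal O_b$. The remaining statement $H^1=K_b^z\oplus(K_b^z)^\perp$ is trivial since $K_b^z$ is finite dimensional.

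\emph{Main obstacle.} I expect the difficulty to be Step 1 for rough $\psi$ near the spine $\{\rho=0\}$ and near $\{\rho=1\}$. Specifically, one must justify the $\rho$-integration by parts with no boundary terms: at $\rho=0$ I would use the zero capacity of the spine together with $\beta>2$, and at $\rho=1$ the vanishing of the factor $(1-\rho^2)^{\ell/2}$ in the radial flux. One must also check that $(B+f)^{\gamma-1}$ causes no trouble; for $\gamma-1\in(0,1)$ it is bounded on bounded sets, so this should be fine. The algebraic identity $\beta(\gamma-1)=\beta-2$, which makes all $\rho$-dependence factor as $\rho^{\beta-2}$, is the key structural fact, and I would verify it first.
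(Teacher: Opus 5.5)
Your proposal is correct and follows the paper's proof. Your weight $\rho^{\beta-2}\rho^{d-\ell-1}(1-\rho^2)^{\frac{\ell-2}{2}}$ is exactly the paper's $\rho^{d-\ell+\beta-3}(1-\rho^2)^{\frac{\ell-2}{2}}$, and the rigorous version of your Step 1 (coordinates, averaging in $\eta$, integrating by parts only in $\rho$) is the computation in \cref{lemma:variation-Pi}. Step 2, with $\mathcal O_b=\ker\Pi\cap(K_b^z)^\perp$, the projection $c_\beta^{-1}\rho^\beta\Pi$ and the oddness of $K_b^z$ in $z$, is the paper's argument; the paper also invokes translationality there, although by \cref{prop:kernel-cylindric} $K_b^z$ is odd in $z$ regardless.
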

            
            The main idea of the decomposition in \cref{prop:decompositionH1} is to divide the complement $\mathcal{M}_b$ of the whole kernel $K_b$ into two sets $N_B$ and $\mathcal{O}_b$. On the first subspace, $N_B$, we can prove a partial Lyapunov-Schmidt reduction (see \cref{lemma:Lyapunov-Schmidt-partial}), which is the analogue of \cref{lemma:Lyapunov-Schmidt} in the $y$-variables, producing a map $Y$. In the second subspace, $\mathcal{O}_b$, the first variation of $\mathcal{G}$ computed at $\phi+Y(\phi)\in X_B$, with $\phi\in K_B$, vanishes by \eqref{eq:propertyRb}. In this way, the first variation of $\mathcal{G}$ computed at $\phi+Y(\phi)$ vanishes on $\mathcal{M}_b$, which is a complement of the kernel $K_b$.
            
            We point out that this decomposition has the following two key advantages. First, 
            the elements in $X_B$ have the same order of vanishing as $b$, and thus small perturbations of $b$ in $X_B$ are non-negative. Secondly, the finite-dimensional reduction in $y$ allows us to prove that the Lyapunov-Schmidt map $Y$ is analytic, and thus we can apply the Łojasiewicz inequality.

            In order to prove \cref{prop:decompositionH1}, we introduce a suitable projection $\Pi$, which will be used to define $\mathcal{O}_b$.
			\begin{definition}\label{def:map-Pi}
            We define the linear map $\Pi:H^1(\partial B_1)\to H^1(\mathbb{S}^{d-\ell-1})$ as follows.
            Using the coordinates in \eqref{eq:coordinates-theta}, for every $\psi=\psi(\rho,\omega,\eta)\in H^1(\partial B_1)$, we define $$\Pi\psi(\omega):=\int_{0}^1\rho^{d-\ell+\beta-3}(1-\rho^2)^{\frac{\ell-2}2}\int_{\mathbb{S}^{\ell-1}} \psi(\rho,\omega,\eta)\,d\HH^{\ell-1}(\eta)\,d\rho.$$ 
            \end{definition}
            The key point of this definition is that, when the first variation of $\mathcal{G}$ is computed at an element of $X_B$ in the direction $\psi$, it depends only on the projected component $\Pi\psi$, as shown in the following lemma.
			\begin{lemma}\label{lemma:variation-Pi}
				Let $\phi_0\in X_B$ be such that $b+\phi_0\ge0$. Then, if we set $q:=b+\phi_0\in X_B$ and write $q(\rho,\omega)=\rho^\beta Q(\omega)$, we get \bea\frac12\delta\mathcal{G}(\phi_0)[\psi]=\int_{\mathbb{S}^{d-\ell-1}} \Big(\nabla_\omega Q\cdot\nabla_\omega (\Pi\psi)-\lambda_{d-\ell}(\beta)Q\Pi\psi+\frac\gamma2 Q^{\gamma-1}\Pi\psi\Big)\,d\HH^{d-\ell-1}(\omega).\eea 
			\end{lemma}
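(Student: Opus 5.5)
The identity will follow from writing the first variation of $\mathcal G$ at $\phi_0$ in the product coordinates \eqref{eq:coordinates-theta}, where $q=b+\phi_0=\rho^\beta Q(\omega)$ does not depend on $\eta$. Start from the formula for $\frac12\delta\mathcal G(\phi_0)[\psi]$ in \cref{subsection:spherical-energies}. The terms involving $b$ alone combine to zero by the equation of $b$, so the first variation is simply
\[
\frac12\delta\mathcal{G}(\phi_0)[\psi]=\int_{\partial B_1}\Big(\nabla_\theta q\cdot\nabla_\theta\psi-\lambda(\beta)q\psi+\frac\gamma2 q^{\gamma-1}\psi\Big)\,d\HH^{d-1}.
\]
Integrating by parts on the sphere gives
\[
\frac12\delta\mathcal G(\phi_0)[\psi]=\int_{\partial B_1}\big(-\Delta_\theta q-\lambda(\beta)q+\tfrac\gamma2q^{\gamma-1}\big)\psi\,d\HH^{d-1}.
\]
This should be read weakly. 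Since $q$ is $\eta$-independent, the $\eta$-Laplacian drops out and only the $\rho$- and $\omega$-parts of $\Delta_\theta$ remain.

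Next, compute $\Delta_\theta q$ via homogeneity rather than coordinates. Let $\tilde q(y,z)=|y|^\beta Q(y/|y|)$. Its restriction to the sphere is $q$, and it is $\beta$-homogeneous on $\R^d$. Hence $\Delta_\theta q=\Delta\tilde q-\lambda(\beta)q$ on $\partial B_1$. Moreover $\Delta\tilde q=\Delta_y\tilde q=|y|^{\beta-2}\big(\Delta_\omega Q+\lambda_{d-\ell}(\beta)Q\big)$. On the sphere $|y|=\rho$, so
\[
-\Delta_\theta q-\lambda(\beta)q+\tfrac\gamma2q^{\gamma-1}=\rho^{\beta-2}\Big(-\Delta_\omega Q-\lambda_{d-\ell}(\beta)Q+\tfrac\gamma2\rho^{\beta(\gamma-1)-\beta+2}Q^{\gamma-1}\Big).
\]
Because $\beta(\gamma-2)=-2$, we have $\beta(\gamma-1)=\beta-2$, so the $\rho$-power inside the bracket is exactly $1$. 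The whole expression is therefore $\rho^{\beta-2}\,\mathcal E(\omega)$, where $\mathcal E:=-\Delta_\omega Q-\lambda_{d-\ell}(\beta)Q+\frac\gamma2Q^{\gamma-1}$ depends only on $\omega$.

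Then write the surface measure in these coordinates as
\[
d\HH^{d-1}=\rho^{d-\ell-1}(1-\rho^2)^{\frac{\ell-2}{2}}\,d\rho\,d\HH^{d-\ell-1}(\omega)\,d\HH^{\ell-1}(\eta).
\]
This is the standard Jacobian of the parametrization $\theta=(\rho\omega,\sqrt{1-\rho^2}\eta)$; one should double-check it, but it is consistent with the weight appearing in $\Pi$. Multiplying by $\rho^{\beta-2}$ and integrating in $\rho$ and $\eta$ against $\psi$ produces exactly $\int_{\mathbb S^{d-\ell-1}}\mathcal E\,\Pi\psi$, by \cref{def:map-Pi}. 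Undoing the integration by parts on $\mathbb S^{d-\ell-1}$ gives the stated formula.

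The main obstacle is rigor, since $q$ is only $H^1$ and vanishes where $\rho=0$, while $q^{\gamma-1}$ carries a singular weight. To handle this, perform the computation directly in weak form instead of using $\Delta_\theta q$ pointwise. Decompose $\nabla_\theta q\cdot\nabla_\theta\psi$ into its $\rho$-part and $\omega$-part using the metric in these coordinates. The $\omega$-part integrates directly to $\nabla_\omega Q\cdot\nabla_\omega\Pi\psi$, with the weight $\rho^{-2}$ from the metric combining with $\rho^\beta$. The $\rho$-part contains $\partial_\rho\psi$, and is handled by integrating by parts in $\rho$; the boundary terms at $\rho=0$ and $\rho=1$ vanish thanks to the weights $\rho^{d-\ell+\beta-2}$ and $(1-\rho^2)^{\ell/2}$. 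This $\rho$-integration by parts must produce the $\lambda_{d-\ell}(\beta)-\lambda(\beta)$ correction, which is the same cancellation as in the homogeneity computation above. Justify everything first for smooth $\psi$, and pass to general $\psi\in H^1(\partial B_1)$ by density, using that $\Pi$ is continuous from $H^1(\partial B_1)$ into $H^1(\mathbb S^{d-\ell-1})$.
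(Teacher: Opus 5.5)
Your proposal is correct, and its rigorous part is exactly the paper's proof. The paper also splits $\nabla_\theta q\cdot\nabla_\theta\psi$ using the metric $\frac{1}{1-\rho^2}d\rho^2+\rho^2 g_{\mathbb{S}^{d-\ell-1}}+(1-\rho^2)g_{\mathbb{S}^{\ell-1}}$. It integrates the $\rho$-part by parts against the weight $\alpha(\rho)=\rho^{d-\ell-1}(1-\rho^2)^{\frac{\ell-2}{2}}$, via the identity $\partial_\rho\big(\alpha(\rho)(1-\rho^2)\beta\rho^{\beta-1}\big)=\alpha(\rho)\rho^{\beta-2}\big(\lambda_{d-\ell}(\beta)-\lambda_d(\beta)\rho^2\big)$, and uses $q^{\gamma-1}=\rho^{\beta-2}Q^{\gamma-1}$.

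Your homogeneity computation of $\Delta_\theta q$ does not appear in the paper, but it is a correct heuristic that explains why the $\lambda_{d-\ell}(\beta)$ cancellation must occur. Your density step through the continuity of $\Pi$ is slightly more careful than the paper, which carries out the computation directly without comment.
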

			\begin{proof}
				We start by computing the three terms in the variation \be\label{eq:inlemmapi}\frac12\delta\mathcal{G}(\phi_0)[\psi]=\frac12\delta\mathcal{F}(q)[\psi]=\int_{\partial B_1}\Big(\nabla_\theta q\cdot\nabla_\theta \psi-\lambda_d(\beta)q\psi+\frac\gamma2 q^{\gamma-1}\psi\Big)\,d\HH^{d-1}.\ee         
                Regarding the first term in \eqref{eq:inlemmapi}, we notice that, using the coordinates $\theta=(\rho\,\omega,\sqrt{1-\rho^2}\,\eta)$ in \eqref{eq:coordinates-theta}, the metric $g_{\mathbb{S}^{d-1}}$ is given by $$g_{\mathbb{S}^{d-1}}=\frac{1}{1-\rho^2}\,d\rho^2+\rho^2g_{\mathbb{S}^{d-\ell-1}}+(1-\rho^2)g_{\mathbb{S}^{\ell-1}}.$$ Then, setting $\alpha(\rho):=\rho^{d-\ell-1}(1-\rho^2)^{\frac{\ell-2}{2}}$, we have $$\nabla_\theta q\cdot\nabla_\theta\psi=(1-\rho^2)\partial_\rho q\partial_\rho \psi+\frac{1}{\rho^2}\nabla_\omega q\cdot\nabla_\omega \psi,\quad d\mathcal{H}^{d-1}=\alpha(\rho)\,d\rho \,d\mathcal{H}^{d-\ell-1}(\omega)\,d\mathcal{H}^{\ell-1}(\eta),$$ 
                where we used that $q$ is $\eta$-independent.
                Therefore, denoting by $$\widetilde \psi(\rho,\omega):=\int_{\mathbb{S}^{\ell-1}} \psi(\rho,\omega,\eta)\,d\HH^{\ell-1}(\eta),$$ we have
				$$\int_{\partial B_1}\nabla_\theta q\cdot\nabla_\theta \psi\,d\HH^{d-1}=\int_{\mathbb{S}^{d-\ell-1}}\int_0^1\alpha(\rho)\Big((1-\rho^2)\partial_\rho q \partial_\rho\widetilde\psi+\frac{1}{\rho^2}\nabla_\omega q\cdot\nabla_\omega \widetilde\psi\Big)\,d\rho\,d\HH^{d-\ell-1}(\omega).
				$$
				By definition of $\alpha(\rho)$, we also have
                $$ \partial_\rho(\alpha(\rho)(1-\rho^2)\beta\rho^{\beta-1})=\alpha(\rho)\rho^{\beta-2}(\lambda_{d-\ell}(\beta)-\lambda_{d}(\beta)\rho^2).$$
                Then, using that $\partial_\rho q=\beta \rho^{\beta-1}Q(\omega)$ and $\nabla_\omega q=\rho^\beta \nabla_\omega Q$,
                we can integrate by parts to get 
                \bea \int_{\partial B_1}\nabla_\theta q\cdot\nabla_\theta \psi\,d\HH^{d-1}=\int_{\mathbb{S}^{d-\ell-1}}\int_0^1\Big(&\alpha(\rho)\lambda_d(\beta)\rho^{\beta}Q\widetilde \psi-\alpha(\rho)\rho^{\beta-2}\lambda_{d-\ell}(\beta)Q\widetilde\psi
                \\&\qquad+\alpha(\rho)\rho^{\beta-2}\nabla_\omega Q\cdot\nabla_\omega\widetilde \psi\Big)\,d\rho\,d\HH^{d-\ell-1}(\omega).\eea
                We also observe that the second and the third term in \eqref{eq:inlemmapi} are given by $$\int_{\partial B_1} \lambda_d(\beta) q\psi\,d\HH^{d-1}=\int_{\mathbb{S}^{d-\ell-1}}\int_0^1\alpha(\rho)\lambda_d(\beta)\rho^\beta Q\widetilde \psi\,d\rho\,d\HH^{d-\ell-1}(\omega)$$ and $$\int_{\partial B_1}\frac\gamma2q^{\gamma-1}\psi\,d\HH^{d-1}=\int_{\mathbb{S}^{d-\ell-1}}\int_0^1\alpha(\rho)\rho^{\beta-2} \frac\gamma2 Q^{\gamma-1}\widetilde \psi\,d\rho\,d\HH^{d-\ell-1}(\omega).$$
                Using \eqref{eq:inlemmapi} with the last three identities, we conclude the proof, by definition of $\Pi$.
			\end{proof}
			\begin{proof}[Proof of \cref{prop:decompositionH1}]
			    We first decompose $H^1(\partial B_1)=K_b^z\oplus (K_b^z)^\perp$. By \eqref{eq:Krot}, we observe that the elements of $K_b^z$ are odd in $z$ and so $$X_B\subset (K_b^z)^\perp.$$
            We also set \be\label{def:Ob}\mathcal{O}_b:=\ker(\Pi)\cap (K_b^z)^\perp\subset (K_b^z)^\perp,\ee where $\Pi$ is the projection of \cref{def:map-Pi}. In particular, for every $\psi\in \mathcal{O}_b$, we have $\Pi\psi=0$. Therefore, by \cref{lemma:variation-Pi}, for every $\phi_0\in X_B$ such that $b+\phi_0\ge0$, we have $\delta\mathcal{G}(\phi_0)[\psi]=0$, which is exactly \eqref{eq:propertyRb}. 

           Since $X_B=K_B\oplus N_B$, in order to conclude the proof, we only need to show the validity of the decomposition $(K_b^z)^\perp=X_B\oplus \mathcal{O}_b$. We observe that if $\phi_0\in X_B$, then we can write $\phi_0(\rho,\omega)=\rho^\beta f(\omega)$, and thus $\Pi \phi_0(\omega)=c_\ast f(\omega)$, for some $c_\ast=c_\ast(d,\ell,\beta)>0$.
			Then, if $\phi\in (K_b^z)^\perp$, we set $$\phi_0(\rho,\omega):=\frac{1}{c_\ast}\rho^\beta \Pi \phi(\omega)\in X_B\subset (K_b^z)^\perp,$$ so that $\Pi(\phi-\phi_0)=0$, and thus $\phi-\phi_0\in \mathcal{O}_b$. Thus we proved $(K_b^z)^\perp=X_B + \mathcal{O}_b$. Finally, if $\phi=\rho^\beta f(\omega)\in X_B\cap \mathcal{O}_b$, then $0=\Pi\phi=c_\ast f(\omega)$, and so $f=0$. Therefore $X_B\cap \mathcal{O}_b=\{ 0\}$, and thus we have the desired direct sum $(K_b^z)^\perp=X_B\oplus \mathcal{O}_b$.
			\end{proof}

			\subsection{A partial Lyapunov-Schmidt reduction} 
			We set $N:={\rm dim}\,K_B$ and we recall the subspace $N_B$, which is the orthogonal complement of $K_B$ in $X_B$, i.e., $X_B=K_B\oplus N_B$.
			
			The following proposition gives a Lyapunov-Schmidt reduction only in the $y$-variables (see e.g.~\cite[Section 3]{Simon} or \cite[Lemma B.1]{esv}). 

            \begin{proposition}\label{lemma:Lyapunov-Schmidt-partial}
            There is a neighborhood $U\subset K_B$ of $0$ in $C^{1,\alpha}(\partial B_1)$ and an analytic map $$Y:K_B\cap U\to N_B\subset X_B$$ such that the following holds.
            \begin{itemize}
            \item $Y(0)=0$, $\delta Y(0)=0$. Moreover,
            \bea\label{eq:cond-on-perp-partial}
			P_{N_B}\big(\delta\mathcal{G}(\phi+Y(\phi))\big)=0,\quad \mbox{for every }\phi \in K_B\cap U.
            \eea
    \item Let $\Phi_1,\dots,\Phi_N$ be an orthonormal basis of $K_B$. Then, there exists $\rho>0$ such that, for every $\mu\in B_\rho\subset\R^N$, the reduced functional $G:B_\rho\to\R$ defined by
\be\label{def:G-finito-dim-partial}
G(\mu):=\mathcal{G}(\Phi_\mu +Y\left(\Phi_\mu\right)),
\quad \mbox{with}\quad
\Phi_\mu:=\sum_{j=1}^N\mu_j\Phi_{j},
\ee
satisfies $P_{K_B}\big(\delta\mathcal{G}(\Phi_\mu+Y(\Phi_\mu))\big)=\nabla_\mu G(\mu)$, for every $\mu\in B_{\rho}$.
			\end{itemize}
            \end{proposition}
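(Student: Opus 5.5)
The plan is to reduce the statement to the Lyapunov-Schmidt reduction of \cref{lemma:Lyapunov-Schmidt} for the base cone $B\in\mathcal{B}_0$ in dimension $d-\ell$, transported to $X_B$ via the isomorphism $f\mapsto \rho^\beta f(\omega)$. By \cref{lemma:variation-Pi}, for $\phi_0=\rho^\beta f\in X_B$ and any direction $\psi$, the first variation $\frac12\delta\mathcal{G}(\phi_0)[\psi]$ equals the first variation of the spherical energy of $B$ on $\mathbb{S}^{d-\ell-1}$, evaluated at $f$ in the direction $\Pi\psi$. Restricting $\psi$ to $X_B$, where $\Pi(\rho^\beta g)=c_\ast g$, the restriction $\mathcal{G}|_{X_B}$ is a positive multiple $c_\ast$ of the functional $\mathcal{G}_B(f):=\mathcal{F}_B(B+f)-\mathcal{F}_B(B)$ on $\mathbb{S}^{d-\ell-1}$. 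A direct slicing in the coordinates \eqref{eq:coordinates-theta} confirms this for the energy itself, not only the variation.

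The first step is to identify the spaces under this isomorphism. $K_B$ corresponds to $\ker(L_B)$, with $L_B$ as in \eqref{eq:linearized-operator} in dimension $d-\ell$, and $N_B$ corresponds to its complement. The isomorphism is a rescaling by constants of both the $L^2$ and $H^1$ structures, so orthogonality is preserved up to the choice of inner product on $X_B$. If needed, I will define $P_{N_B}$ and $P_{K_B}$ through the $X_B$ inner product induced by $\Pi$, which makes them compatible with the $\mathbb{S}^{d-\ell-1}$ projections.

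Next I apply \cref{lemma:Lyapunov-Schmidt} to $B$. Since $B>0$ on $\mathbb{S}^{d-\ell-1}$, the map $f\mapsto (B+f)^\gamma$ is analytic near $0$ in $C^{1,\alpha}$, or in $L^\infty$. This yields an analytic $Y_B:\ker(L_B)\cap U\to \ker(L_B)^\perp$ with $Y_B(0)=0$, $\delta Y_B(0)=0$, and $P_{\ker(L_B)^\perp}\delta\mathcal{G}_B(\phi+Y_B(\phi))=0$. I then set $Y(\rho^\beta\phi):=\rho^\beta Y_B(\phi)$, which is analytic because the isomorphism is linear and bounded. Two facts remain:

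\begin{itemize}
\item The projection identity $P_{N_B}\delta\mathcal{G}(\phi+Y(\phi))=0$ follows from \cref{lemma:variation-Pi} tested with $\psi\in N_B$.
\item The gradient identity $P_{K_B}\delta\mathcal{G}(\Phi_\mu+Y(\Phi_\mu))=\nabla_\mu G(\mu)$ comes from the chain rule. Differentiating $G(\mu)=\mathcal{G}(\Phi_\mu+Y(\Phi_\mu))$ in $\mu_j$ gives $\delta\mathcal{G}[\Phi_j+\delta Y(\Phi_\mu)\Phi_j]$. The second term vanishes because $\delta Y\Phi_j\in N_B$, where the variation is zero by the previous point. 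What is left is $\delta\mathcal{G}[\Phi_j]$, the $j$-th component of the $K_B$-projection when $\Phi_j$ is orthonormal.
\end{itemize}

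The main obstacle I expect is the bookkeeping of inner products. Orthonormality of $\Phi_j$ in $H^1(\partial B_1)$ versus $L^2(\partial B_1)$, and the identification of $\delta\mathcal{G}$ as an element of the dual, need to be consistent so that "$P_{K_B}\delta\mathcal{G}=\nabla_\mu G$" holds literally. I would fix the $L^2(\partial B_1)$ pairing throughout, interpreting $P_{N_B}$ and $P_{K_B}$ as restrictions of the linear functional $\delta\mathcal{G}(\cdot)$ to those subspaces. With that convention both identities reduce to statements about the functional tested against vectors, and the weights introduced by $\Pi$ become harmless constants.

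The neighborhood $U$ in $C^{1,\alpha}(\partial B_1)$ is obtained by pulling back the one for $B$. The finite-dimensionality of $K_B$ makes all norms equivalent there, and the $C^{1,\alpha}$ smallness guarantees $B+\phi+Y(\phi)>0$, so that $\mathcal{G}$ is defined along the whole reduced family.
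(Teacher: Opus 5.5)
Your proposal is correct and is essentially the paper's argument. The paper obtains $Y$ by applying the standard Lyapunov--Schmidt reduction to the analytic restriction of $\mathcal{G}$ to $X_B$, and in \cref{remark:sub-int-ifandonlyif-sub-int} it records exactly your identification, $\mathcal{G}(\rho^\beta f)=c_\ast\widetilde{\mathcal{G}}(f)$ and $Y(\rho^\beta f)=\rho^\beta\widetilde Y(f)$, with the reduction for $B$ on $\mathbb{S}^{d-\ell-1}$.

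One small correction: under $f\mapsto\rho^\beta f$ only the $L^2$ structure is rescaled by a single constant. The $H^1(\partial B_1)$ inner product pulls back to $a\langle f,g\rangle_{L^2}+b\langle\nabla_\omega f,\nabla_\omega g\rangle_{L^2}$ with $a>b$, because the radial derivative adds an extra $L^2$ term. So your decision to fix the $L^2$ pairing when defining $N_B$ and the projections is what makes the transport literal.
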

			We point out that the functional $\mathcal{G}$ in \cref{lemma:Lyapunov-Schmidt-partial} must be understood as its restriction to $X_B$. Then, the analyticity of the Lyapunov-Schmidt map $Y$ in \cref{lemma:Lyapunov-Schmidt-partial} follows by the fact that the restriction of $\mathcal{G}$ to ${X_B}$ is analytic in a neighborhood of $0$.
            Notice that the analyticity of $Y$ is the key ingredient to apply the Łojasiewicz inequality in \cref{lemma:h0} below.

			\begin{remark}\label{remark:sub-int-ifandonlyif-sub-int} We observe that the partial Lyapunov-Schmidt reduction in \cref{lemma:Lyapunov-Schmidt-partial} is, in essence, the Lyapunov-Schmidt reduction of \cref{lemma:Lyapunov-Schmidt} applied to the cone $B$ on $\mathbb{S}^{d-\ell-1}$. In particular, the integrability and sub-integrability of $b$ are equivalent to the corresponding properties for $B$.

            More precisely, let $\mathcal{G}:H^1(\mathbb{S}^{d-1})\to \R$ and $\widetilde{\mathcal{G}}:H^1(\mathbb{S}^{d-\ell-1})\to \R$ be the functionals in \eqref{def:G} associated respectively to $b$ and $B$. Arguing as in \cref{lemma:variation-Pi}, for every $\phi_0=\rho^\beta f(\omega)\in X_B$ and $\psi=\rho^\beta \Psi(\omega)\in X_B$, we have
                     $$\mathcal{G}(\phi_0)=c_\ast\widetilde{\mathcal{G}}(f)\quad\text{and}\quad\delta\mathcal{G}(\phi_0)[\psi]=c_\ast\delta \widetilde{\mathcal{G}}(f)[\Psi],$$ for some $c_\ast=c_\ast(d,\ell,\beta)>0$. Let $\widetilde Y$ be the Lyapunov-Schmidt map associated with $B$ in \cref{lemma:Lyapunov-Schmidt}. Then, for $\phi_0=\rho^\beta f(\omega)\in K_B$ the correction map $Y$ associated with $b$ in \cref{lemma:Lyapunov-Schmidt-partial} satisfies
                     $$Y(\phi_0)(\rho,\omega)=\rho^\beta \widetilde Y(f)(\omega).$$
                     Therefore, if $\Phi_\mu=\rho^\beta \widetilde\Phi_\mu(\omega)\in K_B$, with $\widetilde \Phi_\mu$ as in \eqref{def:G-finito-dim}, we obtain $$G(\mu)=\mathcal{G}(\Phi_\mu+Y(\Phi_\mu))=\mathcal{G}(\rho^\beta \widetilde\Phi_\mu+\rho^\beta\widetilde Y(\widetilde\Phi_\mu))=c_{\ast}\widetilde{\mathcal{G}}(\widetilde\Phi_\mu+\widetilde Y(\widetilde\Phi_\mu))=c_{\ast} \widetilde{G}(\mu),$$ 
                        where $G$ and $\widetilde{G}$ are the reduced functionals associated with $\mathcal{G}$ and $\widetilde{\mathcal{G}}$, respectively. Since $c_\ast>0$, the integrability, respectively sub-integrability, of $b$ is equivalent to the integrability, respectively sub-integrability, of $B$.
			\end{remark}
			\subsection{Decomposition of the trace} 
            In order to decompose the trace $c\in H^1(\partial B_1)$, we use the oblique decomposition of \cref{prop:decompositionH1}. We first kill the rotational modes in $K_b^z$ using the following lemma.
            \begin{lemma}\label{lemma:kill-rotation}
                Let $c\in H^1(\partial B_1)$ be a non-negative trace satisfying the closeness assumptions \eqref{eq:closness-assumption-cylindrical}. Then, there exists a rotation $R\in SO(d)$ such that $$c-Rb \in (K_b^z)^\perp,$$ where $Rb(\theta):=b(R^{-1}\theta)$.
            \end{lemma}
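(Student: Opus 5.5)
The plan is to find $R$ by minimizing the distance from $c$ to the orbit of $b$ under rotations, and to read the orthogonality to $K_b^z$ off the first-order optimality condition. By \cref{corollary:corollary1} and the translationality of $b$, $K_b^z$ is spanned by the mixed rotational derivatives $z_j\partial_{y_i}B$, $i=1,\dots,d-\ell$, $j=1,\dots,\ell$. These are exactly the derivatives $\frac{d}{dt}\big|_{t=0} e^{tA_{ij}}b$, where $A_{ij}$ is the antisymmetric generator of the rotation in the $(y_i,z_j)$-plane (up to sign and constant). So the condition $c-Rb\perp K_b^z$ is a system of $\ell(d-\ell)$ scalar equations in the rotation parameters.

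The steps I will carry out are the following.
\begin{enumerate}
\item Restrict to the submanifold of rotations $R_a:=\exp\big(\sum_{i,j}a_{ij}A_{ij}\big)$, $a\in\R^{\ell(d-\ell)}$ small, and define
$$F(a):=\Big(\big\langle c-R_ab,\ R_a(z_j\partial_{y_i}B)\big\rangle_{H^1(\partial B_1)}\Big)_{i,j}.$$
\item Note that, since rotations act isometrically on $H^1(\partial B_1)$, the vanishing of $F(a)$ is equivalent to $R_a^{-1}c-b\perp K_b^z$. So it suffices to find $a$ with $F(a)=0$. The lemma as written asks for $c-Rb\perp K_b^z$ itself; to avoid having the rotated kernel appear, I will instead define $F$ with the fixed basis $z_j\partial_{y_i}B$ of $K_b^z$:
$$F(a)_{ij}:=\langle c-R_ab,\ z_j\partial_{y_i}B\rangle.$$
\item Compute $F(0)=\langle c-b, z_j\partial_{y_i}B\rangle$. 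By \eqref{eq:closness-assumption-cylindrical}, $|F(0)|\le C\delta$.
\item Compute the Jacobian: $\partial_{a_{kl}}F_{ij}(0)=-\langle A_{kl}\text{-derivative of }b,\ z_j\partial_{y_i}B\rangle$, which is (up to a constant) the Gram matrix of the basis $\{z_j\partial_{y_i}B\}$ of $K_b^z$, and hence invertible.
\item Since $a\mapsto R_ab\in H^1(\partial B_1)$ is $C^1$ near $0$ ($b$ is $C^1$ on the sphere away from a null set, and $\beta>1$, so $\nabla b\in L^2$ with rotations acting continuously), apply the implicit/inverse function theorem, or a quantitative fixed-point argument, to obtain $a$ with $|a|\le C\delta$ and $F(a)=0$. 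Then take $R=R_a$.
\end{enumerate}

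The main obstacle is the regularity of $a\mapsto R_ab$ in $H^1(\partial B_1)$: $b$ vanishes like $|y|^\beta$ along the spine, so $D^2b$ behaves like $|y|^{\beta-2}$. I need the derivative in $a$ to be continuous in $H^1$. Since $\beta-2>-1$ and the codimension is $d-\ell\ge2$, $|y|^{2(\beta-2)}$ is integrable, so $\nabla(\partial_a R_ab)\in L^2$ uniformly and continuously in $a$. This gives $F\in C^1$ near $0$ with nondegenerate differential, and the conclusion follows for $\delta$ small.

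One should also check that the Gram matrix is genuinely nondegenerate. The functions $z_j\partial_{y_i}B$ are linearly independent because the $\partial_{y_i}B$ are: otherwise $B$ would be invariant in some direction, contradicting $B>0$ away from $0$ in $\mathcal B_0$, exactly as in Step 1 of \cref{prop:dim2-integrability}.
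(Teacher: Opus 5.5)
Your final argument is correct and is essentially the paper's proof: the fixed-basis map $F(a)_{ij}=\langle c-R_ab,\,z_j\partial_{y_i}B\rangle$, whose differential at $0$ is nondegenerate because $K_b^z$ is spanned by the mixed rotational generators, combined with the inverse function theorem (the paper instead takes an $L^2$-orthonormal basis $\psi_1,\dots,\psi_m$ of $K_b^z$ and one-parameter rotation families generating each $\psi_i$, so that $DF(0)=\mathrm{Id}$ rather than a Gram matrix). Two small remarks: the opening ``distance-minimization'' plan is not needed, since, as you noticed, it would give orthogonality to rotated tangent directions rather than to $K_b^z$ itself; and your regularity worry is milder than you state, because $\gamma\in(1,2)$ gives $\beta>2$, so $b\in C^2(\partial B_1)$ and $a\mapsto R_ab$ is $C^1$ even into $C^1(\partial B_1)$.
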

            \begin{proof}
    Given an orthonormal basis $\psi_1,\ldots,\psi_m$ of $K_b^z$, we take $\nu=(\nu_{1},\ldots,\nu_{m})\in \R^m$ and we consider the rotation 
    $R_\nu:=R^{1}_{\nu_{1}}\circ\ldots\circ R^{m}_{\nu_m},$
    where $R_{t}^{j}$ is a one-parameter family of rotations whose infinitesimal action on $b$ generates $\psi_j$, i.e., $\partial_{t}(R^j_{t} b) \big|_{t=0}=\psi_j$.
    Now, consider the function $F:\R^{m}\to \R^{m}$ given by $$F(\nu):=\left(\int_{\partial B_1}(R_{\nu} b)\psi_{1}\,d\HH^{d-1},\ldots,\int_{\partial B_1}(R_{\nu} b)\psi_{m}\,d\HH^{d-1}\right).$$ Since $\partial_{\nu_j}(R_{\nu} b) \big|_{\nu=0}=\psi_j,$ we have that $ DF(0)=\mathrm{Id}$. Then, we
can apply the implicit function theorem to $F$ in a neighborhood of $0$. 
Therefore, if $c$ is sufficiently close to $b$, we can define $$\nu:=F^{-1}\left(\int_{\partial B_1}c\psi_{1}\,d\HH^{d-1},\ldots,\int_{\partial B_1}c\psi_{m}\,d\HH^{d-1}\right),$$ and $R:=R_{\nu}$ is the desired rotation.
\end{proof}

            Let $c\in H^1(\partial B_1)$ be a non-negative trace satisfying the closeness assumptions \eqref{eq:closness-assumption-cylindrical}. By \cref{lemma:kill-rotation}, up to a rotation, we can suppose that $R=\mathrm{Id}$, so that $c-b\in(K_b^z)^\perp$. 
            
            By the decomposition of $H^1(\partial B_1)$ in \cref{prop:decompositionH1}, we can write 
            $$
            c=b+\phi+\widetilde\varphi,\qquad \text{where}\quad \phi \in K_B,\, \widetilde\varphi \in \mathcal{M}_b.
            $$
            Now, set $\phi_0:=\phi+Y(\phi)$, where $Y$ is the Lyapunov-Schmidt map in \cref{lemma:Lyapunov-Schmidt-partial}. Since $Y(\phi)\in N_B\subset \mathcal{M}_b$, we can also set $\varphi:=\widetilde \varphi-Y(\phi)\in \mathcal{M}_b$. Then we have the decomposition of the trace \be\label{eq:dec-trace-cylindrical}c=b+\left(\phi+Y(\phi)\right)+\left(\widetilde \varphi -Y(\phi)\right)=b+\phi_0+\varphi.\ee
			Recalling the orthonormal basis $\Phi_1,\ldots,\Phi_{N}$ of $K_B$ and the definition of $\Phi_\mu$ in \eqref{def:G-finito-dim-partial}, we can write $$\phi_0=\Phi_{\mu^0}+Y(\Phi_{\mu^0})=\sum_{j=1}^N \mu_j^0\Phi_{j}+Y\left(\sum_{j=1}^N \mu_j^0\Phi_{j}\right)$$ for some $\mu^0=(\mu_1^0,\ldots,\mu_N^0)\in\R^N$.
			\begin{remark}\label{remark:positivity-parabola}
			Arguing as in \cref{r:h-admissible}, we observe that $q:=b+\phi_0\ge0$ on $\mathbb{S}^{d-1}$ and, using the coordinates in \eqref{eq:coordinates-theta}, $q(\rho,\omega)=\rho^\beta Q(\omega)$, with $Q>0$ on $\mathbb{S}^{d-\ell-1}$. Indeed, we first observe that $B\ge \overline c>0$ on $\mathbb{S}^{d-\ell-1}$, for some constant $\overline c>0$. Moreover, since $\phi$ and $Y(\phi)$ are small for $\delta$ small, if we write $\phi_0(\rho,\omega)=\rho^\beta f(\omega)$, then $\|f\|_{L^\infty(\mathbb{S}^{d-\ell-1})}=\|\phi_0\|_{L^\infty(\partial B_1)}\le C\delta$. Then, recalling that $b(\rho,\omega)=\rho^\beta B(\omega)$, we have $$q(\rho,\omega)=b(\rho,\omega)+\phi_0(\rho,\omega)=\rho^{\beta}B(\omega)+\rho^{\beta}f(\omega)\ge \rho^{\beta}(\overline c-C\delta)\ge0,$$ by choosing $\delta$ small enough. 
            \end{remark}
			\subsection{The competitor by the gradient flow of $G$}
			We recall the function $G$ defined in \cref{lemma:Lyapunov-Schmidt-partial},
			and we consider the gradient flow $\mu(t)$ starting from $\mu^0$, which is defined as in \eqref{eq:flow} through the gradient $\nabla_\mu G$. We also consider the functions $\eta(r)$ and $\eta_1(r)$ as in \eqref{def:etaeta1}, and we set \be\label{eq:aseta}\overline \psi(r,\theta):=b(\theta)+\Phi_{\mu(\eta(r))}+Y(\Phi_{\mu(\eta(r))})=b(\theta)+\sum_{j=1}^N \mu_j(\eta(r))\Phi_j(\theta)+Y\left(\sum_{j=1}^N \mu_j(\eta(r))\Phi_j(\theta)\right).\ee 
			We also define $q(r,\theta):=r^\beta b(\theta)+r^\beta \phi_0(\theta)$ and $\overline q(r,\theta):=r^\beta \overline \psi(r,\theta)$, then we have the following lemma, which essentially follows by the Łojasiewicz inequality \cite{lojasiewicz}.
			\begin{lemma}\label{lemma:h0}
				Let $\mathcal{G}(\phi_0)>0$ and let $\rho\in(0,1)$. 
				Then, there is $\overline\eps=\overline\eps(d,\gamma,b,\rho)>0$, $C_0=C_0(d,\gamma,b,\rho)>0$ and $\sigma=\sigma(d,\gamma,b)\in(0,1/2]$, such that, setting $$\eta_1(r):=C_0\eps \mathcal{G}(\phi_0)^\sigma(\rho-r)\quad\text{and}\quad \eps:=\overline \eps\,\mathcal{G}(\phi_0)^{1-2\sigma},$$ and choosing $\delta=\delta(d,\gamma,b,\rho)$ small enough, we have $$W(\overline q)-W(b)\le(1-\eps) (W(q)-W(b)),$$ 
			\end{lemma}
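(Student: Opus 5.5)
The approach is to reduce this lemma to the computation already carried out in \cref{lemma:uguale}. The link between the two settings is the structure of $X_B$ described in \cref{remark:sub-int-ifandonlyif-sub-int}.

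The first step is to apply the slicing \cref{lemma:slicing} to both $\overline q$ and $q$. By construction $\overline q=r^\beta\overline\psi(r,\cdot)$ with $\overline\psi(r,\cdot)=b+\Phi_{\mu(\eta(r))}+Y(\Phi_{\mu(\eta(r))})$, while $q$ is the $\beta$-homogeneous extension of $b+\phi_0$. This gives
\[
W(\overline q)-W(b)-(1-\eps)(W(q)-W(b))=\int_0^1 r^{d+2\beta-3}\Big(G(\mu(\eta(r)))-(1-\eps)G(\mu^0)\Big)\,dr+\int_0^1\int_{\partial B_1}r^{d+2\beta-1}(\partial_r\overline\psi)^2 .
\]
Here I use $\mathcal{F}(b+\Phi_\mu+Y(\Phi_\mu))-\mathcal F(b)=G(\mu)$ and $\mathcal G(\phi_0)=G(\mu^0)$, which come from \eqref{def:G-finito-dim-partial}. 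Admissibility, i.e.\ $\overline q\ge0$, follows as in \cref{remark:positivity-parabola}: every slice $\overline\psi(r,\cdot)$ lies in $X_B$ and is $C\delta$-close to $b$ in the form $\rho^\beta(B+f)$, since the flow stays in a small ball for $\delta$ small.

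The second step is the estimate of these two integrals, which is \cref{lemma:uguale} word for word. By analyticity of $Y$ (\cref{lemma:Lyapunov-Schmidt-partial}), $G$ is real analytic near $0$, so the Łojasiewicz inequality $|G|^{1-\sigma}\le C_L|\nabla G|$ holds with some $\sigma\in(0,1/2]$. Along the normalized gradient flow \eqref{eq:flow} this gives $G(\mu(t))\le G(\mu^0)-C_1G(\mu^0)^{1-\sigma}t$ for $t<T_\ast$. I then choose $\eta_1(r)=C_0\eps G(\mu^0)^\sigma(\rho-r)_+$ with $C_0=\frac{d+2\beta}{C_1\rho^{d+2\beta-1}}$. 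The first integral is then at most $-\frac{\eps G(\mu^0)}{(d+2\beta-2)(d+2\beta-1)}$.

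For the second integral, $\partial_r\overline\psi=\eta_1'(r)\,\mu'(\eta)\cdot\big(\Phi+\delta Y(\Phi_\mu)[\Phi]\big)$ with $|\mu'|\le1$. Since $Y$ is analytic with bounded derivative near $0$, the $L^2$ norm of this is bounded by $C|\eta_1'|\le C(\rho)\eps G(\mu^0)^\sigma$, so the gradient term is at most $C(\rho)\eps^2G(\mu^0)^{2\sigma}$. Setting $\eps=\overline\eps G(\mu^0)^{1-2\sigma}$ with $\overline\eps$ small makes the total non-positive. It remains to check that $\eta_1\le T_\ast$, i.e.\ $C_0\eps\rho\le C_1^{-1}G(\mu^0)^{0}$ up to constants, which holds for $\overline\eps$ small.

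The step I expect to need the most care is verifying that the slices $\overline\psi(r,\cdot)$ stay in the region where the Łojasiewicz inequality and the Lyapunov-Schmidt chart are valid. The flow has unit speed and runs for time at most $\eta_1(0)\lesssim\eps G(\mu^0)^\sigma$, which is small, and $\mu^0$ is small because $\|c-b\|$ is; so $\mu(\eta(r))\in U$ once $\delta$ is chosen small depending on $\rho$. Everything else is the one-variable computation already done.
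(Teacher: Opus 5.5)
Your proposal is correct and follows the paper's route: the paper proves this lemma only by saying it ``follows by the same argument of \cref{lemma:uguale}'', and you carry out that argument for the partial reduced functional $G$ (slicing, the \L{}ojasiewicz inequality along the normalized gradient flow, the same choice of $\eta_1$, and $\eps=\overline\eps\,\mathcal{G}(\phi_0)^{1-2\sigma}$). Your added checks are accurate and supply details the paper leaves implicit: the slices $\overline\psi(r,\cdot)$ stay in $X_B$, so nonnegativity follows as in \cref{remark:positivity-parabola}, and the radial-derivative term appears without the factor $2$ because no splitting into modes is needed here.
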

            \begin{proof}
                The proof follows by the same argument of \cref{lemma:uguale}.
            \end{proof}
			\subsection{Decomposition of the energy}

			We first observe that, by combining the decomposition in \cref{prop:decompositionH1} together with the partial Lyapunov-Schmidt reduction in \cref{lemma:Lyapunov-Schmidt-partial}, we have the following result. Given $\phi_0=\phi+Y(\phi)\in X_B$ as in \eqref{eq:dec-trace-cylindrical}, we have \be\label{eq:kill-first-variation}\delta\mathcal{F}(b+\phi_0)[\varphi]=\delta\mathcal{G}(\phi_0)[\varphi]=0,\quad\text{for every }\varphi\in \mathcal{M}_b.\ee 
			Indeed, if we write $\varphi=\varphi_{N_B}+\varphi_{\mathcal{O}_b}$, where $\varphi_{N_B}\in N_B$, $\varphi_{\mathcal{O}_b}\in \mathcal{O}_b$, then, by \cref{lemma:Lyapunov-Schmidt-partial}, we get $\delta\mathcal{G}(\phi_0)[\varphi_{{N_B}}]=0,$ and, by \eqref{eq:propertyRb}, we infer that 
            $\delta\mathcal{G}(\phi_0)[\varphi_{\mathcal{O}_b}]=0$.
			
			We consider the first variation of $W$ by using the slicing, as in \cref{lemma:slicing}
			\be\label{eq:deltaW-slicing}\delta W(r^\beta \phi_r)[r^\beta \varphi_r]=\int_{0}^1r^{d+2\beta-3}\delta\mathcal{F}(\phi_r)[\varphi_r]\,dr+2\int_0^1r^{d+2\beta-1}\int_{\partial B_1}\partial_r\phi_r\partial_r\varphi_r\,d\HH^{d-1}\,dr.\ee 
			Given $\rho\in(0,1)$, we choose $\eta(r)$ as in \eqref{def:etaeta1}, so that $\eta(r)\equiv0$ in $(\rho,1)$. We take the corresponding $\overline q(r,\theta):=r^\beta \overline \psi(r,\theta)$, where $\overline \psi$ is defined in \eqref{eq:aseta}.           
            Arguing as in \eqref{eq:formula-in-epi}, for every non-negative function $h\in H^1(B_1)$, we have
			\be\label{eq:decomposition-competitor}W(h)-W(\overline q)=W_0(h-\overline q)+\delta W(\overline q)[h-\overline q]+\int_{B_1}Q_{\overline q}(h-\overline q)\,dx,\ee where
			\bea\label{def:R-cylindrical-cone} Q_{\overline q}(w):=(\overline q+w)^\gamma-\overline q^\gamma-\gamma \overline q^{\gamma-1}w=\int_0^1\int_0^t \gamma(\gamma-1)(\overline q+sw)^{\gamma-2}
			w^2\,ds\,dt\ge0\eea is defined for every $w$ such that $\overline q+w\ge0$.
			
			Given a non-negative function $h\in H^1(B_1)$, we write $h(r,\theta)=r^\beta \psi^r(\theta)$, for some $\psi^r(\theta)=\psi(r,\theta)$. Then, if we suppose that $\psi^r-\overline\psi^r\in \mathcal{M}_b$ for every $r\in(\rho,1)$ and $\psi^r-\overline\psi^r\equiv0$ for $r\in(0,\rho)$, then, by \eqref{eq:kill-first-variation} and \eqref{eq:deltaW-slicing}, we have \be\label{eq:utileparabola1}\delta W(\overline q)[h-\overline q]=0,\ee where we used that  $\overline q=q$ for $r\in(\rho,1)$ and consequently $\partial_r\overline \psi^r=0$ for $r\in(\rho,1)$. 
			
			Similarly, if $q=r^\beta b+r^\beta \phi_0$, by \eqref{eq:kill-first-variation} and \eqref{eq:deltaW-slicing} we have $\delta W(q)[z-q]=0$, and so  \be\label{eq:decomposition-non-competitor}W(z)-W(q)=W_0(z-q)+\int_{B_1}Q_{q}(z-q)\,dx=:\mathcal{Q}_q(z-q).\ee

			\subsection{Proof of the epiperimetric inequality}
			Now we have all the ingredients to prove the epiperimetric inequality in \cref{t:epi-cylindrical}.
			\begin{proof}[Proof of \cref{t:epi-cylindrical}]
				By contradiction, let us suppose that there is a sequence of non-negative traces $c_j\in H^1(\partial B_1)$, constants $\widetilde\eps_j\to0^+$, $\delta_j\to0^+$ and $\sigma\in(0,1/2]$ as in \cref{lemma:h0}, such that the following holds. Let $z_j$ be the $\beta$-homogeneous extension of $c_j$, then
				$$\|z_j-b\|_{H^1(B_1)}\le \delta_j,\quad \|c_j-b\|_{L^\infty(\partial B_1)}\le \delta_j\quad\text{and}\quad |W(z_j)-W(b)|\le \delta_j$$
				and \be\label{eq:contr-hyp1-cylindrical} (1-\eps_j)(W(z_j)-W(b))<W(h)-W(b),\quad \eps_j=\widetilde \eps_j|W(z_j)-W(b)|^{1-2\sigma}\ee for every non-negative function $h\in H^1(B_1)$ such that $h=c_j$ on $\partial B_1$.
				
				For every $j$, we decompose $c_j$ as in \eqref{eq:dec-trace-cylindrical}, and we set $$q_j(r,\theta):=r^\beta b(\theta)+r^\beta(\phi_0)_j(\theta)\ge0,$$ where the non-negativity follows by \cref{remark:positivity-parabola}, and  $(\phi_0)_j=\phi_j+Y(\phi_j)$, with $\phi_j\in K_B$.
                
                We observe that $W(z_j)-W(b)>0$, since we can choose $h=z_j$ in \eqref{eq:contr-hyp1-cylindrical}. 
                We also set $\eta_j:=\|z_j-q_{j}\|_{H^1(B_1)}$ and we start by proving that $\eta_j>0$ and $\eta_j\to0^+$ as $j\to+\infty$. First, since the projection onto $K_B$ is bounded, we have $\|r^\beta\phi_j\|_{H^1(B_1)}\le C\delta_j$, then, by the analyticity of $Y$, we also have $\|r^\beta(\phi_0)_j\|_{H^1(B_1)}\le C\delta_j$. Thus $\eta_j\le C\delta_j$, so that $\eta_j\to0^+$ as $j\to+\infty$. Then, suppose by contradiction that $\eta_j=0$ for some $j>0$. Thus, we take $\overline q_j$ and $\overline \eps$ as the constants in \cref{lemma:h0} corresponding to the choice $\rho=1/2$. Since $z_j=q_j$, we have that $$\eps_j=\widetilde \eps_j|W(z_j)-W(b)|^{1-2\sigma}=\widetilde \eps_j|W(q_j)-W(b)|^{1-2\sigma}= C\widetilde\eps_j\mathcal{G}((\phi_0)_j)^{1-2\sigma}\le\overline\eps\mathcal{G}((\phi_0)_j)^{1-2\sigma},$$
                 for $j$ large enough. This is a contradiction, by choosing $h=\overline q_j$ in \eqref{eq:contr-hyp1-cylindrical}.
							
				By using the decompositions \eqref{eq:decomposition-competitor} and \eqref{eq:decomposition-non-competitor}, the estimate \eqref{eq:contr-hyp1-cylindrical} becomes
				\be\label{eq:contrad-hyp3-cylindrical-cones}
				\begin{aligned}
					(1-\eps_j)&\bigg( W_0(z_j-q_j)+\int_{B_1}Q_{q_j}(z_j-q_j)\,dx
					\bigg)\\&\qquad\le W_0(h-\overline q_j)+\delta W(\overline q_j)[h-\overline q_j]+\int_{B_1}Q_{\overline q_j}(h-\overline q_j)\,dx+E_j.
				\end{aligned} 
				\ee where $$E_j:=W(\overline q_j)-W(b)-(1-\eps_j)(W(q_j)-W(b)).$$
				
                Let $\rho\in(0,1)$, we recall the function $\eta_1$ used to build the competitor in \eqref{eq:aseta} corresponding to $\rho$, and the functional $\mathcal{Q}_q$ in \eqref{eq:decomposition-non-competitor}. After passing to a subsequence, at least one of the following alternatives (corresponding to the alternatives in \cref{subsec:ultima}) holds for every $j$:
				\begin{enumerate}[label=(\roman*)]
					\item[(i)] if $\mathcal{G}((\phi_0)_j)\le0$, then we choose $\eta_1\equiv0$;
					\item[(ii)] if $\mathcal{G}((\phi_0)_j)>0$ and $\mathcal{Q}_{q_j}(z_j-q_j)>\mathcal{G} ((\phi_0)_j)$, then we choose $\eta_1\equiv0$;
					\item[(iii)] if $\mathcal{G}((\phi_0)_j)>0$ and $\mathcal{Q}_{q_j}(z_j-q_j)\le \mathcal{G} ((\phi_0)_j)$, then we choose $\eta_1$ corresponding to $\rho$ as in \cref{lemma:h0}.
				\end{enumerate}
				Then, we choose \be\label{eq:itemize}\overline q_j(r,\theta):=r^\beta \overline \psi_j(r,\theta),
                \ee where $\overline \psi_j$ is constructed as in \eqref{eq:aseta} using the choice of $\eta_1$, corresponding to $\rho$, as in the previous cases.
                Then, in the case (i) we have $W(q_j)-W(b)\le0$ and thus $$E_j=\widetilde\eps_j |W(z_j)-W(b)|^{1-2\sigma}(W(q_j)-W(b))\le0.$$ Similarly, in the case (ii), since $|W(z_j)-W(b)|\le \delta_j$, we have by \eqref{eq:decomposition-non-competitor} that $$E_j=\widetilde\eps_j |W(z_j)-W(b)|^{1-2\sigma}(W(q_j)-W(b))\le  \widetilde \eps_j\mathcal{G}((\phi_0)_j)\le \widetilde\eps_j\mathcal{Q}_{q_j}(z_j-q_j).$$
				In the case (iii), we have again by \eqref{eq:decomposition-non-competitor}, $W(z_j)-W(b)\le C\mathcal{G}((\phi_0)_j)$, and thus, by \cref{lemma:h0} $$E_j\le W(\overline q_j)-W(b)-(1-\widetilde\eps_jC^{1-2\sigma}\mathcal{G}((\phi_0)_j)^{1-2\sigma})(W(q_j)-W(b))\le0.$$  
				Putting all together and using that $\mathcal{Q}_{q_j}(z_j-q_j)\ge0$, we have that, given $\rho\in(0,1)$, \be\label{eq:est-Ej} E_j\le \widetilde \eps_j \mathcal{Q}_{q_j}(z_j-q_j),\ee for $j$ large enough depending on $\rho$.
				
				Finally, we introduce the normalized functions $w_j:=(z_j-q_j)/\eta_j$, and we choose a subsequence $j\to+\infty$ such that $ w_j\to w$ weakly in $H^1(B_1)$ and strongly in $L^2(B_1)$.
				By the decomposition of the trace \eqref{eq:dec-trace-cylindrical}, we observe that $w_j(r,\theta)=r^\beta w_j(\theta)$, with $w_j(\theta)\in \mathcal{M}_b$.
                
                We divide the rest of the proof in several steps.
				\\ \vspace{-0.3cm}
				
				\noindent\textit{Step 1.} In the first step, we prove that for every $j$ large enough \be\label{eq:proof-step-1-cylindrical-cones}\int_{B_1}\frac{Q_{q_j}(z_j-q_j)}{\eta_j^2}\,dx\le C,\quad \int_{B_1}b^{\gamma-2}w^2\,dx\le C \quad\text{and}\quad \frac{E_j}{\eta_j^2}\le C\widetilde \eps_j,\ee where $C$ does not depend on $j$.
				To prove these inequalities, let $\zeta$ be a radial function such that $0\leq \zeta\leq 1$ and $\zeta \equiv 1$ in $B_{1/2}$, and  
                $\overline q_j$ be the competitor in \eqref{eq:itemize} corresponding to $\rho=1/2$. Then, we consider the competitor  $h:=(1-\zeta)z_j+\zeta \overline q_j$ in \eqref{eq:contrad-hyp3-cylindrical-cones}. 
                
                Since $h-\overline q_j=(1-\zeta)(z_j-\overline q_j)$, by \eqref{eq:utileparabola1} we have that $\delta W(\overline q_j)[h-\overline q_j]=0$. Moreover, by the convexity of $w\mapsto Q_{\overline q_j}(w)$ and $Q_{\overline q_j}(0)=0$, we have \bea \int_{B_1}{Q_{\overline q_j}(h-\overline q_j)}\,dx&\le \int_{B_1}(1-\zeta){Q_{\overline q_j}(z_j-\overline q_j)}\,dx=\int_{B_1}(1-\zeta){Q_{q_j}(z_j-q_j)}\,dx
				,\eea where we use the fact that $\overline q_j=q_j$ in $B_1\setminus B_{1/2}$ and $\zeta\equiv1$ in $B_{1/2}$.
				Then, since $\zeta\equiv 1$ in $B_{1/2}$, \eqref{eq:contrad-hyp3-cylindrical-cones} becomes
				\be\label{eq:eq:ultimastep4}\begin{aligned}
					(1-\eps_j)&\bigg( W_0(z_j-q_j)+\int_{B_1}Q_{q_j}(z_j-q_j)\,dx
					\bigg)\\&\qquad\le W_0((1-\zeta)(z_j-q_j))+\int_{B_1}(1-\zeta)Q_{q_j}(z_j-q_j)\,dx+E_j.
				\end{aligned}\ee where we used $W_0((1-\zeta)(z_j-\overline q_j))=W_0((1-\zeta)(z_j-q_j))$. Now we can proceed as in Step 1 of \cref{prop:epi-one-dim-cone}, and we get
				\bea \int_{B_1}\frac{Q_{q_j}(z_j-q_j)}{\eta_j^2}\,dx\le C+\frac{CE_j}{\eta_j^2}.
				\eea The first estimate of \eqref{eq:proof-step-1-cylindrical-cones} follows by the fact that, by \eqref{eq:est-Ej}, \be\label{e:manca}\frac{E_j}
				{\eta_j^2}\le C\widetilde \eps_j+\widetilde\eps_j\int_{B_1}\frac{Q_{q_j}(z_j-q_j)}{\eta_j^2}\,dx.\ee The third estimate of \eqref{eq:proof-step-1-cylindrical-cones} follows by substituting the first one in \eqref{e:manca}.
                The second estimate of \eqref{eq:proof-step-1-cylindrical-cones} follows by the first one, together with the bound $\eta_j^2w_j^2(z_j+q_j)^{\gamma-2}\le CQ_{q_j}(z_j-q_j)$. Indeed, since $z_j\to b, q_j\to b$, and $w_j\to w$ a.e. in $B_1$, up to passing to a subsequence, the conclusion follows from Fatou's lemma.
				\\ \vspace{-0.3cm}
				
				\noindent\textit{Step 2.}
				In the second step, we prove that if $w(r,\theta)=r^\beta w(\theta)$, then its trace solves the linearized equation \be\label{eq:proof-step-2-cylindrical-cones}
				L_bw=0\quad\text{in }\partial B_1\cap\{b>0\}.
				\ee
				To this end, we first take an open domain $D\Subset B_1\cap \{b>0\}$ and $\rho>0$ such that $D\cap \overline B_{2\rho}=\emptyset$. With this choice of $\rho$, let $\overline q_j$ be the corresponding function as in \eqref{eq:itemize} and consider the competitor $$h:=\zeta (\overline q_j+\eta_j\varphi_j)+(1-\zeta)((1-\chi)z_j+\chi \overline q_j),$$ where: 
                \begin{itemize}
                    \item $\zeta\in C^\infty_c(B_1\cap\{b>0\})$, with $0\le\zeta\le1$, $\zeta\equiv0$ in $B_{2\rho}$ and $\zeta\equiv1$ in $D$;
                    \item $\chi\in C^\infty_c(B_{2\rho})$ is radial and satisfies $0\le \chi\le 1$, $\chi\equiv1$ in $B_\rho$, $|\nabla \chi|\le C\rho^{-1}$;
                    \item $\varphi_j:=w_j+\xi$, where $\xi\in H^1(B_1)\cap L^\infty(B_1)$ does not depend on $j$; moreover, we assume that $\xi\equiv0$ in $B_1\setminus D$, and it takes the form $\xi(r,\theta)=r^\beta \xi_r(\theta)$, where $\xi_r\in \mathcal{M}_b$ for every $r\in(0,1)$.
                \end{itemize}
                Since the support of $\zeta$ is away from $\{b=0\}$ and $\overline q_j=q_j$ in $\text{supp}(\zeta)$, for every $j$ large enough, $\overline q_j+\eta_j\varphi_j=z_j+\eta_j\xi\ge0$ in $\text{supp}(\zeta)$. We stress that the latter lower bound follows from the fact that $\xi$ is bounded and $z_j$ is bounded from below away from zero in the support of $\zeta$, since $z_j\to b$.
				
				We observe that, since $\overline q_j=q_j$ in $B_1\setminus B_\rho$ and $\chi\equiv1$ in $B_\rho$, then $(1-\chi)(\overline q_j-q_j)=0$, and thus \bea h-\overline q_j&=\zeta\eta_j\varphi_j+(1-\zeta)(1-\chi)(z_j-\overline q_j)=\zeta\eta_j\varphi_j+(1-\zeta)(1-\chi)\eta_jw_j\\&=\zeta \eta_j(\varphi_j-w_j)+(1-\chi)\eta_jw_j,\eea 
                where in the last equality we used that $\chi\equiv0$ in $\text{supp}(\zeta)$.
                Moreover, by using that $\zeta\equiv1$ in $D$ and $\varphi_j=w_j$ outside $D$, we have $\zeta (\varphi_j-w_j)=\varphi_j-w_j=\xi$, and thus 
                \be\label{eq:diffh} h-\overline q_j= \eta_j\xi+(1-\chi)\eta_jw_j.\ee 
                Then, by combining \eqref{eq:kill-first-variation} and \eqref{eq:deltaW-slicing}, we have $$\delta W(\overline q_j)[h-\overline q_j]=\delta W(\overline q_j)[\eta_j\xi]+\delta W(\overline q_j)[(1-\chi)\eta_jw_j]=0.$$ Indeed, for the first term we use $\xi_r\in \mathcal{M}_b$ for every $r\in (0,1)$, $\overline q_j$ is $\beta$-homogeneous in $B_1\setminus B_\rho$, and $\xi\equiv0$ in $B_\rho$.
				For the second term, we use that $\chi\equiv1$ in $B_\rho$, $\overline q_j$ is $\beta$-homogeneous in $B_1\setminus B_\rho$, and, since $\chi$ is radial, then $(1-\chi)w_j(\theta)\in \mathcal{M}_b$.

				Therefore, by \eqref{eq:proof-step-1-cylindrical-cones}, the inequality \eqref{eq:contrad-hyp3-cylindrical-cones} implies that
				$$\begin{aligned}
					& \frac{W_0(z_j-q_j)}{\eta_j^2}+\int_{B_1}\frac{Q_{q_j}(z_j-q_j)}{\eta_j^2}\,dx
					\le C\widetilde \eps_j+\frac{W_0(h-\overline q_j)}{\eta_j^2}+\int_{B_1}\frac{Q_{\overline q_j}(h-\overline q_j)}{\eta_j^2}\,dx.
				\end{aligned}$$
				Using that $(h-\overline q_j)/\eta_j=\xi+(1-\chi)w_j$, see \eqref{eq:diffh}, we can proceed as in Step 2 of \cref{prop:epi-one-dim-cone}, and we get
				\be\label{eq:passj}\begin{aligned}
				\int_{B_1}(1-&(1-\chi)^2)|\nabla w_j|^2\,dx+\int_{D\cup B_{2\rho}}\frac{Q_{q_j}(z_j-q_j)}{\eta_j^2}\,dx\\&\le C\widetilde \eps_j+\int_{B_1}\Big(|\nabla\xi|^2+|\nabla\chi|^2 w_j^2-2w_j(1-\chi)\nabla \chi\cdot\nabla w_j+2\nabla\xi\cdot \nabla((1-\chi)w_j)\Big)\,dx\\&\qquad+\int_{D\cup B_{2\rho}}\frac{Q_{\overline q_j}(h-\overline q_j)}{\eta_j^2}\,dx,
				\end{aligned}\ee where we used that, in $B_1\setminus (D\cup B_{2\rho})$, we have $\overline q_j=q_j$, $\xi=0$, and $\chi=0$, and therefore \eqref{eq:diffh} gives $h-\overline q_j=z_j-q_j$ in $B_1\setminus (D\cup B_{2\rho})$.

                We also observe that, by using both \eqref{eq:diffh} and the convexity of  $w\mapsto Q_{q_j}(w)$, with $Q_{q_j}(0)=0$, then $$Q_{\overline q_j}(h-\overline q_j)=Q_{q_j}((1-\chi)(z_j-q_j))\le (1-\chi)Q_{q_j}(z_j-q_j)\quad\text{in }B_{2\rho}.$$
                More precisely, we exploit that  $\xi\equiv0$ in $B_{2\rho}$, $\chi\equiv1$ in $B_\rho$ and $\overline q_j=q_j$ in $B_1\setminus B_\rho$.
                Then, by \eqref{eq:proof-step-1-cylindrical-cones}
                \bea \int_{B_{2\rho}}\frac{Q_{q_j}(z_j-q_j)}{\eta_j^2}\,dx+\int_{B_{2\rho}}\frac{Q_{\overline q_j}(h-\overline q_j)}{\eta_j^2}\,dx&\le C\int_0^{2\rho}r^{d+2\beta-3}\int_{\partial B_1}\frac{Q_{q_j}(z_j-q_j)}{\eta_j^2}\,d\HH^{d-1}\\&\le C\rho^{d+2\beta-2}
                \eea
                for some $C$ which does not depend on $j$ and on $\rho$.
                
                We can pass to the limit as $j\to+\infty$ in \eqref{eq:passj}. By using that $\{b=0\}$ is away from $D$ and that
				$\overline q_j\to b$, $h-\overline q_j\to0$ in $D$, we get
				\bea\label{eq:daderivare}
				\begin{aligned}
					\int_{B_1}&|\nabla w|^2\,dx+\int_{D}\frac\gamma2(\gamma-1)b^{\gamma-2}w^2\,dx\\&\qquad\le \int_{B_1}|\nabla(\xi+(1-\chi)w)|^2\,dx+\int_{D}\frac\gamma2(\gamma-1)b^{\gamma-2}(\xi+(1-\chi)w)^2\,dx+C\rho^{d+2\beta-2}.
				\end{aligned}
				\eea
                Moreover, since $\xi+(1-\chi)w=w$ in $B_1\setminus (D\cup B_{2\rho})$, the contributions of the two integrals over $B_1\setminus (D\cup B_{2\rho})$ cancel out. Hence, the two integrals over $B_1$ above may be replaced by the corresponding integrals over $D\cup B_{2\rho}$. Now, by combining $$\int_{B_{2\rho}}|\nabla ((1-\chi)w)|^2\,dx+\int_{B_{2\rho}}|\nabla w|^2\,dx\le C\int_{B_{2\rho}}|\nabla w|^2\,dx+\frac{C}{\rho^2}\int_{B_{2\rho}}w^2\,dx\le C\rho^{d+2\beta-2},$$ with the facts that $\xi=0$ in $B_{2\rho}$ and $\chi=0$ in $D$, we can pass to the limit as $\rho\to0^+$ to get 
                \bea
					\int_{D}&|\nabla w|^2\,dx+\int_{D}\frac\gamma2(\gamma-1)b^{\gamma-2}w^2\,dx\le \int_{D}|\nabla(w+\xi)|^2\,dx+\int_{D}\frac\gamma2(\gamma-1)b^{\gamma-2}(w+\xi)^2\,dx.
				\eea
                Notice that this minimality condition holds only for test functions $\xi\in H^1(B_1)\cap L^\infty(B_1)$ with $\xi\equiv0$ in $B_1\setminus D$ and $\xi_r\in \mathcal{M}_b$. We can drop the condition $\xi\in L^\infty(B_1)$ by approximation. Then, replacing $\xi$ with $t\xi$ and differentiating the resulting identity with respect to $t$, we obtain
                $$\int_{D}\Big( \nabla w\cdot\nabla \xi+\frac{\gamma}{2}(\gamma-1)b^{\gamma-2} w \xi\Big)\,dx=0,$$ for every $\xi\in H^1(B_1)$ with $\xi\equiv0$ in $B_1\setminus D$ and $\xi_r\in \mathcal{M}_b$.

                Now we proceed by showing the spherical analogue of the previous identity. Let $\tau\in C^\infty_c(\partial B_1\cap\{b>0\})\cap\mathcal{M}_b$, we take $\kappa(r)\in C^\infty_c((0,1))$ and $D$ such that $\xi(r,\theta):=r^\beta\kappa(r)\tau(\theta)\in H^1_0(D)$, with $\kappa\not\equiv0$. Then, by integration by parts, we get \bea\int_0^1 r^{d+2\beta-3}\kappa(r)\,dr\int_{\partial B_1}\Big( \nabla_\theta w(\theta)\cdot\nabla_\theta \tau-\lambda(\beta)w(\theta)\tau+\frac{\gamma}{2}(\gamma-1)b^{\gamma-2} w(\theta)\tau\Big)\,d\HH^{d-1}=0,\eea
                where we follow the notation $w(r,\theta)=r^\beta w(\theta)$. Since $\kappa$ is not trivial, we have \be\label{eq:quasifine}\frac12\delta^2\mathcal{G}(0)[w,\tau]=\int_{\partial B_1}\Big( \nabla_\theta w\cdot\nabla_\theta \tau-\lambda(\beta)w\tau+\frac{\gamma}{2}(\gamma-1)b^{\gamma-2} w\tau\Big)\,d\HH^{d-1}=0,\ee for every $\tau\in C^\infty_c(\partial B_1\cap\{b>0\})\cap \mathcal{M}_b$.

                Now we drop the condition $\tau\in \mathcal{M}_b$ as follows. Define the normed space $(Z,\|\cdot\|_Z)$ by $$Z:=\{\tau\in C^\infty_c(\partial B_1\cap\{b>0\}): \|\tau\|_Z<+\infty\}\cap\mathcal{M}_b\quad\text{and}\quad \|\tau\|_Z:=\|\tau\|_{H^1}+\|b^{\frac{\gamma-2}2}\tau\|_{L^2}.$$
Since $C^\infty_c(\partial B_1\cap\{b>0\})$ is dense, with respect to $\|\cdot\|_Z$, in the corresponding weighted Sobolev space, and since the projection $P_{K_b}$ is continuous, the subspace $C^\infty_c(\partial B_1\cap\{b>0\})\cap\mathcal{M}_b$ is dense in $(Z,\|\cdot\|_Z)$. Indeed, starting from any compactly supported approximation, one can eliminate its $K_b$-component by subtracting a finite linear combination of fixed compactly supported functions whose $P_{K_b}$-projections form a basis of $K_b$.  
By the second estimate in \eqref{eq:proof-step-1-cylindrical-cones} and the homogeneity of the terms involved, we have $b^{\frac{\gamma-2}2}w\in L^2(\partial B_1)$, and thus, using the density in \eqref{eq:quasifine}, we have that $$\delta^2\mathcal{G}(0)[w,\tau]=0\quad\text{for every }\tau\in Z.$$
                Therefore, given $\tau\in C^\infty_c(\partial B_1\cap\{b>0\})$, we write $\tau=\tau_{K_b}+\tau_{\mathcal{M}_b}$, with $\tau_{K_b}\in K_b$ and $\tau_{\mathcal{M}_b}\in \mathcal{M}_b$. By the classification of $K_b$ in \eqref{eq:Kbz-decomposition} and \cref{prop:kernel-cylindric}, the worst case scenario is that $\tau_{K_b}$ behaves like $|y|^{\beta-1}$ at $\{b=0\}$, which still implies that $\|\tau_{K_b}\|_Z <+\infty$. Moreover, since $\tau=0$ in a neighborhood of $\{b=0\}$, then $\|\tau_{\mathcal{M}_b}\|_Z<+\infty$ as well. Thus, we have shown that $\tau_{\mathcal{M}_b}\in Z$ and since $\tau_{K_b}\in K_b$, $\tau_{\mathcal{M}_b}\in Z$, we get $$\delta^2\mathcal{G}(0)[w,\tau]=\delta^2\mathcal{G}(0)[w,\tau_{K_b}]+\delta^2\mathcal{G}(0)[w,\tau_{\mathcal{M}_b}]=0,$$ which is exactly the weak formulation of \eqref{eq:proof-step-2-cylindrical-cones}.
				\\ \vspace{-0.3cm}
				
				\noindent\textit{Step 3.} Now we prove that $w\equiv0$. As already observed in Step 2, we have $w\in \mathcal{M}_b$. Moreover, it follows from \eqref{eq:general-kernel-only-positive} that \eqref{eq:proof-step-2-cylindrical-cones} is equivalent to requiring that $w\in K_b$. Then, since $H^1(\partial B_1)=K_b\oplus \mathcal{M}_b$, necessarily $w\equiv0$.
				\\ \vspace{-0.3cm}
				
				\noindent\textit{Step 4.}
				Finally we prove that $w_j$ converges to $w\equiv0$ strongly in $H^1(B_1)$, which is a contradiction with $\|w_j\|_{H^1(B_1)}=1$.
				We choose the same competitor of Step 1, i.e., $h:=(1-\zeta)z_j+\zeta \overline q_j$. Then, using \eqref{eq:eq:ultimastep4} with \eqref{eq:proof-step-1-cylindrical-cones}, the same computation of Step 7 in \cref{prop:epi-one-dim-cone} gives
				$$\int_{B_{1/2}}|\nabla w_j|^2\,dx+ \int_{B_1}\frac{Q_{q_j}(z_j-q_j)}{\eta_j^2}\,dx \le C\widetilde \eps_j.$$
				The conclusion follows by combining the homogeneity of $w_j$ with  $Q_{q_j}(z_j-q_j)\ge0$.
			\end{proof}
            \begin{remark}\label{remark:sigma=0-cylindrical}
                If $b\in\mathcal{B}_\ell$, for $\ell=1,\ldots,d-2$, is sub-integrable, by \cref{remark:sub-int-ifandonlyif-sub-int} only case (i) in the proof of \cref{t:epi-cylindrical} can happen. Then the epiperimetric inequality holds with $\sigma=0$.
            \end{remark}
            \begin{remark}\label{remark:intermediate-cones}
                If $b\in\mathcal{B}_\ell$, for some $\ell=1,\ldots,d-2$, is not sub-integrable, then we can apply \cref{lemma:adamsimon} to $B$, obtaining that there exists a solution $u$ of the Alt-Phillips problem satisfying the logarithmic convergence \eqref{e:log-decay}. Moreover, since $\gamma\in(1,2)$, such a solution is also a minimizer (see \cref{prop:convex-functional}).
            \end{remark}

			\section{Proof of the main results}\label{section:final}
			In this section we conclude the proof of our main results. We first prove the epiperimetric inequality in \cref{thm:epiperimetric-combined}.
             \begin{proof}[Proof of \cref{thm:epiperimetric-combined}]
			   The result follows by combining \cref{t:epi-rad}, \cref{prop:epi-one-dim-cone} and \cref{t:epi-cylindrical}.
			\end{proof}
            The next step is to prove that the epiperimetric inequality in \cref{thm:epiperimetric-combined} can be applied to every scale.
            \begin{proposition}\label{prop:application-epi}
				Let $u$ be a minimizer of the Alt-Phillips problem in $B_1$, and suppose that $b$ is a blow-up of $u$ at $0\in\partial\Omega_u$ satisfying \cref{condition:assumption-uniqueness}. Then, for every $r$ small enough, the traces $u_r\big|_{\partial B_1}$ satisfy the hypotheses of the epiperimetric inequality in \cref{thm:epiperimetric-combined}.
			\end{proposition}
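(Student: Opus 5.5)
The plan is to verify the three closeness hypotheses of \cref{thm:epiperimetric-combined} for $c_r:=u_r|_{\partial B_1}$ and its $\beta$-homogeneous extension $z_r$, namely $\|z_r-b\|_{H^1(B_1)}\le\delta$, $\|c_r-b\|_{L^\infty(\partial B_1)}\le\delta$ and $|W(z_r)-W(b)|\le\delta$. A direct statement ``for every small $r$'' would already require uniqueness of the blow-up, so I would not aim for that. Instead I would use the standard bootstrap: fix a scale $r_1$ along the blow-up subsequence $r_k\to0$ at which all three quantities are below $\delta/2$. Then I would show, by continuation, that they remain below $\delta$ for all $r\in(0,r_1)$, as long as the epiperimetric inequality keeps being applied.

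First, the initial scale. Since $u_{r_k}\to b$ locally uniformly and in $H^1_{\loc}$ (by \cref{prop:regularity-of-solutions-alt-phillips} and the compactness behind Weiss' formula), for $k$ large we get $\|c_{r_k}-b\|_{L^\infty(\partial B_1)}$ small. The $C^\beta$ bounds also give $c_{r_k}\to b$ in $H^1(\partial B_1)$, after an averaging in $r$ or a trace argument using uniform $C^{1,\alpha}$-type estimates away from the free boundary. Hence $z_{r_k}\to b$ in $H^1(B_1)$, because the $H^1(B_1)$ norm of a homogeneous extension is controlled by the $H^1(\partial B_1)$ norm of its trace. Finally $W(z_{r_k})\to W(b)$ by continuity of $W$ in $H^1$.

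Second, the continuation. While the hypotheses hold on $[s,r_1]$, I would use minimality of $u_r$ to get $W(u_r)\le W(h)$, combine it with \eqref{eq:general-epi} and Weiss' formula \eqref{eq:weiss-formula}, and obtain a differential inequality for $e(r):=W(u_r)-W(b)\ge0$. Here $e\ge0$ because $W(u_r)$ is monotone with limit $W(b)$. The inequality reads $e'(r)\ge \frac{c}{r}e(r)^{1+\sigma}$, which yields $e(r)\le C|\log r|^{-1/\sigma}$, or $e(r)\le Cr^{\alpha}$ when $\sigma=0$. I would then use the standard estimate
\[
\int_{\partial B_1}|c_r-c_s|\le\int_s^r \frac1t\Big(\int_{\partial B_1}(\nabla u_t\cdot x-\beta u_t)^2\Big)^{1/2}dt
\]
together with Cauchy--Schwarz on dyadic scales and the formula for $e'$. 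This gives $\|c_r-c_{r_1}\|_{L^1(\partial B_1)}\le C\,e(r_1)^{\alpha'}$, which is small.

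Third, the upgrade from $L^1$ closeness to the required norms, which I expect to be the main obstacle. For $L^\infty$ I would interpolate the $L^1$ bound with the uniform $C^\beta$ bound on $u$ from \cref{prop:regularity-of-solutions-alt-phillips}, since bounded Hölder norms plus small $L^1$ norm give small $L^\infty$. For $H^1$ I would argue by compactness: if $\|z_r-b\|_{H^1}$ failed to be small at some first scale, a subsequence of $u_r$ would converge to another blow-up $b'$ that is $L^1$-close to $b$. By $H^1_{\loc}$ convergence of rescalings, $z_r$ is close to $z$ of that limit, and the limit equals $b'=b$ by the $L^1$ estimate, which is a contradiction.

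For the energy, $W(z_r)-W(b)$ is squeezed by $0\le W(u_r)-W(b)\le W(z_r)-W(b)$. I would bound it above using $H^1$ closeness of $z_r$ to $b$ and continuity of $W$. Choosing $\delta$ first, then $r_1$, closes the continuation argument.
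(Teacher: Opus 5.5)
Your argument has the same architecture as the paper's. You start from a good scale (the paper rescales $u$ so that $\|u-b\|_{L^2(\partial B_1)}\le\delta_1$ and $W(u)-W(b)\le\delta_1$). You integrate Weiss' formula together with \eqref{eq:general-epi} to get decay of $e(r)$ and a Cauchy estimate for the traces. You close by continuation: the paper takes the minimal radius $r_0$ and uses Weiss' formula alone to control $(r_0/2,r_0)$, which plays the role of your continuity in $r$.

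Where you genuinely differ is the upgrade from weak closeness of traces to the three hypotheses. The paper proves \cref{lemma:estimate-application}, the quantitative bound $\|u_r-b\|_{L^\infty(\partial B_1)}+\|z_r-b\|_{H^1(B_1)}+|W(z_r)-W(b)|\le C\|u_r-b\|^\tau_{L^2(\partial B_1)}$. It obtains this by interpolating between $L^2$ and the uniform $C^\beta$ bound, and by pointwise bounds on $Q_b$ (quadratic for $b\in\mathcal B_0$, of order $|\varphi|^\gamma$ for $\gamma>1$). You interpolate only for $L^\infty$ and use compactness for $H^1$. For the energy you use the squeeze $0\le W(u_r)-W(b)\le W(z_r)-W(b)$ together with $H^1$-continuity of $W$, which spares you the $Q_b$ case analysis. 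The price is that your version is non-quantitative, whereas the paper gets an explicit choice of $\delta_1$ in terms of $\rho$ and $\delta$.

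Three points need tightening.

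(1) As written, your compactness step confuses ``close'' with ``equal''. At fixed $r_1$ you only know $\|c_r-b\|_{L^1(\partial B_1)}\le Ce(r_1)^{\alpha'}+\|c_{r_1}-b\|_{L^1(\partial B_1)}$, a fixed positive number, so a subsequential limit $b'$ need not be $b$. What you need is a uniform modulus: for every $\delta>0$ there is $\eta>0$ such that every trace $c$ with $\|c\|_{C^{1,\alpha}(\partial B_1)}\le C$ and $\|c-b\|_{L^1(\partial B_1)}\le\eta$ satisfies $\|c-b\|_{C^1(\partial B_1)}\le\delta$. This follows from Arzel\`a--Ascoli, with no blow-up limit needed. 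Then pick $r_1$ so that $\|c_{r_1}-b\|_{L^1}$ and $e(r_1)$ are below the $\eta(\delta)$-level, not merely below $\delta/2$; your ordering ``$\delta$ first, then $r_1$'' allows this.

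(2) Control of traces in $H^1(\partial B_1)$ comes from the uniform $C^{1,\alpha}$ bound on $u_r$ up to $\partial B_1$ and across the free boundary. This holds because $\beta>1$ in \cref{prop:regularity-of-solutions-alt-phillips}. It does not come from $H^1_{\loc}$ convergence, nor from estimates away from $\partial\Omega_u$.

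(3) In the Cauchy estimate, bounding each dyadic increment by $e(2^{-k}r_1)^{1/2}\lesssim k^{-1/(2\sigma)}$ only sums when $\sigma<1/2$, while \cref{thm:epiperimetric-combined} only gives $\sigma\in[0,1)$. Instead, combine $e'\ge\frac1t\|\nabla u_t\cdot x-\beta u_t\|^2_{L^2(\partial B_1)}$ with $e'\ge\frac ct e^{1+\sigma}$ to get $\frac1t\|\nabla u_t\cdot x-\beta u_t\|_{L^2(\partial B_1)}\le C\,e'(t)\,e(t)^{-(1+\sigma)/2}$. This integrates to $Ce(r_1)^{(1-\sigma)/2}$ for every $\sigma<1$.
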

            We use the following lemma.
\begin{lemma}\label{lemma:estimate-application}
    Let $d\geq 2$, $\gamma \in (0,2)$, then, there exists $\tau>0$ depending only on $d$ and $\gamma$ such that the following holds.
    Let $u$ be a solution of the Alt-Phillips problem in $B_1$, and suppose that $b$ is a blow-up of $u$ at $0\in\partial\Omega_u$ satisfying \cref{condition:assumption-uniqueness}. We denote by $z_r$ the $\beta$-homogeneous extension of the trace $u_r|_{\partial B_1}$. Then $$\|u_r-b\|_{L^\infty(\partial B_{1})}+\|z_r-b\|_{H^1(B_{1})}+|W(z_r)-W(b)|\le C \|u_r-b\|^\tau_{L^2(\partial B_1)}.$$
\end{lemma}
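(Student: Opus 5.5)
The plan is to bound each of the three quantities separately by interpolation between the $L^2(\partial B_1)$ distance and uniform higher regularity of $u_r$ and $b$. The regularity input is Proposition~\ref{prop:regularity-of-solutions-alt-phillips}: $u$ is $C^\beta$ in $B_{1/2}$ with a bound depending only on $d,\gamma$. Hence, for $r$ small, the rescalings $u_r$ are uniformly bounded in $C^\beta(B_2)$, with $\beta>1$, and $b$ is likewise $C^\beta$. In particular the traces $c_r:=u_r|_{\partial B_1}$ and $b|_{\partial B_1}$ are uniformly bounded in $C^{1,\beta-1}(\partial B_1)$.

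The first two terms then follow by Gagliardo--Nirenberg type interpolation on the compact manifold $\partial B_1$. Set $f:=c_r-b$. Since $\|f\|_{C^{1,\beta-1}(\partial B_1)}\le C$, we get $\|f\|_{L^\infty}\le C\|f\|_{L^2}^{\tau_1}$ and $\|\nabla_\theta f\|_{L^2}\le C\|f\|_{L^2}^{\tau_2}$. For the second bound one can use $\|\nabla_\theta f\|_{L^2}^2=-\int f\Delta_\theta f$ after mollification, or the standard interpolation $\|f\|_{H^1}\le C\|f\|_{L^2}^{s}\|f\|_{C^{1,\alpha}}^{1-s}$. Because $z_r-b$ is the $\beta$-homogeneous extension of $f$, a direct computation in polar coordinates gives
\[
\|z_r-b\|_{H^1(B_1)}^2\le C\big(\|f\|_{L^2(\partial B_1)}^2+\|\nabla_\theta f\|_{L^2(\partial B_1)}^2\big),
\]
which controls the second term.

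For the energy term, use the slicing Lemma~\ref{lemma:slicing}: for homogeneous functions, $W(z_r)-W(b)=\frac{1}{d+2\beta-2}\big(\mathcal F(c_r)-\mathcal F(b)\big)$. The gradient and $L^2$ parts of $\mathcal F(c_r)-\mathcal F(b)$ are bounded by $C\|f\|_{H^1(\partial B_1)}$, since both functions are uniformly bounded in $H^1$. The remaining piece is $\int_{\partial B_1}|c_r^\gamma-b^\gamma|$. For $\gamma\ge1$ this is bounded by $C\|f\|_{L^1}$. For $\gamma\in(0,1)$ we use $|s^\gamma-t^\gamma|\le|s-t|^\gamma$ and obtain the bound $C\|f\|_{L^1}^\gamma$. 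In both cases the result is controlled by a power of $\|f\|_{L^2}$. Taking $\tau$ to be the minimum of all the exponents, and using that $\|f\|_{L^2}\le C$, completes the estimate.

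The step I expect to be the main obstacle is ensuring that the constant and the exponent $\tau$ are uniform. This requires that the $C^\beta$ bound on $u_r$ holds uniformly on $B_2$ at every small scale. That follows from Proposition~\ref{prop:regularity-of-solutions-alt-phillips} together with the fact that $\beta$ is exactly the scaling exponent, since then $[u_r]_{C^\beta(B_R)}=r^{0}[u]_{C^\beta(B_{rR})}$ and the rescaling does not change the seminorm. If only solutions (not minimizers) are assumed, the growth bound $|u|\le C|x|^\beta$ at free boundary points would have to be invoked instead; by Condition~\ref{condition:assumption-uniqueness} we may rely on the minimizer case.
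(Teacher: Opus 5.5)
Your proposal is correct. For the first two terms it matches the paper's proof: the same two interpolation inequalities on $\partial B_1$, fed by the uniform $C^\beta$ bound, followed by homogeneity to pass from the trace to $\|z_r-b\|_{H^1(B_1)}$.

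Where you differ is the energy term.

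\textbf{The paper's route.} It writes $W(z_r)-W(b)$ as a multiple of $\mathcal G(\varphi)$, with $\varphi=z_r-b$ on $\partial B_1$, using the second expression in \eqref{def:G}. That expression comes from integrating by parts against the equation of $b$, so the linear term drops. The paper then needs two pointwise bounds on $Q_b(\varphi)$. The first is $|Q_b(\varphi)|\le C\varphi^2$ for $b\in\mathcal B_0$, which uses $\inf_{\partial B_1}b>0$. The second is $|Q_b(\varphi)|\le C|\varphi|^\gamma$ when $\gamma>1$.

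\textbf{Your route.} You estimate $\mathcal F(c_r)-\mathcal F(b)$ only to first order. You apply Cauchy--Schwarz to the quadratic parts. For the potential, you use Lipschitz continuity (for $\gamma\ge1$) or $\gamma$-H\"older continuity (for $\gamma<1$) of $s\mapsto s^\gamma$ on bounded sets. This is more elementary. It does not use that $b$ is a critical point, nor that $b$ is positive on the sphere, and it needs no case split according to the class of $b$. The price is a worse exponent, linear rather than quadratic in $\|\varphi\|_{H^1}$. That loss is irrelevant, since any $\tau>0$ suffices for \cref{prop:application-epi}. The paper's version, for its part, reflects the true second-order size of the energy gap at a critical point.

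\textbf{Minor points.}
\begin{itemize}
\item For $\gamma\in(1,2)$ one has $\beta>2$, so ``$C^{1,\beta-1}$'' should read $C^{1,\alpha}$ for some $\alpha\in(0,1)$.
\item Uniformity of the $C^\beta$ bound for $u_r$ needs more than invariance of the top seminorm. You also need the lower-order terms, which follow from $D^ku(0)=0$ for $k<\beta$ together with Taylor's formula.
\item \cref{condition:assumption-uniqueness} constrains $b$, not $u$, so it does not by itself let you treat $u$ as a minimizer. The paper likewise just invokes \cref{prop:regularity-of-solutions-alt-phillips} here, and in the application $u$ is a minimizer.
\end{itemize}
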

\begin{proof}
For a function $\phi\in C^{1,\alpha}(\partial B_1)$, with $\alpha\in(0,1)$, we use the following interpolation inequalities 
\be\label{eq:interpolation-inequalities}
\|\phi\|_{L^\infty(\partial B_1)}\le C\|\phi\|_{C^{0,1}(\partial B_1)}^{\frac{d-1}{d+1}}\|\phi\|_{L^2(\partial B_1)}^\frac{2}{d+1}\quad\text{and} \quad\|\nabla_\theta \phi\|_{L^2(\partial B_1)}\le C\| \phi\|_{C^{1,\alpha}(\partial B_1)}^\frac{1}{1+\alpha}\| \phi\|_{L^2(\partial B_1)}^{\frac{\alpha}{1+\alpha}}.\ee
Indeed, the first can be proved as in \cite[Lemma 3.2]{sv19}, while the second follows by an elementary finite-difference argument. By the $C^\beta$ regularity of solutions of the Alt-Phillips problem in \cref{prop:regularity-of-solutions-alt-phillips}, we have $\|u_r\|_{C^\beta(B_1)}\le C$ and $\|b\|_{C^\beta(B_1)}\le C$. Then, applying the interpolation inequalities in \eqref{eq:interpolation-inequalities} to $\phi=u_r-b$, we have
$$
\|u_r-b\|_{L^\infty(\partial B_{1})}\le C \|u_r-b\|^\frac{2}{d+1}_{L^2(\partial B_1)}
$$ and, by homogeneity, 
\bea
\|z_r-b\|_{H^1(B_{1})}\le C\|\nabla_\theta(z_r-b)\|_{L^2(\partial B_{1})}+C\|z_r-b\|_{L^2(\partial B_{1})}\le C\|z_r-b\|^{\frac{\alpha}{1+\alpha}}_{L^2(\partial B_{1})}.
\eea
Now let $\varphi\in C^\beta(\partial B_1)$ be the trace of $z_r-b$ on $\partial B_1$. Then, by slicing \cref{lemma:slicing} and using the definition of $\mathcal{G}$ in \eqref{def:G}, we have 
$$|W(z_r)-W(b)|=C|\mathcal{G}(\varphi)|\le C\|\nabla_\theta \varphi\|^2_{L^2(\partial B_1)}+C\| \varphi\|^2_{L^2(\partial B_1)}+ C\int_{\partial B_1}|Q_b(\varphi)|\,d\HH^{d-1},$$ where $Q_b(\varphi):=(b+\varphi)^\gamma-b^\gamma-\gamma b^{\gamma-1}\varphi$. At this point, we have two possibilities:
\begin{itemize}
    \item if $b \in \mathcal{B}_0$, then $\gamma \in (0,2)$ and we apply $|(1+s)^\gamma-1-\gamma s|\le Cs^2$ with $s=\varphi/b$;
    \item if $b \in \mathcal{B}\setminus \mathcal{B}_0$, then $\gamma \in (1,2)$ and we apply $|(1+s)^\gamma-1-\gamma s|\le C|s|^\gamma$ with $s=\varphi/b$.
\end{itemize}
Thus, we have that  
\be\label{eq:stimaRb}|Q_b(\varphi)|\le C\varphi^2\quad\text{if }\gamma\in(0,2)\quad\text{and}\quad |Q_b(\varphi)|\le C|\varphi|^\gamma\quad\text{if }\gamma>1.\ee
Combining the second interpolation inequality in \eqref{eq:interpolation-inequalities} with \eqref{eq:stimaRb}, and recalling that $\varphi=z_r-b$ on $\partial B_1$, we infer the existence of $\tau>0$ so that $$|W(z_r)-W(b)|\le C\|z_r-b\|^\tau_{L^2(\partial B_1)},$$
concluding the proof, since $z_r=u_r$ on $\partial B_1$.
\end{proof}
            
            \begin{proof}[Proof of \cref{prop:application-epi}]
Let $\rho>0$ and $\delta_1=\delta_1(\rho)>0$ to be chosen later.
Since $b$ is a blow-up of $u$ at $0$, then, up to rescaling $u$, we can assume that \be\label{eq:eq2final}\|u-b\|_{L^2(\partial B_1)}\le \delta_1 \quad\text{and}\quad |W(u)-W(b)|\le \delta_1
.\ee 
By \cref{lemma:estimate-application}, for every $r\in [\rho/2,1]$, we have
\be\label{eq:fundamental-application}\|u_r-b\|_{L^\infty(\partial B_{1})}+\|z_r-b\|_{H^1(B_{1})}+|W(z_r)-W(b)|\le C \|u_r-b\|^\tau_{L^2(\partial B_1)},\ee where $z_r$ is the $\beta$-homogeneous extension of the trace $u_r|_{\partial B_1}$.

Integrating the monotonicity formula of the Weiss' energy \eqref{eq:weiss-formula}, we obtain \be\label{eq:eq1final}\|u_{r_2}-u_{r_1}\|_{L^2(\partial B_1)}\le C \log\left(\frac{r_2}{r_1}\right)^{1/2}(W(u_{r_2})-W(b))^{1/2}\quad\text{for every } 0<r_1\le r_2\le 1,\ee where we used that $W(u_{r_1})-W(b)\ge0$.
Combining \eqref{eq:eq2final} with \eqref{eq:eq1final} applied with $r_1=r$ and $r_2=1$, we obtain, for every $r\in[\rho/2,1]$,
\be\label{utiledopo-application}\|u_r-b\|_{L^2(\partial B_1)}\le \|u-u_r\|_{L^2(\partial B_1)}+\|u-b\|_{L^2(\partial B_1)}\le C\left( \log\frac2\rho\right)^{1/2}\delta_1^{1/2}+\delta_1\le C_\rho\delta_1^{1/2},\ee 
for some $C_\rho>0$ depending on $\rho$.
Therefore, by \eqref{eq:fundamental-application}, for every $r\in [\rho/2,1]$, we have 
$$\|u_r-b\|_{L^\infty(\partial B_{1})}+\|z_r-b\|_{H^1(B_{1})}+|W(z_r)-W(b)|\le CC_\rho^\tau\delta_1^{\tau/2}.$$
Let $\delta$ be the constant appearing in  \cref{thm:epiperimetric-combined}, and we choose $\delta_1=\delta_1(\rho)>0$ sufficiently small so that $CC_\rho^\tau\delta_1^{\tau/2}\le \delta$. Then, we can apply the epiperimetric inequality \cref{thm:epiperimetric-combined} to the rescaling $u_r$, for every $r\in[\rho/2,1]$. 

Let $r_0\in[0,\rho/2)$ be the minimum of the radii such that the epiperimetric inequality applies to $u_r$, for every $r\in(r_0,1)$. We claim that $r_0=0$. Suppose, by contradiction, that $r_0>0$. Then, by applying the epiperimetric inequality
$$\frac{d}{dr}\Big(W(u_r)-W(b)\Big)\ge \frac cr\Big((W(z_r)-W(b))-(W(u_r)-W(b))\Big)\ge \frac{c\eps}{r}(W(u_r)-W(b))^{1+\sigma}$$
for every $r \in (r_0,1)$. Then, by direct integration, we obtain the following decay of the Weiss' energy $$0\le W(u_r)-W(b)\le \frac{C}{|\log r|^\alpha},\quad\text{for every }r\in(r_0,1),$$ for some $\alpha>0$. Combining this decay with a standard dyadic argument (see e.g.~\cite{csv18}), we get \be\label{eq:application-epi}\|u_r-u_\rho\|_{L^2(\partial B_1)}\le \frac{C}{|\log \rho|^\alpha}\quad\text{for every }r\in [r_{0},\rho).\ee
Thus, by \eqref{eq:eq1final}, \eqref{utiledopo-application} and  \eqref{eq:application-epi}, for every $r'\in(r_{0}/2,r_{0})$, we have \bea \|u_{r'}-b\|_{L^2(\partial B_1)}&\le \|u_{r'}-u_{r_{0}}\|_{L^2(\partial B_1)}+\|u_{r_{0}}-u_\rho\|_{L^2(\partial B_1)}+\|u_\rho-b\|_{L^2(\partial B_1)}\\&\le C\log(2)^\frac12\delta_1^{1/2}+\frac{C}{|\log\rho|^\alpha}+C_\rho\delta_1^{1/2}.\eea 
Therefore, by \eqref{eq:fundamental-application} $$\|u_r-b\|_{L^\infty(\partial B_{1})}+\|z_r-b\|_{H^1(B_{1})}+|W(z_r)-W(b)|\le C\left(C\log(2)^\frac12\delta_1^{1/2}+\frac{C}{|\log\rho|^\alpha}+C_\rho\delta_1^{1/2}\right)^\tau. $$
Choosing first $\rho>0$ sufficiently small and then $\delta_1=\delta_1(\rho)>0$ sufficiently small, we can ensure that the right-hand side above is bounded by $\delta$. Therefore, $u_{r'}$ satisfies the hypotheses of \cref{thm:epiperimetric-combined}, for every $r'\in(r_0/2,r_0)$, contradicting the definition of $r_0$.
\end{proof}           
            \begin{proof}[Proof of \cref{t:uniqueness}]
				By \cref{prop:application-epi}, the epiperimetric inequality in \cref{thm:epiperimetric-combined} can be applied at every scale. The uniqueness of the blow-up limit and the corresponding rate of convergence then follow as in \eqref{eq:application-epi}.          
			\end{proof}
            \begin{proof}[Proof of \cref{t:ap-low-dimension}] 
            By combining \cref{prop:trans0} with \cref{t:uniqueness}, we get uniqueness of the blow-up limit for $d$ and $\gamma$ satisfying \eqref{eq:assumption-dimension}. Then, the stratification of the free boundary follows by the implicit function theorem and Whitney’s extension theorem in \cite{fefferman} (see e.g.~\cite{gp09,fs19,csv20}). Finally, the regularity result for $\Sigma_{d-1}(u)$ follows by \cref{thm:parabola-cones}.
			\end{proof}
            \begin{proof}[Proof of \cref{t:rate-convergence}]
            If $b$ is sub-integrable, by \cref{remark:sigma=0} and \cref{remark:sigma=0-cylindrical}, the epiperimetric inequalities hold with $\sigma=0$, and hence yield a polynomial rate of convergence. On the other hand, if $b$ is not sub-integrable, the conclusion follows from \cref{lemma:adamsimon} and \cref{remark:intermediate-cones}.
            \end{proof}
            \begin{proof}[Proof of \cref{thm:parabola-cones}]
            The result follows from the epiperimetric inequality in \cref{thm:epiperimetric-combined} applied at every scale (see \cref{prop:application-epi}) and by \cref{t:rate-convergence}, combined with \cref{prop:optimality-log}, \cref{remark:non-int-one-dim-cone} and
            \cref{corollary:kernel-cylindrical}.
            \end{proof}
			\bibliographystyle{plain}
			\bibliography{FreeBoundary_bib}
		\end{document}